\documentclass[11pt,oneside,reqno]{amsart}
\usepackage[latin9]{inputenc}
\usepackage{color}
\usepackage{mathrsfs}
\usepackage{bm}
\usepackage{amstext}
\usepackage{amsthm}
\usepackage{amssymb}
\usepackage{geometry}
\usepackage{setspace}
\usepackage[bookmarks=false,
 breaklinks=false,pdfborder={0 0 1},backref=false,colorlinks=false]
 {hyperref}

\makeatletter
\numberwithin{equation}{section}
\numberwithin{figure}{section}
\theoremstyle{plain}
\newtheorem{thm}{\protect\theoremname}[section]
\theoremstyle{remark}
\newtheorem{rem}[thm]{\protect\remarkname}

\theoremstyle{plain}
\newtheorem{prop}[thm]{\protect\propositionname}
\newtheorem*{claim*}{\protect\claimname}

\theoremstyle{definition}
\newtheorem{defn}[thm]{\protect\definitionname}
\theoremstyle{plain}
\newtheorem{lem}[thm]{\protect\lemmaname}
\theoremstyle{plain}

\theoremstyle{remark}
\newtheorem*{acknowledgement*}{\protect\acknowledgementname}

\usepackage{amsthm}
\usepackage{mathrsfs}
\usepackage[noadjust]{cite}
\usepackage{enumitem}
\setlist[itemize]{leftmargin=*}
\setlist[enumerate]{leftmargin=*}
\usepackage{tikz}
\usetikzlibrary[patterns]
\usepackage{bbm}
\usepackage{stmaryrd}

\makeatother

\providecommand{\corollaryname}{Corollary}
\providecommand{\definitionname}{Definition}
\providecommand{\lemmaname}{Lemma}
\providecommand{\remarkname}{Remark}
\providecommand{\theoremname}{Theorem}
\providecommand{\acknowledgementname}{Acknowledgement}
\providecommand{\claimname}{Claim}

\providecommand{\propositionname}{Proposition}

\newcommand{\mb}[1]{{\mathbf #1}}
\newcommand{\mf}[1]{{\mathfrak #1}}
\newcommand{\bs}[1]{{\boldsymbol #1}}
\newcommand{\bb}[1]{{\mathbb #1}}

\usepackage[notcite,notref]{showkeys}

\newcommand{\<}{\langle}
\renewcommand{\>}{\rangle}

\newcommand{\cb}[1]{{\color{blue} #1}}

\begin{document}
\title[Covergence of the Condensing SIP to coalescing BMs]{Convergence of the Condensing Symmetric Inclusion Process on the Torus in the Thermodynamical Limit to Coalescing Brownian Motions}
\author{Seonwoo Kim and Claudio Landim}
\address{S. Kim. Yonsei University, 50 Yonsei-ro, Seodaemun-gu, Seoul 03722, South Korea.}
\email{seonwookim@yonsei.ac.kr}
\address{C. Landim. IMPA, Estrada Dona Castorina 110, J. Botanico, 22460 Rio de Janeiro, Brazil and
CNRS UMR 6085, Universit\'e Rouen, Avenue de l'Universit\'e, BP.12, Technop\^ole du Madrillet, F76801 Saint-\'Etienne-du-Rouvray, France.}
\email{landim@impa.br}
\begin{abstract}
We investigate the saturation regime of the condensing symmetric inclusion
process on the discrete one-dimensional torus in the thermodynamical
limit. In this regime, the total mass concentrates on a finite number
of sites, forming condensates. 

Our main result establishes that, under appropriate scaling, the
positions of the condensates converge to a system of coalescing
Brownian motions on the continuum torus. In particular, condensates
perform diffusive motion until they meet, at which point they merge
and their masses coagulate. This provides a rigorous derivation of a
macroscopic coalescing diffusion from an underlying interacting
particle system with condensation.

The main technical difficulty arises from the complicated coalescence mechanism of two condensates of particles, whose trajectories are very difficult to track completely. The key idea is to control the coalescing time instead and prove that it is negligible compared to the time-scale of condensate movement. By combining this with precise estimates of movements without coalescence, we can prove its convergence to coalescing Brownian motions.
\end{abstract}	

\maketitle
\tableofcontents{}

\section{\label{sec1}Introduction and Main Results}

\subsection{\label{sec1.1}Condensing Symmetric Inclusion Process}

Consider the discrete one-dimensional torus
${\color{blue} \mathbb{T}_{L}} := \mathbb Z / ( L \mathbb Z )$ where $L$ is a positive integer. The $N$-particle configuration space on $\mathbb{T}_{L}$
is defined by
\[
{\color{blue} \Omega_{N}}:=\Omega_{N,L}:=\left\{ \eta\in\mathbb{N}_0^{\mathbb{T}_{L}} :
|\eta|:=\sum_{x\in\mathbb{T}_{L}}\eta_{x}=N \right\}.
\]
Here, ${\color{blue} \mathbb N_0} := \{ 0 \} \cup \mathbb N$ where
${\color{blue} \mathbb N} := \{ 1,2,3, \dots \}$. We are interested in
the regime in which both $N$ and $L$ diverge in such a way that the
particle density $N/L$ converges to a positive limit $\rho$,
corresponding to the thermodynamic limit:
\begin{equation}
N\to\infty,\qquad L=L(N)\to\infty
\qquad\text{such that}\qquad
\cb{\rho} \,:=\, \lim_{N\to\infty} \frac{N}{L(N)} >0.
\label{eq:NL-rho}
\end{equation}
The inclusion process $\color{blue} \{ \eta_N(t) \}_{t \ge 0}$ is a continuous-time
Markov process on $\Omega_{N}$ with generator
\begin{equation}
{\color{blue}\mathcal{L}_{N}}f(\eta)
=\sum_{x\in\mathbb{T}_{L}}\sum_{y\in\{x+1,x-1\}}
\theta_N \eta_{x}\bigl(d_{N}+\eta_{y}\bigr)
\bigl(f(\eta-\delta^{x}+\delta^{y})-f(\eta)\bigr).
\label{eq:inc-gen}
\end{equation}
Here, $\delta^{z}$, $z\in \mathbb{T}_{L}$, represents the
configuration with a single particle at $z$, and ${\color{blue} d_{N}}>0$ is a parameter
controlling the diffusivity of the system. 
In addition, the time-scale $\theta_N$ multiplying the jump rates is defined as
\begin{equation}
{\color{blue} \theta_{N}}:=\frac{N^{2}}{d_{N}}.\label{eq:thetaN}
\end{equation}
We assume that the process
lies in the \emph{condensing regime} (see Remark~\ref{rem:cond-reg} for a literal reasoning),
namely that $d_{N}$ decays sufficiently fast as $N\to\infty$:
\begin{equation}
\lim_{N\to\infty} d_{N} N^{3} \log N = 0.
\label{eq:dN-cond}
\end{equation}
We denote by ${\color{blue} r_{N}}(\cdot,\cdot)$ the corresponding transition rates
of the inclusion process.
Denote by $\color{blue}\mathbb{P}_{\eta}^{N}$ the law on the c\`adl\`ag space
$D([0,\infty);\Omega_{N})$ of the process starting from
$\eta\in\Omega_{N}$.

The inclusion process was originally introduced as a dual process of
Brownian energy diffusions \cite{GKR07,GKRV09}, its name is due to an
algebraic similarity to the well-known classical exclusion process. It
was shown in \cite{GRV11} that, when $d_N$ decays as $N \to \infty$,
the system exhibits a \emph{condensation} phenomenon, i.e., above a
critical density a macroscopic fraction of the particles condensate on
a single site.

The long-time dynamical behavior of the condensate was first studied
in \cite{GRV13} for dynamics on a fixed finite graph with symmetric
jumps. The work sparked interest within the metastability community to
study the dynamical behavior under more general settings. The program
was resolved completely in a series of works \cite{BDG17, Kim21,
Kim25} for the reversible case, and partially studied in \cite{KS21}
for the most general non-reversible case. It is worth mentioning that
the non-reversible case is very difficult to study due to the absence
of an explicit formula for the stationary distribution.

It was predicted in \cite{CCG14} that, starting from a uniformly
distributed initial configuration, the symmetric inclusion process
exhibits a four step dynamical condensation:
\begin{enumerate}
\item \textit{nucleation regime}: all occupied sites become isolated in a time of order $\Theta(\log N)$;
\item \textit{coarsening regime}: the clusters merge with each other to sizes of order $\Theta(N)$ in a time of order $\Theta(\frac1{d_N})$;
\item \textit{saturation regime}: the remaining finite number of
clusters of size $\Theta(N)$ merge into a single condensate in a time
of order $\Theta(\frac{N^2}{d_N})$;
\item \textit{stationary regime}: the single condensate roams around the lattice in the same time-scale.
\end{enumerate}

The present paper aims to study the saturation regime of the symmetric
inclusion process in the thermodynamic limit $L\to\infty$,
$N/L\to \rho>0$. More precisely, we assume that the system starts from
an initial configuration with fixed $k \in \mathbb N$ number of
condensates, and prove that in the time-scale of order
$\theta_N = \frac{N^2}{d_N}$ (cf. \eqref{eq:thetaN}), the merging
mechanism of $k$ condensates is well approximated by coalescing
Brownian motions of $k$ points, each point carrying the mass
information of the corresponding initial condensate along its
trajectory. This will be mathematically formulated in the remainder of
this section. We refer to Remark \ref{rem:prop} for further
explanations.

\begin{rem}
The scale $\theta_N$ in \eqref{eq:thetaN} is obtained from the microscopic metastable time-scale $\frac1{d_N}$ (cf. \cite{GRV13,BDG17,Kim21,Kim25}) multiplied by the diffusive scaling factor $N^2$.
\end{rem}

\subsection{\label{sec1.2}Condensed Configurations and Trace Process}

As explained in the last subsection, we fix a positive integer $k \ge 1$ and
focus on initial configurations consisting of $k$ condensates. For each $\ell \in \llbracket 1,k \rrbracket$, define
\[
{\color{blue}{\bf N}_{N}^{\ell}}
:=\left\{ \bm{n}=(n_{1},\dots,n_{\ell})\in\mathbb{N}^{\ell} :
n_{1}+\cdots+n_{\ell}=N \right\}.
\]
Here, $\llbracket a,b \rrbracket := [a,b] \cap \mathbb Z$.

\begin{defn}\label{def:IL-ILhat}
An element $\bm x = (x_1,\dots,x_\ell) \in \mathbb T_L^\ell$ is \emph{ordered} (resp. \emph{weakly ordered}) if
\[
0 < x_2 -x_1 < \cdots < x_\ell -x_1 < L \qquad (\text{resp.} \quad 0 \le x_2 -x_1 \le \cdots \le x_\ell -x_1 < L ),
\]
by recognizing each element $x_i - x_1$, $i \in \llbracket 1,\ell \rrbracket$, in the representative set $\llbracket 0, L-1 \rrbracket$.
In addition, $\bm{x}\in\mathbb{T}_{L}^{\ell}$ is \emph{isolated} if
any two sites in $\bm x$ are either equal or at least distance two away from each other. For instance, if $L=5$ then $(3,3,0)$ is a weakly ordered isolated element, whereas $(1,3,0)$ is ordered but not isolated.
Denote by $\color{blue} \mathbb I_L^\ell$ (resp. $\color{blue} \widehat{\mathbb I}_L^\ell$) the set of all ordered (resp. weakly ordered) isolated elements of $\mathbb T_L^\ell$.
\end{defn}

For $\bm{x}=(x_{1},\dots,x_{\ell})\in\mathbb I_{L}^{\ell}$ and
$\bm{n}=(n_{1},\dots,n_{\ell})\in{\bf N}_{N}^{\ell}$, define a
configuration $\color{blue}\xi_{\bm{n}}^{\bm{x}}\in\Omega_{N}$ by
\begin{equation}\label{eq:xinx}
\xi_{\bm{n}}^{\bm{x}}
:=\sum_{i=1}^{\ell} n_{i}\,\delta^{x_{i}},
\end{equation}
where $n_{i}\,\delta^{x_{i}}$ denotes the configuration with $n_i$
particles at site $x_{i}$ and no particles elsewhere.
Then, define
\begin{equation}
{\color{blue}\mathcal{E}_{\bm n}^{\bm x}}
:= \{\xi_{\bm{n}}^{\bm{x}}\},
\qquad
{\color{blue}\mathcal{E}_{N}^{\ell}}
:=\bigcup_{\bm{x}\in\mathbb{I}_{L}^{\ell}}
\bigcup_{\bm{n}\in{\bf N}_{N}^{\ell}}
\mathcal E_{\bm n}^{\bm x},
\qquad
{\color{blue}\mathcal{E}_{N}}
:=\bigcup_{\ell=1}^{k}\mathcal{E}_{N}^{\ell}\,,
\label{eq:EN}
\end{equation}
where, recall, $k$ represents the initial number of condensates. For each $\xi_{\bm n}^{\bm x} \in \mathcal E_N$, we call each stack of $n_i$ particles at $x_i$ a \emph{condensate} of $\xi_{\bm n}^{\bm x}$. In words, $\mathcal E_N$ collects the condensed configurations with at most $k$ isolated condensates of particles.

Our first result shows that the process stays in $\mathcal E_N$ within any finite time window with high probability.\footnote{An event happens \emph{with high probability}, or \emph{w.h.p.}, if the probability of its occurrence converges to $1$ as $N \to \infty$.} Denote by $\color{blue} {\bf 1}_A$ or $\color{blue}{\bf 1}\{A\}$ the indicator function of set/event $A$.

\begin{thm}\label{thm1}
For any $T>0$,\footnote{In this paper, ${\rm E}^{Q}$ denotes the expectation corresponding to the law $Q$.}
\[
\lim_{N\to\infty} \sup_{\xi \in \mathcal E_N} {\rm E}^{\mathbb{P}_{\xi}^{N}}\left[\int_{0}^{T}{\bf 1}\{ \eta_N(t) \in\Omega_{N}\setminus\mathcal{E}_{N}\}\,{\rm d}t\right]=0.
\]
\end{thm}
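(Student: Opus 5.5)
We will prove Theorem~\ref{thm1} by a comparison of time scales. Starting from any $\xi\in\mathcal E_N$, the process leaves $\mathcal E_N$ only through a move whose total rate is of order $\theta_N d_N\cdot N = N^3$: a single particle detaches from a condensate of size $n_i$ at rate $\theta_N n_i d_N\le 2\theta_N N d_N$. Once the configuration is not in $\mathcal E_N$, the mass outside the condensates is tiny, and inclusion-driven moves at rate $\theta_N\eta_x\eta_y\gtrsim\theta_N$ quickly return it to $\mathcal E_N$. So the plan is to bound separately the number of excursions out of $\mathcal E_N$ in $[0,T]$ and the mean duration of a single excursion, and to show that their product vanishes.

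\textbf{Step 1 (excursion count).} From $\xi^{\bm x}_{\bm n}$ the escape rate is at most $2\theta_N d_N N=2N^3$. Hence, up to time $T$, the expected number of excursions is at most $2N^3 T$, provided we also control the entrance states, which we show are essentially always of the form $\xi-\delta^{x_i}+\delta^{x_i\pm1}$. Excursions to configurations not in $\mathcal E_N$ that are isolated-but-adjacent, namely two condensates at distance one, have to be counted in the same way.

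\textbf{Step 2 (excursion length).} Starting from $\eta=\xi-\delta^{x}+\delta^{x+1}$, we show that the expected time to hit $\mathcal E_N$ is $O(1/(\theta_N N))=O(d_N/N^3)$ up to logarithmic factors. The lone particle at $x+1$ jumps back to the condensate at rate $\theta_N(d_N+n-1)\approx\theta_N N$, while competing moves have rate $O(\theta_N d_N N)$. The latter comprise the particle wandering away (rate $\theta_N d_N$) and further detachments (rate $\theta_N d_N N$). Combining these, the return time is $\approx 1/(\theta_N N)=d_N/N^3$ with probability $1-O(d_N)$. Multiplying by the count from Step~1 gives an occupation time of order $N^3T\cdot d_N/N^3=Td_N\to0$.

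\textbf{Step 3 (main obstacle).} We must handle the rare events in which the excursion is long. In one, a detached particle walks away at rate $\theta_N d_N$ and then performs a symmetric walk whose rate is $\theta_N d_N$ per step until it reaches another condensate or returns; in the other, a condensate genuinely moves, splitting mass between two adjacent sites. For the first, the walk takes time up to $L^2/(\theta_N d_N)=L^2/N^2=O(1)$. The probability of walking away is $O(d_N)$ per excursion, so the expected contribution is $N^3T\cdot d_N\cdot O(1)$, which is $o(1)$ by \eqref{eq:dN-cond}. The second is exactly the condensate-motion mechanism. Here we would use a one-dimensional reduction, using the well-known potential-theoretic estimates for two adjacent sites with masses $(m,n-m)$ from the metastability literature, the gambler's ruin of the inclusion dynamics \cite{BDG17,Kim21}. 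The transit time through intermediate splittings is $O(\log N/(\theta_N))\cdot$(number of crossings), and such events occur at rate $O(\theta_N d_N)\cdot O(1/\log N)$-type. This produces the $\log N$ in \eqref{eq:dN-cond}: the total expected time is $O(T d_N N^3\log N)$-ish at worst, which vanishes by \eqref{eq:dN-cond}. We expect this step, with its uniformity in $\xi\in\mathcal E_N$ and its control of multiple simultaneous excursions among $k$ condensates handled by a union bound over $i\le k$, to be the delicate part. We would formalize it via a Lyapunov/test function $f$ on $\Omega_N\setminus\mathcal E_N$ solving $\mathcal L_Nf\le -1$ approximately, with $\sup f$ small times escape rate, and then apply Dynkin's formula.
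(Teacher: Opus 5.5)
Your architecture is viable, and it is more direct than the paper's. It has three parts: at most $2N^3T$ exits from $\mathcal E_N$ in expectation (correct, since every neighbour of an occupied site of $\xi\in\mathcal E_N$ is empty), multiplied by the length of an excursion, plus separate control of excursions that leave the local dynamics. However, the two steps that carry the weight are wrong as written.

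\textbf{Step 2.} You omit the inclusion term.
\begin{itemize}
\item From $\eta=\xi-\delta^x+\delta^{x+1}$ with $\xi_x=n$, a second particle jumps $x\to x+1$ at rate $\theta_N(n-1)(d_N+1)$. This is the same order as the return rate $\theta_N(n-1+d_N)$; only jumps onto \emph{empty} sites carry the small factor $d_N$.
\item So an excursion does not end after one jump with probability $1-O(d_N)$; that happens with probability about $1/2$. In general, $\eta_{x+1}$ performs the nearly symmetric birth--death chain \eqref{eq:coupling-1} on $\llbracket 0,n\rrbracket$, absorbed at $0$ (return) or at $n$ (the condensate has moved).
\item Splitting the mass over two adjacent sites is therefore the typical excursion, not the rare event of your Step 3, and the rates you assign to it there have no basis.
\end{itemize}
What is needed is the gambler's-ruin bound of Lemma \ref{lem:B1}, $E_1[H]\le e^{d_N(1+\log N)}\,2(1+\log M)/(\theta_N M)$, uniformly in the condensate size $M$. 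For a particle entering the gap between two condensates at distance two, you also need the fact that the occupation of the middle site is by itself a Markov chain with rates \eqref{eq:coupling-2} (Lemma \ref{lem:B4}); ``counted in the same way'' does not supply this. The same computation shows that one excursion leaves the local dynamics with probability of order $d_N\log N$, not $d_N$: particles jump to empty sites at total rate up to $2N^3$ while the chain runs for time of order $\log N/(\theta_N N)$. Condition \eqref{eq:dN-cond} is exactly ``$N^3$ excursions times $d_N\log N$''.

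\textbf{Step 3.} The long excursions cannot be bounded by a duration $O(1)$. Nor can they be handled by a supersolution $f\ge 0$ of $\mathcal L_Nf\le -1$ on $\Omega_N\setminus\mathcal E_N$ with $2N^3\sup f\to 0$. Such an $f$ dominates ${\rm E}^{\mathbb P^N_\eta}[H_{\mathcal E_N}]$, which is of order one when, say, $\eta$ has $k+1$ well-separated macroscopic condensates (the number of occupied sites then drops only through a merger). Restricting the supremum to entrance states only moves the problem to the return time after an exit from the tube, which you do not control.

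The missing device is to stop at the first tube exit $\mathfrak T_N$ of \eqref{eq:TN-def} and bound everything afterwards by $T$. Since the original process and the restricted process ${\bf P}^N$ coincide up to $\mathfrak T_N$, and exits from $\mathcal E_N$ occur at rate at most $2N^3$,
\[
{\rm E}^{\mathbb P_\xi^N}\Big[\int_0^T{\bf 1}\{\eta_N(t)\notin\mathcal E_N\}\,{\rm d}t\Big]\le T\,\mathbb P_\xi^N(\mathfrak T_N\le T)+2N^3T\,\sup_{\eta}{\rm E}^{{\bf P}_\eta^N}[\mathfrak t_1].
\]
Here the supremum is over configurations one jump away from $\mathcal E_N$, and $\mathfrak t_1$ is the absorption time of the corresponding one-dimensional chain. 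The first term vanishes by Lemma \ref{lem:tube-typ}. By Lemmas \ref{lem:B1} and \ref{lem:B4} the supremum is $O(\theta_N^{-1}\log N)$, so the second term is $O(TNd_N\log N)\to 0$.

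With these repairs your route proves the theorem in a few lines from Lemma \ref{lem:tube-typ} and the random-walk estimates. The paper goes instead through the resolvent $F_N=(\lambda-\mathcal L_N)^{-1}{\bf 1}_{\Omega_N\setminus\mathcal E_N}$. It proves $\sup_{\mathcal E_N}|F_N|\to0$ by induction on the number of condensates:
\begin{itemize}
\item on $\mathcal J_N^\ell$, by a test-function computation using reversibility with respect to $\mu_N$;
\item on $\mathcal K_N^\ell$, by the merging probability $p$ of Lemma \ref{lem:escape-p}.
\end{itemize}
The direct count needs neither the induction nor reversibility. The paper's heavier scheme pays off because the same flatness argument is what later identifies the limit of the labeled process (Theorem \ref{thm:res}).
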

We prove Theorem \ref{thm1} in Section \ref{sec3}.

\begin{rem}
\label{rem:cond-reg}
We refer to \eqref{eq:dN-cond} as the condensing regime for the following
reason. One readily checks that the measure $\mu_{N}$ defined in
\eqref{eq:muN-def} is the unique invariant measure of the system (up to
normalization). By \cite[Theorem~3.25]{KS21},
\[
\mu_{N} \left( \Omega_{N}\setminus\mathcal{E}_{N}^{1} \right)
\ll \mu_{N} \left( \mathcal{E}_{N}^{1} \right)
\]
if and only if $d_{N}N\log N\ll1$, where $a_{N}\ll b_{N}$ means
$\lim_{N\to\infty}a_{N}/b_{N}=0$. Thus, requiring $d_{N}$ to decay faster
than $1/(N\log N)$ is equivalent to the stability of fully condensed
configurations in $\mathcal{E}_{N}^{1}$ at stationarity. The
additional $N^{2}$ diffusive scale in \eqref{eq:dN-cond} is needed to
pass from the microscopic dynamics to the continuum limit.
\end{rem}

Consider the trace
of the process $\eta_N(\cdot)$ on $\mathcal{E}_{N}$
(cf.~\cite[Section~6.1]{BL10}), obtained by turning off the clock
outside the set $\mathcal{E}_{N}$. Formally, this is defined as follows. Denote by $\color{blue} T_N(t)$ the local time in $\mathcal E_N$ until time $t$:
\begin{equation}\label{eq:TNt}
T_N (t) := \int_0^t {\bf 1} \{ \eta_N(s) \in \mathcal E_N \} \, {\rm d}s.
\end{equation}
We define its (generalized) inverse as ${\color{blue} S_N(t)} := \sup \, \{ s \ge 0 : T_N (s) \le t \}$. Then, the trace process is defined as
\begin{equation}\label{eq:trace}
\eta_N^{\mathcal E_N} (t) := \eta_N (S_N (t)) \qquad \text{for all time} \quad t \ge 0.
\end{equation}
Denote by ${\color{blue} R_N} : \mathcal E_N \times \mathcal E_N \to [0,\infty)$ its transition rate function, and by $\color{blue} \mathbb Q_\xi^{N}$ the law of the trace process on $D([0,\infty);\mathcal E_N)$ starting from $\xi \in \mathcal E_N$.

\subsection{\label{sec1.3}Typical Trajectory of the Trace Process}

In this subsection, we describe typical trajectories of the trace process as $N \to \infty$. Suppose that we run our original inclusion process starting from a configuration $\xi_{\bm n}^{\bm x} \in \mathcal E_N$ where $\bm x \in \mathbb I_L^\ell$ and $\bm n \in {\bf N}_N^\ell$ for some $\ell \in \llbracket 1,k \rrbracket$. There are two types of possible jumps from $\xi_{\bm n}^{\bm x}$.
\begin{itemize}
\item \textbf{(Type A)} First, suppose that a particle jump $x_i \to x_i+1$ occurs where $x_{i+1} \ne x_i+2$. According to \eqref{eq:inc-gen} and \eqref{eq:dN-cond}, as long as there exists a neighboring pair of occupied sites, it is unlikely to observe another particle jump to an empty site. Thus after the first jump from $x_i$ to $x_i+1$, only jumps of the $n_i$ particles between $x_i$ and $x_i+1$ are expected until either one of them becomes empty. This heuristic indicates that the first return to $\mathcal E_N$ should typically be either to $\xi_{\bm n}^{\bm x}$ or $\xi_{\bm n}^{\bm x + \bm e_i} = \xi_{\bm n}^{\bm x} - n_i \delta^{x_i} + n_i \delta^{x_i+1}$. Here, $\color{blue} \bm e_i$ denotes the unit vector on the $i$-th coordinate.
The same logic applies if the first jump occurs from $x_i$ to $x_i-1$ provided $x_{i-1} \ne x_i -2$, in which case the first return to $\mathcal E_N$ should typically be either to $\xi_{\bm n}^{\bm x}$ or $\xi_{\bm n}^{\bm x - \bm e_i}$. From the perspective of the trace process trajectory, this means that typically the condensate at $x_i$ may move to its right or left, given that the resulting configuration is still in $\mathcal E_N$.
\item \textbf{(Type B)} Otherwise, suppose that a jump $x_i \to x_i+1$ occurs where $x_{i+1} = x_i+2$. In this case, we are in a situation where there are $n_i+n_{i+1}$ particles on three consecutive occupied sites $x_i , x_i+1 ,x_i+2$ (or just two sites $x_i+1,x_i+2$ if originally there was only one particle at $x_i$), and then using the same logic, the return to $\mathcal E_N$ should occur at a configuration $\zeta$ such that: $\zeta_y = (\xi_{\bm n}^{\bm x})_y$ for any $y \notin \{x_i ,x_i+1,x_i+2 \}$, and either
\begin{equation}\label{eq:EN-nbd-1}
\begin{cases}
(\zeta_{x_i},\zeta_{x_i+1},\zeta_{x_i+2}) = (0, n_i + n_{i+1} , 0) & \text{or} \\
(\zeta_{x_i},\zeta_{x_i+1},\zeta_{x_i+2}) = (m,0, n_i + n_{i+1} - m) & \text{for some} \enspace m \in \llbracket 0 , n_i + n_{i+1} \rrbracket.
\end{cases}
\end{equation}
A similar characterization holds as well if a jump $x_i \to x_i-1$ occurs where $x_{i-1} = x_i - 2$. In this case, the process returns to $\mathcal E_N$ at a configuration $\zeta$ such that $\zeta_y = (\xi_{\bm n}^{\bm x})_y$ for any $y \notin \{x_i ,x_i-1,x_i-2 \}$, and either
\begin{equation}\label{eq:EN-nbd-2}
\begin{cases}
(\zeta_{x_i},\zeta_{x_i-1},\zeta_{x_i-2}) = (0, n_i + n_{i-1} , 0) & \text{or} \\
(\zeta_{x_i},\zeta_{x_i-1},\zeta_{x_i-2}) = (m,0, n_i + n_{i-1} - m) & \text{for some} \enspace m \in \llbracket 0 , n_i + n_{i-1} \rrbracket.
\end{cases}
\end{equation}
From the trace process perspective, two condensates at distance $2$ may either exchange masses or merge into a singe condensate at any of the three relevant sites.
\end{itemize}

\begin{figure}
\begin{tikzpicture}[scale=0.6]
\draw[ thick,<-] (2.55,0.45)--(3.45,0.45); \draw[ thick,<-] (2.55,3)--(3.45,2.4); \draw[ thick,<-] (2.55,-2.1)--(3.34,-1.5);
\draw[ thick,->] (8.55,0.45)--(9.45,0.45); \draw[ thick,->] (8.55,2.4)--(9.45,3); \draw[ thick,->] (8.55,-1.5)--(9.45,-2.1);

\draw[] (-2.25,-1.3) rectangle (2.25,2.2);
\draw[] (0,0) ellipse (2 and 1);
\foreach \i in {0,...,15} { \draw[thick] ({2*cos(\i*22.5)},{1*sin(\i*22.5)-0.1})--({2*cos(\i*22.5)},{1*sin(\i*22.5)}); }

\foreach \i in {2} { \foreach \j in {0,...,3} { \draw[very thick,blue] ({2*cos(\i*22.5)},{1*sin(\i*22.5)-0.2})--({2*cos(\i*22.5)},{1*sin(\i*22.5)});
\fill[blue!50!white] ({2*cos(\i*22.5)},{1*sin(\i*22.5)+0.15+0.3*\j}) circle (0.15); \draw[blue] ({2*cos(\i*22.5)},{1*sin(\i*22.5)+0.15+0.3*\j}) circle (0.15); } }
\foreach \i in {6} { \foreach \j in {0,...,2} { \draw[very thick,teal] ({2*cos(\i*22.5)},{1*sin(\i*22.5)-0.2})--({2*cos(\i*22.5)},{1*sin(\i*22.5)});
\fill[teal!50!white] ({2*cos(\i*22.5)},{1*sin(\i*22.5)+0.15+0.3*\j}) circle (0.15); \draw[teal] ({2*cos(\i*22.5)},{1*sin(\i*22.5)+0.15+0.3*\j}) circle (0.15); } }
\foreach \i in {11} { \foreach \j in {0,...,4} { \draw[very thick,red] ({2*cos(\i*22.5)},{1*sin(\i*22.5)-0.2})--({2*cos(\i*22.5)},{1*sin(\i*22.5)});
\fill[red!50!white] ({2*cos(\i*22.5)},{1*sin(\i*22.5)+0.15+0.3*\j}) circle (0.15); \draw[red] ({2*cos(\i*22.5)},{1*sin(\i*22.5)+0.15+0.3*\j}) circle (0.15); } }
\foreach \i in {13} { \foreach \j in {0,...,3} { \draw[very thick,red] ({2*cos(\i*22.5)},{1*sin(\i*22.5)-0.2})--({2*cos(\i*22.5)},{1*sin(\i*22.5)});
\fill[red!50!white] ({2*cos(\i*22.5)},{1*sin(\i*22.5)+0.15+0.3*\j}) circle (0.15); \draw[red] ({2*cos(\i*22.5)},{1*sin(\i*22.5)+0.15+0.3*\j}) circle (0.15); } }

\begin{scope}[shift={(0,-4.5)}]
\draw[] (-2.25,-1.3) rectangle (2.25,2.2);
\draw[] (0,0) ellipse (2 and 1);
\foreach \i in {0,...,15} { \draw[thick] ({2*cos(\i*22.5)},{1*sin(\i*22.5)-0.1})--({2*cos(\i*22.5)},{1*sin(\i*22.5)}); }

\foreach \i in {1} { \foreach \j in {0,...,3} { \draw[very thick,blue] ({2*cos(\i*22.5)},{1*sin(\i*22.5)-0.2})--({2*cos(\i*22.5)},{1*sin(\i*22.5)});
\fill[blue!50!white] ({2*cos(\i*22.5)},{1*sin(\i*22.5)+0.15+0.3*\j}) circle (0.15); \draw[blue] ({2*cos(\i*22.5)},{1*sin(\i*22.5)+0.15+0.3*\j}) circle (0.15); } }
\foreach \i in {7} { \foreach \j in {0,...,2} { \draw[very thick,teal] ({2*cos(\i*22.5)},{1*sin(\i*22.5)-0.2})--({2*cos(\i*22.5)},{1*sin(\i*22.5)});
\fill[teal!50!white] ({2*cos(\i*22.5)},{1*sin(\i*22.5)+0.15+0.3*\j}) circle (0.15); \draw[teal] ({2*cos(\i*22.5)},{1*sin(\i*22.5)+0.15+0.3*\j}) circle (0.15); } }
\foreach \i in {11} { \foreach \j in {0,...,4} { \draw[very thick,red] ({2*cos(\i*22.5)},{1*sin(\i*22.5)-0.2})--({2*cos(\i*22.5)},{1*sin(\i*22.5)});
\fill[red!50!white] ({2*cos(\i*22.5)},{1*sin(\i*22.5)+0.15+0.3*\j}) circle (0.15); \draw[red] ({2*cos(\i*22.5)},{1*sin(\i*22.5)+0.15+0.3*\j}) circle (0.15); } }
\foreach \i in {13} { \foreach \j in {0,...,3} { \draw[very thick,red] ({2*cos(\i*22.5)},{1*sin(\i*22.5)-0.2})--({2*cos(\i*22.5)},{1*sin(\i*22.5)});
\fill[red!50!white] ({2*cos(\i*22.5)},{1*sin(\i*22.5)+0.15+0.3*\j}) circle (0.15); \draw[red] ({2*cos(\i*22.5)},{1*sin(\i*22.5)+0.15+0.3*\j}) circle (0.15); } }
\end{scope}

\begin{scope}[shift={(0,4.5)}]
\draw[] (-2.25,-1.3) rectangle (2.25,2.2);
\draw[] (0,0) ellipse (2 and 1);
\foreach \i in {0,...,15} { \draw[thick] ({2*cos(\i*22.5)},{1*sin(\i*22.5)-0.1})--({2*cos(\i*22.5)},{1*sin(\i*22.5)}); }

\foreach \i in {1} { \foreach \j in {0,...,3} { \draw[very thick,blue] ({2*cos(\i*22.5)},{1*sin(\i*22.5)-0.2})--({2*cos(\i*22.5)},{1*sin(\i*22.5)});
\fill[blue!50!white] ({2*cos(\i*22.5)},{1*sin(\i*22.5)+0.15+0.3*\j}) circle (0.15); \draw[blue] ({2*cos(\i*22.5)},{1*sin(\i*22.5)+0.15+0.3*\j}) circle (0.15); } }
\foreach \i in {6} { \foreach \j in {0,...,2} { \draw[very thick,teal] ({2*cos(\i*22.5)},{1*sin(\i*22.5)-0.2})--({2*cos(\i*22.5)},{1*sin(\i*22.5)});
\fill[teal!50!white] ({2*cos(\i*22.5)},{1*sin(\i*22.5)+0.15+0.3*\j}) circle (0.15); \draw[teal] ({2*cos(\i*22.5)},{1*sin(\i*22.5)+0.15+0.3*\j}) circle (0.15); } }
\foreach \i in {11} { \foreach \j in {0,...,4} { \draw[very thick,red] ({2*cos(\i*22.5)},{1*sin(\i*22.5)-0.2})--({2*cos(\i*22.5)},{1*sin(\i*22.5)});
\fill[red!50!white] ({2*cos(\i*22.5)},{1*sin(\i*22.5)+0.15+0.3*\j}) circle (0.15); \draw[red] ({2*cos(\i*22.5)},{1*sin(\i*22.5)+0.15+0.3*\j}) circle (0.15); } }
\foreach \i in {14} { \foreach \j in {0,...,3} { \draw[very thick,red] ({2*cos(\i*22.5)},{1*sin(\i*22.5)-0.2})--({2*cos(\i*22.5)},{1*sin(\i*22.5)});
\fill[red!50!white] ({2*cos(\i*22.5)},{1*sin(\i*22.5)+0.15+0.3*\j}) circle (0.15); \draw[red] ({2*cos(\i*22.5)},{1*sin(\i*22.5)+0.15+0.3*\j}) circle (0.15); } }
\end{scope}

\begin{scope}[shift={(6,0)}]
\draw[] (-2.25,-1.3) rectangle (2.25,2.2);
\draw[] (0,0) ellipse (2 and 1);
\foreach \i in {0,...,15} { \draw[thick] ({2*cos(\i*22.5)},{1*sin(\i*22.5)-0.1})--({2*cos(\i*22.5)},{1*sin(\i*22.5)}); }

\foreach \i in {1} { \foreach \j in {0,...,3} { \draw[very thick,blue] ({2*cos(\i*22.5)},{1*sin(\i*22.5)-0.2})--({2*cos(\i*22.5)},{1*sin(\i*22.5)});
\fill[blue!50!white] ({2*cos(\i*22.5)},{1*sin(\i*22.5)+0.15+0.3*\j}) circle (0.15); \draw[blue] ({2*cos(\i*22.5)},{1*sin(\i*22.5)+0.15+0.3*\j}) circle (0.15); } }
\foreach \i in {6} { \foreach \j in {0,...,2} { \draw[very thick,teal] ({2*cos(\i*22.5)},{1*sin(\i*22.5)-0.2})--({2*cos(\i*22.5)},{1*sin(\i*22.5)});
\fill[teal!50!white] ({2*cos(\i*22.5)},{1*sin(\i*22.5)+0.15+0.3*\j}) circle (0.15); \draw[teal] ({2*cos(\i*22.5)},{1*sin(\i*22.5)+0.15+0.3*\j}) circle (0.15); } }
\foreach \i in {11} { \foreach \j in {0,...,4} { \draw[very thick,red] ({2*cos(\i*22.5)},{1*sin(\i*22.5)-0.2})--({2*cos(\i*22.5)},{1*sin(\i*22.5)});
\fill[red!50!white] ({2*cos(\i*22.5)},{1*sin(\i*22.5)+0.15+0.3*\j}) circle (0.15); \draw[red] ({2*cos(\i*22.5)},{1*sin(\i*22.5)+0.15+0.3*\j}) circle (0.15); } }
\foreach \i in {13} { \foreach \j in {0,...,3} { \draw[very thick,red] ({2*cos(\i*22.5)},{1*sin(\i*22.5)-0.2})--({2*cos(\i*22.5)},{1*sin(\i*22.5)});
\fill[red!50!white] ({2*cos(\i*22.5)},{1*sin(\i*22.5)+0.15+0.3*\j}) circle (0.15); \draw[red] ({2*cos(\i*22.5)},{1*sin(\i*22.5)+0.15+0.3*\j}) circle (0.15); } }
\draw (0,-1.4) node[below]{$\xi \in \mathcal E_N$};
\end{scope}

\begin{scope}[shift={(12,-4.5)}]
\draw[] (-2.25,-1.3) rectangle (2.25,2.2);
\draw[] (0,0) ellipse (2 and 1);
\foreach \i in {0,...,15} { \draw[thick] ({2*cos(\i*22.5)},{1*sin(\i*22.5)-0.1})--({2*cos(\i*22.5)},{1*sin(\i*22.5)}); }

\foreach \i in {1} { \foreach \j in {0,...,3} { \draw[very thick,blue] ({2*cos(\i*22.5)},{1*sin(\i*22.5)-0.2})--({2*cos(\i*22.5)},{1*sin(\i*22.5)});
\fill[blue!50!white] ({2*cos(\i*22.5)},{1*sin(\i*22.5)+0.15+0.3*\j}) circle (0.15); \draw[blue] ({2*cos(\i*22.5)},{1*sin(\i*22.5)+0.15+0.3*\j}) circle (0.15); } }
\foreach \i in {6} { \foreach \j in {0,...,2} { \draw[very thick,teal] ({2*cos(\i*22.5)},{1*sin(\i*22.5)-0.2})--({2*cos(\i*22.5)},{1*sin(\i*22.5)});
\fill[teal!50!white] ({2*cos(\i*22.5)},{1*sin(\i*22.5)+0.15+0.3*\j}) circle (0.15); \draw[teal] ({2*cos(\i*22.5)},{1*sin(\i*22.5)+0.15+0.3*\j}) circle (0.15); } }
\foreach \i in {11} { \foreach \j in {0,...,8} { \draw[very thick,red] ({2*cos(\i*22.5)},{1*sin(\i*22.5)-0.2})--({2*cos(\i*22.5)},{1*sin(\i*22.5)});
\fill[red!50!white] ({2*cos(\i*22.5)},{1*sin(\i*22.5)+0.15+0.3*\j}) circle (0.15); \draw[red] ({2*cos(\i*22.5)},{1*sin(\i*22.5)+0.15+0.3*\j}) circle (0.15); } }
\end{scope}

\begin{scope}[shift={(12,0)}]
\draw[] (-2.25,-1.3) rectangle (2.25,2.2);
\draw[] (0,0) ellipse (2 and 1);
\foreach \i in {0,...,15} { \draw[thick] ({2*cos(\i*22.5)},{1*sin(\i*22.5)-0.1})--({2*cos(\i*22.5)},{1*sin(\i*22.5)}); }

\foreach \i in {1} { \foreach \j in {0,...,3} { \draw[very thick,blue] ({2*cos(\i*22.5)},{1*sin(\i*22.5)-0.2})--({2*cos(\i*22.5)},{1*sin(\i*22.5)});
\fill[blue!50!white] ({2*cos(\i*22.5)},{1*sin(\i*22.5)+0.15+0.3*\j}) circle (0.15); \draw[blue] ({2*cos(\i*22.5)},{1*sin(\i*22.5)+0.15+0.3*\j}) circle (0.15); } }
\foreach \i in {6} { \foreach \j in {0,...,2} { \draw[very thick,teal] ({2*cos(\i*22.5)},{1*sin(\i*22.5)-0.2})--({2*cos(\i*22.5)},{1*sin(\i*22.5)});
\fill[teal!50!white] ({2*cos(\i*22.5)},{1*sin(\i*22.5)+0.15+0.3*\j}) circle (0.15); \draw[teal] ({2*cos(\i*22.5)},{1*sin(\i*22.5)+0.15+0.3*\j}) circle (0.15); } }
\foreach \i in {12} { \foreach \j in {0,...,8} { \draw[very thick,red] ({2*cos(\i*22.5)},{1*sin(\i*22.5)-0.2})--({2*cos(\i*22.5)},{1*sin(\i*22.5)});
\fill[red!50!white] ({2*cos(\i*22.5)},{1*sin(\i*22.5)+0.15+0.3*\j}) circle (0.15); \draw[red] ({2*cos(\i*22.5)},{1*sin(\i*22.5)+0.15+0.3*\j}) circle (0.15); } }
\end{scope}

\begin{scope}[shift={(12,4.5)}]
\draw[] (-2.25,-1.3) rectangle (2.25,2.2);
\draw[] (0,0) ellipse (2 and 1);
\foreach \i in {0,...,15} { \draw[thick] ({2*cos(\i*22.5)},{1*sin(\i*22.5)-0.1})--({2*cos(\i*22.5)},{1*sin(\i*22.5)}); }

\foreach \i in {1} { \foreach \j in {0,...,3} { \draw[very thick,blue] ({2*cos(\i*22.5)},{1*sin(\i*22.5)-0.2})--({2*cos(\i*22.5)},{1*sin(\i*22.5)});
\fill[blue!50!white] ({2*cos(\i*22.5)},{1*sin(\i*22.5)+0.15+0.3*\j}) circle (0.15); \draw[blue] ({2*cos(\i*22.5)},{1*sin(\i*22.5)+0.15+0.3*\j}) circle (0.15); } }
\foreach \i in {6} { \foreach \j in {0,...,2} { \draw[very thick,teal] ({2*cos(\i*22.5)},{1*sin(\i*22.5)-0.2})--({2*cos(\i*22.5)},{1*sin(\i*22.5)});
\fill[teal!50!white] ({2*cos(\i*22.5)},{1*sin(\i*22.5)+0.15+0.3*\j}) circle (0.15); \draw[teal] ({2*cos(\i*22.5)},{1*sin(\i*22.5)+0.15+0.3*\j}) circle (0.15); } }
\foreach \i in {11} { \foreach \j in {0,...,1} { \draw[very thick,red] ({2*cos(\i*22.5)},{1*sin(\i*22.5)-0.2})--({2*cos(\i*22.5)},{1*sin(\i*22.5)});
\fill[red!50!white] ({2*cos(\i*22.5)},{1*sin(\i*22.5)+0.15+0.3*\j}) circle (0.15); \draw[red] ({2*cos(\i*22.5)},{1*sin(\i*22.5)+0.15+0.3*\j}) circle (0.15); } }
\foreach \i in {13} { \foreach \j in {0,...,6} { \draw[very thick,red] ({2*cos(\i*22.5)},{1*sin(\i*22.5)-0.2})--({2*cos(\i*22.5)},{1*sin(\i*22.5)});
\fill[red!50!white] ({2*cos(\i*22.5)},{1*sin(\i*22.5)+0.15+0.3*\j}) circle (0.15); \draw[red] ({2*cos(\i*22.5)},{1*sin(\i*22.5)+0.15+0.3*\j}) circle (0.15); } }
\end{scope}
\end{tikzpicture}\caption{\label{fig1.1}Example of $\xi \in \mathcal E_N$ and its neighbor configurations in $\mathcal N(\xi)$. Left configurations are obtained by type A jumps and right configurations are obtained by type B jumps.}
\end{figure}

\begin{defn}[Neighbor configurations  in $\mathcal E_N$]\label{def:typ-jump}
For each $\xi_{\bm n}^{\bm x} \in \mathcal E_N^\ell$, define ${\color{blue} \mathcal N (\xi_{\bm n}^{\bm x})} = {\color{blue} \mathcal N_{\bm n}^{\bm x}}$ as the set which contains the following elements of $\mathcal E_N$ for each $i \in \llbracket 1,\ell \rrbracket$:
\begin{itemize}
\item if $x_{i+1} \ne x_i + 2$, then add $\xi_{\bm n}^{\bm x + \bm e_i}$;
\item if $x_{i-1} \ne x_i - 2$, then add $\xi_{\bm n}^{\bm x - \bm e_i}$;
\item if $x_{i+1} = x_i + 2$, then add all $\zeta \in \mathcal E_N \setminus \{\xi_{\bm n}^{\bm x} \}$ that satisfies \eqref{eq:EN-nbd-1}; 
\item if $x_{i-1} = x_i - 2$, then add all $\zeta \in \mathcal E_N \setminus \{\xi_{\bm n}^{\bm x} \}$ that satisfies \eqref{eq:EN-nbd-2}.
\end{itemize}
See Figure \ref{fig1.1}. Recall that $R_N(\cdot,\cdot)$ denotes the rate function of the trace process. Define ${\color{blue} R_N'}: \mathcal E_N \times \mathcal E_N \to [0,\infty)$ as
\[
R_N' \left( \xi,\xi' \right) := \begin{cases} R_N ( \xi, \xi' ) & \text{if} \enspace \xi \in \mathcal E_N, \enspace \xi' \in \mathcal N(\xi), \\
0 & \text{otherwise}. \end{cases}
\]
Denote by $\color{blue} \overline{\mathbb Q}_{\xi}^{N}$ the law of the process on $\mathcal E_N$ defined via $R_N'(\cdot,\cdot)$ starting from $\xi \in \mathcal E_N$.
\end{defn}

Then within any finite time window, the two laws $\mathbb Q_\xi^N$ and $\overline{\mathbb Q}_\xi^N$ can be coupled with high probability; i.e., the trace process jumps only to its neighbors with high probability, in the following sense. Denote by $\color{blue} \tau_N$ the first (random) time such that a jump $\xi \to \zeta$ occurs in the trace process with $\zeta \notin \mathcal N (\xi)$. More precisely, define
%\[
%\mathfrak a_N(t) := \sum_{s \in (0,t]} \sum_{\xi \in \mathcal E_N } \sum_{\zeta \in \mathcal E_N \setminus \mathcal N(\xi) } {\bf 1} \left\{ \eta_N^{\mathcal E_N}(s-) = \xi , \enspace \eta_N^{\mathcal E_N}(s) = \zeta \right\}, \qquad t \ge 0,
%\]
%where the summation in $s \in (0,t]$ is well defined a.s. since each trajectory is a.s. a c\`adl\`ag path. Then, define
%\[
%\tau_N := \inf \{ t>0 : \mathfrak a_N(t) = 1 \}.
%\]
\[
\tau_N := \inf \left\{ t>0 :  \exists \, \xi \in \mathcal E_N \quad \text{such that} \quad
\eta_N^{\mathcal E_N}(t-) = \xi, \quad
\eta_N^{\mathcal E_N}(t) \in \mathcal E_N \setminus \mathcal N(\xi) \right\}.
\]
Clearly, $\tau_N < \infty$ almost surely.

\begin{thm}\label{thm2}
For each fixed $T>0$,
\[
\lim_{N \to \infty} \sup_{\xi \in \mathcal E_N} \mathbb Q_{\xi}^N ( \tau_N \le T) = 0.
\]
\end{thm}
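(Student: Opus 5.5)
We split $\{\tau_N\le T\}$ according to whether many trace jumps happen before $T$, and bound the probability of a single atypical jump by an excursion estimate for the original process. Fix $\xi=\xi_{\bm n}^{\bm x}\in\mathcal E_N$. By the construction of the trace process (cf.~\cite[Section~6.1]{BL10}), $R_N(\xi,\zeta)=\lambda_N(\xi)\,\mathbb P^N_\xi[\eta_N(H^+_{\mathcal E_N})=\zeta]$, where $\lambda_N(\xi)=\sum_{\eta} r_N(\xi,\eta)$ is the holding rate of $\eta_N$ at $\xi$ and $H^+_{\mathcal E_N}$ is the return time to $\mathcal E_N$. Every jump from $\xi$ moves a particle from an occupied site to an empty one, so $\lambda_N(\xi)=2\theta_N d_N N=2N^3$. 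It therefore suffices to establish two bounds:
\begin{enumerate}
\item[(i)] $p_N:=\sup_{\xi\in\mathcal E_N}\mathbb P^N_\xi\bigl[\eta_N(H^+_{\mathcal E_N})\notin\mathcal N(\xi)\cup\{\xi\}\bigr]\le \epsilon_N/N^3$ with $\epsilon_N\to0$;
\item[(ii)] $\sup_\xi\mathbb Q^N_\xi[\text{number of excursions started before }T\ge KN^3]\to0$ as $K\to\infty$, uniformly in $N$.
\end{enumerate}
Indeed, excursions from $\mathcal E_N$ are launched at total rate $2N^3$ in trace time, so the number launched before trace time $T$ is dominated by a Poisson variable of mean $2N^3T$, which gives (ii). By the strong Markov property at successive returns, $\mathbb Q^N_\xi(\tau_N\le T)\le P(\mathrm{Poisson}(2N^3T)\ge KN^3)+KN^3p_N$. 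Letting $N\to\infty$ and then $K\to\infty$ yields the theorem.

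For (i), we follow the excursion started by the first particle jump. Take Type A first, where a particle goes from $x_i$ to $x_i+1$ with no condensate at $x_i+2$. Until one of the two sites empties, the configuration has a block $(a,b)$ with $a+b=n_i$ on $\{x_i,x_i+1\}$, plus the other condensates. The internal jump rate between the block sites is $\theta_N ab$. The only way to leave the desired outcomes is a jump to an empty site, or the occupation of a site at distance $\le1$ from another condensate. Such jumps come from the $\le 2k$ occupied boundary sites and have rate $\theta_N d_N\,(\eta_x)$, summed at most $\theta_N d_N N=N^3$. The block dynamics $a\mapsto a\pm1$ is a birth–death chain with rates $\theta_N a b$. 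Its expected holding time before absorption at $a\in\{0,n_i\}$, starting from $a=n_i-1$, is bounded by $\sum_{a=1}^{n_i-1}\frac{C}{\theta_N a(n_i-a)}\cdot(\text{Green function})$. The Green function from the state adjacent to the boundary is at most $O(1)$ near the ends, and the total expected time is $O(\log N/\theta_N)$; a harmonic-sum computation gives $\frac{1}{\theta_N}\sum_a \frac{\min(a,n-a)}{n\,a(n-a)}\cdot n\lesssim \frac{\log N}{\theta_N}$. Hence the probability of an escape jump during the excursion is at most
\[
N^3\cdot C\log N/\theta_N = C d_N N\log N,
\]
and we need this to be $\ll N^{-3}$, i.e.\ exactly \eqref{eq:dN-cond}. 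In the degenerate case where $n_i=1$, or where an escape-type jump ends in $\mathcal E_N$ but outside $\mathcal N(\xi)$, the same estimate applies, since every non-neighbor outcome requires at least one extra $d_N$-jump. We also include the initial jump not being to a neighbor: that jump is a rate-$\theta_N d_N\eta_x$ jump and is, by definition, always of type A or B.

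Type B, with three sites $x_i,x_i+1,x_i+2$ sharing $n_i+n_{i+1}$ particles, is the main obstacle. The internal dynamics on three sites is a two-dimensional chain, and returns to $\mathcal E_N$ only at configurations listed in \eqref{eq:EN-nbd-1}. Its absorption time is harder to bound because the middle site may be reemptied and refilled repeatedly. The plan is to bound the expected occupation time of the three-site inclusion dynamics (rates $\theta_N\eta_x\eta_y$) before hitting the set where some site is empty, aiming for $O(\log N/\theta_N)$ again, possibly $O((\log N)^2/\theta_N)$. First we would try comparison with a reversible chain via the measure $\prod 1/\eta_x$, a capacity/Dirichlet-principle bound, and Green function estimates; a weaker bound would require checking that it is still compatible with \eqref{eq:dN-cond}. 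With that estimate, the escape-jump probability per excursion is again $o(N^{-3})$ uniformly in $\xi$. Combined with (ii), this concludes the proof.
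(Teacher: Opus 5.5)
Your outer reduction is correct, and in one respect cleaner than the paper's. Every configuration of $\mathcal E_N$ has holding rate $2N^3$, so excursions are launched as a Poisson process of rate $2N^3$ in \emph{trace} time. You therefore bypass Theorem~\ref{thm1} and Lemma~\ref{lem:time-change}, which the paper needs to pass from $\mathfrak T_N$ (original time, Lemma~\ref{lem:tube-typ}) to $\tau_N$.

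The gap is in estimate (i), which fails as written. In Type A, your per-excursion bound $C d_N N\log N$ is $o(N^{-3})$ only if $d_N N^4\log N\to0$. That is the suboptimal hypothesis of \cite{KS21}, not \eqref{eq:dN-cond}. For example, with $d_N=N^{-7/2}$ condition \eqref{eq:dN-cond} holds, yet $KN^3\cdot Cd_N N\log N\to\infty$. Two refinements are missing:
\begin{itemize}
\item The expected excursion duration is $O(\log N/(\theta_N n_i))$, not $O(\log N/\theta_N)$. Your own Green-function sum $\sum_a \frac{C}{\theta_N a(n_i-a)}$ gives this; you dropped the factor $1/n_i$.
\item Even with that factor, no bound uniform over the launching condensate suffices, since $n_i$ may be $O(1)$. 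You must average over the first jump, which leaves condensate $i$ with probability $n_i/N$. This gives $\sum_i \frac{n_i}{N}\cdot 2N^3\cdot \frac{C\log N}{\theta_N n_i}=O(k\,d_N\log N)=o(N^{-3})$.
\end{itemize}
This cancellation of the launch rate $r_N(\zeta,\eta)$ against the duration bound is exactly how \eqref{eq:RW-bound-1}--\eqref{eq:one-jump-bound} are organized.

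Type B is not proved, and your plan misses the idea that makes it one-dimensional. Suppose $x,y,z$ are consecutive, $x,z$ are occupied, and the first particle lands on $y$. Inside the tube, the total rate into $y$ is $\theta_N(\eta_x+\eta_z)(\eta_y+d_N)$ and the total rate out of $y$ is $\theta_N\eta_y(\eta_x+\eta_z+2d_N)$. Since $\eta_x+\eta_z=M-\eta_y$ with $M=n+n'$, the occupation $\eta_y$ is by itself the birth--death chain \eqref{eq:coupling-2}. The excursion ends exactly when this chain hits $\{0,M\}$. Lemma~\ref{lem:B4} then gives duration $O(\log N/(\theta_N n))$, and weighting by the launch rate $\theta_N d_N n$ again yields $O(d_N\log N)$ per excursion.

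Your proposed stopping set, ``some site is empty'', is also wrong. If only $x$ (or only $z$) empties, $y$ and the other outer site are still adjacent, so the configuration lies outside $\mathcal E_N$ and the excursion continues. Conversely, the excursion ends as soon as $\eta_y=0$, because $(m,0,M-m)$ on $x,y,z$ lies in $\mathcal E_N$. So the middle site is never re-emptied and refilled within one excursion; later refills are new excursions, already counted by your Poisson bound.
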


Theorem \ref{thm2} will be proved in Section \ref{sec4}.

\subsection{\label{sec1.4}Labeled Trace Process}

To track the movement of each condensate in the trace process, we would like to label them from $1$ to $k$ such that each label follows its corresponding condensate along the process. This cannot be done in a canonical way since there exists a small probability (tending to $0$) that a series of atypical transitions occur and trigger the condensates to evolve in a non-trackable way.\footnote{For instance, suppose that $L=6$ and the process starts from $(3,0,4,0,0,0) \in \mathcal E_N$. If the process visits \[ \mathcal E_N \ni (\bm3,0,\bm4,0,0,0) \to (2,1,4,0,0,0) \to (2,1,3,1,0,0) \to (2,1,3,0,1,0) \to (\bm2,0,\bm4,0,\bm1,0) \in \mathcal E_N,\] respectively, then a new condensate emerges at the fifth site out of nowhere. Notice that the second and third jumps above happen with negligible probability.}
However, if we restrict ourselves in the situation that only the jumps to neighbors occur, as described in Definition \ref{def:typ-jump}, then we may preserve the order of labels in a way that the labels follow the correct condensates along the dynamics.

Precisely, recall $\widehat{\mathbb I}_L^k$ from Definition \ref{def:IL-ILhat} and define a labeled trace set $\color{blue} \widehat{\mathcal E}_N$ as
\begin{equation}\label{eq:ENhat}
\widehat{\mathcal E}_N := \left\{ {\color{blue} \widehat\xi_{\bm n}^{\bm x}} := (\bm x,\bm n) \in \widehat{\mathbb I}_L^k \times \mathbb N^k : n_i = n_j \quad \text{if} \enspace x_i = x_j , \quad \sum_{ i \in \mathscr A_{\bm x }} n_i = N \right\} ,
\end{equation}
where the collection ${\color{blue} \mathscr{A}_{\bm x}} \subset \llbracket 1,k \rrbracket$ is defined as any maximal collection of $i \in \llbracket 1,k \rrbracket$ such that $x_i$, $i \in \mathscr A_{\bm x}$ are all different. Any choice of $\mathscr A_{\bm x}$ does not alter the definition since $n_i = n_j$ if $x_i = x_j$. One can easily notice that $\mathbb I_L^k \times {\bf N}_N^k \subset \widehat{\mathcal E}_N$. Each element $\widehat\xi_{\bm n}^{\bm x} \in \widehat{\mathcal E}_N$ contains the information of \emph{ordered} positions and sizes of the $k$ initial condensates. If two condensates coalesce, their positions become equal and their sizes are updated to be the same merged size.

For each $\widehat\xi_{\bm n}^{\bm x} \in \widehat{\mathcal E}_N$, define ${\color{blue} \Psi_N}(\widehat\xi_{\bm n}^{\bm x}) \in \mathcal E_N$ as
\begin{equation}\label{eq:PsiN}
\Psi_N (\widehat\xi_{\bm n}^{\bm x}) := \sum_{i \in \mathscr A_{\bm x}} n_{i}\,\delta^{x_{i}} .
\end{equation}
Indeed, $\Psi_N(\widehat\xi_{\bm n}^{\bm x}) \in \mathcal E_N$ since the elements of $\widehat{\mathbb I}_N^k$ are isolated and $\sum_{i \in \mathscr A_{\bm x}} n_i = N$. In addition, if $(\bm x,\bm n) \in \mathbb I_L^k \times {\bf N}_N^k$ then clearly $\Psi_N(\widehat\xi_{\bm n}^{\bm x}) = \xi_{\bm n}^{\bm x}$ (cf. \eqref{eq:xinx}). Thus, without any confusion, we may denote $\Psi_N(\widehat\xi_{\bm n}^{\bm x})$ as $\xi_{\bm n}^{\bm x}$.

\begin{figure}
\begin{tikzpicture}[scale=0.5]
\draw[white] (-2.7,0)--(2.7,0);
\draw[] (0,0) ellipse (2 and 1);
\foreach \i in {0,...,15} { \draw[thick] ({2*cos(\i*22.5)},{1*sin(\i*22.5)-0.1})--({2*cos(\i*22.5)},{1*sin(\i*22.5)}); }

\foreach \i in {1} { \foreach \j in {0,...,3} { \draw[very thick,blue] ({2*cos(\i*22.5)},{1*sin(\i*22.5)-0.2})--({2*cos(\i*22.5)},{1*sin(\i*22.5)});
\fill[blue!50!white] ({2*cos(\i*22.5)},{1*sin(\i*22.5)+0.15+0.3*\j}) circle (0.15); \draw[blue] ({2*cos(\i*22.5)},{1*sin(\i*22.5)+0.15+0.3*\j}) circle (0.15); }
\draw ({2*cos(\i*22.5)},{1*sin(\i*22.5)-0.15}) node[below]{\tiny \color{blue} $\bm{x_4},\bm{x_5}$}; }
\foreach \i in {6} { \foreach \j in {0,...,2} { \draw[very thick,teal] ({2*cos(\i*22.5)},{1*sin(\i*22.5)-0.2})--({2*cos(\i*22.5)},{1*sin(\i*22.5)});
\fill[teal!50!white] ({2*cos(\i*22.5)},{1*sin(\i*22.5)+0.15+0.3*\j}) circle (0.15); \draw[teal] ({2*cos(\i*22.5)},{1*sin(\i*22.5)+0.15+0.3*\j}) circle (0.15); }
\draw ({2*cos(\i*22.5)},{1*sin(\i*22.5)-0.15}) node[below]{\tiny \color{teal} $\bm{x_6}$}; }
\foreach \i in {11} { \foreach \j in {0,...,4} { \draw[very thick,red] ({2*cos(\i*22.5)},{1*sin(\i*22.5)-0.2})--({2*cos(\i*22.5)},{1*sin(\i*22.5)});
\fill[red!50!white] ({2*cos(\i*22.5)},{1*sin(\i*22.5)+0.15+0.3*\j}) circle (0.15); \draw[red] ({2*cos(\i*22.5)},{1*sin(\i*22.5)+0.15+0.3*\j}) circle (0.15); }
\draw ({2*cos(\i*22.5)},{1*sin(\i*22.5)-0.15}) node[below]{\tiny \color{red} $\bm{x_1},\bm{x_2}$}; }
\foreach \i in {13} { \foreach \j in {0,...,3} { \draw[very thick,red] ({2*cos(\i*22.5)},{1*sin(\i*22.5)-0.2})--({2*cos(\i*22.5)},{1*sin(\i*22.5)});
\fill[red!50!white] ({2*cos(\i*22.5)},{1*sin(\i*22.5)+0.15+0.3*\j}) circle (0.15); \draw[red] ({2*cos(\i*22.5)},{1*sin(\i*22.5)+0.15+0.3*\j}) circle (0.15); }
\draw ({2*cos(\i*22.5)},{1*sin(\i*22.5)-0.15}) node[below]{\tiny \color{red} $\bm{x_3}$}; }

\begin{scope}[shift={(6,0)}]
\draw[white] (-2.7,0)--(2.7,0);
\draw[] (0,0) ellipse (2 and 1);
\foreach \i in {0,...,15} { \draw[thick] ({2*cos(\i*22.5)},{1*sin(\i*22.5)-0.1})--({2*cos(\i*22.5)},{1*sin(\i*22.5)}); }

\foreach \i in {0} { \foreach \j in {0,...,3} { \draw[very thick,blue] ({2*cos(\i*22.5)},{1*sin(\i*22.5)-0.2})--({2*cos(\i*22.5)},{1*sin(\i*22.5)});
\fill[blue!50!white] ({2*cos(\i*22.5)},{1*sin(\i*22.5)+0.15+0.3*\j}) circle (0.15); \draw[blue] ({2*cos(\i*22.5)},{1*sin(\i*22.5)+0.15+0.3*\j}) circle (0.15); }
\draw ({2*cos(\i*22.5)},{1*sin(\i*22.5)-0.15}) node[below]{\tiny \color{blue} $\bm{x_4},\bm{x_5}$}; }
\foreach \i in {6} { \foreach \j in {0,...,2} { \draw[very thick,teal] ({2*cos(\i*22.5)},{1*sin(\i*22.5)-0.2})--({2*cos(\i*22.5)},{1*sin(\i*22.5)});
\fill[teal!50!white] ({2*cos(\i*22.5)},{1*sin(\i*22.5)+0.15+0.3*\j}) circle (0.15); \draw[teal] ({2*cos(\i*22.5)},{1*sin(\i*22.5)+0.15+0.3*\j}) circle (0.15); }
\draw ({2*cos(\i*22.5)},{1*sin(\i*22.5)-0.15}) node[below]{\tiny \color{teal} $\bm{x_6}$}; }
\foreach \i in {11} { \foreach \j in {0,...,4} { \draw[very thick,red] ({2*cos(\i*22.5)},{1*sin(\i*22.5)-0.2})--({2*cos(\i*22.5)},{1*sin(\i*22.5)});
\fill[red!50!white] ({2*cos(\i*22.5)},{1*sin(\i*22.5)+0.15+0.3*\j}) circle (0.15); \draw[red] ({2*cos(\i*22.5)},{1*sin(\i*22.5)+0.15+0.3*\j}) circle (0.15); }
\draw ({2*cos(\i*22.5)},{1*sin(\i*22.5)-0.15}) node[below]{\tiny \color{red} $\bm{x_1},\bm{x_2}$}; }
\foreach \i in {13} { \foreach \j in {0,...,3} { \draw[very thick,red] ({2*cos(\i*22.5)},{1*sin(\i*22.5)-0.2})--({2*cos(\i*22.5)},{1*sin(\i*22.5)});
\fill[red!50!white] ({2*cos(\i*22.5)},{1*sin(\i*22.5)+0.15+0.3*\j}) circle (0.15); \draw[red] ({2*cos(\i*22.5)},{1*sin(\i*22.5)+0.15+0.3*\j}) circle (0.15); }
\draw ({2*cos(\i*22.5)},{1*sin(\i*22.5)-0.15}) node[below]{\tiny \color{red} $\bm{x_3}$}; }
\end{scope}

\begin{scope}[shift={(12,0)}]
\draw[white] (-2.7,0)--(2.7,0);
\draw[] (0,0) ellipse (2 and 1);
\foreach \i in {0,...,15} { \draw[thick] ({2*cos(\i*22.5)},{1*sin(\i*22.5)-0.1})--({2*cos(\i*22.5)},{1*sin(\i*22.5)}); }

\foreach \i in {0} { \foreach \j in {0,...,3} { \draw[very thick,blue] ({2*cos(\i*22.5)},{1*sin(\i*22.5)-0.2})--({2*cos(\i*22.5)},{1*sin(\i*22.5)});
\fill[blue!50!white] ({2*cos(\i*22.5)},{1*sin(\i*22.5)+0.15+0.3*\j}) circle (0.15); \draw[blue] ({2*cos(\i*22.5)},{1*sin(\i*22.5)+0.15+0.3*\j}) circle (0.15); }
\draw ({2*cos(\i*22.5)},{1*sin(\i*22.5)-0.15}) node[below]{\tiny \color{blue} $\bm{x_4},\bm{x_5}$}; }
\foreach \i in {6} { \foreach \j in {0,...,2} { \draw[very thick,teal] ({2*cos(\i*22.5)},{1*sin(\i*22.5)-0.2})--({2*cos(\i*22.5)},{1*sin(\i*22.5)});
\fill[teal!50!white] ({2*cos(\i*22.5)},{1*sin(\i*22.5)+0.15+0.3*\j}) circle (0.15); \draw[teal] ({2*cos(\i*22.5)},{1*sin(\i*22.5)+0.15+0.3*\j}) circle (0.15); }
\draw ({2*cos(\i*22.5)},{1*sin(\i*22.5)-0.15}) node[below]{\tiny \color{teal} $\bm{x_6}$}; }
\foreach \i in {11} { \foreach \j in {0,...,6} { \draw[very thick,red] ({2*cos(\i*22.5)},{1*sin(\i*22.5)-0.2})--({2*cos(\i*22.5)},{1*sin(\i*22.5)});
\fill[red!50!white] ({2*cos(\i*22.5)},{1*sin(\i*22.5)+0.15+0.3*\j}) circle (0.15); \draw[red] ({2*cos(\i*22.5)},{1*sin(\i*22.5)+0.15+0.3*\j}) circle (0.15); }
\draw ({2*cos(\i*22.5)},{1*sin(\i*22.5)-0.15}) node[below]{\tiny \color{red} $\bm{x_1},\bm{x_2}$}; }
\foreach \i in {13} { \foreach \j in {0,...,1} { \draw[very thick,red] ({2*cos(\i*22.5)},{1*sin(\i*22.5)-0.2})--({2*cos(\i*22.5)},{1*sin(\i*22.5)});
\fill[red!50!white] ({2*cos(\i*22.5)},{1*sin(\i*22.5)+0.15+0.3*\j}) circle (0.15); \draw[red] ({2*cos(\i*22.5)},{1*sin(\i*22.5)+0.15+0.3*\j}) circle (0.15); }
\draw ({2*cos(\i*22.5)},{1*sin(\i*22.5)-0.15}) node[below]{\tiny \color{red} $\bm{x_3}$}; }
\end{scope}

\begin{scope}[shift={(18,0)}]
\draw[white] (-2.7,0)--(2.7,0);
\draw[] (0,0) ellipse (2 and 1);
\foreach \i in {0,...,15} { \draw[thick] ({2*cos(\i*22.5)},{1*sin(\i*22.5)-0.1})--({2*cos(\i*22.5)},{1*sin(\i*22.5)}); }

\foreach \i in {0} { \foreach \j in {0,...,3} { \draw[very thick,blue] ({2*cos(\i*22.5)},{1*sin(\i*22.5)-0.2})--({2*cos(\i*22.5)},{1*sin(\i*22.5)});
\fill[blue!50!white] ({2*cos(\i*22.5)},{1*sin(\i*22.5)+0.15+0.3*\j}) circle (0.15); \draw[blue] ({2*cos(\i*22.5)},{1*sin(\i*22.5)+0.15+0.3*\j}) circle (0.15); }
\draw ({2*cos(\i*22.5)},{1*sin(\i*22.5)-0.15}) node[below]{\tiny \color{blue} $\bm{x_4},\bm{x_5}$}; }
\foreach \i in {7} { \foreach \j in {0,...,2} { \draw[very thick,teal] ({2*cos(\i*22.5)},{1*sin(\i*22.5)-0.2})--({2*cos(\i*22.5)},{1*sin(\i*22.5)});
\fill[teal!50!white] ({2*cos(\i*22.5)},{1*sin(\i*22.5)+0.15+0.3*\j}) circle (0.15); \draw[teal] ({2*cos(\i*22.5)},{1*sin(\i*22.5)+0.15+0.3*\j}) circle (0.15); }
\draw ({2*cos(\i*22.5)},{1*sin(\i*22.5)-0.15}) node[below]{\tiny \color{teal} $\bm{x_6}$}; }
\foreach \i in {11} { \foreach \j in {0,...,6} { \draw[very thick,red] ({2*cos(\i*22.5)},{1*sin(\i*22.5)-0.2})--({2*cos(\i*22.5)},{1*sin(\i*22.5)});
\fill[red!50!white] ({2*cos(\i*22.5)},{1*sin(\i*22.5)+0.15+0.3*\j}) circle (0.15); \draw[red] ({2*cos(\i*22.5)},{1*sin(\i*22.5)+0.15+0.3*\j}) circle (0.15); }
\draw ({2*cos(\i*22.5)},{1*sin(\i*22.5)-0.15}) node[below]{\tiny \color{red} $\bm{x_1},\bm{x_2}$}; }
\foreach \i in {13} { \foreach \j in {0,...,1} { \draw[very thick,red] ({2*cos(\i*22.5)},{1*sin(\i*22.5)-0.2})--({2*cos(\i*22.5)},{1*sin(\i*22.5)});
\fill[red!50!white] ({2*cos(\i*22.5)},{1*sin(\i*22.5)+0.15+0.3*\j}) circle (0.15); \draw[red] ({2*cos(\i*22.5)},{1*sin(\i*22.5)+0.15+0.3*\j}) circle (0.15); }
\draw ({2*cos(\i*22.5)},{1*sin(\i*22.5)-0.15}) node[below]{\tiny \color{red} $\bm{x_3}$}; }
\end{scope}

\begin{scope}[shift={(24,0)}]
\draw[white] (-2.7,0)--(2.7,0);
\draw[] (0,0) ellipse (2 and 1);
\foreach \i in {0,...,15} { \draw[thick] ({2*cos(\i*22.5)},{1*sin(\i*22.5)-0.1})--({2*cos(\i*22.5)},{1*sin(\i*22.5)}); }

\foreach \i in {0} { \foreach \j in {0,...,3} { \draw[very thick,blue] ({2*cos(\i*22.5)},{1*sin(\i*22.5)-0.2})--({2*cos(\i*22.5)},{1*sin(\i*22.5)});
\fill[blue!50!white] ({2*cos(\i*22.5)},{1*sin(\i*22.5)+0.15+0.3*\j}) circle (0.15); \draw[blue] ({2*cos(\i*22.5)},{1*sin(\i*22.5)+0.15+0.3*\j}) circle (0.15); }
\draw ({2*cos(\i*22.5)},{1*sin(\i*22.5)-0.15}) node[below]{\tiny \color{blue} $\bm{x_4},\bm{x_5}$}; }
\foreach \i in {7} { \foreach \j in {0,...,2} { \draw[very thick,teal] ({2*cos(\i*22.5)},{1*sin(\i*22.5)-0.2})--({2*cos(\i*22.5)},{1*sin(\i*22.5)});
\fill[teal!50!white] ({2*cos(\i*22.5)},{1*sin(\i*22.5)+0.15+0.3*\j}) circle (0.15); \draw[teal] ({2*cos(\i*22.5)},{1*sin(\i*22.5)+0.15+0.3*\j}) circle (0.15); }
\draw ({2*cos(\i*22.5)},{1*sin(\i*22.5)-0.15}) node[below]{\tiny \color{teal} $\bm{x_6}$}; }
\foreach \i in {12} { \foreach \j in {0,...,8} { \draw[very thick,red] ({2*cos(\i*22.5)},{1*sin(\i*22.5)-0.2})--({2*cos(\i*22.5)},{1*sin(\i*22.5)});
\fill[red!50!white] ({2*cos(\i*22.5)},{1*sin(\i*22.5)+0.15+0.3*\j}) circle (0.15); \draw[red] ({2*cos(\i*22.5)},{1*sin(\i*22.5)+0.15+0.3*\j}) circle (0.15); }
\draw ({2*cos(\i*22.5)},{1*sin(\i*22.5)-0.15}) node[below]{\tiny \color{red} $\bm{x_1},\bm{x_2},\bm{x_3}$}; }
\end{scope}

\begin{scope}[shift={(0,-4)}]
\draw[white] (-2.7,0)--(2.7,0);
\draw[] (0,0) ellipse (2 and 1);
\foreach \i in {0,...,15} { \draw[thick] ({2*cos(\i*22.5)},{1*sin(\i*22.5)-0.1})--({2*cos(\i*22.5)},{1*sin(\i*22.5)}); }

\foreach \i in {1} { \foreach \j in {0,...,3} { \draw[very thick,blue] ({2*cos(\i*22.5)},{1*sin(\i*22.5)-0.2})--({2*cos(\i*22.5)},{1*sin(\i*22.5)});
\fill[blue!50!white] ({2*cos(\i*22.5)},{1*sin(\i*22.5)+0.15+0.3*\j}) circle (0.15); \draw[blue] ({2*cos(\i*22.5)},{1*sin(\i*22.5)+0.15+0.3*\j}) circle (0.15); } }
\foreach \i in {6} { \foreach \j in {0,...,2} { \draw[very thick,teal] ({2*cos(\i*22.5)},{1*sin(\i*22.5)-0.2})--({2*cos(\i*22.5)},{1*sin(\i*22.5)});
\fill[teal!50!white] ({2*cos(\i*22.5)},{1*sin(\i*22.5)+0.15+0.3*\j}) circle (0.15); \draw[teal] ({2*cos(\i*22.5)},{1*sin(\i*22.5)+0.15+0.3*\j}) circle (0.15); } }
\foreach \i in {11} { \foreach \j in {0,...,4} { \draw[very thick,red] ({2*cos(\i*22.5)},{1*sin(\i*22.5)-0.2})--({2*cos(\i*22.5)},{1*sin(\i*22.5)});
\fill[red!50!white] ({2*cos(\i*22.5)},{1*sin(\i*22.5)+0.15+0.3*\j}) circle (0.15); \draw[red] ({2*cos(\i*22.5)},{1*sin(\i*22.5)+0.15+0.3*\j}) circle (0.15); } }
\foreach \i in {13} { \foreach \j in {0,...,3} { \draw[very thick,red] ({2*cos(\i*22.5)},{1*sin(\i*22.5)-0.2})--({2*cos(\i*22.5)},{1*sin(\i*22.5)});
\fill[red!50!white] ({2*cos(\i*22.5)},{1*sin(\i*22.5)+0.15+0.3*\j}) circle (0.15); \draw[red] ({2*cos(\i*22.5)},{1*sin(\i*22.5)+0.15+0.3*\j}) circle (0.15); } }
\end{scope}

\begin{scope}[shift={(6,-4)}]
\draw[white] (-2.7,0)--(2.7,0);
\draw[] (0,0) ellipse (2 and 1);
\foreach \i in {0,...,15} { \draw[thick] ({2*cos(\i*22.5)},{1*sin(\i*22.5)-0.1})--({2*cos(\i*22.5)},{1*sin(\i*22.5)}); }

\foreach \i in {0} { \foreach \j in {0,...,3} { \draw[very thick,blue] ({2*cos(\i*22.5)},{1*sin(\i*22.5)-0.2})--({2*cos(\i*22.5)},{1*sin(\i*22.5)});
\fill[blue!50!white] ({2*cos(\i*22.5)},{1*sin(\i*22.5)+0.15+0.3*\j}) circle (0.15); \draw[blue] ({2*cos(\i*22.5)},{1*sin(\i*22.5)+0.15+0.3*\j}) circle (0.15); } }
\foreach \i in {6} { \foreach \j in {0,...,2} { \draw[very thick,teal] ({2*cos(\i*22.5)},{1*sin(\i*22.5)-0.2})--({2*cos(\i*22.5)},{1*sin(\i*22.5)});
\fill[teal!50!white] ({2*cos(\i*22.5)},{1*sin(\i*22.5)+0.15+0.3*\j}) circle (0.15); \draw[teal] ({2*cos(\i*22.5)},{1*sin(\i*22.5)+0.15+0.3*\j}) circle (0.15); } }
\foreach \i in {11} { \foreach \j in {0,...,4} { \draw[very thick,red] ({2*cos(\i*22.5)},{1*sin(\i*22.5)-0.2})--({2*cos(\i*22.5)},{1*sin(\i*22.5)});
\fill[red!50!white] ({2*cos(\i*22.5)},{1*sin(\i*22.5)+0.15+0.3*\j}) circle (0.15); \draw[red] ({2*cos(\i*22.5)},{1*sin(\i*22.5)+0.15+0.3*\j}) circle (0.15); } }
\foreach \i in {13} { \foreach \j in {0,...,3} { \draw[very thick,red] ({2*cos(\i*22.5)},{1*sin(\i*22.5)-0.2})--({2*cos(\i*22.5)},{1*sin(\i*22.5)});
\fill[red!50!white] ({2*cos(\i*22.5)},{1*sin(\i*22.5)+0.15+0.3*\j}) circle (0.15); \draw[red] ({2*cos(\i*22.5)},{1*sin(\i*22.5)+0.15+0.3*\j}) circle (0.15); } }
\end{scope}

\begin{scope}[shift={(12,-4)}]
\draw[white] (-2.7,0)--(2.7,0);
\draw[] (0,0) ellipse (2 and 1);
\foreach \i in {0,...,15} { \draw[thick] ({2*cos(\i*22.5)},{1*sin(\i*22.5)-0.1})--({2*cos(\i*22.5)},{1*sin(\i*22.5)}); }

\foreach \i in {0} { \foreach \j in {0,...,3} { \draw[very thick,blue] ({2*cos(\i*22.5)},{1*sin(\i*22.5)-0.2})--({2*cos(\i*22.5)},{1*sin(\i*22.5)});
\fill[blue!50!white] ({2*cos(\i*22.5)},{1*sin(\i*22.5)+0.15+0.3*\j}) circle (0.15); \draw[blue] ({2*cos(\i*22.5)},{1*sin(\i*22.5)+0.15+0.3*\j}) circle (0.15); } }
\foreach \i in {6} { \foreach \j in {0,...,2} { \draw[very thick,teal] ({2*cos(\i*22.5)},{1*sin(\i*22.5)-0.2})--({2*cos(\i*22.5)},{1*sin(\i*22.5)});
\fill[teal!50!white] ({2*cos(\i*22.5)},{1*sin(\i*22.5)+0.15+0.3*\j}) circle (0.15); \draw[teal] ({2*cos(\i*22.5)},{1*sin(\i*22.5)+0.15+0.3*\j}) circle (0.15); } }
\foreach \i in {11} { \foreach \j in {0,...,6} { \draw[very thick,red] ({2*cos(\i*22.5)},{1*sin(\i*22.5)-0.2})--({2*cos(\i*22.5)},{1*sin(\i*22.5)});
\fill[red!50!white] ({2*cos(\i*22.5)},{1*sin(\i*22.5)+0.15+0.3*\j}) circle (0.15); \draw[red] ({2*cos(\i*22.5)},{1*sin(\i*22.5)+0.15+0.3*\j}) circle (0.15); } }
\foreach \i in {13} { \foreach \j in {0,...,1} { \draw[very thick,red] ({2*cos(\i*22.5)},{1*sin(\i*22.5)-0.2})--({2*cos(\i*22.5)},{1*sin(\i*22.5)});
\fill[red!50!white] ({2*cos(\i*22.5)},{1*sin(\i*22.5)+0.15+0.3*\j}) circle (0.15); \draw[red] ({2*cos(\i*22.5)},{1*sin(\i*22.5)+0.15+0.3*\j}) circle (0.15); } }
\end{scope}

\begin{scope}[shift={(18,-4)}]
\draw[white] (-2.7,0)--(2.7,0);
\draw[] (0,0) ellipse (2 and 1);
\foreach \i in {0,...,15} { \draw[thick] ({2*cos(\i*22.5)},{1*sin(\i*22.5)-0.1})--({2*cos(\i*22.5)},{1*sin(\i*22.5)}); }

\foreach \i in {0} { \foreach \j in {0,...,3} { \draw[very thick,blue] ({2*cos(\i*22.5)},{1*sin(\i*22.5)-0.2})--({2*cos(\i*22.5)},{1*sin(\i*22.5)});
\fill[blue!50!white] ({2*cos(\i*22.5)},{1*sin(\i*22.5)+0.15+0.3*\j}) circle (0.15); \draw[blue] ({2*cos(\i*22.5)},{1*sin(\i*22.5)+0.15+0.3*\j}) circle (0.15); } }
\foreach \i in {7} { \foreach \j in {0,...,2} { \draw[very thick,teal] ({2*cos(\i*22.5)},{1*sin(\i*22.5)-0.2})--({2*cos(\i*22.5)},{1*sin(\i*22.5)});
\fill[teal!50!white] ({2*cos(\i*22.5)},{1*sin(\i*22.5)+0.15+0.3*\j}) circle (0.15); \draw[teal] ({2*cos(\i*22.5)},{1*sin(\i*22.5)+0.15+0.3*\j}) circle (0.15); } }
\foreach \i in {11} { \foreach \j in {0,...,6} { \draw[very thick,red] ({2*cos(\i*22.5)},{1*sin(\i*22.5)-0.2})--({2*cos(\i*22.5)},{1*sin(\i*22.5)});
\fill[red!50!white] ({2*cos(\i*22.5)},{1*sin(\i*22.5)+0.15+0.3*\j}) circle (0.15); \draw[red] ({2*cos(\i*22.5)},{1*sin(\i*22.5)+0.15+0.3*\j}) circle (0.15); } }
\foreach \i in {13} { \foreach \j in {0,...,1} { \draw[very thick,red] ({2*cos(\i*22.5)},{1*sin(\i*22.5)-0.2})--({2*cos(\i*22.5)},{1*sin(\i*22.5)});
\fill[red!50!white] ({2*cos(\i*22.5)},{1*sin(\i*22.5)+0.15+0.3*\j}) circle (0.15); \draw[red] ({2*cos(\i*22.5)},{1*sin(\i*22.5)+0.15+0.3*\j}) circle (0.15); } }
\end{scope}

\begin{scope}[shift={(24,-4)}]
\draw[white] (-2.7,0)--(2.7,0);
\draw[] (0,0) ellipse (2 and 1);
\foreach \i in {0,...,15} { \draw[thick] ({2*cos(\i*22.5)},{1*sin(\i*22.5)-0.1})--({2*cos(\i*22.5)},{1*sin(\i*22.5)}); }

\foreach \i in {0} { \foreach \j in {0,...,3} { \draw[very thick,blue] ({2*cos(\i*22.5)},{1*sin(\i*22.5)-0.2})--({2*cos(\i*22.5)},{1*sin(\i*22.5)});
\fill[blue!50!white] ({2*cos(\i*22.5)},{1*sin(\i*22.5)+0.15+0.3*\j}) circle (0.15); \draw[blue] ({2*cos(\i*22.5)},{1*sin(\i*22.5)+0.15+0.3*\j}) circle (0.15); } }
\foreach \i in {7} { \foreach \j in {0,...,2} { \draw[very thick,teal] ({2*cos(\i*22.5)},{1*sin(\i*22.5)-0.2})--({2*cos(\i*22.5)},{1*sin(\i*22.5)});
\fill[teal!50!white] ({2*cos(\i*22.5)},{1*sin(\i*22.5)+0.15+0.3*\j}) circle (0.15); \draw[teal] ({2*cos(\i*22.5)},{1*sin(\i*22.5)+0.15+0.3*\j}) circle (0.15); } }
\foreach \i in {12} { \foreach \j in {0,...,8} { \draw[very thick,red] ({2*cos(\i*22.5)},{1*sin(\i*22.5)-0.2})--({2*cos(\i*22.5)},{1*sin(\i*22.5)});
\fill[red!50!white] ({2*cos(\i*22.5)},{1*sin(\i*22.5)+0.15+0.3*\j}) circle (0.15); \draw[red] ({2*cos(\i*22.5)},{1*sin(\i*22.5)+0.15+0.3*\j}) circle (0.15); } }
\end{scope}

\draw[very thick,-latex] (2.5,0)--(3.5,0); \draw (3,0.1) node[above]{\scriptsize A};
\draw[very thick,-latex] (8.5,0)--(9.5,0); \draw (9,0.1) node[above]{\scriptsize B};
\draw[very thick,-latex] (14.5,0)--(15.5,0); \draw (15,0.1) node[above]{\scriptsize A};
\draw[very thick,-latex] (20.5,0)--(21.5,0); \draw (21,0.1) node[above]{\scriptsize B};
\draw[very thick,-latex] (2.5,-4)--(3.5,-4); \draw (3,-3.9) node[above]{\scriptsize A};
\draw[very thick,-latex] (8.5,-4)--(9.5,-4); \draw (9,-3.9) node[above]{\scriptsize B};
\draw[very thick,-latex] (14.5,-4)--(15.5,-4); \draw (15,-3.9) node[above]{\scriptsize A};
\draw[very thick,-latex] (20.5,-4)--(21.5,-4); \draw (21,-3.9) node[above]{\scriptsize B};
\end{tikzpicture}\caption{\label{fig1.2}A trajectory of the labeled trace process on $\widehat{\mathcal E}_N$ (up) and its projection to $\mathcal E_N$ (down), which becomes a trajectory of the trace process restricted to neighbor jumps only.}
\end{figure}

Next, define a transition rate function ${\color{blue} \widehat R_N} : \widehat{\mathcal E}_N \times \widehat{\mathcal E}_N \to [0,\infty)$ as follows. Refer to Figure \ref{fig1.2} for an illustration.

\begin{defn}\label{def:lab-trace}
Fix a configuration $\widehat\xi_{\bm n}^{\bm x} \in \widehat{\mathcal E}_N$.

\begin{itemize}
\item \textbf{(Type A)} Suppose that $x_{i-1} \ne x_i = \cdots = x_{i+j-1} \ne x_{i+j}$. First, suppose that $x_{i+j} \ne x_i + 2$.
Let $\bm y := \bm x + (\bm e_i + \cdots + \bm e_{i+j-1})$, which belongs to $\widehat{\mathbb I}_L^k$ since $x_{i+j} \ne x_i +2$. Define
\begin{equation}\label{eq:RNhat}
\widehat R_N ( \widehat\xi_{\bm n}^{\bm x} , \widehat\xi_{\bm n}^{\bm y} ) := R_N (\xi_{\bm n}^{\bm x} , \xi_{\bm n}^{\bm y} ) .
\end{equation}
Similarly, suppose alternatively that $x_{i-1} \ne x_i -2$. Let $\bm y := \bm x - (\bm e_i + \cdots + \bm e_{i+j-1}) \in \widehat{\mathbb I}_L^k$ and define as in \eqref{eq:RNhat}.
\item \textbf{(Type B)} If $x_{i-1} \ne x_i = \cdots = x_{i+j-1}$, $x_{i+j} = \cdots = x_{i+j+j'-1} = x_i+2$ (which should be understood in modulo $L$), and $x_{i+j+j'} \ne x_{i+j}$, then define
\begin{equation}\label{eq:RNhat-2}
\widehat R_N ( \widehat\xi_{\bm n}^{\bm x} , \widehat\xi_{\bm m}^{\bm y} ) := R_N (\xi_{\bm n}^{\bm x} , \xi_{\bm m}^{\bm y} ) ,
\end{equation}
if $\widehat\xi_{\bm m}^{\bm y} \in \widehat{\mathcal E}_N$ falls into one of the following two categories:
\begin{itemize}
\item $y_i = \cdots = y_{i+j+j'-1} \in \{x_i , x_i+1 , x_i+2\}$, $m_i = \cdots = m_{i+j+j'-1} = n_i + n_{i+j}$, and $y_p = x_p$, $m_p = n_p$ for any other $p \in \llbracket 1 , k \rrbracket$;
\item $\bm y = \bm x$, $m_i = \cdots = m_{i+j-1} = m$, $m_{i+j} = \cdots = m_{i+j+j'-1} = n_i + n_{i+j} - m$ for some $m \in \llbracket 1 , n_i + n_{i+j} -1 \rrbracket$, and $m_p = n_p$ for any other $p \in \llbracket 1 , k \rrbracket$.
\end{itemize}
\item Define $\widehat R_N (\widehat\xi_{\bm n}^{\bm x} , \widehat\xi  ) : = 0$ for any other $\widehat\xi \in \widehat{\mathcal E}_N$ not described above.
\end{itemize}
\end{defn}

The transition rate $\widehat R_N(\cdot,\cdot)$ defines a continuous-time Markov chain $\color{blue} \widehat\eta_N (t)$ on $\widehat{\mathcal E}_N$, which has only one ergodic component $\{ \widehat\xi_N^x : x \in \mathbb T_L \}$ where $\widehat\xi_N^x := ((x,N))_{i \in \llbracket 1,k \rrbracket} \in \widehat{\mathcal E}_N$.
Denote by $\color{blue} \widehat{\mathcal L}_N$ the corresponding generator, and by $\color{blue} \widehat{\mathbb Q}_{\widehat\xi}^N$ its law on $D([0,\infty);\widehat{\mathcal E}_N)$ which starts from $\widehat\xi$.

Recall $\Psi_N : \widehat{\mathcal E}_N \to \mathcal E_N$ from \eqref{eq:PsiN}. With a slight abuse of notation, we may regard $\Psi_N$ also as a mapping from $D([0,\infty);\widehat{\mathcal E}_N)$ to $D([0,\infty);\mathcal E_N)$. In this regard, the pushforward measure $\widehat{\mathbb Q}_{\widehat\xi}^N \circ \Psi_N^{-1}$ is well defined on the unlabeled trajectory space $D([0,\infty);\mathcal E_N)$.
Moreover, by comparing Definitions \ref{def:typ-jump} and \ref{def:lab-trace}, it is not difficult to see that
\begin{equation}\label{eq:proj-law}
\widehat{\mathbb Q}_{\widehat\xi_{\bm n}^{\bm x}}^{N} \circ \Psi_N^{-1} = \overline{\mathbb Q}_{\xi_{\bm n}^{\bm x}}^{N} ,
\end{equation}
i.e., the trajectory in $\mathcal E_N$ generated by the law $\widehat{\mathbb Q}_{\widehat\xi_{\bm n}^{\bm x}}^N$ via projection $\Psi_N$ follows the neighbor-jump-restricted trace process law $\overline{\mathbb Q}_{\xi_{\bm n}^{\bm x}}^N$.

\subsection{\label{sec1.5}Coalescing Brownian Motions}

Our next objective is to describe the evolution of the positions of the condensates. For this purpose, we define a diffusion process representing $k$ coalescing Brownian motions on the continuum torus $\mathbb T := \mathbb R / \mathbb Z = [0,1)$, inspired from \cite[Section 2]{DLZ04}.

For a topological space $A$ and $n \in \mathbb N_0$, denote by $\color{blue} C^{n}(A)$ the
space of $n$ times continuously differentiable functions on $A$. In particular,
$C(A):=C^{0}(A)$ is the space of continuous functions on $A$.

For each $\ell\in\llbracket1,k\rrbracket$, define
\[
{ \color{blue} \mathbb{T}_{\circ}^{\ell}} :=\left\{ (u_{1},\dots,u_{\ell})\in\mathbb{T}^{\ell}:u_{i}\ne u_{j} \quad \text{for all}\enspace i\ne j\right\} .
\]
Note that $\mathbb{T}_{\circ}^{\ell}$ consists of $(\ell-1)!$ connected
open subsets of $\mathbb{T}^{\ell}$, and that the closure of $\mathbb{T}_{\circ}^{\ell}$
is exactly $\mathbb{T}^{\ell}$. Clearly, its boundary can be written as
\[
\partial\mathbb{T}_{\circ}^{\ell} = \left\{ (u_{1},\dots,u_{\ell})\in\mathbb{T}^{\ell}:u_{i}=u_{j} \quad \text{for some}\enspace i\ne j\right\} .
\]

Next, we describe the domain of the generator. Define ${\color{blue} \mathfrak D_{0}^{(1)}}:=C^{3}(\mathbb{T})$.\footnote{The reason why we choose the $C^3$ space instead of $C^2$ as in \cite[cond. (2.A)]{DLZ04} is that we need a uniform control on the third derivatives of the functions in the generator; see \eqref{eq:Taylor-3rd-used}.}
For each $\ell\ge2$, denote by ${\color{blue} \mathfrak D_{0}^{(\ell)}}$ the
set of functions ${\bf f}\in C(\mathbb{T}^{\ell})$ that satisfies
\begin{itemize}
\item ${\bf f}\in C^{3}(\mathbb{T}_{\circ}^{\ell})$;
\item all partial derivatives of ${\bf f}$ up to the third degree are extended
continuously to $\mathbb{T}^{\ell}$ such that 
\begin{equation}
\partial_{i}\partial_{j}{\bf f}=0\qquad\text{on}\quad\left\{ \bm{u}\in\partial\mathbb{T}_{\circ}^{\ell}:u_{i}=u_{j}\right\} \qquad\text{for all}\quad i\ne j.\label{eq:didj}
\end{equation}
\end{itemize}

For $1\le i<j\le\ell$, define an operator $\psi_{i,j}:C(\mathbb{T}^{\ell})\to C(\mathbb{T}^{\ell-1})$
as
\begin{equation}\label{eq:psi-ij}
\psi_{i,j}{\bf f}(u_{1},\dots,u_{\ell-1})
:={\bf f}(u_1, \dots,u_{i},\dots,u_{j-1},u_{i},u_{j},\dots,
u_{\ell-1} ),
\end{equation}
where $u_{i}$ appears twice on the right-hand side at the $i$-th
and $j$-th coordinates. For instance, if $\ell=3$, $\psi_{1,2}{\bf f}(u,v)={\bf f}(u,u,v)$,
$\psi_{1,3}{\bf f}(u,v)={\bf f}(u,v,u)$, and $\psi_{2,3}{\bf f}(u,v)={\bf f}(u,v,v)$.

Now, denote by ${\color{blue} \mathfrak D^{(k)}}$ the collection of functions
${\bf f}\in C(\mathbb{T}^{k})$ such that
\[
\psi_{i_{\ell},j_{\ell}} \circ \cdots \circ \psi_{i_{1},j_{1}}{\bf f}\in \mathfrak D_{0}^{(k-\ell)}
\]
for all $\ell\in\llbracket0,k-1\rrbracket$ and $1\le i_{m}<j_{m}\le k-m+1$
for $m\in\llbracket1,\ell\rrbracket$. The generator ${\color{blue} \mathfrak L_{\rho}}$
acting on $C(\mathbb{T}^{k})$ is defined as
\begin{equation}\label{eq:Lrho}
\mathfrak L_{\rho}{\bf f}:=\rho^{2}\Delta{\bf f}\qquad\text{for all}\quad{\bf f}\in \mathfrak D^{(k)}.
\end{equation}
Here, $\Delta{\bf f}$ on $\partial\mathbb{T}_{\circ}^{k}$ should
be understood as the continuous extension from $\mathbb{T}_{\circ}^{k}$.

The pair $(\mathfrak L_{\rho},\mathfrak{D}^{(k)})$ defines a well-posed martingale problem associated
to $k$ coalescing Brownian motions on $\mathbb T$,
as stated in the following theorem, whose proof is deferred to Appendix \ref{secA}.
\begin{thm}
\label{thm3}For any $\bm{u} \in {\mathbb T}^{k}$,
there exists a unique probability measure $\color{blue} {\bf Q}_{\bm{u}}$
on $D([0,\infty);{\mathbb T}^{k})$ such that ${\bf Q}_{\bm u} [ \omega :  \omega_0 = \bm u ] = 1$
and
\[
\left( {\bf f}(\omega_{t})+\int_{0}^{t}e^{-\lambda s}\,(\lambda-\mathfrak L_\rho){\bf f}(\omega_{s})\,{\rm d}s \right)_{t \ge 0}
\]
is a ${\bf Q}_{ \bm{u} }$-martingale for any ${\bf f}\in\mathfrak{D}^{(k)}$ and $\lambda > 0$.
\end{thm}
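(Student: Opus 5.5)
Existence will come from an explicit construction, and uniqueness from an induction on $k$ using the strong Markov structure at coalescence times. The statement is phrased with the $e^{-\lambda s}$ weight, but this is equivalent to the usual martingale problem. Indeed, if $M_t^{\bf f}={\bf f}(\omega_t)-{\bf f}(\omega_0)-\int_0^t \mathfrak L_\rho {\bf f}(\omega_s)\,{\rm d}s$ is a martingale, then integration by parts shows that $e^{-\lambda t}{\bf f}(\omega_t)+\int_0^t e^{-\lambda s}(\lambda-\mathfrak L_\rho){\bf f}(\omega_s)\,{\rm d}s=\int_0^t e^{-\lambda s}\,{\rm d}M_s^{\bf f}+{\bf f}(\omega_0)$ is a martingale, and the converse holds similarly. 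I read the displayed process in this sense, with the factor $e^{-\lambda t}$ in front of ${\bf f}(\omega_t)$, so I may work with $(\mathfrak L_\rho,\mathfrak D^{(k)})$ directly.

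\emph{Existence.} Let $B^1,\dots,B^k$ be independent Brownian motions on $\mathbb T$ with generator $\rho^2\frac{{\rm d}^2}{{\rm d}u^2}$, started at $u_1,\dots,u_k$. I will build the coalescing system in the standard way. Run all coordinates independently until the first meeting time $\sigma_1=\inf\{t:\omega^i_t=\omega^j_t \text{ for some } i\ne j\}$. From then on, glue the pair: the higher-index coordinate follows the lower one, and the remaining $k-1$ distinct particles are again independent. Iterate at most $k-1$ times. Let ${\bf Q}_{\bm u}$ be the law of this process, and fix ${\bf f}\in\mathfrak D^{(k)}$. On each interval $[\sigma_m,\sigma_{m+1})$ the process evolves as a $(k-m)$-dimensional Brownian motion in $\mathbb T_\circ^{k-m}$ after composition with the appropriate maps $\psi_{i,j}$. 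Since the composed function lies in $\mathfrak D_0^{(k-m)}$, it is $C^3$ up to the boundary with continuous extensions, so Itô's formula applies up to $\sigma_{m+1}$. The drift term is $\rho^2\Delta(\psi\cdots{\bf f})$, and this coincides with $\mathfrak L_\rho{\bf f}$ evaluated on the diagonal. Checking this identity is where condition \eqref{eq:didj} enters. When coordinates $i$ and $j$ are glued, the Laplacian of the composed function equals $\sum_p\partial_p^2{\bf f}+2\partial_i\partial_j{\bf f}$ at the diagonal point, and $\partial_i\partial_j{\bf f}=0$ there. Pasting the pieces with optional stopping then gives the martingale property.

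\emph{Uniqueness.} I will argue by induction on $k$; the case $k=1$ is the classical martingale problem for Brownian motion on $\mathbb T$. Let ${\bf P}$ be any solution started at $\bm u$, first with $\bm u\in\mathbb T_\circ^k$. Before the hitting time $\sigma$ of $\partial\mathbb T_\circ^k$, any $C^3$ function supported away from the boundary lies in $\mathfrak D^{(k)}$. Hence the stopped process solves the Brownian martingale problem up to $\sigma$ (localizing to reach $\sigma$), so ${\bf P}$ restricted to $\mathcal F_\sigma$ is uniquely determined. After $\sigma$ I need to show that the two colliding coordinates stay equal, i.e. ${\bf P}(\omega^i_t=\omega^j_t\ \forall t\ge\sigma)=1$. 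The test function ${\bf f}(\bm u)=g(u_i-u_j)$ with $g(r)=\sin^2(\pi r)$ is suitable for this. It is not in the domain, since $\partial_i\partial_j{\bf f}=-g''\ne0$ on the diagonal. I will therefore use ${\bf f}=h(u_i-u_j)$ with $h$ smooth, $h'(0)=h''(0)=0$, $h(0)=0$ and $h>0$ elsewhere, for instance $h(r)=\sin^4(\pi r)$, adjusted in the other variables so that all compositions $\psi$ remain in the domain. This gives $h(\omega^i_t-\omega^j_t)-\int 2\rho^2h''$ as a martingale. Combined with a Gronwall-type bound $h''\le C\sqrt h$-style estimate near $0$, started from $0$ at time $\sigma$, this should force $h(\omega^i-\omega^j)\equiv0$ after $\sigma$. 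On the event of gluing, the process projected by $\psi_{i,j}$ then solves the $(k-1)$-martingale problem, which has a unique solution by induction. Conditioning at $\sigma$ (regular conditional probabilities, as in Stroock--Varadhan) completes uniqueness. Starting points on the boundary are handled in the same way, starting directly from the second step.

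The main obstacle is the stickiness step: showing that once two coordinates meet they never separate. The difficulty is that domain functions must have vanishing mixed derivatives on the diagonal, which rules out the natural test functions. I would first try $h(r)=\sin^4(\pi r)$ together with a comparison argument. If that fails, I would fall back on the approach of \cite{DLZ04}: take ${\bf f}$ depending on $u_i-u_j$ only through $|u_i-u_j|$-symmetric functions flat at $0$, and use the resulting local-time-free semimartingale decomposition.
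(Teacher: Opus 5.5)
\textbf{The gap is the stickiness step, and a better choice of $h$ will not fix it.} The rest of your plan is reasonable: the existence construction, the identification of the law up to the first meeting time $\sigma$ by localization, and the projection/conditioning step. Your reading of the display with a factor $e^{-\lambda t}$ in front of ${\bf f}(\omega_t)$ is also the correct one.

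Suppose $W^i,W^j$ are \emph{independent} Brownian motions with generator $\rho^2\partial^2$. Then $D=W^i-W^j$ has generator $2\rho^2\partial^2$, so by It\^o's formula $h(D_t)-2\rho^2\int_0^t h''(D_s)\,{\rm d}s$ is a martingale for every $C^2$ function $h$. The relation you extract from ${\bf f}=h(u_i-u_j)$ therefore holds equally for particles that pass through each other. The same is true of your fallback, since functions of $|u_i-u_j|$ that are flat at $0$ are $C^2$ across the diagonal.

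Concretely, the Gronwall step does not close. Set $\phi(t)={\rm E}[h(D_{\sigma+t})]$. The bound $h''\le 12\pi^2\sqrt h$ together with Jensen gives only $\phi(t)\le C\int_0^t\sqrt{\phi(s)}\,{\rm d}s$ with $\phi(0)=0$. This inequality is satisfied by $\phi(t)=(Ct/2)^2$, which is exactly the $t^2$ growth that independent motions started together exhibit.

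\textbf{What is needed is a test function with a \emph{kink} on the diagonal.} For example, take ${\bf f}(\bm u)=\varphi(d(u_i,u_j))$ with $\varphi(r)=r$ near $0$ and $\varphi$ constant near $1/2$. It is linear on each side of $\{u_i=u_j\}$, so all its second derivatives vanish there. Its $\psi$-compositions are of the same form or constant. The martingale problem then says that after $\sigma$, $d(\omega^i_t,\omega^j_t)$, stopped when it leaves a neighbourhood of $0$, is a nonnegative martingale started at $0$, hence identically $0$. Independent motions violate this through the local time term in Tanaka's formula.

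The paper's argument is of this kind. It shows that the clockwise distance $T(\omega_i,\omega_j)\in[0,1]$ is a bounded martingale, hence absorbed at $\{0,1\}$. It then identifies the covariations $\langle B_i,B_j\rangle$ pair by pair via Lemma \ref{al01}, instead of your induction over collision times.

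\textbf{Caveat on the domain.} Kinked functions belong to $\mathfrak D^{(k)}$ only if the derivatives are extended to the diagonal from each side separately. If ``extended continuously to $\mathbb T^\ell$'' is read literally, then:
\begin{itemize}
\item every ${\bf f}\in\mathfrak D^{(k)}$ is $C^3$ on all of $\mathbb T^k$;
\item independent Brownian motions solve the martingale problem, as do the coalescing ones you construct;
\item uniqueness therefore fails.
\end{itemize}
The paper's proof passes over exactly this point when it asserts that $T(\omega(\cdot))$ is continuous; that assertion is the non-crossing property itself.

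So you must first adopt the one-sided reading of the domain. With that reading, your scheme can be completed with the kinked test function in place of $\sin^4$.
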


\subsection{\label{sec1.6}Convergence of Condensate Movements to
Coalescing Brownian Motions}

We are ready to state our final main result. Recall the set $\widehat{\mathcal{E}}_{N}$
from \eqref{eq:ENhat}. Define a projection $\Phi_{N} : \widehat{\mathcal{E}}_{N}\to{\mathbb T}^{k}$ which forgets the mass information:
\begin{equation}
\Phi_{N}(\widehat\xi_{\bm{n}}^{\bm{x}}):= \frac{ \bm x } L \in [0,1)^k = {\mathbb T}^{k}.
\label{eq:PhiN}
\end{equation}
Let us also regard, with a notational abuse, that the projection $\Phi_N$ maps $D([0,\infty);\widehat{\mathcal E}_N)$ to $D([0,\infty);\mathbb T^k)$.

\begin{thm}
\label{thm4}Let $\widehat\xi_{\bm{n}}^{\bm{x}}\in \widehat{\mathcal{E}}_{N}$ and assume that
$L^{-1}\bm x \xrightarrow{N\to\infty} \bm u \in \mathbb{T}^k$. Then, the laws $\widehat{\mathbb Q}_{\widehat\xi_{\bm{n}}^{\bm{x}}}^{N} \circ \Phi_N^{-1}$ on $D([0,\infty);{\mathbb T}^k)$ converge weakly in the Skorokhod topology to
${\bf Q}_{  \bm u  }$ as $N\to\infty$.
\end{thm}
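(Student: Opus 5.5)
We follow the standard martingale-problem route: prove tightness of $\{\widehat{\mathbb Q}_{\widehat\xi_{\bm n}^{\bm x}}^N\circ\Phi_N^{-1}\}_N$ in $D([0,\infty);\mathbb T^k)$, and then show that every limit point solves the martingale problem of Theorem \ref{thm3} started at $\bm u$. Uniqueness in Theorem \ref{thm3} then identifies the limit as ${\bf Q}_{\bm u}$.

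The first step is to compute the rates $\widehat R_N$ asymptotically. For a type A move of an isolated condensate of mass $n$ sitting at $x_i$, the trace rate $R_N(\xi,\xi\pm$ shift$)$ equals $\theta_N$ times the escape rate times the capacity-type probability that the $n$ particles shuttling between $x_i$ and $x_i\pm1$ empty $x_i$ before returning. This is a one-dimensional birth–death computation of the kind done in \cite{BDG17,Kim21}. We expect it to give a rate $\theta_N d_N\,(1+o(1))\cdot c = N^2 c\,(1+o(1))$, with a constant $c$ independent of $n$. Because the labelling uses $L$ sites and $N/L\to\rho$, this becomes $\rho^2 L^2$ in macroscopic units, which matches the constant in $\rho^2\Delta$. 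We need this uniformly in $n$, including $n$ of order one; the condition \eqref{eq:dN-cond} should guarantee that the error terms coming from stray jumps are $o(1)$.

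For a test function ${\bf f}\in\mathfrak D^{(k)}$, set $F_N(\widehat\xi_{\bm n}^{\bm x}) := {\bf f}(\bm x/L)$. Away from coincidences and from type B configurations, a second-order Taylor expansion with the uniform third-derivative bound gives $\widehat{\mathcal L}_N F_N = \rho^2\Delta{\bf f}(\bm x/L)+o(1)$. The symmetry of the left and right rates kills the first-order term. When several labels coincide, the group moves together, and the generator produces $\rho^2\sum_{p,q\text{ in group}}\partial_p\partial_q{\bf f}$. Condition \eqref{eq:didj} makes the mixed terms vanish on the diagonal, so this again equals $\rho^2\Delta{\bf f}$ up to $o(1)$. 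Given these estimates, Dynkin martingales and Aldous' criterion yield tightness, since the generator bound is uniform in the configuration.

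The main obstacle is the type B configurations, where two groups are at distance $2$ and can exchange mass or merge. There the generator is not close to $\rho^2\Delta{\bf f}$. The coalescence rates are of order $N^2$ in microscopic units, but the jump in $F_N$ is only $O(1/L)$, and mass exchange does not change $F_N$ at all. So $|\widehat{\mathcal L}_N F_N|$ is bounded there by something like $CN^2\cdot L^{-1}\|\nabla{\bf f}\|$, which is not small. Our plan is to show that the occupation time of the set $\mathcal B_N$ of configurations containing a pair at distance exactly $2$ (or, more generally, at distance $\le K$) is negligible: ${\rm E}\int_0^T{\bf 1}\{\widehat\eta_N(t)\in\mathcal B_N\}\,{\rm d}t\cdot L \to 0$.

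To prove this, we would compare with the distance process between adjacent distinct groups. Away from $\mathcal B_N$ this process is a symmetric random walk at rate of order $L^2$. A local-time estimate for such a walk gives time $O(L^{-1})$ at each fixed site over $[0,T]$. The extra factor of $L$ then has to be beaten by showing that once a pair reaches distance $2$, it coalesces (or exchanges mass and separates) within time $o(L^{-2})$ with high probability. That is, the coalescing time is negligible compared with the diffusive time scale, using that the merging rates in \eqref{eq:EN-nbd-1} are at least comparable to the type A rates. Alternatively, we would control the error via the test function ${\bf f}$ itself: $\partial_i{\bf f}-\partial_j{\bf f}$ is Lipschitz and vanishes appropriately on the diagonal, so the first-order error near a collision is $O(1/L)\cdot N^2/L = O(N)$ only on a set of occupation time $O(L^{-2})$, which gives $o(1)$.

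Once the error term is shown to vanish in $L^1$ on $[0,T]$, the Dynkin martingales pass to the limit along any subsequence. Every limit point then satisfies the martingale problem in its $\lambda$-resolvent form, and Theorem \ref{thm3} concludes.
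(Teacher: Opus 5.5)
\noindent Your overall route (check the martingale problem directly for ${\bf f}\circ\Phi_N$ and use Aldous' criterion, instead of the paper's resolvent flatness Theorem~\ref{thm:res}) is a legitimate alternative. Your interior computation matches the paper's Lemma~\ref{lem1}: rates $N^2+O(d_NN^3\log N)$ as in Lemma~\ref{lem:JN-trace-rate}, Taylor expansion, and \eqref{eq:didj} for coinciding labels. The gap is the step you yourself flag, and none of your arguments controls the error on $\widehat{\mathcal K}_N$.

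\begin{itemize}
\item A visit to distance two does \emph{not} end within time $o(L^{-2})$. Separations occur at rate of order $N^2$, and for two condensates of mass of order $N$ the coalescence rates are also $O(N^2)$. So a visit lasts a time of order $N^{-2}\asymp L^{-2}$, and ``local time $O(L^{-1})$ times $L$'' does not vanish.
\item Your bound $CN^2L^{-1}\|\nabla{\bf f}\|_\infty$ on $|\widehat{\mathcal L}_N({\bf f}\circ\Phi_N)|$ at type B configurations is unjustified. The paper only bounds coalescence rates from below (Lemma~\ref{lem:KN-trace-rate}), and the a priori upper bound is the microscopic exit rate $2N^3$. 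Coalescence rates genuinely exceed $N^2$ when one condensate is small: heuristically, a single particle two sites from a condensate of mass $N-1$ is absorbed at rate of order $N^{5/2}$. Hence ``occupation time $\times$ supremum of the error'' cannot close.
\item Your alternative uses that $\partial_i{\bf f}-\partial_j{\bf f}$ vanishes on $\{u_i=u_j\}$, which is false in $\mathfrak D^{(k)}$ (take ${\bf f}(\bm u)=g(u_1)$); only $\partial_i\partial_j{\bf f}=0$ there.
\item For the same reason, your tightness step (``the generator bound is uniform'') is not justified.
\end{itemize}

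\noindent The missing idea is the escape probability of Lemma~\ref{lem:escape-p-trace}. Each visit to $\widehat{\mathcal K}_N^\ell$ ends in $\widehat{\mathcal E}_N^{\ell-1}$ with probability at least $p$, and at most $k-1$ coalescences can occur. So the number of visits is $O(1)$, not $O(L)$ as in your picture of a distance walk returning $\asymp L$ times. To exploit this, count jumps rather than time. By the compensator identity,
\begin{equation*}
{\rm E}\int_0^\infty {\bf 1}_{\widehat{\mathcal K}_N}(\widehat\eta_N(t))\,\bigl|\widehat{\mathcal L}_N({\bf f}\circ\Phi_N)(\widehat\eta_N(t))\bigr|\,{\rm d}t
\le \frac{C_k\|\nabla{\bf f}\|_\infty}{L}\,{\rm E}\bigl[\#\{\text{position-changing jumps from }\widehat{\mathcal K}_N\}\bigr].
\end{equation*}
Type A rates are at most $N^2(1+o(1))$, while the total coalescence rate from any configuration in $\widehat{\mathcal K}_N$ is at least $N^2(1+o(1))$. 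Hence each position-changing jump from $\widehat{\mathcal K}_N$ is a coalescence with probability bounded below by a constant depending only on $k$. The right-hand side is therefore $O(1/L)$, however large the coalescence rates are. Likewise, the expected occupation time of $\widehat{\mathcal K}_N$ is $O(N^{-2})$, which handles the $\rho^2\Delta{\bf f}$ term. With this fix your direct route gives tightness and the martingale problem at once.

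The paper instead never evaluates $\widehat{\mathcal L}_N$ on $\widehat{\mathcal K}_N$. It proves Theorem~\ref{thm:res} by induction on $\ell$, using on $\widehat{\mathcal K}_N^\ell$ only three facts: the exit time is $O(N^{-2})$ (Lemma~\ref{lem:xinx-hit-label}); positions do not move macroscopically before it (Lemma~\ref{lem:not-moving}); and the same probability $p$. Together these give the contraction $\|\varepsilon_N\|_{\widehat{\mathcal K}_N^\ell}\le(1-p)\|\varepsilon_N\|_{\widehat{\mathcal E}_N^\ell}$ in the limit. Tightness and identification of limit points then follow from the general resolvent framework.
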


\begin{rem}\label{rem:prop}
The main theorems explain the typical saturation mechanism of the
condensing symmetric inclusion process as follows. By Theorem
\ref{thm1}, the process stays at a condensed configuration in
$\mathcal E_N$ at almost all times. By Theorem \ref{thm2}, the
trajectory of the process undergoes only the typical jumps (of types A
or B defined in Section \ref{sec1.3}) with high probability, so the
condensates can be labeled in order by \eqref{eq:proj-law}, and each
mass information remains attached to each condensate before collision
and adds up afterwards. Finally, Theorem \ref{thm4} indicates that the
movements of the condensates are close to the process of $k$
coalescing Brownian motions, which is well posed by Theorem
\ref{thm3}. In conclusion, as $N \to \infty$, the $k$ condensates
perform \emph{coalescing Brownian motions with a coagulating mass
mechanism.}  It is worth mentioning that, at any time of the process
at a condensed configuration, the dynamical mass information of each
condensate can be recovered from the history of the saturation regime
with high probability.
\end{rem}

\begin{rem}
The case of $k=1$ was proved in \cite[Theorem 3.24]{KS21} under a suboptimal condition $d_{N}N^{4}\log N\ll1$. Our main theorems generalize this result to the case when the process
starts from two or more condensates.
Previously in \cite{KS21}, it was impossible to consider two or more condensates due to the following
two reasons. First, the coalescing mechanism between two condensates was not well understood at the moment, which is now resolved in this paper with aid from recent works \cite{KSau24a,KSau24b}.
Second, the jumping mechanism between less stable configurations with two or more condensates (in terms of the stationary distribution) was difficult to analyze via potential theory. In this paper, we successfully overcome this by applying the so-called \emph{resolvent approach} to metastability \cite{KL26,LMS25}.
\end{rem}

\begin{rem}
Here we collect some potential future directions of research. A
natural first extension would be to consider the same saturation
regime of the symmetric inclusion process on the discrete torus of
dimension $d \ge 2$. In this case the situation would become
qualitatively different, since the diffusive time order of the random
walk is still $N^2$, whereas the time order of the coalescence of two
condensates becomes $N^2 \log N$ for $d=2$ and $N^d$ for $d \ge 3$,
both strictly bigger than $N^2$. Thus, in the thermodynamic limit, we
expect the limit diffusion to be the non-coalescing $k$ independent
Brownian motions in $\mathbb T^d$.

Alternatively, one may consider the saturation regime of the
\emph{asymmetric} inclusion process on the torus. In this case, it was
predicted \cite{CCG14} that the corresponding time-scale would be
$\frac{N}{d_N}$ instead of $\theta_N = \frac{N^2}{d_N}$. The
asymmetric jumps in the thermodynamic limit in $\mathbb T$ (or
$\mathbb T^d$) are expected to be deterministic, but the saturation
regime would remain stochastic due to the random choice of the
condensate to move next. After the saturation is completed, the limit
dynamics in the stationary regime becomes deterministic, as verified
in \cite[Theorem 3.22]{KS21}.

%One needs to first address the discrepancy in \cite{CCG14,KS21} regarding the time-scale of the saturation/stationary regimes, which was predicted as $\frac{N}{d_N}$ in \cite{CCG14} and calculated as $\frac1{d_N}$ in \cite{KS21}.
\end{rem}

The analysis of the bulk part, i.e., the jumping mechanism of a condensate that is at least distance three away from the others, is relatively straightforward since it does not interact with the other condensates. Thus, the union of typical trajectories (cf. $\mathcal A_{\bm n}^{\bm x}$ in Definition \ref{def:Anx-def-1}) is essentially one-dimensional. The main technical difficulty arises from the edge part, i.e., two condensates that are exactly two units apart. In this case, the interaction between the two condensates makes the set of typical trajectories (cf. $\mathcal A_{\bm n}^{\bm x}$ in Definition \ref{def:Anx-def-2}) two-dimensional, and it is practically impossible to track all possible jumps of the condensates. Alternatively, we use the symmetry structure of the microscopic jumps (see \eqref{eq:coupling-2}) to upper bound the time it takes for coalescence and prove that it is negligible compared to the time-scale $\theta_N$ (see Lemmas \ref{lem:tube-typ} and \ref{lem:tube-typ-2}). Combining these two estimates for the bulk and edge parts proves the flatness of the resolvent solutions in Lemmas \ref{lem:res-ind-hyp} and \ref{lem:res-ind-hyp-label}, which is key to identifying the dynamics of the coalescing particle condensates.

The rest of the article is organized as follows. 
In Section \ref{sec2}, we study the local configurations near the condensed configurations in $\mathcal E_N$, via the so-called tube of typical trajectories $\mathcal A_N$. The results therein will be repeatedly exploited in the remainder.
In Section \ref{sec3}, we prove Theorem \ref{thm1} via a comparison principle of the macro/microscopic resolvent solutions.
In Section \ref{sec4}, we prove Theorem \ref{thm2}.
In Section \ref{sec5}, we present another resolvent comparison result, now suitable for characterizing the transitions between the labeled condensed configurations.
Finallly, in Section \ref{sec6}, we make use of this resolvent condition to prove Theorem \ref{thm4}.
%we prove the resolvent flatness condition, Theorem \ref{thm:res}.
%In Section \ref{sec7}, we prove the uniqueness of the martingale
%problem presented in Theorem \ref{thm3}. In Section \ref{sec6},
%we prove that the collection $\{{\bf Q}_{\xi_{\bm{n}}^{\bm{x}}}^{N}:N\ge1\}$
%in Theorem \ref{thm4} is tight and any limit point solves the
%martingale problem in Theorem \ref{thm3}. Along with the uniqueness
%from Section \ref{sec7}, this completes the proof of both Theorems
%\ref{thm3} and \ref{thm4}. \textbf{TBD: mass part?}
Appendix \ref{secA} presents some general properties of the coalescing Brownian motions
defined in Section \ref{sec1.5}, and especially, the proof of Theorem \ref{thm3}. Appendix \ref{secB} collects
simple estimates on two types of one-dimensional random walks that
are exploited in Section \ref{sec2}.

\section{\label{sec2}Tube of Typical Trajectories}

In this section, we begin the analysis of particle condensates transitions.
Recall from Definition \ref{def:IL-ILhat} and \eqref{eq:EN} the definitions of $\mathbb I_L^\ell$, $\ell \in \llbracket 1,k \rrbracket$, and $\mathcal E_N$.
Notice that if $\bm x \in \mathbb I_L^\ell$, $d \, (x_i , x_j) \ge 2$ for all $i \ne j$, where ${\color{blue} d} \, (\cdot,\cdot)$ denotes the canonical distance on $\mathbb T_L$.
Decompose $\mathbb{I}_{L}^{\ell} := \mathbb J_{L}^{\ell}\cup\mathbb K_{L}^{\ell}$
where
\[
{\color{blue} \mathbb J_{L}^{\ell}}:=\left\{ \bm{x} \in \mathbb I_{L}^{\ell} : d \, (x_i, x_j) \ge 3 \quad \text{for all} \enspace i \ne j \right\} ,\qquad
{\color{blue} \mathbb K_{L}^{\ell}}:= \mathbb{I}_{L}^{\ell} \setminus\mathbb J_{L}^{\ell}.
\]
Then, define
\begin{equation}\label{eq:JNKN}
{\color{blue} \mathcal J_N^\ell } := \bigcup_{\bm x \in \mathbb J_L^\ell} \bigcup_{\bm n \in {\bf N}_N^\ell} \{ \xi_{\bm n}^{\bm x} \}, \qquad
{\color{blue} \mathcal K_N^\ell } := \bigcup_{\bm x \in \mathbb K_L^\ell} \bigcup_{\bm n \in {\bf N}_N^\ell} \{ \xi_{\bm n}^{\bm x} \} \qquad \text{so that} \qquad
\mathcal E_N^\ell = \mathcal J_N^\ell \cup \mathcal K_N^\ell .
\end{equation}

Next, we define sets $\mathcal A_{\bm n}^{\bm x}$ for each $\xi_{\bm n}^{\bm x} \in \mathcal E_N$, dividing into two cases.

\begin{defn}[Collection $\mathcal A_{\bm n}^{\bm x}$ for $\xi_{\bm n}^{\bm x} \in \mathcal J_N^\ell$]\label{def:Anx-def-1}
For each $\xi_{\bm n}^{\bm x} \in \mathcal J_N^\ell$, denote by $\color{blue} \mathcal A_{\bm n}^{\bm x}$ the collection of all configurations that can be attained from consecutive transitions starting from $\xi_{\bm n}^{\bm x}$ such that,
if the initial particle jump occurs at $x_i \to x_i +a$, where $a \in \{1,-1\}$, then from this point only jumps between $x_i$ and $x_i +a$ are allowed. The set $\mathcal A_{\bm n}^{\bm x}$ is called a \emph{tube} from $\xi_{\bm n}^{\bm x}$ to $\mathcal E_N$, in the sense that it is defined by the initial state $\xi_{\bm n}^{\bm x}$ and the ones attained from typical jumps after the first jump. It is clear that
\[
\mathcal A_{\bm n}^{\bm x} = \bigcup_{i = 1}^\ell \left( \mathcal A_{\bm n}^{\bm x,i,+} \cup \mathcal A_{\bm n}^{\bm x,i,-} \right) ,
\]
where
\[
{\color{blue} \mathcal{A}_{\bm n}^{\bm{x},i,\pm} } := \left\{ \eta\in\Omega_{N}:\eta_{x_{i}}+\eta_{x_{i} \pm 1}=n_{i} \quad \text{and} \quad \eta_{x_{j}}=n_{j} \quad \text{for}\enspace j\ne i\right\} .
\]
The set $\mathcal{A}_{\bm n}^{\bm{x}}$ collects configurations
visited during a typical transition from $\xi_{\bm{n}}^{\bm{x}}$
to its neighbors, that are, $\xi_{\bm{n}}^{\bm{x}\pm\bm e_{i}}$
for $i\in\llbracket1,\ell\rrbracket$. Note that the configurations $\xi_{\bm n}^{\bm x \pm \bm e_i}$ are still in $\mathcal E_N^\ell$ since $\bm x \in \mathbb J_L^\ell$.
Starting from $\xi_{\bm n}^{\bm x}$ and during the time in which the process remains in $\mathcal A_{\bm n}^{\bm x}$, it returns to $\mathcal E_N$ at a configuration which belongs either to $\mathcal J_N^\ell$ or to $\mathcal K_N^\ell$.
\end{defn}

\begin{figure}
\begin{tikzpicture}[scale=0.6]
\draw[thick] (0,2.2)--(0,3.2); \draw[thick] (2.25,4.95)--(3.75,4.95);
\draw[thick] (0,-1.3)--(0,-2.3); \draw[thick] (2.25,-4.05)--(3.75,-4.05);
\draw[thick] (2.25,0.45)--(3.75,0.45);
\draw[thick] (8.25,0.45)--(9.75,4.95); \draw[thick] (8.25,0.45)--(9.75,0.45); \draw[thick] (8.25,0.45)--(9.75,-4.05);

\draw[thick] (-2.25,-1.3) rectangle (2.25,2.2);
\draw[] (0,0) ellipse (2 and 1);
\foreach \i in {0,...,15} { \draw[thick] ({2*cos(\i*22.5)},{1*sin(\i*22.5)-0.1})--({2*cos(\i*22.5)},{1*sin(\i*22.5)}); }

\foreach \i in {1} { \foreach \j in {0,...,3} { \draw[very thick,blue] ({2*cos(\i*22.5)},{1*sin(\i*22.5)-0.2})--({2*cos(\i*22.5)},{1*sin(\i*22.5)});
\fill[blue!50!white] ({2*cos(\i*22.5)},{1*sin(\i*22.5)+0.15+0.3*\j}) circle (0.15); \draw[blue] ({2*cos(\i*22.5)},{1*sin(\i*22.5)+0.15+0.3*\j}) circle (0.15); } }
\foreach \i in {6} { \foreach \j in {0,...,2} { \draw[very thick,teal] ({2*cos(\i*22.5)},{1*sin(\i*22.5)-0.2})--({2*cos(\i*22.5)},{1*sin(\i*22.5)});
\fill[teal!50!white] ({2*cos(\i*22.5)},{1*sin(\i*22.5)+0.15+0.3*\j}) circle (0.15); \draw[teal] ({2*cos(\i*22.5)},{1*sin(\i*22.5)+0.15+0.3*\j}) circle (0.15); } }
\foreach \i in {11} { \foreach \j in {0,...,4} { \draw[very thick,red] ({2*cos(\i*22.5)},{1*sin(\i*22.5)-0.2})--({2*cos(\i*22.5)},{1*sin(\i*22.5)});
\fill[red!50!white] ({2*cos(\i*22.5)},{1*sin(\i*22.5)+0.15+0.3*\j}) circle (0.15); \draw[red] ({2*cos(\i*22.5)},{1*sin(\i*22.5)+0.15+0.3*\j}) circle (0.15); } }
\foreach \i in {13} { \foreach \j in {0,...,3} { \draw[very thick,red] ({2*cos(\i*22.5)},{1*sin(\i*22.5)-0.2})--({2*cos(\i*22.5)},{1*sin(\i*22.5)});
\fill[red!50!white] ({2*cos(\i*22.5)},{1*sin(\i*22.5)+0.15+0.3*\j}) circle (0.15); \draw[red] ({2*cos(\i*22.5)},{1*sin(\i*22.5)+0.15+0.3*\j}) circle (0.15); } }

\begin{scope}[shift={(0,-4.5)}]
\draw[densely dashed] (-2.25,-1.3) rectangle (2.25,2.2);
\draw[] (0,0) ellipse (2 and 1);
\foreach \i in {0,...,15} { \draw[thick] ({2*cos(\i*22.5)},{1*sin(\i*22.5)-0.1})--({2*cos(\i*22.5)},{1*sin(\i*22.5)}); }

\foreach \i in {1} { \foreach \j in {0,...,3} { \draw[very thick,blue] ({2*cos(\i*22.5)},{1*sin(\i*22.5)-0.2})--({2*cos(\i*22.5)},{1*sin(\i*22.5)});
\fill[blue!50!white] ({2*cos(\i*22.5)},{1*sin(\i*22.5)+0.15+0.3*\j}) circle (0.15); \draw[blue] ({2*cos(\i*22.5)},{1*sin(\i*22.5)+0.15+0.3*\j}) circle (0.15); } }
\foreach \i in {5} { \foreach \j in {0,...,0} { \draw[very thick,teal] ({2*cos(\i*22.5)},{1*sin(\i*22.5)-0.2})--({2*cos(\i*22.5)},{1*sin(\i*22.5)});
\fill[teal!50!white] ({2*cos(\i*22.5)},{1*sin(\i*22.5)+0.15+0.3*\j}) circle (0.15); \draw[teal] ({2*cos(\i*22.5)},{1*sin(\i*22.5)+0.15+0.3*\j}) circle (0.15); } }
\foreach \i in {6} { \foreach \j in {0,...,1} { \draw[very thick,teal] ({2*cos(\i*22.5)},{1*sin(\i*22.5)-0.2})--({2*cos(\i*22.5)},{1*sin(\i*22.5)});
\fill[teal!50!white] ({2*cos(\i*22.5)},{1*sin(\i*22.5)+0.15+0.3*\j}) circle (0.15); \draw[teal] ({2*cos(\i*22.5)},{1*sin(\i*22.5)+0.15+0.3*\j}) circle (0.15); } }
\foreach \i in {11} { \foreach \j in {0,...,4} { \draw[very thick,red] ({2*cos(\i*22.5)},{1*sin(\i*22.5)-0.2})--({2*cos(\i*22.5)},{1*sin(\i*22.5)});
\fill[red!50!white] ({2*cos(\i*22.5)},{1*sin(\i*22.5)+0.15+0.3*\j}) circle (0.15); \draw[red] ({2*cos(\i*22.5)},{1*sin(\i*22.5)+0.15+0.3*\j}) circle (0.15); } }
\foreach \i in {13} { \foreach \j in {0,...,3} { \draw[very thick,red] ({2*cos(\i*22.5)},{1*sin(\i*22.5)-0.2})--({2*cos(\i*22.5)},{1*sin(\i*22.5)});
\fill[red!50!white] ({2*cos(\i*22.5)},{1*sin(\i*22.5)+0.15+0.3*\j}) circle (0.15); \draw[red] ({2*cos(\i*22.5)},{1*sin(\i*22.5)+0.15+0.3*\j}) circle (0.15); } }
\end{scope}

\begin{scope}[shift={(0,4.5)}]
\draw[densely dashed] (-2.25,-1.3) rectangle (2.25,2.2);
\draw[] (0,0) ellipse (2 and 1);
\foreach \i in {0,...,15} { \draw[thick] ({2*cos(\i*22.5)},{1*sin(\i*22.5)-0.1})--({2*cos(\i*22.5)},{1*sin(\i*22.5)}); }

\foreach \i in {1} { \foreach \j in {0,...,3} { \draw[very thick,blue] ({2*cos(\i*22.5)},{1*sin(\i*22.5)-0.2})--({2*cos(\i*22.5)},{1*sin(\i*22.5)});
\fill[blue!50!white] ({2*cos(\i*22.5)},{1*sin(\i*22.5)+0.15+0.3*\j}) circle (0.15); \draw[blue] ({2*cos(\i*22.5)},{1*sin(\i*22.5)+0.15+0.3*\j}) circle (0.15); } }
\foreach \i in {6} { \foreach \j in {0,...,2} { \draw[very thick,teal] ({2*cos(\i*22.5)},{1*sin(\i*22.5)-0.2})--({2*cos(\i*22.5)},{1*sin(\i*22.5)});
\fill[teal!50!white] ({2*cos(\i*22.5)},{1*sin(\i*22.5)+0.15+0.3*\j}) circle (0.15); \draw[teal] ({2*cos(\i*22.5)},{1*sin(\i*22.5)+0.15+0.3*\j}) circle (0.15); } }
\foreach \i in {11} { \foreach \j in {0,...,4} { \draw[very thick,red] ({2*cos(\i*22.5)},{1*sin(\i*22.5)-0.2})--({2*cos(\i*22.5)},{1*sin(\i*22.5)});
\fill[red!50!white] ({2*cos(\i*22.5)},{1*sin(\i*22.5)+0.15+0.3*\j}) circle (0.15); \draw[red] ({2*cos(\i*22.5)},{1*sin(\i*22.5)+0.15+0.3*\j}) circle (0.15); } }
\foreach \i in {13} { \foreach \j in {0,...,2} { \draw[very thick,red] ({2*cos(\i*22.5)},{1*sin(\i*22.5)-0.2})--({2*cos(\i*22.5)},{1*sin(\i*22.5)});
\fill[red!50!white] ({2*cos(\i*22.5)},{1*sin(\i*22.5)+0.15+0.3*\j}) circle (0.15); \draw[red] ({2*cos(\i*22.5)},{1*sin(\i*22.5)+0.15+0.3*\j}) circle (0.15); } }
\foreach \i in {14} { \foreach \j in {0,...,0} { \draw[very thick,red] ({2*cos(\i*22.5)},{1*sin(\i*22.5)-0.2})--({2*cos(\i*22.5)},{1*sin(\i*22.5)});
\fill[red!50!white] ({2*cos(\i*22.5)},{1*sin(\i*22.5)+0.15+0.3*\j}) circle (0.15); \draw[red] ({2*cos(\i*22.5)},{1*sin(\i*22.5)+0.15+0.3*\j}) circle (0.15); } }
\end{scope}

\begin{scope}[shift={(6,0)}]
\draw[densely dashed] (-2.25,-1.3) rectangle (2.25,2.2);
\draw[] (0,0) ellipse (2 and 1);
\foreach \i in {0,...,15} { \draw[thick] ({2*cos(\i*22.5)},{1*sin(\i*22.5)-0.1})--({2*cos(\i*22.5)},{1*sin(\i*22.5)}); }

\foreach \i in {1} { \foreach \j in {0,...,3} { \draw[very thick,blue] ({2*cos(\i*22.5)},{1*sin(\i*22.5)-0.2})--({2*cos(\i*22.5)},{1*sin(\i*22.5)});
\fill[blue!50!white] ({2*cos(\i*22.5)},{1*sin(\i*22.5)+0.15+0.3*\j}) circle (0.15); \draw[blue] ({2*cos(\i*22.5)},{1*sin(\i*22.5)+0.15+0.3*\j}) circle (0.15); } }
\foreach \i in {6} { \foreach \j in {0,...,2} { \draw[very thick,teal] ({2*cos(\i*22.5)},{1*sin(\i*22.5)-0.2})--({2*cos(\i*22.5)},{1*sin(\i*22.5)});
\fill[teal!50!white] ({2*cos(\i*22.5)},{1*sin(\i*22.5)+0.15+0.3*\j}) circle (0.15); \draw[teal] ({2*cos(\i*22.5)},{1*sin(\i*22.5)+0.15+0.3*\j}) circle (0.15); } }
\foreach \i in {11} { \foreach \j in {0,...,3} { \draw[very thick,red] ({2*cos(\i*22.5)},{1*sin(\i*22.5)-0.2})--({2*cos(\i*22.5)},{1*sin(\i*22.5)});
\fill[red!50!white] ({2*cos(\i*22.5)},{1*sin(\i*22.5)+0.15+0.3*\j}) circle (0.15); \draw[red] ({2*cos(\i*22.5)},{1*sin(\i*22.5)+0.15+0.3*\j}) circle (0.15); } }
\foreach \i in {12} { \foreach \j in {0,...,0} { \draw[very thick,red] ({2*cos(\i*22.5)},{1*sin(\i*22.5)-0.2})--({2*cos(\i*22.5)},{1*sin(\i*22.5)});
\fill[red!50!white] ({2*cos(\i*22.5)},{1*sin(\i*22.5)+0.15+0.3*\j}) circle (0.15); \draw[red] ({2*cos(\i*22.5)},{1*sin(\i*22.5)+0.15+0.3*\j}) circle (0.15); } }
\foreach \i in {13} { \foreach \j in {0,...,3} { \draw[very thick,red] ({2*cos(\i*22.5)},{1*sin(\i*22.5)-0.2})--({2*cos(\i*22.5)},{1*sin(\i*22.5)});
\fill[red!50!white] ({2*cos(\i*22.5)},{1*sin(\i*22.5)+0.15+0.3*\j}) circle (0.15); \draw[red] ({2*cos(\i*22.5)},{1*sin(\i*22.5)+0.15+0.3*\j}) circle (0.15); } }
\end{scope}

\begin{scope}[shift={(6,-4.5)}]
\draw[red,thick] (-2.25,-1.3) rectangle (2.25,2.2);
\draw[] (0,0) ellipse (2 and 1);
\foreach \i in {0,...,15} { \draw[thick] ({2*cos(\i*22.5)},{1*sin(\i*22.5)-0.1})--({2*cos(\i*22.5)},{1*sin(\i*22.5)}); }

\foreach \i in {1} { \foreach \j in {0,...,3} { \draw[very thick,blue] ({2*cos(\i*22.5)},{1*sin(\i*22.5)-0.2})--({2*cos(\i*22.5)},{1*sin(\i*22.5)});
\fill[blue!50!white] ({2*cos(\i*22.5)},{1*sin(\i*22.5)+0.15+0.3*\j}) circle (0.15); \draw[blue] ({2*cos(\i*22.5)},{1*sin(\i*22.5)+0.15+0.3*\j}) circle (0.15); } }
\foreach \i in {5} { \foreach \j in {0,...,2} { \draw[very thick,teal] ({2*cos(\i*22.5)},{1*sin(\i*22.5)-0.2})--({2*cos(\i*22.5)},{1*sin(\i*22.5)});
\fill[teal!50!white] ({2*cos(\i*22.5)},{1*sin(\i*22.5)+0.15+0.3*\j}) circle (0.15); \draw[teal] ({2*cos(\i*22.5)},{1*sin(\i*22.5)+0.15+0.3*\j}) circle (0.15); } }
\foreach \i in {11} { \foreach \j in {0,...,4} { \draw[very thick,red] ({2*cos(\i*22.5)},{1*sin(\i*22.5)-0.2})--({2*cos(\i*22.5)},{1*sin(\i*22.5)});
\fill[red!50!white] ({2*cos(\i*22.5)},{1*sin(\i*22.5)+0.15+0.3*\j}) circle (0.15); \draw[red] ({2*cos(\i*22.5)},{1*sin(\i*22.5)+0.15+0.3*\j}) circle (0.15); } }
\foreach \i in {13} { \foreach \j in {0,...,3} { \draw[very thick,red] ({2*cos(\i*22.5)},{1*sin(\i*22.5)-0.2})--({2*cos(\i*22.5)},{1*sin(\i*22.5)});
\fill[red!50!white] ({2*cos(\i*22.5)},{1*sin(\i*22.5)+0.15+0.3*\j}) circle (0.15); \draw[red] ({2*cos(\i*22.5)},{1*sin(\i*22.5)+0.15+0.3*\j}) circle (0.15); } }
\end{scope}

\begin{scope}[shift={(6,4.5)}]
\draw[orange,thick] (-2.25,-1.3) rectangle (2.25,2.2);
\draw[] (0,0) ellipse (2 and 1);
\foreach \i in {0,...,15} { \draw[thick] ({2*cos(\i*22.5)},{1*sin(\i*22.5)-0.1})--({2*cos(\i*22.5)},{1*sin(\i*22.5)}); }

\foreach \i in {1} { \foreach \j in {0,...,3} { \draw[very thick,blue] ({2*cos(\i*22.5)},{1*sin(\i*22.5)-0.2})--({2*cos(\i*22.5)},{1*sin(\i*22.5)});
\fill[blue!50!white] ({2*cos(\i*22.5)},{1*sin(\i*22.5)+0.15+0.3*\j}) circle (0.15); \draw[blue] ({2*cos(\i*22.5)},{1*sin(\i*22.5)+0.15+0.3*\j}) circle (0.15); } }
\foreach \i in {6} { \foreach \j in {0,...,2} { \draw[very thick,teal] ({2*cos(\i*22.5)},{1*sin(\i*22.5)-0.2})--({2*cos(\i*22.5)},{1*sin(\i*22.5)});
\fill[teal!50!white] ({2*cos(\i*22.5)},{1*sin(\i*22.5)+0.15+0.3*\j}) circle (0.15); \draw[teal] ({2*cos(\i*22.5)},{1*sin(\i*22.5)+0.15+0.3*\j}) circle (0.15); } }
\foreach \i in {11} { \foreach \j in {0,...,4} { \draw[very thick,red] ({2*cos(\i*22.5)},{1*sin(\i*22.5)-0.2})--({2*cos(\i*22.5)},{1*sin(\i*22.5)});
\fill[red!50!white] ({2*cos(\i*22.5)},{1*sin(\i*22.5)+0.15+0.3*\j}) circle (0.15); \draw[red] ({2*cos(\i*22.5)},{1*sin(\i*22.5)+0.15+0.3*\j}) circle (0.15); } }
\foreach \i in {14} { \foreach \j in {0,...,3} { \draw[very thick,red] ({2*cos(\i*22.5)},{1*sin(\i*22.5)-0.2})--({2*cos(\i*22.5)},{1*sin(\i*22.5)});
\fill[red!50!white] ({2*cos(\i*22.5)},{1*sin(\i*22.5)+0.15+0.3*\j}) circle (0.15); \draw[red] ({2*cos(\i*22.5)},{1*sin(\i*22.5)+0.15+0.3*\j}) circle (0.15); } }
\end{scope}

\begin{scope}[shift={(12,-4.5)}]
\draw[blue,thick] (-2.25,-1.3) rectangle (2.25,2.2);
\draw[] (0,0) ellipse (2 and 1);
\foreach \i in {0,...,15} { \draw[thick] ({2*cos(\i*22.5)},{1*sin(\i*22.5)-0.1})--({2*cos(\i*22.5)},{1*sin(\i*22.5)}); }

\foreach \i in {1} { \foreach \j in {0,...,3} { \draw[very thick,blue] ({2*cos(\i*22.5)},{1*sin(\i*22.5)-0.2})--({2*cos(\i*22.5)},{1*sin(\i*22.5)});
\fill[blue!50!white] ({2*cos(\i*22.5)},{1*sin(\i*22.5)+0.15+0.3*\j}) circle (0.15); \draw[blue] ({2*cos(\i*22.5)},{1*sin(\i*22.5)+0.15+0.3*\j}) circle (0.15); } }
\foreach \i in {6} { \foreach \j in {0,...,2} { \draw[very thick,teal] ({2*cos(\i*22.5)},{1*sin(\i*22.5)-0.2})--({2*cos(\i*22.5)},{1*sin(\i*22.5)});
\fill[teal!50!white] ({2*cos(\i*22.5)},{1*sin(\i*22.5)+0.15+0.3*\j}) circle (0.15); \draw[teal] ({2*cos(\i*22.5)},{1*sin(\i*22.5)+0.15+0.3*\j}) circle (0.15); } }
\foreach \i in {11} { \foreach \j in {0,...,8} { \draw[very thick,red] ({2*cos(\i*22.5)},{1*sin(\i*22.5)-0.2})--({2*cos(\i*22.5)},{1*sin(\i*22.5)});
\fill[red!50!white] ({2*cos(\i*22.5)},{1*sin(\i*22.5)+0.15+0.3*\j}) circle (0.15); \draw[red] ({2*cos(\i*22.5)},{1*sin(\i*22.5)+0.15+0.3*\j}) circle (0.15); } }
\end{scope}

\begin{scope}[shift={(12,0)}]
\draw[blue,thick] (-2.25,-1.3) rectangle (2.25,2.2);
\draw[] (0,0) ellipse (2 and 1);
\foreach \i in {0,...,15} { \draw[thick] ({2*cos(\i*22.5)},{1*sin(\i*22.5)-0.1})--({2*cos(\i*22.5)},{1*sin(\i*22.5)}); }

\foreach \i in {1} { \foreach \j in {0,...,3} { \draw[very thick,blue] ({2*cos(\i*22.5)},{1*sin(\i*22.5)-0.2})--({2*cos(\i*22.5)},{1*sin(\i*22.5)});
\fill[blue!50!white] ({2*cos(\i*22.5)},{1*sin(\i*22.5)+0.15+0.3*\j}) circle (0.15); \draw[blue] ({2*cos(\i*22.5)},{1*sin(\i*22.5)+0.15+0.3*\j}) circle (0.15); } }
\foreach \i in {6} { \foreach \j in {0,...,2} { \draw[very thick,teal] ({2*cos(\i*22.5)},{1*sin(\i*22.5)-0.2})--({2*cos(\i*22.5)},{1*sin(\i*22.5)});
\fill[teal!50!white] ({2*cos(\i*22.5)},{1*sin(\i*22.5)+0.15+0.3*\j}) circle (0.15); \draw[teal] ({2*cos(\i*22.5)},{1*sin(\i*22.5)+0.15+0.3*\j}) circle (0.15); } }
\foreach \i in {12} { \foreach \j in {0,...,8} { \draw[very thick,red] ({2*cos(\i*22.5)},{1*sin(\i*22.5)-0.2})--({2*cos(\i*22.5)},{1*sin(\i*22.5)});
\fill[red!50!white] ({2*cos(\i*22.5)},{1*sin(\i*22.5)+0.15+0.3*\j}) circle (0.15); \draw[red] ({2*cos(\i*22.5)},{1*sin(\i*22.5)+0.15+0.3*\j}) circle (0.15); } }
\end{scope}

\begin{scope}[shift={(12,4.5)}]
\draw[red,thick] (-2.25,-1.3) rectangle (2.25,2.2);
\draw[] (0,0) ellipse (2 and 1);
\foreach \i in {0,...,15} { \draw[thick] ({2*cos(\i*22.5)},{1*sin(\i*22.5)-0.1})--({2*cos(\i*22.5)},{1*sin(\i*22.5)}); }

\foreach \i in {1} { \foreach \j in {0,...,3} { \draw[very thick,blue] ({2*cos(\i*22.5)},{1*sin(\i*22.5)-0.2})--({2*cos(\i*22.5)},{1*sin(\i*22.5)});
\fill[blue!50!white] ({2*cos(\i*22.5)},{1*sin(\i*22.5)+0.15+0.3*\j}) circle (0.15); \draw[blue] ({2*cos(\i*22.5)},{1*sin(\i*22.5)+0.15+0.3*\j}) circle (0.15); } }
\foreach \i in {6} { \foreach \j in {0,...,2} { \draw[very thick,teal] ({2*cos(\i*22.5)},{1*sin(\i*22.5)-0.2})--({2*cos(\i*22.5)},{1*sin(\i*22.5)});
\fill[teal!50!white] ({2*cos(\i*22.5)},{1*sin(\i*22.5)+0.15+0.3*\j}) circle (0.15); \draw[teal] ({2*cos(\i*22.5)},{1*sin(\i*22.5)+0.15+0.3*\j}) circle (0.15); } }
\foreach \i in {11} { \foreach \j in {0,...,1} { \draw[very thick,red] ({2*cos(\i*22.5)},{1*sin(\i*22.5)-0.2})--({2*cos(\i*22.5)},{1*sin(\i*22.5)});
\fill[red!50!white] ({2*cos(\i*22.5)},{1*sin(\i*22.5)+0.15+0.3*\j}) circle (0.15); \draw[red] ({2*cos(\i*22.5)},{1*sin(\i*22.5)+0.15+0.3*\j}) circle (0.15); } }
\foreach \i in {13} { \foreach \j in {0,...,6} { \draw[very thick,red] ({2*cos(\i*22.5)},{1*sin(\i*22.5)-0.2})--({2*cos(\i*22.5)},{1*sin(\i*22.5)});
\fill[red!50!white] ({2*cos(\i*22.5)},{1*sin(\i*22.5)+0.15+0.3*\j}) circle (0.15); \draw[red] ({2*cos(\i*22.5)},{1*sin(\i*22.5)+0.15+0.3*\j}) circle (0.15); } }
\end{scope}
\end{tikzpicture}\caption{\label{fig2.1}Example of configurations in $\mathcal A_{\bm n}^{\bm x}$ where $\xi_{\bm n}^{\bm x} \in \mathcal K_N^\ell$ with $\ell = 4$. Precisely, 
the initial configuration $\xi_{\bm n}^{\bm x}$ (with normal boundary),
configurations in $\mathcal B_N^\ell$ (with dashed boundary),
$\mathcal K_N^\ell$ (with red boundary),
$\mathcal J_N^\ell$ (with orange boundary),
and $\mathcal E_N^{\ell-1}$ (with blue boundary) are presented.}
\end{figure}

\begin{defn}[Collection $\mathcal A_{\bm n}^{\bm x}$ for $\xi_{\bm n}^{\bm x} \in \mathcal K_N^\ell$]\label{def:Anx-def-2}
Fix $\bm{x} \in \mathbb K_L^{\ell}$ and $\bm n \in {\bf N}_N^\ell$ such that $\xi_{\bm n}^{\bm x} \in \mathcal K_N^\ell$.
Denote by $\color{blue}  \mathcal{A}_{\bm{n}}^{\bm{x}}$ the collection of all configurations
that can be attained from consecutive transitions starting from $\xi_{\bm{n}}^{\bm{x}}$
according to the following two rules:
\begin{itemize}
\item If the initial particle jump occurs as $x_i \to x_i +1$ and $x_{i+1} \ne x_i +2$, then after this jump only jumps between $x_i$ and $x_i +1$ are allowed. Similarly if initially $x_i \to x_i -1$ occurs where $x_{i-1} \ne x_i - 2$, then after that only jumps between $x_i \leftrightarrow x_i -1$ are allowed.
\item If the initial jump occurs as $x_i \to x_i +1$ and $x_{i+1} = x_i + 2$, then after that only jumps among $x_i , x_i +1 , x_i +2$ are allowed. Similarly if initially $x_i \to x_i -1$ occurs where $x_{i-1} = x_i -2$, then after that only jumps between $x_i \leftrightarrow x_i-1 \leftrightarrow x_i-2$ are allowed.
\end{itemize}
We may represent
\[
\mathcal A_{\bm n}^{\bm x} = \bigcup_{i=1}^\ell \left( \mathcal A_{\bm n}^{\bm x, i,+} \cup \mathcal A_{\bm n}^{\bm x, i,-} \right) ,
\]
where $\color{blue} \mathcal A_{\bm n}^{\bm x,i,+}$ is defined as
\[
\begin{cases}
\left\{ \eta\in\Omega_{N}:\eta_{x_{i}}+\eta_{x_{i} + 1}=n_{i}, \quad \eta_{x_{j}}=n_{j} \quad \text{for}\enspace j\ne i\right\} & \text{if} \enspace {x_{i + 1} \ne x_i + 2}, \\
\left\{ \eta\in\Omega_{N}:\eta_{x_{i}}+\eta_{x_{i} + 1} + \eta_{x_i + 2} = n_{i} + n_{i+1}, \quad \eta_{x_{j}}=n_{j} \quad \text{for}\enspace j\ne i,i+1 \right\} & \text{if} \enspace x_{i + 1} = x_i + 2,
\end{cases}
\]
and $\color{blue} \mathcal A_{\bm n}^{\bm x,i,-}$ is defined as
\[
\begin{cases}
\left\{ \eta\in\Omega_{N}:\eta_{x_{i}}+\eta_{x_{i} - 1}=n_{i}, \quad \eta_{x_{j}}=n_{j} \quad \text{for}\enspace j\ne i\right\} & \text{if} \enspace {x_{i - 1} \ne x_i - 2}, \\
\left\{ \eta\in\Omega_{N}:\eta_{x_{i}}+\eta_{x_{i} - 1} + \eta_{x_i - 2} = n_{i} + n_{i-1}, \quad \eta_{x_{j}}=n_{j} \quad \text{for}\enspace j\ne i,i-1 \right\} & \text{if} \enspace x_{i - 1} = x_i - 2.
\end{cases}
\]
After a jump from $\xi_{\bm n}^{\bm x}$, provided that the process stays within $\mathcal A_{\bm n}^{\bm x}$, there are three types of elements in $\mathcal E_N$ that the process can hit. Refer to Figure \ref{fig2.1}.
\begin{itemize}
\item It returns to $\mathcal K_N^\ell$; i.e., the number of occupied sites returns to $\ell$ and there still exists a pair of occupied sites with distance $2$. An example is the initial configuration $\xi_{\bm n}^{\bm x}$.
\item It ends up in $\mathcal J_N^\ell$; i.e., the number of occupied sites becomes $\ell$, but now all occupied sites are at least distance $3$ away from each other.
\item It hits $\mathcal E_N^{\ell-1}$. This happens if particles in a pair of sites with distance $2$ merge into a single condensate.
\end{itemize}
\end{defn}

In any cases, define ${\color{blue} \mathcal B_{\bm n}^{\bm x}} := \mathcal A_{\bm n}^{\bm x} \setminus \mathcal E_N$ for each $\xi_{\bm n}^{\bm x} \in \mathcal E_N^\ell$. Collect
\[
{\color{blue} \mathcal A_N^\ell } := \bigcup_{\xi_{\bm n}^{\bm x} \in \mathcal E_N^\ell} \mathcal A_{\bm n}^{\bm x}, \qquad
{\color{blue} \mathcal A_N} := \bigcup_{\ell=1}^k \mathcal A_N^\ell, \qquad
{\color{blue} \mathcal B_N^\ell} := \bigcup_{\xi_{\bm n}^{\bm x} \in \mathcal E_N^\ell} \mathcal B_{\bm n}^{\bm x}, \qquad \text{and} \qquad
{\color{blue} \mathcal B_N} := \bigcup_{\ell=1}^k \mathcal B_N^\ell .
\]
By definition, $\mathcal B_N \cap \mathcal E_N = \emptyset$ and $\mathcal A_N = \mathcal E_N \cup \mathcal B_N$.
Moreover, by comparing Definition \ref{def:typ-jump} and Definitions \ref{def:Anx-def-1} and \ref{def:Anx-def-2}, we obtain that
\begin{equation}\label{eq:Nnx-Anx}
\mathcal N_{\bm n}^{\bm x} = ( \mathcal A_{\bm n}^{\bm x} \cap \mathcal E_N ) \setminus \{ \xi_{\bm n}^{\bm x} \}.
\end{equation}

The collection $\mathcal A_N$ is indeed a tube of typical trajectories, in the sense of the following lemma.
Define $\color{blue} \mathfrak T_N$ as the first time when a jump from $\mathcal B_{\bm n}^{\bm x}$ to $\Omega_N \setminus \mathcal A_{\bm n}^{\bm x}$ occurs for some $\xi_{\bm n}^{\bm x} \in \mathcal E_N$.
More precisely, define
%\[
%\mathfrak b_N(t) := \sum_{s \in (0,t]} \sum_{\xi_{\bm n}^{\bm x} \in \mathcal E_N} \sum_{\zeta \in \mathcal B_{\bm n}^{\bm x}} \sum_{\zeta' \in \Omega_N \setminus \mathcal A_{\bm n}^{\bm x} } {\bf 1} \left\{ \eta_N(s-) = \zeta , \enspace \eta_N(s) = \zeta' \right\} \qquad \text{for} \quad t \ge 0,
%\]
%where the summation in $s \in (0,t]$ is well defined almost surely. Then define
%\begin{equation}\label{eq:TN-def}
%\mathfrak T_N := \inf \{ t>0 : \mathfrak b_N(t) = 1 \},
%\end{equation}
\begin{equation}\label{eq:TN-def}
\mathfrak T_N := \inf \left\{ t>0 :  \exists \, \xi_{\bm n}^{\bm x} \in
\mathcal E_N \quad \text{such that} \quad
\eta_N(t-) \in \mathcal B_{\bm n}^{\bm x}, \quad
\eta_N(t) \in \Omega_N \setminus \mathcal A_{\bm n}^{\bm x} \right\},
\end{equation}
which is a finite value almost surely. Recall that $r_N : \Omega_N \times \Omega_N \to [0,\infty)$ denotes the jump rate function of the original inclusion process \eqref{eq:inc-gen}. Define ${\color{blue} r_N'}:\Omega_N \times \Omega_N \to [0,\infty)$ as
\begin{equation}\label{eq:rN'-def}
r_N' \left(\zeta,\zeta' \right) := \begin{cases} 0 & \text{if} \enspace \zeta \in \mathcal B_{\bm n}^{\bm x}, \enspace \zeta' \in \Omega_N \setminus \mathcal A_{\bm n}^{\bm x} \enspace \text{for some} \enspace \xi_{\bm n}^{\bm x} \in \mathcal E_N, \\
r_N \left( \zeta, \zeta' \right) & \text{otherwise}. \end{cases}
\end{equation}
Denote by $\color{blue} \sigma_N(t)$ the process on $\Omega_N$ defined via $r_N'(\cdot,\cdot)$, and by $\color{blue} {\bf P}_\xi^N$ its law starting from $\xi \in \mathcal E_N$. Mind that this is different from annihilating all jumps from $\mathcal B_N$ to $\Omega_N \setminus \mathcal A_N$. For instance consider $L=4$, $\xi_{\bm n}^{\bm x} = (0,0,5,0)$, and $(0,1,4,0) \in \mathcal B_{\bm n}^{\bm x}$. A jump $(0,1,4,0) \to (1,0,4,0)$ is not allowed for $\sigma_N(t)$ since $(1,0,4,0) \notin \mathcal A_{\bm n}^{\bm x}$, but allowed in this alternative dynamics since $(1,0,4,0) \in \mathcal A_N \setminus \mathcal A_{\bm n}^{\bm x}$.

\begin{lem}\label{lem:tube-typ}
For any given $T>0$,
\begin{equation}\label{eq:tube-typ}
\lim_{N \to \infty} \sup_{\xi \in \mathcal E_N} \mathbb P_\xi^N \left( \mathfrak T_N \le T \right) = 0.
\end{equation}
\end{lem}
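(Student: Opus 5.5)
The plan is to reduce the statement to an estimate on the expected time spent in $\mathcal B_N$, weighted by the rate of forbidden jumps. Up to time $\mathfrak T_N$ the original process $\eta_N(\cdot)$ and the modified process $\sigma_N(\cdot)$ of \eqref{eq:rN'-def} can be coupled to coincide. Moreover, $\mathfrak T_N$ is the first jump of $\eta_N$ along a transition whose rate was set to zero in $r_N'$. Write $c_N(\zeta)$ for the total suppressed rate out of $\zeta$, i.e. the sum of $r_N(\zeta,\zeta')-r_N'(\zeta,\zeta')$ over $\zeta'$. By the compensator formula for the counting process of suppressed jumps,
\[
\mathbb P_\xi^N(\mathfrak T_N\le T)\;\le\; {\rm E}^{{\bf P}_\xi^N}\Big[\int_0^T c_N(\sigma_N(s))\,{\rm d}s\Big].
\]
The function $c_N$ vanishes off $\mathcal B_N$, so only the occupation time of $\sigma_N$ in $\mathcal B_N$ matters. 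A suppressed jump from $\zeta\in\mathcal B_{\bm n}^{\bm x}$ moves a particle onto an empty site outside the allowed block of two (or three) sites, so its rate is $\theta_N\zeta_x d_N$. Summing over the at most $2k$ boundary sites gives $c_N\le 2k\,\theta_N N d_N = 2kN^3$.

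Next I would decompose the trajectory of $\sigma_N$ into excursions from $\mathcal E_N$ into $\mathcal B_N$. From $\xi_{\bm n}^{\bm x}\in\mathcal E_N$, the rate of the first jump moving a particle off condensate $i$ is $2\theta_N n_i d_N=2N^2 n_i$. Hence the number of excursions started from condensate $i$ before time $T$ is stochastically dominated by a Poisson variable of mean $2N^2 n_iT$. By the strong Markov property it then suffices to show, uniformly over $\xi_{\bm n}^{\bm x}$ and $i$,
\[
{\rm E}\big[\text{duration of an excursion into }\mathcal A_{\bm n}^{\bm x,i,\pm}\big]\;\le\; C\,\frac{\log N}{n_i\,\theta_N}.
\]
Granting this, the right-hand side of the display above is at most
\[
\sum_i 2N^2 n_i T\cdot 2kN^3\cdot C\frac{\log N}{n_i\theta_N}\;\le\; C_kT\,d_N N^3\log N ,
\]
which tends to $0$ by \eqref{eq:dN-cond}. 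This is exactly where the condensing assumption enters.

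For the bulk case $\mathcal A_{\bm n}^{\bm x,i,\pm}$ with only two sites, the excursion is a birth–death chain in $a=\eta_{x_i\pm1}\in\llbracket 0,n\rrbracket$, with $n=n_i$. It jumps $a\to a\pm1$ at rates $\theta_N a(d_N+n-a)$ and $\theta_N (n-a)(d_N+a)$, starts at $a=1$, and is killed at $\{0,n\}$. The two rates agree up to a factor $1+O(d_N)$, so the embedded chain is close to a simple symmetric random walk. Its Green function from $1$ is $G(1,a)\le 2(n-a)/n$, and the holding time at $a$ is of order $1/(\theta_N a(n-a))$. This gives an expected duration of order
\[
\sum_{a=1}^{n-1}\frac{n-a}{n}\cdot\frac{1}{\theta_N a(n-a)}\;\le\; \frac{C\log n}{n\theta_N},
\]
and this is what the one-dimensional estimates of Appendix \ref{secB} should provide. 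The $O(d_N)$ asymmetry is harmless, since $nd_N\to0$.

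The main obstacle is the edge case $x_{i+1}=x_i+2$. There the excursion lives on the two-dimensional simplex $\{(a,b,c):a+b+c=n_i+n_{i+1}\}$. It is absorbed only when the middle site is empty, or when two of the three sites are empty (merging), and a Green function computation is not explicit. I would first try to control the excursion through the middle-site mass $b=\eta_{x_i+1}$. Starting from $b=1$, the rates of $b\to b\pm1$ are $\theta_N$ times $b(2d_N+a+c)$ and $(a+c)(d_N+b)$ respectively, up to the split between the two outer sites. These are symmetric up to $O(d_N)$, exactly as in the bulk case. So the symmetry of the microscopic jumps lets me couple $b$ with the one-dimensional chain above, run with a random time change. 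In that chain the exchanges between $x_i$ and $x_i+2$ via the middle do not affect the law of $b$ but may end the excursion earlier, at $b=0$ or by merging. This would bound the total time until $b$ hits $0$ or $n_i+n_{i+1}$ by $C\log N/((n_i+n_{i+1})\theta_N)$; once $b=n_i+n_{i+1}$ the excursion has already ended by merging. The delicate point is to check that the time change does not slow the clock: the jump rate of $b$ is at least $\theta_N b(a+c)$, which is all the comparison needs. If the coupling turns out to be awkward, I would fall back on a Lyapunov-function bound, using a function of $b$ like $h(b)=\sum_{j<b}\log\frac{n}{j}$ with generator bounded above by $-c/\theta_N$-scaled quantities. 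Combining the two cases with the excursion count concludes \eqref{eq:tube-typ}.
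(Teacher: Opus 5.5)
Your proposal is correct and is essentially the paper's proof. The common ingredients are a forbidden-jump clock of rate $O(N^3)$ that can only ring while $\sigma_N$ is in $\mathcal B_N$, and a decomposition into excursions from $\mathcal E_N$ whose durations are controlled by Lemmas \ref{lem:B1} and \ref{lem:B4}. The edge case is handled through the middle-site occupation as in \eqref{eq:coupling-2}, and the excursion rate $\theta_N d_N n$ cancels the $1/n$ in the expected duration. Your first-moment compensator bound just replaces the paper's Laplace-transform and Gamma-tail bookkeeping.

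Two small corrections are needed.

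\begin{enumerate}
\item Masses change along the path through exchange and merging, so a per-condensate Poisson count with the initial $n_i$ is not valid as stated. Replace it by the uniform bound (excursion rate)$\times$(expected duration)$=O(d_N\log N)$. The paper obtains this by weighting the first jump by $r_N(\zeta,\eta)/(2N^3)$.
\item In the edge case, set $M=n_i+n_{i+1}$. Since $a+c=M-b$, the rates you wrote depend on $b$ alone, so no time change or Lyapunov fallback is needed. You also swapped them: the up rate is $\theta_N(M-b)(b+d_N)$ and the down rate is $\theta_N b(M-b+2d_N)$.
\end{enumerate}
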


\begin{proof}
Before presenting a rigorous proof, we sketch the key ideas. A jump from $\mathcal B_{\bm n}^{\bm x}$ to $\Omega_N \setminus \mathcal A_{\bm n}^{\bm x}$ for some $\xi_{\bm n}^{\bm x} \in \mathcal E_N$ must be a particle jump to an empty site.\footnote{A jump to an empty site does not guarantee the other way around. To see this, consider e.g. that $L=4$ and $\xi_{\bm n}^{\bm x} = (1,0,4,0)$, such that $(0,1,4,0) \in \mathcal B_{\bm n}^{\bm x}$. If the next jump occurs as $(0,1,4,0) \to (1,0,4,0)$, then a jump to an empty site occurred but the process remains inside $\mathcal A_{\bm n}^{\bm x}$.}
According to \eqref{eq:inc-gen}, since each site has exactly two neighbors, the total jump rate from a configuration in $\mathcal B_{\bm n}^{\bm x}$ to $\Omega_N \setminus \mathcal A_{\bm n}^{\bm x}$ is thus bounded above by $2 \theta_N d_N N = 2N^3$.
%Therefore, for each $\xi \in \mathcal E_N$, we may bound the probability in \eqref{eq:tube-typ} from above
%by considering the restricted process (see below) in $\mathcal A_N$ and calculating the probability that an independent clock of rate
%$2N^3$ rings, during the time in which the process remains in $\mathcal B_N$, until time $T$.
Since the first jump from a configuration in $\mathcal E_N$ to $\mathcal B_{\bm n}^{\bm x}$ has rate $O(N^3)$ and the corresponding rate to return to $\mathcal E_N$ is much faster, each time interval between the moments when the process stays in $\mathcal E_N$ has a scale of $N^{-3}$. This implies that after $O(N^3)$ such trials we reach the time limit $T$. Thus, we prove that in each trial, the probability to observe a particle jump to an empty site before returning to $\mathcal E_N$ has a scale strictly smaller than $N^{-3}$, which would conclude the proof via strong Markov property.

\begin{figure}
\begin{tikzpicture}[scale=0.5]
\draw[-latex] (2.3,0)--(3.2,0); \draw[-latex] (7.8,0)--(8.7,0); \draw[-latex] (13.3,0)--(14.2,0); \draw[-latex] (18.8,0)--(19.7,0);
\draw[-latex] (2.3,-5)--(3.2,-5); \draw[-latex] (7.8,-5)--(8.7,-5); \draw[-latex] (13.3,-5)--(14.2,-5); \draw[-latex] (18.8,-5)--(19.7,-5);
\draw[-latex] (24.3,-5)--(25.2,-5); \draw (26,-5) node{$\cdots$};

\draw[] (0,0) ellipse (2 and 1);
\foreach \i in {0,...,15} { \draw[thick] ({2*cos(\i*22.5)},{1*sin(\i*22.5)-0.1})--({2*cos(\i*22.5)},{1*sin(\i*22.5)}); }

\foreach \i in {1} { \foreach \j in {0,...,3} { \draw[very thick,blue] ({2*cos(\i*22.5)},{1*sin(\i*22.5)-0.2})--({2*cos(\i*22.5)},{1*sin(\i*22.5)});
\fill[blue!50!white] ({2*cos(\i*22.5)},{1*sin(\i*22.5)+0.15+0.3*\j}) circle (0.15); \draw[blue] ({2*cos(\i*22.5)},{1*sin(\i*22.5)+0.15+0.3*\j}) circle (0.15); } }
\foreach \i in {6} { \foreach \j in {0,...,2} { \draw[very thick,teal] ({2*cos(\i*22.5)},{1*sin(\i*22.5)-0.2})--({2*cos(\i*22.5)},{1*sin(\i*22.5)});
\fill[teal!50!white] ({2*cos(\i*22.5)},{1*sin(\i*22.5)+0.15+0.3*\j}) circle (0.15); \draw[teal] ({2*cos(\i*22.5)},{1*sin(\i*22.5)+0.15+0.3*\j}) circle (0.15); } }
\foreach \i in {11} { \foreach \j in {0,...,4} { \draw[very thick,red] ({2*cos(\i*22.5)},{1*sin(\i*22.5)-0.2})--({2*cos(\i*22.5)},{1*sin(\i*22.5)});
\fill[red!50!white] ({2*cos(\i*22.5)},{1*sin(\i*22.5)+0.15+0.3*\j}) circle (0.15); \draw[red] ({2*cos(\i*22.5)},{1*sin(\i*22.5)+0.15+0.3*\j}) circle (0.15); } }
\foreach \i in {13} { \foreach \j in {0,...,3} { \draw[very thick,red] ({2*cos(\i*22.5)},{1*sin(\i*22.5)-0.2})--({2*cos(\i*22.5)},{1*sin(\i*22.5)});
\fill[red!50!white] ({2*cos(\i*22.5)},{1*sin(\i*22.5)+0.15+0.3*\j}) circle (0.15); \draw[red] ({2*cos(\i*22.5)},{1*sin(\i*22.5)+0.15+0.3*\j}) circle (0.15); } }
\draw (0,-2.5) node[above]{$\mathfrak t_0$};

\begin{scope}[shift={(5.5,0)}]
\draw[] (0,0) ellipse (2 and 1);
\foreach \i in {0,...,15} { \draw[thick] ({2*cos(\i*22.5)},{1*sin(\i*22.5)-0.1})--({2*cos(\i*22.5)},{1*sin(\i*22.5)}); }

\foreach \i in {1} { \foreach \j in {0,...,3} { \draw[very thick,blue] ({2*cos(\i*22.5)},{1*sin(\i*22.5)-0.2})--({2*cos(\i*22.5)},{1*sin(\i*22.5)});
\fill[blue!50!white] ({2*cos(\i*22.5)},{1*sin(\i*22.5)+0.15+0.3*\j}) circle (0.15); \draw[blue] ({2*cos(\i*22.5)},{1*sin(\i*22.5)+0.15+0.3*\j}) circle (0.15); } }
\foreach \i in {6} { \foreach \j in {0,...,1} { \draw[very thick,teal] ({2*cos(\i*22.5)},{1*sin(\i*22.5)-0.2})--({2*cos(\i*22.5)},{1*sin(\i*22.5)});
\fill[teal!50!white] ({2*cos(\i*22.5)},{1*sin(\i*22.5)+0.15+0.3*\j}) circle (0.15); \draw[teal] ({2*cos(\i*22.5)},{1*sin(\i*22.5)+0.15+0.3*\j}) circle (0.15); } }
\foreach \i in {7} { \foreach \j in {0,...,0} { \draw[very thick,teal] ({2*cos(\i*22.5)},{1*sin(\i*22.5)-0.2})--({2*cos(\i*22.5)},{1*sin(\i*22.5)});
\fill[teal!50!white] ({2*cos(\i*22.5)},{1*sin(\i*22.5)+0.15+0.3*\j}) circle (0.15); \draw[teal] ({2*cos(\i*22.5)},{1*sin(\i*22.5)+0.15+0.3*\j}) circle (0.15); } }
\foreach \i in {11} { \foreach \j in {0,...,4} { \draw[very thick,red] ({2*cos(\i*22.5)},{1*sin(\i*22.5)-0.2})--({2*cos(\i*22.5)},{1*sin(\i*22.5)});
\fill[red!50!white] ({2*cos(\i*22.5)},{1*sin(\i*22.5)+0.15+0.3*\j}) circle (0.15); \draw[red] ({2*cos(\i*22.5)},{1*sin(\i*22.5)+0.15+0.3*\j}) circle (0.15); } }
\foreach \i in {13} { \foreach \j in {0,...,3} { \draw[very thick,red] ({2*cos(\i*22.5)},{1*sin(\i*22.5)-0.2})--({2*cos(\i*22.5)},{1*sin(\i*22.5)});
\fill[red!50!white] ({2*cos(\i*22.5)},{1*sin(\i*22.5)+0.15+0.3*\j}) circle (0.15); \draw[red] ({2*cos(\i*22.5)},{1*sin(\i*22.5)+0.15+0.3*\j}) circle (0.15); } }
\draw (0,-2.5) node[above]{$\mathfrak s_0$};
\end{scope}

\begin{scope}[shift={(11,0)}]
\draw[] (0,0) ellipse (2 and 1);
\foreach \i in {0,...,15} { \draw[thick] ({2*cos(\i*22.5)},{1*sin(\i*22.5)-0.1})--({2*cos(\i*22.5)},{1*sin(\i*22.5)}); }

\foreach \i in {1} { \foreach \j in {0,...,3} { \draw[very thick,blue] ({2*cos(\i*22.5)},{1*sin(\i*22.5)-0.2})--({2*cos(\i*22.5)},{1*sin(\i*22.5)});
\fill[blue!50!white] ({2*cos(\i*22.5)},{1*sin(\i*22.5)+0.15+0.3*\j}) circle (0.15); \draw[blue] ({2*cos(\i*22.5)},{1*sin(\i*22.5)+0.15+0.3*\j}) circle (0.15); } }
\foreach \i in {7} { \foreach \j in {0,...,2} { \draw[very thick,teal] ({2*cos(\i*22.5)},{1*sin(\i*22.5)-0.2})--({2*cos(\i*22.5)},{1*sin(\i*22.5)});
\fill[teal!50!white] ({2*cos(\i*22.5)},{1*sin(\i*22.5)+0.15+0.3*\j}) circle (0.15); \draw[teal] ({2*cos(\i*22.5)},{1*sin(\i*22.5)+0.15+0.3*\j}) circle (0.15); } }
\foreach \i in {11} { \foreach \j in {0,...,4} { \draw[very thick,red] ({2*cos(\i*22.5)},{1*sin(\i*22.5)-0.2})--({2*cos(\i*22.5)},{1*sin(\i*22.5)});
\fill[red!50!white] ({2*cos(\i*22.5)},{1*sin(\i*22.5)+0.15+0.3*\j}) circle (0.15); \draw[red] ({2*cos(\i*22.5)},{1*sin(\i*22.5)+0.15+0.3*\j}) circle (0.15); } }
\foreach \i in {13} { \foreach \j in {0,...,3} { \draw[very thick,red] ({2*cos(\i*22.5)},{1*sin(\i*22.5)-0.2})--({2*cos(\i*22.5)},{1*sin(\i*22.5)});
\fill[red!50!white] ({2*cos(\i*22.5)},{1*sin(\i*22.5)+0.15+0.3*\j}) circle (0.15); \draw[red] ({2*cos(\i*22.5)},{1*sin(\i*22.5)+0.15+0.3*\j}) circle (0.15); } }
\draw (0,-2.5) node[above]{$\mathfrak t_1$};
\end{scope}

\begin{scope}[shift={(16.5,0)}]
\draw[] (0,0) ellipse (2 and 1);
\foreach \i in {0,...,15} { \draw[thick] ({2*cos(\i*22.5)},{1*sin(\i*22.5)-0.1})--({2*cos(\i*22.5)},{1*sin(\i*22.5)}); }

\foreach \i in {1} { \foreach \j in {0,...,3} { \draw[very thick,blue] ({2*cos(\i*22.5)},{1*sin(\i*22.5)-0.2})--({2*cos(\i*22.5)},{1*sin(\i*22.5)});
\fill[blue!50!white] ({2*cos(\i*22.5)},{1*sin(\i*22.5)+0.15+0.3*\j}) circle (0.15); \draw[blue] ({2*cos(\i*22.5)},{1*sin(\i*22.5)+0.15+0.3*\j}) circle (0.15); } }
\foreach \i in {7} { \foreach \j in {0,...,2} { \draw[very thick,teal] ({2*cos(\i*22.5)},{1*sin(\i*22.5)-0.2})--({2*cos(\i*22.5)},{1*sin(\i*22.5)});
\fill[teal!50!white] ({2*cos(\i*22.5)},{1*sin(\i*22.5)+0.15+0.3*\j}) circle (0.15); \draw[teal] ({2*cos(\i*22.5)},{1*sin(\i*22.5)+0.15+0.3*\j}) circle (0.15); } }
\foreach \i in {11} { \foreach \j in {0,...,3} { \draw[very thick,red] ({2*cos(\i*22.5)},{1*sin(\i*22.5)-0.2})--({2*cos(\i*22.5)},{1*sin(\i*22.5)});
\fill[red!50!white] ({2*cos(\i*22.5)},{1*sin(\i*22.5)+0.15+0.3*\j}) circle (0.15); \draw[red] ({2*cos(\i*22.5)},{1*sin(\i*22.5)+0.15+0.3*\j}) circle (0.15); } }
\foreach \i in {12} { \foreach \j in {0,...,0} { \draw[very thick,red] ({2*cos(\i*22.5)},{1*sin(\i*22.5)-0.2})--({2*cos(\i*22.5)},{1*sin(\i*22.5)});
\fill[red!50!white] ({2*cos(\i*22.5)},{1*sin(\i*22.5)+0.15+0.3*\j}) circle (0.15); \draw[red] ({2*cos(\i*22.5)},{1*sin(\i*22.5)+0.15+0.3*\j}) circle (0.15); } }
\foreach \i in {13} { \foreach \j in {0,...,3} { \draw[very thick,red] ({2*cos(\i*22.5)},{1*sin(\i*22.5)-0.2})--({2*cos(\i*22.5)},{1*sin(\i*22.5)});
\fill[red!50!white] ({2*cos(\i*22.5)},{1*sin(\i*22.5)+0.15+0.3*\j}) circle (0.15); \draw[red] ({2*cos(\i*22.5)},{1*sin(\i*22.5)+0.15+0.3*\j}) circle (0.15); } }
\draw (0,-2.5) node[above]{$\mathfrak s_1$};
\end{scope}

\begin{scope}[shift={(22,0)}]
\draw[] (0,0) ellipse (2 and 1);
\foreach \i in {0,...,15} { \draw[thick] ({2*cos(\i*22.5)},{1*sin(\i*22.5)-0.1})--({2*cos(\i*22.5)},{1*sin(\i*22.5)}); }

\foreach \i in {1} { \foreach \j in {0,...,3} { \draw[very thick,blue] ({2*cos(\i*22.5)},{1*sin(\i*22.5)-0.2})--({2*cos(\i*22.5)},{1*sin(\i*22.5)});
\fill[blue!50!white] ({2*cos(\i*22.5)},{1*sin(\i*22.5)+0.15+0.3*\j}) circle (0.15); \draw[blue] ({2*cos(\i*22.5)},{1*sin(\i*22.5)+0.15+0.3*\j}) circle (0.15); } }
\foreach \i in {7} { \foreach \j in {0,...,2} { \draw[very thick,teal] ({2*cos(\i*22.5)},{1*sin(\i*22.5)-0.2})--({2*cos(\i*22.5)},{1*sin(\i*22.5)});
\fill[teal!50!white] ({2*cos(\i*22.5)},{1*sin(\i*22.5)+0.15+0.3*\j}) circle (0.15); \draw[teal] ({2*cos(\i*22.5)},{1*sin(\i*22.5)+0.15+0.3*\j}) circle (0.15); } }
\foreach \i in {11} { \foreach \j in {0,...,6} { \draw[very thick,red] ({2*cos(\i*22.5)},{1*sin(\i*22.5)-0.2})--({2*cos(\i*22.5)},{1*sin(\i*22.5)});
\fill[red!50!white] ({2*cos(\i*22.5)},{1*sin(\i*22.5)+0.15+0.3*\j}) circle (0.15); \draw[red] ({2*cos(\i*22.5)},{1*sin(\i*22.5)+0.15+0.3*\j}) circle (0.15); } }
\foreach \i in {13} { \foreach \j in {0,...,1} { \draw[very thick,red] ({2*cos(\i*22.5)},{1*sin(\i*22.5)-0.2})--({2*cos(\i*22.5)},{1*sin(\i*22.5)});
\fill[red!50!white] ({2*cos(\i*22.5)},{1*sin(\i*22.5)+0.15+0.3*\j}) circle (0.15); \draw[red] ({2*cos(\i*22.5)},{1*sin(\i*22.5)+0.15+0.3*\j}) circle (0.15); } }
\draw (0,-2.5) node[above]{$\mathfrak t_2$};
\end{scope}

\begin{scope}[shift={(5.5,-5)}]
\draw[] (0,0) ellipse (2 and 1);
\foreach \i in {0,...,15} { \draw[thick] ({2*cos(\i*22.5)},{1*sin(\i*22.5)-0.1})--({2*cos(\i*22.5)},{1*sin(\i*22.5)}); }

\foreach \i in {1} { \foreach \j in {0,...,2} { \draw[very thick,blue] ({2*cos(\i*22.5)},{1*sin(\i*22.5)-0.2})--({2*cos(\i*22.5)},{1*sin(\i*22.5)});
\fill[blue!50!white] ({2*cos(\i*22.5)},{1*sin(\i*22.5)+0.15+0.3*\j}) circle (0.15); \draw[blue] ({2*cos(\i*22.5)},{1*sin(\i*22.5)+0.15+0.3*\j}) circle (0.15); } }
\foreach \i in {2} { \foreach \j in {0,...,0} { \draw[very thick,blue] ({2*cos(\i*22.5)},{1*sin(\i*22.5)-0.2})--({2*cos(\i*22.5)},{1*sin(\i*22.5)});
\fill[blue!50!white] ({2*cos(\i*22.5)},{1*sin(\i*22.5)+0.15+0.3*\j}) circle (0.15); \draw[blue] ({2*cos(\i*22.5)},{1*sin(\i*22.5)+0.15+0.3*\j}) circle (0.15); } }
\foreach \i in {7} { \foreach \j in {0,...,2} { \draw[very thick,teal] ({2*cos(\i*22.5)},{1*sin(\i*22.5)-0.2})--({2*cos(\i*22.5)},{1*sin(\i*22.5)});
\fill[teal!50!white] ({2*cos(\i*22.5)},{1*sin(\i*22.5)+0.15+0.3*\j}) circle (0.15); \draw[teal] ({2*cos(\i*22.5)},{1*sin(\i*22.5)+0.15+0.3*\j}) circle (0.15); } }
\foreach \i in {11} { \foreach \j in {0,...,6} { \draw[very thick,red] ({2*cos(\i*22.5)},{1*sin(\i*22.5)-0.2})--({2*cos(\i*22.5)},{1*sin(\i*22.5)});
\fill[red!50!white] ({2*cos(\i*22.5)},{1*sin(\i*22.5)+0.15+0.3*\j}) circle (0.15); \draw[red] ({2*cos(\i*22.5)},{1*sin(\i*22.5)+0.15+0.3*\j}) circle (0.15); } }
\foreach \i in {13} { \foreach \j in {0,...,1} { \draw[very thick,red] ({2*cos(\i*22.5)},{1*sin(\i*22.5)-0.2})--({2*cos(\i*22.5)},{1*sin(\i*22.5)});
\fill[red!50!white] ({2*cos(\i*22.5)},{1*sin(\i*22.5)+0.15+0.3*\j}) circle (0.15); \draw[red] ({2*cos(\i*22.5)},{1*sin(\i*22.5)+0.15+0.3*\j}) circle (0.15); } }
\draw (0,-2.5) node[above]{$\mathfrak s_2$};
\end{scope}

\begin{scope}[shift={(11,-5)}]
\draw[] (0,0) ellipse (2 and 1);
\foreach \i in {0,...,15} { \draw[thick] ({2*cos(\i*22.5)},{1*sin(\i*22.5)-0.1})--({2*cos(\i*22.5)},{1*sin(\i*22.5)}); }

\foreach \i in {1} { \foreach \j in {0,...,3} { \draw[very thick,blue] ({2*cos(\i*22.5)},{1*sin(\i*22.5)-0.2})--({2*cos(\i*22.5)},{1*sin(\i*22.5)});
\fill[blue!50!white] ({2*cos(\i*22.5)},{1*sin(\i*22.5)+0.15+0.3*\j}) circle (0.15); \draw[blue] ({2*cos(\i*22.5)},{1*sin(\i*22.5)+0.15+0.3*\j}) circle (0.15); } }
\foreach \i in {7} { \foreach \j in {0,...,2} { \draw[very thick,teal] ({2*cos(\i*22.5)},{1*sin(\i*22.5)-0.2})--({2*cos(\i*22.5)},{1*sin(\i*22.5)});
\fill[teal!50!white] ({2*cos(\i*22.5)},{1*sin(\i*22.5)+0.15+0.3*\j}) circle (0.15); \draw[teal] ({2*cos(\i*22.5)},{1*sin(\i*22.5)+0.15+0.3*\j}) circle (0.15); } }
\foreach \i in {11} { \foreach \j in {0,...,6} { \draw[very thick,red] ({2*cos(\i*22.5)},{1*sin(\i*22.5)-0.2})--({2*cos(\i*22.5)},{1*sin(\i*22.5)});
\fill[red!50!white] ({2*cos(\i*22.5)},{1*sin(\i*22.5)+0.15+0.3*\j}) circle (0.15); \draw[red] ({2*cos(\i*22.5)},{1*sin(\i*22.5)+0.15+0.3*\j}) circle (0.15); } }
\foreach \i in {13} { \foreach \j in {0,...,1} { \draw[very thick,red] ({2*cos(\i*22.5)},{1*sin(\i*22.5)-0.2})--({2*cos(\i*22.5)},{1*sin(\i*22.5)});
\fill[red!50!white] ({2*cos(\i*22.5)},{1*sin(\i*22.5)+0.15+0.3*\j}) circle (0.15); \draw[red] ({2*cos(\i*22.5)},{1*sin(\i*22.5)+0.15+0.3*\j}) circle (0.15); } }
\draw (0,-2.5) node[above]{$\mathfrak t_3$};
\end{scope}

\begin{scope}[shift={(16.5,-5)}]
\draw[] (0,0) ellipse (2 and 1);
\foreach \i in {0,...,15} { \draw[thick] ({2*cos(\i*22.5)},{1*sin(\i*22.5)-0.1})--({2*cos(\i*22.5)},{1*sin(\i*22.5)}); }

\foreach \i in {1} { \foreach \j in {0,...,3} { \draw[very thick,blue] ({2*cos(\i*22.5)},{1*sin(\i*22.5)-0.2})--({2*cos(\i*22.5)},{1*sin(\i*22.5)});
\fill[blue!50!white] ({2*cos(\i*22.5)},{1*sin(\i*22.5)+0.15+0.3*\j}) circle (0.15); \draw[blue] ({2*cos(\i*22.5)},{1*sin(\i*22.5)+0.15+0.3*\j}) circle (0.15); } }
\foreach \i in {7} { \foreach \j in {0,...,2} { \draw[very thick,teal] ({2*cos(\i*22.5)},{1*sin(\i*22.5)-0.2})--({2*cos(\i*22.5)},{1*sin(\i*22.5)});
\fill[teal!50!white] ({2*cos(\i*22.5)},{1*sin(\i*22.5)+0.15+0.3*\j}) circle (0.15); \draw[teal] ({2*cos(\i*22.5)},{1*sin(\i*22.5)+0.15+0.3*\j}) circle (0.15); } }
\foreach \i in {11} { \foreach \j in {0,...,6} { \draw[very thick,red] ({2*cos(\i*22.5)},{1*sin(\i*22.5)-0.2})--({2*cos(\i*22.5)},{1*sin(\i*22.5)});
\fill[red!50!white] ({2*cos(\i*22.5)},{1*sin(\i*22.5)+0.15+0.3*\j}) circle (0.15); \draw[red] ({2*cos(\i*22.5)},{1*sin(\i*22.5)+0.15+0.3*\j}) circle (0.15); } }
\foreach \i in {12} { \foreach \j in {0,...,0} { \draw[very thick,red] ({2*cos(\i*22.5)},{1*sin(\i*22.5)-0.2})--({2*cos(\i*22.5)},{1*sin(\i*22.5)});
\fill[red!50!white] ({2*cos(\i*22.5)},{1*sin(\i*22.5)+0.15+0.3*\j}) circle (0.15); \draw[red] ({2*cos(\i*22.5)},{1*sin(\i*22.5)+0.15+0.3*\j}) circle (0.15); } }
\foreach \i in {13} { \foreach \j in {0,...,0} { \draw[very thick,red] ({2*cos(\i*22.5)},{1*sin(\i*22.5)-0.2})--({2*cos(\i*22.5)},{1*sin(\i*22.5)});
\fill[red!50!white] ({2*cos(\i*22.5)},{1*sin(\i*22.5)+0.15+0.3*\j}) circle (0.15); \draw[red] ({2*cos(\i*22.5)},{1*sin(\i*22.5)+0.15+0.3*\j}) circle (0.15); } }
\draw (0,-2.5) node[above]{$\mathfrak s_3$};
\end{scope}

\begin{scope}[shift={(22,-5)}]
\draw[] (0,0) ellipse (2 and 1);
\foreach \i in {0,...,15} { \draw[thick] ({2*cos(\i*22.5)},{1*sin(\i*22.5)-0.1})--({2*cos(\i*22.5)},{1*sin(\i*22.5)}); }

\foreach \i in {1} { \foreach \j in {0,...,3} { \draw[very thick,blue] ({2*cos(\i*22.5)},{1*sin(\i*22.5)-0.2})--({2*cos(\i*22.5)},{1*sin(\i*22.5)});
\fill[blue!50!white] ({2*cos(\i*22.5)},{1*sin(\i*22.5)+0.15+0.3*\j}) circle (0.15); \draw[blue] ({2*cos(\i*22.5)},{1*sin(\i*22.5)+0.15+0.3*\j}) circle (0.15); } }
\foreach \i in {7} { \foreach \j in {0,...,2} { \draw[very thick,teal] ({2*cos(\i*22.5)},{1*sin(\i*22.5)-0.2})--({2*cos(\i*22.5)},{1*sin(\i*22.5)});
\fill[teal!50!white] ({2*cos(\i*22.5)},{1*sin(\i*22.5)+0.15+0.3*\j}) circle (0.15); \draw[teal] ({2*cos(\i*22.5)},{1*sin(\i*22.5)+0.15+0.3*\j}) circle (0.15); } }
\foreach \i in {12} { \foreach \j in {0,...,8} { \draw[very thick,red] ({2*cos(\i*22.5)},{1*sin(\i*22.5)-0.2})--({2*cos(\i*22.5)},{1*sin(\i*22.5)});
\fill[red!50!white] ({2*cos(\i*22.5)},{1*sin(\i*22.5)+0.15+0.3*\j}) circle (0.15); \draw[red] ({2*cos(\i*22.5)},{1*sin(\i*22.5)+0.15+0.3*\j}) circle (0.15); } }
\draw (0,-2.5) node[above]{$\mathfrak t_4$};
\end{scope}
\end{tikzpicture}\caption{\label{fig2.2}Consecutive hitting times $0=\mathfrak t_0 < \mathfrak s_0 < \cdots < \mathfrak s_3 < \mathfrak t_4 < \cdots$ where $\ell = 4$.}
\end{figure}

Rigorously, define a sequence of increasing stopping times
\begin{equation}\label{eq:sigma-stopping}
{\color{blue} \mathfrak t_{0} } < {\color{blue} \mathfrak t_{1} } < {\color{blue} \mathfrak t_{2} } < \cdots
\end{equation}
as follows.
Let $\mathfrak t_0 = 0$, and let $\mathfrak t_{i+1} > \mathfrak t_i$ be the first return time to $\mathcal E_N$ after time $\mathfrak t_i$. In addition, define
\begin{equation}\label{eq:nu-stopping}
{\color{blue} \mathfrak s_0 } < {\color{blue} \mathfrak s_1 } < {\color{blue} \mathfrak s_2 } < \cdots
\end{equation}
as each $\mathfrak s_i$ being the first jump time after $\mathfrak t_i$. Then, the process stays in $\mathcal B_N$ at times in the intervals
\[
\bigcup_{i=0}^\infty \, [\mathfrak s_i , \mathfrak t_{i+1}).
\]
See Figure \ref{fig2.2}.
Define ${\color{blue} \mathfrak n} := \inf \, \{ i \ge 1 : \mathfrak t_i > T \}$.
The probability that an exponential random variable with rate $\lambda$
rings within a time period of $M$ is exactly $1-e^{-\lambda M}$.
Thus,
\begin{equation}\label{eq:change-prob-1}
\mathbb P_\xi^N (\mathfrak T_N \le T ) \le {\rm E}^{{\bf P}_\xi^N} \left[ 1- e^{- 2N^3 \sum_{i=1}^{\mathfrak n} (\mathfrak t_i - \mathfrak s_{i-1}) } \right] ,
\end{equation}
where, recall, ${\bf P}_\xi^N$ was defined after \eqref{eq:rN'-def}.
First, observe that $\mathfrak n > m$ if and only if $\mathfrak t_m \le T$, which implies that at least $m$ different jumps from $\mathcal E_N$ occur until time $T$. Since any jump from $\mathcal E_N$ has rate $2 \theta_N d_N N = 2N^3$ and since the sum of $m$ independent exponential random variables of parameter $\lambda$ has a Gamma distribution of parameters $m$ and $\lambda$,
\[
{\bf P}_\xi^N ( \mathfrak n > m) \le P( \Gamma_{m,N} \le T) = \int_0^{2N^3T} \frac{t^{m-1} e^{-t}}{(m-1)!} \, {\rm d}t =: {\color{blue} \alpha_m},
\]
where $\Gamma_{m,N} \sim {\rm Gamma} \, (m, 2N^3)$. Integrating by parts gives $\alpha_m = - e^{-2N^3T} \frac{(2N^3T)^{m-1}}{(m-1)!} + \alpha_{m-1}$. Hence, as $\alpha_1 = 1-e^{-2N^3T}$,
\[
\alpha_m = 1 - e^{-2N^3T} \sum_{n=0}^{m-1} \frac{(2N^3T)^n}{n!} \le \frac{(2N^3T)^m}{m!} ,
\]
where the inequality follows from a standard Taylor estimate. Thus, we can take a sequence of integers ${\color{blue} m_{1,N}} \simeq 2eT N^3$ such that\footnote{Here, $f_N \simeq g_N$ if $\lim_{N \to \infty} f_N / g_N = 1$ and $f_N=o(g_N)$ if $\lim_{N\to\infty}f_{N}/g_{N}=0$.} by Stirling's formula,
\[
\sup_{\xi \in \mathcal E_N} {\bf P}_\xi^N ( \mathfrak n > m_{1,N} ) \le \frac {(2N^3T)^{m_{1,N}}}{m_{1,N}!} \le \frac {(2N^3T)^{m_{1,N}}} {m_{1,N}^{m_{1,N}} \sqrt{2\pi m_{1,N}} / e^{m_{1,N}} }  = o(1).
\]
Substituting this bound in \eqref{eq:change-prob-1} yields that
\begin{equation}\label{eq:change-prob-2}
\begin{aligned}
\mathbb P_\xi^N (\mathfrak T_N \le T ) & \le {\rm E}^{{\bf P}_\xi^N} \left[ \left( 1- e^{- 2N^3 \sum_{i=1}^{\mathfrak n} ( \mathfrak t_i - \mathfrak s_{i-1} ) } \right) {\bf 1} \{ \mathfrak n \le m_{1,N} \} \right] + {\bf P}_\xi^N ( \mathfrak n > m_{1,N} ) \\
& \le {\rm E}^{{\bf P}_\xi^N} \left[ 1- e^{- 2N^3 \sum_{i=1}^{m_{1,N}} ( \mathfrak t_i - \mathfrak s_{i-1} ) } \right] + o(1) ,
\end{aligned}
\end{equation}
where the error is uniform over all $\xi \in \mathcal E_N$.

Our next objective is to estimate the expectation in the right-hand side of \eqref{eq:change-prob-2}.
By the strong Markov property at each stopping time $\mathfrak t_i$ for $i \in \llbracket 1,m_{1,N} \rrbracket$,
\begin{equation}\label{eq:change-prob-3}
{\rm E}^{{\bf P}_\xi^N} \left[ 1- e^{- 2N^3 \sum_{i=1}^{m_{1,N}} (\mathfrak t_i - \mathfrak s_{i-1}) } \right] \le 1 -  \left( \inf_{\zeta \in \mathcal E_N} {\rm E}^{{\bf P}_\zeta^N } \left[ e^{-2N^3 ( \mathfrak t_1 - \mathfrak s_0 )} \right] \right)^{m_{1,N}} .
\end{equation}
For the expectation on the right-hand side of \eqref{eq:change-prob-3}, apply again the strong Markov property at $\mathfrak s_0$ to obtain that
\begin{equation}\label{eq:calc-going}
{\rm E}^{{\bf P}_\zeta^N } \left[ e^{-2N^3 ( \mathfrak t_1 - \mathfrak s_0 )} \right]
= \sum_{\eta \in \mathcal B_N} {\bf P}_\zeta^N ( \sigma_N ( \mathfrak s_0 ) = \eta ) \, {\rm E}^{{\bf P}_\eta^N} \left[ e^{-2N^3 \mathfrak t_1} \right].
\end{equation}

First, suppose that $\eta \in \mathcal B_N$ is obtained from $\zeta$ by a particle's jump $x \to y$ such that $x,y$ are two isolated occupied sites. Let $\zeta_x = n$ such that $(\eta_x , \eta_y ) = (n-1,1)$. Note that $r_N(\zeta,\eta) = \theta_N d_N n$. In this case, the dynamics of $\sigma_{N}(\cdot)$ starting from
$\eta$ until time $\mathfrak t_1$ corresponds to a one-dimensional random
walk on $\llbracket0,n\rrbracket$ starting from $1$ and stopped
upon hitting $\{0,n\}$, whose jump rates are
\begin{equation}
r(i,i+1)= \theta_N (n-i)(i+d_{N}),\quad r(i,i-1)= \theta_N i(n-i+d_{N})\qquad\text{for}\quad i\in\llbracket1,n-1\rrbracket.\label{eq:coupling-1}
\end{equation}
Thus, applying Lemma \ref{lem:B1} and since $n\le N$ and $e^{-\gamma}\ge1-\gamma$,
\begin{equation}\label{eq:RW-bound-1}
\begin{aligned}
{\rm E}^{{\bf P}_{\eta}^{N}} \left[e^{-2 N^3 \mathfrak t_1} \right]
\ge1-2 N^3 \,  {\rm E}^{{\bf P}_{\eta}^{N}} [ \mathfrak t_1 ] & \ge 1 - 2N^3 \, e^{d_N (1+\log N)} \, \frac{2 (1+\log N)}{\theta_N n}   \\
& = 1 - \frac {4 d_N N^3} {r_N (\zeta,\eta)} \,e^{d_{N}(1+\log N)}\,(1+\log N).
\end{aligned}
\end{equation}

On the other hand, suppose that $\eta \in \mathcal B_N$ is obtained from $\zeta$ by $x \to y$, where the other neighbor $z$ of $y$ is also occupied by $\zeta$. Let $(\zeta_x , \zeta_z) = (n,n')$ such that $(\eta_x,\eta_y,\eta_z) = (n-1,1,n')$
(so that, again, $r_N (\zeta,\eta) = \theta_N d_N n$).
In this case, the hitting time $\mathfrak t_1$ equals the hitting time of $\{0,n+n'\}$ of the
one-dimensional random walk on $\llbracket0,n+n'\rrbracket$ starting
from $1$ whose rates are
\begin{equation}
r(i,i+1) = \theta_N (n+n'-i)(i+d_{N}),\quad r(i,i-1) = \theta_N i(n+n'-i+ 2d_{N}),\quad i\in\llbracket1,n+n'-1\rrbracket.\label{eq:coupling-2}
\end{equation}
Indeed, the two dynamics can be coupled by mapping the number
of particles at $y$ to the location of the walk on $\llbracket0,n+n'\rrbracket$.
At the hitting time $\mathfrak t_1$, the random walk sits either at $0$ (when
$y$ becomes empty) or $n+n'$ (when all the particles gather at $y$), thus the hitting time of $\{0,n+n'\}$
in the random walk is exactly $\mathfrak t_1$. By Lemma \ref{lem:B4},
\begin{equation}\label{eq:RW-bound-2}
\begin{aligned}
{\rm E}^{{\bf P}_{\eta}^{N}} \left[e^{-2 N^3 \mathfrak t_1} \right]
\ge1-2 N^3 \,  {\rm E}^{{\bf P}_{\eta}^{N}} [ \mathfrak t_1 ] & \ge 1 - 2N^3 \, e^{d_N (1+\log N)} \, \frac{2 (1+\log N)}{\theta_N n}   \\
& = 1 - \frac {4 d_N N^3} {r_N (\zeta,\eta)} \,e^{d_{N}(1+\log N)}\,(1+\log N).
\end{aligned}
\end{equation}
Substituting \eqref{eq:RW-bound-1} and \eqref{eq:RW-bound-2} in \eqref{eq:calc-going} yields that
\[
{\rm E}^{{\bf P}_\zeta^N} \left[ e^{-2N^3 (\mathfrak t_1 - \mathfrak s_0) } \right] \ge 1 - \sum_{\eta \in \mathcal B_N : \, r_N(\zeta,\eta) > 0} \, \frac {4 d_N N^3 } { 2 \theta_N d_N N} \,e^{d_{N}(1+\log N)}\,(1+\log N) ,
\]
where the constant $2\theta_N d_N N$ in the denominator indicates the holding rate of $\zeta \in \mathcal E_N$. Simplifying and noting that the number of configurations $\eta \in \mathcal B_N$ with $r_N(\zeta,\eta)>0$ is at most $2k$,
\begin{equation}\label{eq:one-jump-bound}
{\rm E}^{{\bf P}_\zeta^N} \left[ e^{-2N^3 (\mathfrak t_1 - \mathfrak s_0) } \right] \ge 1 - 4k d_N \,e^{d_{N}(1+\log N)}\,(1+\log N) .
\end{equation}
Substituting this to \eqref{eq:change-prob-3}, and applying Bernoulli's inequality, we obtain that
\begin{equation}\label{eq:expect-bound}
\begin{aligned}
{\rm E}^{{\bf P}_\xi^N} \left[ 1- e^{- 2N^3 \sum_{i=1}^{m_{1,N}} ( \mathfrak t_i - \mathfrak s_{i-1} ) } \right] & \le 1 -  \left( 1 - 4k d_N \,e^{d_{N}(1+\log N)}\,(1+\log N) \right)^{m_{1,N}} \\
& \le 4m_{1,N} k d_N \,e^{d_{N}(1+\log N)}\,(1+\log N).
\end{aligned}
\end{equation}
Since $m_{1,N} \simeq 2eTN^3$, \eqref{eq:dN-cond} indicates that the last term is $o(1)$. Combining this with \eqref{eq:change-prob-2} proves Lemma \ref{lem:tube-typ}.
\end{proof}

The next lemma is a variant of Lemma \ref{lem:tube-typ}. Recall from Definition \ref{def:Anx-def-2} that starting from a configuration in $\mathcal K_N^\ell$, if the process stays in $\mathcal A_N$, its first return to $\mathcal E_N$ is either at $\mathcal K_N^\ell$, $\mathcal J_N^\ell$, or $\mathcal E_N^{\ell-1}$. We argue that it hits either $\mathcal J_N^\ell$ or $\mathcal E_N^{\ell-1}$ before time $\mathfrak T_N$ w.h.p.
Let $\color{blue} H_A$ be the first hitting time of a set $A$.

\begin{lem}
\label{lem:tube-typ-2}For each $\ell \in \llbracket 1,k \rrbracket$,
\begin{equation}\label{eq:tube-typ-2}
\lim_{N\to\infty} \sup_{\xi \in \mathcal K_N^\ell} \mathbb{P}_{\xi}^{N} \left( H_{ \mathcal J_N^\ell \cup \mathcal E_N^{\ell-1} } > \mathfrak T_N \right) = 0.
\end{equation}
\end{lem}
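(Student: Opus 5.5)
The plan is to reduce to the restricted dynamics $\sigma_N(\cdot)$ and show that every excursion from a configuration in $\mathcal K_N^\ell$ has probability at least of order $1/N$ to return to $\mathcal E_N$ in $\mathcal E_N^{\ell-1}$. Since only $O(N^3)$ excursions occur before time $T$, while $O(N\log N)$ excursions already suffice to force such a merger, the hitting time of $\mathcal J_N^\ell\cup\mathcal E_N^{\ell-1}$ is reached well before $\mathfrak T_N$.

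\emph{Reduction.} By Lemma \ref{lem:tube-typ}, for any fixed $T>0$ it suffices to prove
\[
\sup_{\xi\in\mathcal K_N^\ell}\mathbb P_\xi^N\bigl(H_{\mathcal J_N^\ell\cup\mathcal E_N^{\ell-1}}>T,\ \mathfrak T_N>T\bigr)\to0 .
\]
Up to time $\mathfrak T_N$ the process $\eta_N$ coincides with $\sigma_N$. Hence we may work under ${\bf P}_\xi^N$ and bound ${\bf P}_\xi^N(H_{\mathcal J_N^\ell\cup\mathcal E_N^{\ell-1}}>T)$. I use the stopping times $\mathfrak t_i,\mathfrak s_i$ from \eqref{eq:sigma-stopping}--\eqref{eq:nu-stopping} and $\mathfrak n=\inf\{i\ge 1:\mathfrak t_i>T\}$. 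Exactly as in the proof of Lemma \ref{lem:tube-typ}, ${\bf P}_\xi^N(\mathfrak n>m_{1,N})=o(1)$ with $m_{1,N}\simeq 2eTN^3$. On the event $\{H>T\}$, each of the returns $\sigma_N(\mathfrak t_1),\dots,\sigma_N(\mathfrak t_{\mathfrak n-1})$ lies in $\mathcal K_N^\ell$; under $\sigma_N$ the process cannot reach $\mathcal E_N^{\ell'}$ with $\ell'<\ell-1$ without passing through $\mathcal E_N^{\ell-1}$. It therefore suffices to show
\[
\inf_{\zeta\in\mathcal K_N^\ell}{\bf P}_\zeta^N\bigl(\sigma_N(\mathfrak t_1)\in\mathcal E_N^{\ell-1}\bigr)\ge \frac cN
\]
for some $c>0$. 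Indeed, the strong Markov property at $\mathfrak t_1,\mathfrak t_2,\dots$ then gives ${\bf P}_\xi^N(H>T)\le (1-c/N)^{m}+{\bf P}_\xi^N(\mathfrak n>m)$. Taking $m=m_{1,N}$, the first term is at most $e^{-cN^2}$.

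\emph{One-excursion bound.} Fix $\zeta=\xi_{\bm n}^{\bm x}\in\mathcal K_N^\ell$ and a pair $x_{i+1}=x_i+2$; by symmetry assume $n_i\ge n_{i+1}$. The holding rate of $\zeta$ is $2\theta_Nd_NN$. The jump $x_i\to x_i+1$ has rate $\theta_Nd_Nn_i$, so it is the first jump with probability $n_i/(2N)$. After this jump, the coupling with the walk \eqref{eq:coupling-2} on $\llbracket0,m\rrbracket$, $m:=n_i+n_{i+1}$, started from $1$ applies. Its absorption at $m$ means that all particles gather at $x_i+1$, i.e.\ $\sigma_N(\mathfrak t_1)\in\mathcal E_N^{\ell-1}$. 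The whole argument therefore reduces to showing
\[
P_1(H_m<H_0)\ge \frac{c}{m}
\]
for this walk. Then the probability of a merger is at least $\frac{n_i}{2N}\cdot\frac{c}{2n_i}=\frac{c}{4N}$, uniformly in $\zeta$.

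\emph{Main step: the gambler's ruin estimate.} This is the step I expect to require care. The rates \eqref{eq:coupling-2} carry an extra downward bias $2d_N$, which is the symmetry-breaking caused by the third site. I would use the scale-function formula
\[
P_1(H_m<H_0)=\Bigl(\sum_{j=0}^{m-1}\prod_{i=1}^{j}\frac{r(i,i-1)}{r(i,i+1)}\Bigr)^{-1},
\qquad
\frac{r(i,i-1)}{r(i,i+1)}=\frac{i\,(m-i+2d_N)}{(m-i)(i+d_N)} .
\]
Each ratio is bounded by $\bigl(1+\frac{2d_N}{m-i}\bigr)$, so each product is at most $\exp\bigl(2d_N\sum_{i<m}\frac1{m-i}\bigr)\le e^{2d_N(1+\log N)}$. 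This bound tends to $1$ by \eqref{eq:dN-cond}. Consequently the sum is at most $m\,e^{2d_N(1+\log N)}\le 2m$ for large $N$, which gives $P_1(H_m<H_0)\ge 1/(2m)$. This is the same mechanism underlying Lemma \ref{lem:B4}, and can possibly be quoted from it or from Appendix \ref{secB}. Combining the three steps proves \eqref{eq:tube-typ-2}.
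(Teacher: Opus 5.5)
\textbf{Gap: the excursion count is bounded in the wrong direction.} Your reduction to $\{H>T,\ \mathfrak T_N>T\}$ via Lemma \ref{lem:tube-typ} is fine. Your one-excursion merger bound is correct and matches the paper's mechanism: a first jump into the middle site with probability of order $n_i/(2N)$, followed by Lemma \ref{lem:B4} through the coupling \eqref{eq:coupling-2}. The problem is the counting step. The strong Markov property controls only the event $\{H>T\}\cap\{\mathfrak n>m\}$. On that event $\mathfrak t_m\le T<H$, so the first $m$ returns all lie in $\mathcal K_N^\ell$, which has probability at most $(1-c/N)^m$. What you actually obtain is
\[
{\bf P}_\xi^N(H>T)\le (1-c/N)^m+{\bf P}_\xi^N(\mathfrak n\le m)=(1-c/N)^m+{\bf P}_\xi^N(\mathfrak t_m>T),
\]
not the bound with ${\bf P}_\xi^N(\mathfrak n>m)$. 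You need a \emph{lower} bound on the number of excursions completed by time $T$. The Gamma estimate borrowed from Lemma \ref{lem:tube-typ} only gives an upper bound. With your choice $m=m_{1,N}\simeq 2eTN^3$, the needed estimate is actually false. The holding times in $\mathcal E_N$ are independent exponentials of rate $2N^3$, so $\mathfrak t_{m_{1,N}}$ dominates a ${\rm Gamma}(m_{1,N},2N^3)$ variable. That variable concentrates at $eT>T$, hence ${\bf P}_\xi^N(\mathfrak t_{m_{1,N}}>T)\to1$.

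\textbf{Missing ingredient and repair.} You need control of the excursion durations, including the time spent in $\mathcal B_N$. This is available in the paper. By the strong Markov property and the expected absorption-time bounds of Lemmas \ref{lem:B1} and \ref{lem:B4} (as used in \eqref{eq:RW-bound-1}--\eqref{eq:RW-bound-2}),
\[
{\rm E}^{{\bf P}_\xi^N}[\mathfrak t_m]\le m\Bigl(\frac{1}{2N^3}+2\theta_N^{-1}e^{d_N(1+\log N)}(1+\log N)\Bigr).
\]
Take $m=N^2$; any $m$ with $N\ll m\ll N^3$ works. The right-hand side is then $O(N^{-1}+d_N\log N)=o(1)$, so Markov's inequality gives ${\bf P}_\xi^N(\mathfrak t_{N^2}>T)\to0$, while $(1-c/N)^{N^2}\le e^{-cN}$. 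With this repair your route is valid: it combines Lemma \ref{lem:tube-typ} with the fact that $H$ is small in time. The paper avoids the time $T$ altogether. It bounds $\mathbb P_\xi^N(H>\mathfrak T_N)$ by the chance that a rate-$2N^3$ clock rings during the $\mathcal B_N$-portions of the first $\mathfrak n'$ excursions. It caps $\mathfrak n'$ at $m_{2,N}\simeq N\log N$ using the same order-$1/N$ merger bound you derived. It then uses \eqref{eq:one-jump-bound}, an escape probability of $O(d_N\log N)$ per excursion, to conclude with $O(d_NN\log^2N)=o(1)$.
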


\begin{proof}
As explained in the proof of Lemma \ref{lem:tube-typ}, we may bound the probability in \eqref{eq:tube-typ-2} above by the probability that an independent clock of rate $2N^3$ rings, during when the process $\sigma_N(t)$ stays in $\mathcal B_N$, but now before when the process hits $\mathcal J_N^\ell \cup \mathcal E_N^{\ell-1}$.

Recall from \eqref{eq:sigma-stopping} and \eqref{eq:nu-stopping} the definition of the stopping times $\mathfrak t_i , \mathfrak s_i$, $i \ge 0$.
Define ${\color{blue} \mathfrak n'} :=\inf \, \{ i \ge 1 : \sigma_N ( \mathfrak t_i ) \in \mathcal J_N^\ell \cup \mathcal E_N^{\ell-1} \}$. Then,
\begin{equation}\label{eq:3.4-1}
\mathbb{P}_{\xi}^{N} \left( H_{ \mathcal J_N^\ell \cup \mathcal E_N^{\ell-1} } > \mathfrak T_N \right)
\le {\rm E}^{{\bf P}_\xi^N} \left[ 1- e^{- 2N^3 \sum_{i=1}^{\mathfrak n'} ( \mathfrak t_i - \mathfrak s_{i-1} ) } \right] ,
\end{equation}
where, recall, ${\bf P}_\xi^N$ denotes the law of the restricted process $\sigma_N(t)$ on $\mathcal A_N$ starting from $\xi \in \mathcal K_N^\ell$.

To calculate the distribution of $\mathfrak n'$, note that
starting from any $\zeta \in \mathcal K_N^\ell$, there
exists at least one pair of occupied sites $x,z\in\mathbb{T}_{L}$ of $\zeta$
such that $x,y,z$ are consecutive sites in $\mathbb{T}_{L}$. The
probability that the next configuration $\eta$ is obtained by a particle's jump of type either $x \to y$ or $z \to y$ is exactly
\[
\frac{(\zeta_{x}+\zeta_{z}) \, \theta_N d_{N}}{2N^3}=\frac{\zeta_{x}+\zeta_{z}}{2N},
\]
since the total holding rate at $\zeta$ is $2N^3$ and
the two jumps that send a particle from $\{x,z\}$ to $y$ 
have total rate $(\zeta_{x}+\zeta_{z}) \, \theta_N d_{N}$. By Lemma \ref{lem:B4} via the coupling presented in \eqref{eq:coupling-2},
from that configuration $\eta$, 
the probability to reach $\mathcal E_N^{\ell-1}$ (and thus $\mathcal E_N^{\ell-1} \cup \mathcal J_N^\ell$)
at time $\mathfrak t_1$ is bounded below by
\[
\frac{1}{(\zeta_{x}+\zeta_{z})\,e^{2d_{N}(1+\log N)}}.
\]
Thus,
\[
\inf_{\zeta \in \mathcal K_N^\ell} {\bf P}_{\zeta}^{N} \left[\sigma_{N}(\mathfrak t_1)\in \mathcal E_N^{\ell-1} \cup \mathcal J_N^\ell \right]
\ge \frac{\zeta_x + \zeta_z}{2N} \times \frac{1}{(\zeta_x + \zeta_z) \, e^{2d_N (1+\log N)}} = \frac{1}{2N\,e^{2d_{N}(1+\log N)}}.
\]
Therefore, by the strong Markov property,
\begin{equation}
{\bf P}_{\xi}^{N}\,[\mathfrak n' > m]
\le \left(1 - \inf_{\zeta \in \mathcal K_N^\ell} {\bf P}_{\zeta}^{N} \left[\sigma_{N}(\mathfrak t_1)\in \mathcal E_N^{\ell-1} \cup \mathcal J_N^\ell \right] \right)^{m}
\le \left(1-\frac{1}{2N\,e^{2d_{N}(1+\log N)}}\right)^{m}.\label{eq:H-UB}
\end{equation}
By taking integers ${\color{blue} m_{2,N}} \simeq N \log N$, we then obtain via \eqref{eq:dN-cond} that
\begin{equation}\label{eq:G-bound}
\sup_{\xi \in \mathcal K_N^\ell} {\bf P}_\xi^N [ \mathfrak n' > m_{2,N} ] \le
\left(1-\frac{1}{2N\,e^{2d_{N}(1+\log N)}}\right)^{m_{2,N}} \simeq \left(1-\frac{1}{2N\,e^{2d_{N}(1+\log N)}}\right)^{N\log N} = o(1).
\end{equation}
Thus, as it was done in \eqref{eq:change-prob-2}, we substitute this to \eqref{eq:3.4-1} and obtain
\begin{equation}\label{eq:3.4-2}
\begin{aligned}
\mathbb{P}_{\xi}^{N} \left( H_{ \mathcal J_N^\ell \cup \mathcal E_N^{\ell-1} } > \mathfrak T_N \right) & \le {\rm E}^{{\bf P}_\xi^N} \left[ \left( 1- e^{- 2N^3 \sum_{i=1}^{\mathfrak n'} (\mathfrak t_i - \mathfrak s_{i-1}) } \right) {\bf 1} \{ \mathfrak n' \le m_{2,N} \} \right] + {\bf P}_\xi^N ( \mathfrak n' > m_{2,N} ) \\
& \le {\rm E}^{{\bf P}_\xi^N} \left[ 1- e^{- 2N^3 \sum_{i=1}^{m_{2,N}} (\mathfrak t_i - \mathfrak s_{i-1}) } \right] + o(1) ,
\end{aligned}
\end{equation}
where the error is uniform over all $\xi \in \mathcal K_N^\ell$. For the expectation on the right-hand side, we use \eqref{eq:expect-bound} to bound it:
\[
{\rm E}^{{\bf P}_\xi^N} \left[ 1- e^{- 2N^3 \sum_{i=1}^{m_{2,N}} (\mathfrak t_i - \mathfrak s_{i-1}) } \right]
\le 4m_{2,N} k d_N \,e^{d_{N}(1+\log N)}\,(1+\log N).
\]
Since $d_N N \log^2 N = o(1)$ by \eqref{eq:dN-cond}, we conclude the proof via \eqref{eq:3.4-2}.
\end{proof}

Next, we present a lower bound of the probability of hitting $ \mathcal E_N^{\ell-1} $
before $ \mathcal J_N^\ell $, starting from a configuration $\xi \in \mathcal K_N^\ell$.
This constant $p$, which depends on $k$, will appear later in \eqref{eq:key-2}.

\begin{lem}
\label{lem:escape-p}
There exists a universal constant $p = p(k) \in(0,1)$
such that
\[
\liminf_{N\to\infty} \inf_{\xi \in \mathcal K_N^\ell} \mathbb{P}_{\xi}^{N}\left[H_{\mathcal E_N^{\ell-1} }<H_{\mathcal J_N^\ell }\right] \ge p.
\]
\end{lem}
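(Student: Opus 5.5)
The plan is to compare, trial by trial, the chance that an excursion from $\mathcal K_N^\ell$ ends in a merger with the chance that it ends in $\mathcal J_N^\ell$. Both chances will turn out to be of order $1/N$, with comparable constants, which produces a $k$-dependent constant $p$. I work with the restricted process $\sigma_N(\cdot)$ under ${\bf P}^N_\xi$ and the stopping times $\mathfrak t_i,\mathfrak s_i$ from \eqref{eq:sigma-stopping}--\eqref{eq:nu-stopping}. By Lemmas \ref{lem:tube-typ} and \ref{lem:tube-typ-2}, with probability $1-o(1)$ uniformly in $\xi\in\mathcal K_N^\ell$, the original process hits $\mathcal J_N^\ell\cup\mathcal E_N^{\ell-1}$ before $\mathfrak T_N$. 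On that event $\mathbb P^N_\xi$ and ${\bf P}^N_\xi$ agree up to the hitting time. It therefore suffices to bound ${\bf P}^N_\xi[H_{\mathcal E_N^{\ell-1}}<H_{\mathcal J_N^\ell}]$ from below, up to an $o(1)$ error.

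Fix $\zeta\in\mathcal K_N^\ell$ and consider one excursion $[\mathfrak t_0,\mathfrak t_1]$. Let $a(\zeta)$ be the probability that $\sigma_N(\mathfrak t_1)\in\mathcal E_N^{\ell-1}$ and $b(\zeta)$ the probability that $\sigma_N(\mathfrak t_1)\in\mathcal J_N^\ell$.

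\emph{Lower bound on $a$.} The proof of Lemma \ref{lem:tube-typ-2} already gives, via the coupling \eqref{eq:coupling-2} and Lemma \ref{lem:B4},
\[
a(\zeta)\ge \frac{1}{2N\,e^{2d_N(1+\log N)}}=\frac{1+o(1)}{2N}.
\]

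\emph{Upper bound on $b$.} A return to $\mathcal J_N^\ell$ requires the first jump to move a particle out of a condensate at $x$ away from its partner at distance two, so that the excursion is a type-A excursion. It also requires the whole condensate of size $n=\zeta_x$ to be transferred, i.e.\ the walk \eqref{eq:coupling-1} started at $1$ hits $n$ before $0$. There are at most $2k$ such candidate jumps, each with probability $\zeta_x\theta_Nd_N/(2N^3)=\zeta_x/(2N)$. The key estimate is then
\[
\mathbb P_1\left[H_n<H_0\right]\le \frac{C}{n}\qquad\text{for the walk \eqref{eq:coupling-1}, uniformly in } n\le N,
\]
with a universal $C$ (indeed $1+o(1)$). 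To prove it I would use the explicit scale function of this birth--death chain. Since $d_N\log N\to0$, the ratios $\prod_{j\le i} r(j,j-1)/r(j,j+1)=\prod_{j\le i}\frac{j(n-j+d_N)}{(n-j)(j+d_N)}$ equal $e^{O(d_N\log N)}=1+o(1)$ uniformly. The walk is therefore nearly a martingale, and the gambler's-ruin formula gives $(1+o(1))/n$. This is the counterpart of Lemma \ref{lem:B1}, and if Appendix \ref{secB} already contains it I would simply cite it. Combining the pieces gives $b(\zeta)\le 2k\cdot\frac{\zeta_x}{2N}\cdot\frac{C}{\zeta_x}\le \frac{Ck}{N}$.

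By the strong Markov property at the successive times $\mathfrak t_i$, as long as the process is still in $\mathcal K_N^\ell$ each excursion independently ends in $\mathcal E_N^{\ell-1}$ with probability at least $a_*:=\inf_\zeta a(\zeta)$ and in $\mathcal J_N^\ell$ with probability at most $b^*:=\sup_\zeta b(\zeta)$. A standard competing-geometric-trials comparison then yields
\[
{\bf P}^N_\xi\left[H_{\mathcal E_N^{\ell-1}}<H_{\mathcal J_N^\ell}\right]\ge \frac{a_*}{a_*+b^*}\ge \frac{1+o(1)}{1+2Ck}.
\]
Here the bound \eqref{eq:G-bound} ensures that $\mathfrak n'<\infty$ almost surely. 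Excursions that return to $\mathcal K_N^\ell$, including mass exchanges and those where another condensate creates a new distance-two pair, only restart the comparison. Taking $p:=1/(2+2Ck)$ finishes the argument.

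The step I expect to be the main obstacle is the uniform $C/n$ upper bound on the full-transfer probability for \eqref{eq:coupling-1}. This is where the condensing condition \eqref{eq:dN-cond} must control the drift near the endpoints. One also has to check carefully that no other route into $\mathcal J_N^\ell$ exists inside $\mathcal A_N$. In particular, a type-B excursion ending at $(m,0,n+n'-m)$ still leaves the pair at distance two, so it returns to $\mathcal K_N^\ell$ or merges.
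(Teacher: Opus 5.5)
Your proposal is correct and is essentially the paper's argument. The paper first reduces to the restricted process via Lemma \ref{lem:tube-typ-2}. It then compares the trace jump rates of $\widetilde\sigma_N$ into $\mathcal E_N^{\ell-1}$, which are at least $N^2e^{-2d_N(1+\log N)}$ by Lemma \ref{lem:B4}, with those into $\mathcal J_N^\ell$, which are at most $(2\ell-2)N^2e^{d_N(1+\log N)}$ by the upper bound in Lemma \ref{lem:B1}; these rates are exactly your excursion probabilities $a(\zeta)$ and $b(\zeta)$ multiplied by the holding rate $2N^3$. The full-transfer estimate you flag as the main obstacle is already stated in Lemma \ref{lem:B1} as $P_1[x_H=M]\le e^{d(1+\log M)}/M$, so your $C$ is $1+o(1)$ and the ratio gives $p$ of order $1/(2k)$, matching the paper.
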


\begin{proof}
Recall that ${\bf P}_{\xi}^{N}$ is the law of the
restricted process in $\mathcal A_N$. By Lemma \ref{lem:tube-typ-2},
\begin{equation}
\inf_{\xi \in \mathcal K_N^\ell} \mathbb{P}_{\xi}^{N}\left[H_{\mathcal E_N^{\ell-1} }<H_{\mathcal J_N^\ell }\right] \ge \inf_{\xi \in \mathcal K_N^\ell} {\bf P}_{\xi}^{N}\left[H_{\mathcal E_N^{\ell-1} }<H_{\mathcal J_N^\ell }\right] +o (1).\label{eq:stays-inside}
\end{equation}

\begin{figure}
\begin{tikzpicture}[scale=0.7]
\draw[densely dotted,->] (2.5,0)--(4,-1); \draw[densely dotted,->] (2.5,0.5)--(4,1.5);
\draw[densely dotted,->] (0.25,2)--(1,3); \draw[densely dotted,->] (-0.25,2)--(-1,3);
\draw[densely dotted,->] (-2.5,0)--(-4,-1); \draw[densely dotted,->] (-2.5,0.5)--(-4,1.5);
\draw[very thick,-latex] (0,-1.5)--(0,-2.5);
\draw (3.25,-0.5) node[above right]{\footnotesize $\simeq d_N$}; \draw (3.25,1) node[below right]{\footnotesize $\simeq d_N$};
\draw (0.625,2.5) node[below right]{\footnotesize $\simeq d_N$}; \draw (-0.625,2.5) node[above right]{\footnotesize $\simeq d_N$};
\draw (-3.25,-0.5) node[below right]{\footnotesize $\simeq d_N$}; \draw (-3.25,1) node[above right]{\footnotesize $\simeq d_N$};

\draw[] (0,0) ellipse (2 and 1);
\foreach \i in {0,...,15} { \draw[thick] ({2*cos(\i*22.5)},{1*sin(\i*22.5)-0.1})--({2*cos(\i*22.5)},{1*sin(\i*22.5)}); }

\foreach \i in {1} { \foreach \j in {0,...,3} { \draw[very thick,blue] ({2*cos(\i*22.5)},{1*sin(\i*22.5)-0.2})--({2*cos(\i*22.5)},{1*sin(\i*22.5)});
\fill[blue!50!white] ({2*cos(\i*22.5)},{1*sin(\i*22.5)+0.15+0.3*\j}) circle (0.15); \draw[blue] ({2*cos(\i*22.5)},{1*sin(\i*22.5)+0.15+0.3*\j}) circle (0.15); } }
\foreach \i in {6} { \foreach \j in {0,...,2} { \draw[very thick,teal] ({2*cos(\i*22.5)},{1*sin(\i*22.5)-0.2})--({2*cos(\i*22.5)},{1*sin(\i*22.5)});
\fill[teal!50!white] ({2*cos(\i*22.5)},{1*sin(\i*22.5)+0.15+0.3*\j}) circle (0.15); \draw[teal] ({2*cos(\i*22.5)},{1*sin(\i*22.5)+0.15+0.3*\j}) circle (0.15); } }
\foreach \i in {11} { \foreach \j in {0,...,4} { \draw[very thick,red] ({2*cos(\i*22.5)},{1*sin(\i*22.5)-0.2})--({2*cos(\i*22.5)},{1*sin(\i*22.5)});
\fill[red!50!white] ({2*cos(\i*22.5)},{1*sin(\i*22.5)+0.15+0.3*\j}) circle (0.15); \draw[red] ({2*cos(\i*22.5)},{1*sin(\i*22.5)+0.15+0.3*\j}) circle (0.15); } }
\foreach \i in {13} { \foreach \j in {0,...,3} { \draw[very thick,red] ({2*cos(\i*22.5)},{1*sin(\i*22.5)-0.2})--({2*cos(\i*22.5)},{1*sin(\i*22.5)});
\fill[red!50!white] ({2*cos(\i*22.5)},{1*sin(\i*22.5)+0.15+0.3*\j}) circle (0.15); \draw[red] ({2*cos(\i*22.5)},{1*sin(\i*22.5)+0.15+0.3*\j}) circle (0.15); } }

\begin{scope}[shift={(6,-1)},scale=0.7]
\draw[] (0,0) ellipse (2 and 1);
\foreach \i in {0,...,15} { \draw[thick] ({2*cos(\i*22.5)},{1*sin(\i*22.5)-0.1})--({2*cos(\i*22.5)},{1*sin(\i*22.5)}); }

\foreach \i in {0} { \foreach \j in {0,...,3} { \draw[very thick,blue] ({2*cos(\i*22.5)},{1*sin(\i*22.5)-0.2})--({2*cos(\i*22.5)},{1*sin(\i*22.5)});
\fill[blue!50!white] ({2*cos(\i*22.5)},{1*sin(\i*22.5)+0.15+0.3*\j}) circle (0.15); \draw[blue] ({2*cos(\i*22.5)},{1*sin(\i*22.5)+0.15+0.3*\j}) circle (0.15); } }
\foreach \i in {6} { \foreach \j in {0,...,2} { \draw[very thick,teal] ({2*cos(\i*22.5)},{1*sin(\i*22.5)-0.2})--({2*cos(\i*22.5)},{1*sin(\i*22.5)});
\fill[teal!50!white] ({2*cos(\i*22.5)},{1*sin(\i*22.5)+0.15+0.3*\j}) circle (0.15); \draw[teal] ({2*cos(\i*22.5)},{1*sin(\i*22.5)+0.15+0.3*\j}) circle (0.15); } }
\foreach \i in {11} { \foreach \j in {0,...,4} { \draw[very thick,red] ({2*cos(\i*22.5)},{1*sin(\i*22.5)-0.2})--({2*cos(\i*22.5)},{1*sin(\i*22.5)});
\fill[red!50!white] ({2*cos(\i*22.5)},{1*sin(\i*22.5)+0.15+0.3*\j}) circle (0.15); \draw[red] ({2*cos(\i*22.5)},{1*sin(\i*22.5)+0.15+0.3*\j}) circle (0.15); } }
\foreach \i in {13} { \foreach \j in {0,...,3} { \draw[very thick,red] ({2*cos(\i*22.5)},{1*sin(\i*22.5)-0.2})--({2*cos(\i*22.5)},{1*sin(\i*22.5)});
\fill[red!50!white] ({2*cos(\i*22.5)},{1*sin(\i*22.5)+0.15+0.3*\j}) circle (0.15); \draw[red] ({2*cos(\i*22.5)},{1*sin(\i*22.5)+0.15+0.3*\j}) circle (0.15); } }
\end{scope}

\begin{scope}[shift={(6,2)},scale=0.7]
\draw[] (0,0) ellipse (2 and 1);
\foreach \i in {0,...,15} { \draw[thick] ({2*cos(\i*22.5)},{1*sin(\i*22.5)-0.1})--({2*cos(\i*22.5)},{1*sin(\i*22.5)}); }

\foreach \i in {2} { \foreach \j in {0,...,3} { \draw[very thick,blue] ({2*cos(\i*22.5)},{1*sin(\i*22.5)-0.2})--({2*cos(\i*22.5)},{1*sin(\i*22.5)});
\fill[blue!50!white] ({2*cos(\i*22.5)},{1*sin(\i*22.5)+0.15+0.3*\j}) circle (0.15); \draw[blue] ({2*cos(\i*22.5)},{1*sin(\i*22.5)+0.15+0.3*\j}) circle (0.15); } }
\foreach \i in {6} { \foreach \j in {0,...,2} { \draw[very thick,teal] ({2*cos(\i*22.5)},{1*sin(\i*22.5)-0.2})--({2*cos(\i*22.5)},{1*sin(\i*22.5)});
\fill[teal!50!white] ({2*cos(\i*22.5)},{1*sin(\i*22.5)+0.15+0.3*\j}) circle (0.15); \draw[teal] ({2*cos(\i*22.5)},{1*sin(\i*22.5)+0.15+0.3*\j}) circle (0.15); } }
\foreach \i in {11} { \foreach \j in {0,...,4} { \draw[very thick,red] ({2*cos(\i*22.5)},{1*sin(\i*22.5)-0.2})--({2*cos(\i*22.5)},{1*sin(\i*22.5)});
\fill[red!50!white] ({2*cos(\i*22.5)},{1*sin(\i*22.5)+0.15+0.3*\j}) circle (0.15); \draw[red] ({2*cos(\i*22.5)},{1*sin(\i*22.5)+0.15+0.3*\j}) circle (0.15); } }
\foreach \i in {13} { \foreach \j in {0,...,3} { \draw[very thick,red] ({2*cos(\i*22.5)},{1*sin(\i*22.5)-0.2})--({2*cos(\i*22.5)},{1*sin(\i*22.5)});
\fill[red!50!white] ({2*cos(\i*22.5)},{1*sin(\i*22.5)+0.15+0.3*\j}) circle (0.15); \draw[red] ({2*cos(\i*22.5)},{1*sin(\i*22.5)+0.15+0.3*\j}) circle (0.15); } }
\end{scope}

\begin{scope}[shift={(2.5,4)},scale=0.7]
\draw[] (0,0) ellipse (2 and 1);
\foreach \i in {0,...,15} { \draw[thick] ({2*cos(\i*22.5)},{1*sin(\i*22.5)-0.1})--({2*cos(\i*22.5)},{1*sin(\i*22.5)}); }

\foreach \i in {1} { \foreach \j in {0,...,3} { \draw[very thick,blue] ({2*cos(\i*22.5)},{1*sin(\i*22.5)-0.2})--({2*cos(\i*22.5)},{1*sin(\i*22.5)});
\fill[blue!50!white] ({2*cos(\i*22.5)},{1*sin(\i*22.5)+0.15+0.3*\j}) circle (0.15); \draw[blue] ({2*cos(\i*22.5)},{1*sin(\i*22.5)+0.15+0.3*\j}) circle (0.15); } }
\foreach \i in {5} { \foreach \j in {0,...,2} { \draw[very thick,teal] ({2*cos(\i*22.5)},{1*sin(\i*22.5)-0.2})--({2*cos(\i*22.5)},{1*sin(\i*22.5)});
\fill[teal!50!white] ({2*cos(\i*22.5)},{1*sin(\i*22.5)+0.15+0.3*\j}) circle (0.15); \draw[teal] ({2*cos(\i*22.5)},{1*sin(\i*22.5)+0.15+0.3*\j}) circle (0.15); } }
\foreach \i in {11} { \foreach \j in {0,...,4} { \draw[very thick,red] ({2*cos(\i*22.5)},{1*sin(\i*22.5)-0.2})--({2*cos(\i*22.5)},{1*sin(\i*22.5)});
\fill[red!50!white] ({2*cos(\i*22.5)},{1*sin(\i*22.5)+0.15+0.3*\j}) circle (0.15); \draw[red] ({2*cos(\i*22.5)},{1*sin(\i*22.5)+0.15+0.3*\j}) circle (0.15); } }
\foreach \i in {13} { \foreach \j in {0,...,3} { \draw[very thick,red] ({2*cos(\i*22.5)},{1*sin(\i*22.5)-0.2})--({2*cos(\i*22.5)},{1*sin(\i*22.5)});
\fill[red!50!white] ({2*cos(\i*22.5)},{1*sin(\i*22.5)+0.15+0.3*\j}) circle (0.15); \draw[red] ({2*cos(\i*22.5)},{1*sin(\i*22.5)+0.15+0.3*\j}) circle (0.15); } }
\end{scope}

\begin{scope}[shift={(-2.5,4)},scale=0.7]
\draw[] (0,0) ellipse (2 and 1);
\foreach \i in {0,...,15} { \draw[thick] ({2*cos(\i*22.5)},{1*sin(\i*22.5)-0.1})--({2*cos(\i*22.5)},{1*sin(\i*22.5)}); }

\foreach \i in {1} { \foreach \j in {0,...,3} { \draw[very thick,blue] ({2*cos(\i*22.5)},{1*sin(\i*22.5)-0.2})--({2*cos(\i*22.5)},{1*sin(\i*22.5)});
\fill[blue!50!white] ({2*cos(\i*22.5)},{1*sin(\i*22.5)+0.15+0.3*\j}) circle (0.15); \draw[blue] ({2*cos(\i*22.5)},{1*sin(\i*22.5)+0.15+0.3*\j}) circle (0.15); } }
\foreach \i in {7} { \foreach \j in {0,...,2} { \draw[very thick,teal] ({2*cos(\i*22.5)},{1*sin(\i*22.5)-0.2})--({2*cos(\i*22.5)},{1*sin(\i*22.5)});
\fill[teal!50!white] ({2*cos(\i*22.5)},{1*sin(\i*22.5)+0.15+0.3*\j}) circle (0.15); \draw[teal] ({2*cos(\i*22.5)},{1*sin(\i*22.5)+0.15+0.3*\j}) circle (0.15); } }
\foreach \i in {11} { \foreach \j in {0,...,4} { \draw[very thick,red] ({2*cos(\i*22.5)},{1*sin(\i*22.5)-0.2})--({2*cos(\i*22.5)},{1*sin(\i*22.5)});
\fill[red!50!white] ({2*cos(\i*22.5)},{1*sin(\i*22.5)+0.15+0.3*\j}) circle (0.15); \draw[red] ({2*cos(\i*22.5)},{1*sin(\i*22.5)+0.15+0.3*\j}) circle (0.15); } }
\foreach \i in {13} { \foreach \j in {0,...,3} { \draw[very thick,red] ({2*cos(\i*22.5)},{1*sin(\i*22.5)-0.2})--({2*cos(\i*22.5)},{1*sin(\i*22.5)});
\fill[red!50!white] ({2*cos(\i*22.5)},{1*sin(\i*22.5)+0.15+0.3*\j}) circle (0.15); \draw[red] ({2*cos(\i*22.5)},{1*sin(\i*22.5)+0.15+0.3*\j}) circle (0.15); } }
\end{scope}

\begin{scope}[shift={(-6,2)},scale=0.7]
\draw[] (0,0) ellipse (2 and 1);
\foreach \i in {0,...,15} { \draw[thick] ({2*cos(\i*22.5)},{1*sin(\i*22.5)-0.1})--({2*cos(\i*22.5)},{1*sin(\i*22.5)}); }

\foreach \i in {1} { \foreach \j in {0,...,3} { \draw[very thick,blue] ({2*cos(\i*22.5)},{1*sin(\i*22.5)-0.2})--({2*cos(\i*22.5)},{1*sin(\i*22.5)});
\fill[blue!50!white] ({2*cos(\i*22.5)},{1*sin(\i*22.5)+0.15+0.3*\j}) circle (0.15); \draw[blue] ({2*cos(\i*22.5)},{1*sin(\i*22.5)+0.15+0.3*\j}) circle (0.15); } }
\foreach \i in {6} { \foreach \j in {0,...,2} { \draw[very thick,teal] ({2*cos(\i*22.5)},{1*sin(\i*22.5)-0.2})--({2*cos(\i*22.5)},{1*sin(\i*22.5)});
\fill[teal!50!white] ({2*cos(\i*22.5)},{1*sin(\i*22.5)+0.15+0.3*\j}) circle (0.15); \draw[teal] ({2*cos(\i*22.5)},{1*sin(\i*22.5)+0.15+0.3*\j}) circle (0.15); } }
\foreach \i in {10} { \foreach \j in {0,...,4} { \draw[very thick,red] ({2*cos(\i*22.5)},{1*sin(\i*22.5)-0.2})--({2*cos(\i*22.5)},{1*sin(\i*22.5)});
\fill[red!50!white] ({2*cos(\i*22.5)},{1*sin(\i*22.5)+0.15+0.3*\j}) circle (0.15); \draw[red] ({2*cos(\i*22.5)},{1*sin(\i*22.5)+0.15+0.3*\j}) circle (0.15); } }
\foreach \i in {13} { \foreach \j in {0,...,3} { \draw[very thick,red] ({2*cos(\i*22.5)},{1*sin(\i*22.5)-0.2})--({2*cos(\i*22.5)},{1*sin(\i*22.5)});
\fill[red!50!white] ({2*cos(\i*22.5)},{1*sin(\i*22.5)+0.15+0.3*\j}) circle (0.15); \draw[red] ({2*cos(\i*22.5)},{1*sin(\i*22.5)+0.15+0.3*\j}) circle (0.15); } }
\end{scope}

\begin{scope}[shift={(-6,-1)},scale=0.7]
\draw[] (0,0) ellipse (2 and 1);
\foreach \i in {0,...,15} { \draw[thick] ({2*cos(\i*22.5)},{1*sin(\i*22.5)-0.1})--({2*cos(\i*22.5)},{1*sin(\i*22.5)}); }

\foreach \i in {1} { \foreach \j in {0,...,3} { \draw[very thick,blue] ({2*cos(\i*22.5)},{1*sin(\i*22.5)-0.2})--({2*cos(\i*22.5)},{1*sin(\i*22.5)});
\fill[blue!50!white] ({2*cos(\i*22.5)},{1*sin(\i*22.5)+0.15+0.3*\j}) circle (0.15); \draw[blue] ({2*cos(\i*22.5)},{1*sin(\i*22.5)+0.15+0.3*\j}) circle (0.15); } }
\foreach \i in {6} { \foreach \j in {0,...,2} { \draw[very thick,teal] ({2*cos(\i*22.5)},{1*sin(\i*22.5)-0.2})--({2*cos(\i*22.5)},{1*sin(\i*22.5)});
\fill[teal!50!white] ({2*cos(\i*22.5)},{1*sin(\i*22.5)+0.15+0.3*\j}) circle (0.15); \draw[teal] ({2*cos(\i*22.5)},{1*sin(\i*22.5)+0.15+0.3*\j}) circle (0.15); } }
\foreach \i in {11} { \foreach \j in {0,...,4} { \draw[very thick,red] ({2*cos(\i*22.5)},{1*sin(\i*22.5)-0.2})--({2*cos(\i*22.5)},{1*sin(\i*22.5)});
\fill[red!50!white] ({2*cos(\i*22.5)},{1*sin(\i*22.5)+0.15+0.3*\j}) circle (0.15); \draw[red] ({2*cos(\i*22.5)},{1*sin(\i*22.5)+0.15+0.3*\j}) circle (0.15); } }
\foreach \i in {14} { \foreach \j in {0,...,3} { \draw[very thick,red] ({2*cos(\i*22.5)},{1*sin(\i*22.5)-0.2})--({2*cos(\i*22.5)},{1*sin(\i*22.5)});
\fill[red!50!white] ({2*cos(\i*22.5)},{1*sin(\i*22.5)+0.15+0.3*\j}) circle (0.15); \draw[red] ({2*cos(\i*22.5)},{1*sin(\i*22.5)+0.15+0.3*\j}) circle (0.15); } }
\end{scope}

\begin{scope}[shift={(-4,-4)},scale=0.7]
\draw[] (0,0) ellipse (2 and 1);
\foreach \i in {0,...,15} { \draw[thick] ({2*cos(\i*22.5)},{1*sin(\i*22.5)-0.1})--({2*cos(\i*22.5)},{1*sin(\i*22.5)}); }

\foreach \i in {1} { \foreach \j in {0,...,3} { \draw[very thick,blue] ({2*cos(\i*22.5)},{1*sin(\i*22.5)-0.2})--({2*cos(\i*22.5)},{1*sin(\i*22.5)});
\fill[blue!50!white] ({2*cos(\i*22.5)},{1*sin(\i*22.5)+0.15+0.3*\j}) circle (0.15); \draw[blue] ({2*cos(\i*22.5)},{1*sin(\i*22.5)+0.15+0.3*\j}) circle (0.15); } }
\foreach \i in {6} { \foreach \j in {0,...,2} { \draw[very thick,teal] ({2*cos(\i*22.5)},{1*sin(\i*22.5)-0.2})--({2*cos(\i*22.5)},{1*sin(\i*22.5)});
\fill[teal!50!white] ({2*cos(\i*22.5)},{1*sin(\i*22.5)+0.15+0.3*\j}) circle (0.15); \draw[teal] ({2*cos(\i*22.5)},{1*sin(\i*22.5)+0.15+0.3*\j}) circle (0.15); } }
\foreach \i in {11} { \foreach \j in {0,...,8} { \draw[very thick,red] ({2*cos(\i*22.5)},{1*sin(\i*22.5)-0.2})--({2*cos(\i*22.5)},{1*sin(\i*22.5)});
\fill[red!50!white] ({2*cos(\i*22.5)},{1*sin(\i*22.5)+0.15+0.3*\j}) circle (0.15); \draw[red] ({2*cos(\i*22.5)},{1*sin(\i*22.5)+0.15+0.3*\j}) circle (0.15); } }
\end{scope}

\begin{scope}[shift={(0,-4)},scale=0.7]
\draw[] (0,0) ellipse (2 and 1);
\foreach \i in {0,...,15} { \draw[thick] ({2*cos(\i*22.5)},{1*sin(\i*22.5)-0.1})--({2*cos(\i*22.5)},{1*sin(\i*22.5)}); }

\foreach \i in {1} { \foreach \j in {0,...,3} { \draw[very thick,blue] ({2*cos(\i*22.5)},{1*sin(\i*22.5)-0.2})--({2*cos(\i*22.5)},{1*sin(\i*22.5)});
\fill[blue!50!white] ({2*cos(\i*22.5)},{1*sin(\i*22.5)+0.15+0.3*\j}) circle (0.15); \draw[blue] ({2*cos(\i*22.5)},{1*sin(\i*22.5)+0.15+0.3*\j}) circle (0.15); } }
\foreach \i in {6} { \foreach \j in {0,...,2} { \draw[very thick,teal] ({2*cos(\i*22.5)},{1*sin(\i*22.5)-0.2})--({2*cos(\i*22.5)},{1*sin(\i*22.5)});
\fill[teal!50!white] ({2*cos(\i*22.5)},{1*sin(\i*22.5)+0.15+0.3*\j}) circle (0.15); \draw[teal] ({2*cos(\i*22.5)},{1*sin(\i*22.5)+0.15+0.3*\j}) circle (0.15); } }
\foreach \i in {12} { \foreach \j in {0,...,8} { \draw[very thick,red] ({2*cos(\i*22.5)},{1*sin(\i*22.5)-0.2})--({2*cos(\i*22.5)},{1*sin(\i*22.5)});
\fill[red!50!white] ({2*cos(\i*22.5)},{1*sin(\i*22.5)+0.15+0.3*\j}) circle (0.15); \draw[red] ({2*cos(\i*22.5)},{1*sin(\i*22.5)+0.15+0.3*\j}) circle (0.15); } }
\end{scope}

\begin{scope}[shift={(4,-4)},scale=0.7]
\draw[] (0,0) ellipse (2 and 1);
\foreach \i in {0,...,15} { \draw[thick] ({2*cos(\i*22.5)},{1*sin(\i*22.5)-0.1})--({2*cos(\i*22.5)},{1*sin(\i*22.5)}); }

\foreach \i in {1} { \foreach \j in {0,...,3} { \draw[very thick,blue] ({2*cos(\i*22.5)},{1*sin(\i*22.5)-0.2})--({2*cos(\i*22.5)},{1*sin(\i*22.5)});
\fill[blue!50!white] ({2*cos(\i*22.5)},{1*sin(\i*22.5)+0.15+0.3*\j}) circle (0.15); \draw[blue] ({2*cos(\i*22.5)},{1*sin(\i*22.5)+0.15+0.3*\j}) circle (0.15); } }
\foreach \i in {6} { \foreach \j in {0,...,2} { \draw[very thick,teal] ({2*cos(\i*22.5)},{1*sin(\i*22.5)-0.2})--({2*cos(\i*22.5)},{1*sin(\i*22.5)});
\fill[teal!50!white] ({2*cos(\i*22.5)},{1*sin(\i*22.5)+0.15+0.3*\j}) circle (0.15); \draw[teal] ({2*cos(\i*22.5)},{1*sin(\i*22.5)+0.15+0.3*\j}) circle (0.15); } }
\foreach \i in {13} { \foreach \j in {0,...,8} { \draw[very thick,red] ({2*cos(\i*22.5)},{1*sin(\i*22.5)-0.2})--({2*cos(\i*22.5)},{1*sin(\i*22.5)});
\fill[red!50!white] ({2*cos(\i*22.5)},{1*sin(\i*22.5)+0.15+0.3*\j}) circle (0.15); \draw[red] ({2*cos(\i*22.5)},{1*sin(\i*22.5)+0.15+0.3*\j}) circle (0.15); } }
\end{scope}
\end{tikzpicture}\caption{\label{fig2.3}Trace process $\widetilde\sigma_{N}(t)$ on $\mathcal E_N$.
Starting from a configuration in $\mathcal K_N^\ell$
at the center, it either visits another configuration in $\mathcal K_N^\ell \cup \mathcal J_N^\ell$ (top six configurations) or visits
$\mathcal{E}_N^{\ell-1}$ (bottom three configurations). Each trace
jump rate to a configuration in $\mathcal J_N^\ell$ is
asymptotically equal to $N^2$ (cf. \eqref{eq:trace-1}), whereas
the total trace jump rate to $\mathcal E_N^{\ell-1}$ is asymptotically
at least $N^2$ (cf. \eqref{eq:trace-3}).}
\end{figure}

Consider the trace process $\color{blue} \widetilde\sigma_{N}(t)$ of $\sigma_{N}(t)$
on the subset $\mathcal E_N = \mathcal A_N \setminus \mathcal B_N$ (cf. Figure \ref{fig2.3}).\footnote{Mind that this process $\widetilde\sigma_N(t)$ is different from the original trace process $\eta_N^{\mathcal E_N}(t)$, since it has been obtained by first annihilating the jump rates, as done in \eqref{eq:rN'-def}, and then tracing on $\mathcal E_N$.}
Denote by ${\color{blue} \widetilde{r}_{N}} (\cdot,\cdot)$
the jump rates of $\widetilde\sigma_N(t)$.
By \cite[Corollary 6.2]{BL10}, for any $\zeta,\zeta' \in \mathcal E_N$,
\begin{equation}\label{eq:trace-rate}
\widetilde{r}_{N}(\zeta,\zeta') = \sum_{\eta\in \mathcal A_N} r_{N}(\zeta,\eta)\,{\bf P}_{\eta}^{N}\left[H_{ \{ \zeta' \} }=H_{\mathcal E_N}\right].
\end{equation}
To reach a configuration $\zeta' \in \mathcal J_N^\ell$
from $\zeta \in \mathcal K_N^\ell$, the
configuration $\eta$ reached after the first jump must be obtained from $\zeta$ by a particle's
jump $x \to y$ which is isolated from any other occupied sites. In this case, by the coupling presented at \eqref{eq:coupling-1} and Lemma \ref{lem:B1}
\[
{\bf P}_\eta^N \left[ H_{\{ \zeta' \}} = H_{\mathcal E_N} \right] \le  \frac1{\zeta_x} \, e^{d_N (1+ \log N)}.
\]
Substituting this to \eqref{eq:trace-rate},
\begin{equation}
\widetilde{r}_{N}(\zeta,\zeta') \le \zeta_x \theta_N d_N \times \frac1{\zeta_x} \, e^{d_N (1+\log N)} = N^2 \, e^{d_N(1+\log N)} .\label{eq:trace-1}
\end{equation}
There are at most $2\ell-2$ such distinct configurations $\zeta'\in\mathcal J_N^\ell$.
This gives
\begin{equation}
\widetilde{r}_{N} ( \zeta,\mathcal J_N^\ell ) :=\sum_{\zeta'\in \mathcal J_N^\ell}\widetilde{r}_{N}(\zeta,\zeta') \le (2\ell-2) N^2 \, e^{d_N (1+\log N)} .\label{eq:trace-2}
\end{equation}
On the other hand, suppose that $\eta$ is obtained from $\zeta \in \mathcal K_N^\ell$
with a particle's jump $x \to y$ or $z \to y$ such that $x,y,z$ are three consecutive elements and $\zeta_x, \zeta_z \ge 1$.
The total jump rate is $(\zeta_x + \zeta_z) \, \theta_N d_N$. Then, using the same
coupling as given in \eqref{eq:coupling-2}, via Lemma \ref{lem:B4}, we deduce that
\[
{\bf P}_{\eta}^{N}\left[H_{ \mathcal E_N^{\ell-1} } = H_{ \mathcal E_N }\right] \ge \frac{1}{(\zeta_x + \zeta_z)\,e^{2d_{N}(1+\log N)}}.
\]
Thus, again by \eqref{eq:trace-rate},
\begin{equation}
\widetilde{r}_{N} ( \zeta,\mathcal E_N^{\ell-1} ) := \sum_{\zeta' \in \mathcal E_N^{\ell-1}} \widetilde{r}_{N}(\zeta,\zeta')\ge\frac{N^2}{e^{2d_{N}(1+\log N)}}.\label{eq:trace-3}
\end{equation}
Combining \eqref{eq:trace-2} and \eqref{eq:trace-3}, along with
the strong Markov property, we obtain for any $\xi \in \mathcal K_N^\ell$ that
\begin{equation}
\begin{aligned}
{\bf P}_{\xi}^{N}\,\Big [H_{ \mathcal E_N^{\ell-1}} & <  H_{ \mathcal J_N^\ell } \Big]\,
\ge \inf_{\zeta \in \mathcal K_N^\ell } \frac{\widetilde{r}_{N} ( \zeta, \mathcal E_N^{\ell-1} ) } {\widetilde{r}_{N} ( \zeta, \mathcal J_N^\ell \cup \mathcal E_N^{\ell-1} ) } \\
& \ge\frac{e^{-2d_{N}(1+\log N)}}{e^{-2d_{N}(1+\log N)}+(2\ell-2) \, e^{d_N (1+\log N)} } \ge \frac{1}{2\ell},
\end{aligned}
\label{eq:go-forward}
\end{equation}
where the last inequality holds for all sufficiently large $N$ by
\eqref{eq:dN-cond}. Thus, we may take e.g. $p=(2k)^{-1}>0$ such
that, by \eqref{eq:stays-inside} and \eqref{eq:go-forward},
\[
\liminf_{N\to\infty} \inf_{\xi \in \mathcal K_N^\ell } \mathbb{P}_{\xi}^{N}\left[H_{\mathcal E_N^{\ell-1} }<H_{\mathcal J_N^\ell }\right]\ge\frac{1}{2\ell}\ge p.
\]
This concludes the proof.
\end{proof}

We are able to control the hitting time of $ \mathcal J_N^\ell \cup \mathcal E_N^{\ell-1} $
when restricted to the event that the process does not escape $\mathcal A_N$.

\begin{lem}\label{lem:good-hitting-time}
For all fixed $\lambda >0$,
\[
\sup_{ \xi \in \mathcal K_N^\ell } {\rm E}^{\mathbb{P}_{\xi}^{N}} \left[ \left( 1 - e^{-\lambda H_{\mathcal J_N^\ell \cup \mathcal E_N^{\ell-1}}} \right) {\bf 1} \left\{ H_{\mathcal J_N^\ell \cup \mathcal E_N^{\ell-1}} < \mathfrak T_N \right\} \right] = o (1).
\]
\end{lem}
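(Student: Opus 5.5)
The plan is to show that, on the event $\{H_{\mathcal J_N^\ell\cup\mathcal E_N^{\ell-1}}<\mathfrak T_N\}$, the hitting time $H:=H_{\mathcal J_N^\ell\cup\mathcal E_N^{\ell-1}}$ is typically of order $N^{-2}$ up to logarithmic factors. This is far below the macroscopic time scale $1$, so $1-e^{-\lambda H}\to0$.

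First I reduce to the restricted process. Before $\mathfrak T_N$ the original process and $\sigma_N(\cdot)$ can be coupled to coincide, so the expectation is bounded by ${\rm E}^{{\bf P}_\xi^N}[1-e^{-\lambda H}]\le \lambda\,{\rm E}^{{\bf P}_\xi^N}[H\wedge t_N]+{\bf P}_\xi^N(H>t_N)$ for a suitable $t_N\to0$. Alternatively, I simply bound ${\rm E}^{{\bf P}_\xi^N}[H]$ directly.

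Next I decompose $H$ with the stopping times $\mathfrak t_i,\mathfrak s_i$ of \eqref{eq:sigma-stopping}--\eqref{eq:nu-stopping}. Writing $\mathfrak n'$ as in the proof of Lemma \ref{lem:tube-typ-2}, we have $H=\mathfrak t_{\mathfrak n'}=\sum_{i=1}^{\mathfrak n'}[(\mathfrak s_{i-1}-\mathfrak t_{i-1})+(\mathfrak t_i-\mathfrak s_{i-1})]$. Each holding time at a configuration of $\mathcal E_N$ has mean $(2N^3)^{-1}$. Each excursion in $\mathcal B_N$ has mean at most $C d_N(1+\log N)/(\theta_N d_N n)\cdot$, and by \eqref{eq:RW-bound-1}, \eqref{eq:RW-bound-2} and the jump probabilities out of $\zeta$, its averaged mean is at most $C(1+\log N)/N^3$. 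This is exactly the computation leading to \eqref{eq:one-jump-bound}, but keeping ${\rm E}[\mathfrak t_1-\mathfrak s_0]$ rather than the Laplace transform. Hence a Wald-type argument (strong Markov at each $\mathfrak t_i$, since $\{\mathfrak n'\ge i\}$ is $\mathcal F_{\mathfrak t_{i-1}}$-measurable) gives
\[
{\rm E}^{{\bf P}_\xi^N}[H]\le \frac{C(1+\log N)}{N^3}\,{\rm E}^{{\bf P}_\xi^N}[\mathfrak n'] .
\]

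The key step is bounding ${\rm E}[\mathfrak n']$. By \eqref{eq:H-UB}, $\mathfrak n'$ is stochastically dominated by a geometric variable with success probability $(2N e^{2d_N(1+\log N)})^{-1}$, so ${\rm E}[\mathfrak n']\le 2Ne^{2d_N(1+\log N)}\le 4N$ for large $N$. Therefore
\[
\sup_{\xi\in\mathcal K_N^\ell}{\rm E}^{\mathbb P_\xi^N}\bigl[(1-e^{-\lambda H}){\bf 1}\{H<\mathfrak T_N\}\bigr]\le \lambda\sup_\xi{\rm E}^{{\bf P}_\xi^N}[H]\le \frac{C\lambda\log N}{N^2}=o(1).
\]

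The main obstacle is making the mean-excursion bound uniform. Lemmas \ref{lem:B1} and \ref{lem:B4} bound ${\rm E}_\eta[\mathfrak t_1]$ by $C(1+\log N)/(\theta_N n)$, where $n$ is the mass of the departing condensate. Weighting by the first-jump probability $\theta_N d_N n/(2N^3)$ cancels $n$, exactly as in \eqref{eq:one-jump-bound}. This yields a per-trial mean of order $d_N\log N/(\theta_N d_N)\cdot k/ N \sim k\log N/N^3$, which is what I need.

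A second point needs care. On $\{H<\mathfrak T_N\}$ the original and restricted paths agree up to $H$, so replacing $\mathbb P$ by ${\bf P}$ is legitimate; I would spell this coupling out explicitly.
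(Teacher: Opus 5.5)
Your proposal is correct and follows essentially the same route as the paper: you couple with the restricted process $\sigma_N$, write $H=\mathfrak t_{\mathfrak n'}$, control the number of trials through \eqref{eq:H-UB}, and bound each holding time and excursion mean via Lemmas \ref{lem:B1} and \ref{lem:B4}. The differences are cosmetic. You use a Wald-type bound ${\rm E}[H]\le \sup_{\zeta}{\rm E}_\zeta[\mathfrak t_1]\,{\rm E}[\mathfrak n']$ with ${\rm E}[\mathfrak n']=O(N)$, where the paper instead truncates at $m_{2,N}\simeq N\log N$; and your weighting of excursion means by exit probabilities (which actually gives $O(kd_N\log N/N^3)$, smaller than you state) is unnecessary, because the crude uniform bound $2\theta_N^{-1}e^{d_N(1+\log N)}(1+\log N)$ used by the paper already suffices.
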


\begin{proof}
According to the notation introduced in the proof of Lemma \ref{lem:tube-typ-2},
\begin{equation}\label{eq:sum-1}
{\rm E}^{\mathbb{P}_{\xi}^{N}} \, \Big[ \left( 1 - e^{-\lambda H_{\mathcal J_N^\ell \cup \mathcal E_N^{\ell-1}}} \right) {\bf 1} \left\{ H_{\mathcal J_N^\ell \cup \mathcal E_N^{\ell-1}} < \mathfrak T_N \right\} \Big] \, 
\le {\rm E}^{{\bf P}_{\xi}^{N}} \left[ 1 - e^{-\lambda \mathfrak t_{\mathfrak n'}} \right] .
\end{equation}
Recall \eqref{eq:G-bound}. Then,
\begin{equation}\label{eq:sum-2}
{\rm E}^{{\bf P}_\xi^N} \left[ 1 - e^{-\lambda \mathfrak t_{\mathfrak n'}} \right] \le {\bf P}_\xi^N ( \mathfrak n' > m_{2,N} )
+ {\rm E}^{{\bf P}_\xi^N} \left[ 1 - e^{-\lambda \mathfrak t_{m_{2,N}}} \right] \le o(1) + \lambda \, {\rm E}^{{\bf P}_\xi^N} [ \mathfrak t_{m_{2,N}} ] ,
\end{equation}
where the error is uniform over $\xi \in \mathcal K_N^\ell$. Next, we may write
\begin{equation}\label{eq:expect-sum}
{\rm E}^{{\bf P}_\xi^N} [ \mathfrak t_{m_{2,N}} ] =
{\rm E}^{{\bf P}_{\xi}^{N}} \left[ \sum_{j=1}^{m_{2,N}}(\mathfrak s_{j-1} - \mathfrak t_{j-1})+\sum_{j=1}^{m_{2,N}}( \mathfrak t_j - \mathfrak s_{j-1}) \right] .
\end{equation}
For any $j \in \llbracket 1,m_{2,N} \rrbracket$, as done in \eqref{eq:RW-bound-1}
and \eqref{eq:RW-bound-2}, by the strong Markov property,
\[
{\rm E}^{{\bf P}_{\xi}^{N}} \left[ \mathfrak s_{j-1} - \mathfrak t_{j-1} \right] \le
\sup_{\zeta \in \mathcal E_N} {\rm E}^{{\bf P}_{\zeta}^{N}} \,[ \mathfrak s_0 ] = \frac{1}{2 N^3}
\]
and
\[
{\rm E}^{{\bf P}_{\xi}^{N}} \left[ \mathfrak t_j - \mathfrak s_{j-1} \right] \le \sup_{ \eta \in \mathcal B_N : \,r_{N} ( \eta, \mathcal E_N ) > 0}
{\rm E}^{{\bf P}_{\eta}^{N}} \left[ \mathfrak t_1 \right]  \le 2 \theta_N^{-1} \,e^{d_{N}(1+\log N)}\,(1+\log N).
\]
Substituting these two bounds to \eqref{eq:expect-sum}, we obtain\footnote{Here, $f_N=O (g_N)$ if $|f_N|\le Cg_N$ where $C$ is a global constant independent of $N$.}
\begin{equation}\label{eq:sum-3}
{\rm E}^{{\bf P}_\xi^N} [ \mathfrak t_{m_{2,N}} ] \le m_{2,N} \left( \frac1{2N^3} + 2 \theta_N^{-1} \,e^{d_{N}(1+\log N)}\,(1+\log N) \right) = O \left( \frac{\log N}{N^2} \right) .
\end{equation}
Here, \eqref{eq:dN-cond} was used. Combining \eqref{eq:sum-1}, \eqref{eq:sum-2}, and \eqref{eq:sum-3} completes the proof.
\end{proof}

Finally, according to the analysis conducted in the previous lemmas, we calculate the trace jump rate function $R_N (\cdot,\cdot)$ of $\eta_N^{\mathcal E_N} (t)$, defined in \eqref{eq:trace}.
Recall Definition \ref{def:typ-jump}.
First, we consider the rates from $\mathcal J_N^\ell$.

\begin{lem}\label{lem:JN-trace-rate}
For each $\xi_{\bm n}^{\bm x} \in \mathcal J_N^\ell$, $\ell \in \llbracket 1,k \rrbracket$, we have
\[
R_N (\xi_{\bm n}^{\bm x} , \xi_{\bm n}^{\bm x \pm \bm e_i}) = N^2 + O(d_N N^3 \log N) \qquad \text{for each} \quad i \in \llbracket 1,\ell \rrbracket.
\]
\end{lem}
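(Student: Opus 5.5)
We prove the lemma by writing $R_N$ as a capacity-type formula for the trace process and comparing it with the restricted dynamics $\sigma_N$. By \cite[Corollary 6.2]{BL10}, for $\xi = \xi_{\bm n}^{\bm x}$ and $\xi' = \xi_{\bm n}^{\bm x+\bm e_i}$,
\[
R_N(\xi,\xi') = \sum_{\eta} r_N(\xi,\eta)\, \mathbb P_\eta^N\left[ H_{\{\xi'\}} = H_{\mathcal E_N}\right].
\]
Only two kinds of $\eta$ occur. If $\eta$ is reached by a jump $x_j \to x_j\pm1$ with $(j,\pm)\neq(i,+)$, then $\xi'$ can be reached before $\mathcal E_N$ only if the process leaves $\mathcal A_{\bm n}^{\bm x}$. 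Otherwise $\eta = \xi - \delta^{x_i}+\delta^{x_i+1}$, with $r_N(\xi,\eta) = \theta_N d_N n_i$. Note that $\bm x\in\mathbb J_L^\ell$ guarantees that $x_i+1$ is isolated from all other condensates.

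\textbf{Main term.} Inside $\mathcal A_{\bm n}^{\bm x,i,+}$, the coupling \eqref{eq:coupling-1} identifies the dynamics with the birth--death chain on $\llbracket 0,n_i\rrbracket$ started at $1$. Here $\xi'$ corresponds to absorption at $n_i$. This chain is reversible, so its hitting probability is explicit:
\[
P_1[H_{n_i}<H_0] = \frac{1}{\sum_{j=0}^{n_i-1} \prod_{m=1}^{j} \frac{r(m,m-1)}{r(m,m+1)}},
\]
with
\[
\frac{r(m,m-1)}{r(m,m+1)} = \frac{m(n_i-m+d_N)}{(n_i-m)(m+d_N)}.
\]
The products telescope to $\binom{n_i-1}{j}$-type ratios multiplied by factors $\prod(1+d_N/\cdot)$, which equal $1+O(d_N\log N)$. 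This gives
\[
P_1[H_{n_i}<H_0] = \frac{1}{n_i}\bigl(1+O(d_N\log N)\bigr).
\]
The sharper versions of Lemma~\ref{lem:B1}, the analogue of the bound $\frac{1}{n}e^{d_N(1+\log N)}$ together with its matching lower bound, provide exactly this. Multiplying by $\theta_N d_N n_i = N^2 n_i$ yields $N^2(1+O(d_N\log N)) = N^2 + O(d_N N^2\log N)$, which lies within the claimed error.

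\textbf{Escape error.} We must control the difference between $\mathbb P_\eta^N$ and the restricted law ${\bf P}_\eta^N$, and also the contributions from the other $\eta$. Both are bounded by the probability that, starting from some $\eta \in \mathcal B_{\bm n}^{\bm x}$ adjacent to $\xi$, a jump out of $\mathcal A_{\bm n}^{\bm x}$ occurs before $\mathcal E_N$ is reached. Such a jump has rate at most $2N^3$, the same bound used in the proof of Lemma~\ref{lem:tube-typ}. Hence the escape probability is at most $2N^3\,{\rm E}^{{\bf P}_\eta^N}[\mathfrak t_1] \le 4N^3 e^{d_N(1+\log N)}(1+\log N)/(\theta_N n)$, where $n$ is the size of the condensate that jumped. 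We multiply by $r_N(\xi,\eta) = \theta_N d_N n$ and sum over at most $2k$ choices of $\eta$. The total error is then $O(d_N N^3\log N)$, which is precisely the stated error term.

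\textbf{Main obstacle.} The delicate step is the sharp two-sided hitting estimate $1/n_i$ for the inclusion birth--death chain, which must hold uniformly in $n_i\le N$. I would derive it directly from the reversible-chain formula above rather than rely only on the one-sided bound in Lemma~\ref{lem:B1}. The case $\xi_{\bm n}^{\bm x-\bm e_i}$ follows by symmetry.
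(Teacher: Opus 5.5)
Your proposal is correct and follows the paper's proof essentially step for step: the \cite[Corollary 6.2]{BL10} representation of $R_N$, the escape bound $2N^3\,{\rm E}^{{\bf P}_\eta^N}[\mathfrak t_1]$ from \eqref{eq:RW-bound-1}--\eqref{eq:RW-bound-2} multiplied by $r_N(\xi,\eta)$ and summed over at most $2k$ choices of $\eta$, and the birth--death computation of the main term via the coupling \eqref{eq:coupling-1}. Your separate derivation of a matching lower bound is not needed, because Lemma~\ref{lem:B1} is already two-sided, $\frac{1}{M e^{d(1+\log M)}} \le P_1[x_H=M] \le \frac{e^{d(1+\log M)}}{M}$; this is exactly the estimate behind the paper's brief final step, and it gives the main term $N^2(1+O(d_N\log N))$.
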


\begin{proof}
Recall from \cite[Corollary 6.2]{BL10} that, for $\zeta,\zeta' \in \mathcal E_N$,
\begin{equation}\label{eq:trace-rate-for}
R_N (\zeta,\zeta') = \sum_{\eta\in \mathcal A_N} r_{N}(\zeta,\eta)\,\mathbb P_{\eta}^{N}\left[H_{ \{ \zeta' \} }=H_{\mathcal E_N}\right].
\end{equation}
Recall \eqref{eq:sigma-stopping} and \eqref{eq:nu-stopping}. Using the same idea as in the proof of Lemma \ref{lem:tube-typ}, for each $\eta \in \mathcal A_N$ such that $r_N(\zeta,\eta) > 0$ we have by \eqref{eq:RW-bound-1} and \eqref{eq:RW-bound-2} that
\[
\mathbb P_\eta^N \left[ H_{\mathcal E_N} > \mathfrak T_N \right] \le
{\rm E}^{{\bf P}_\eta^N} \left[ 1 - e^{-2N^3 \mathfrak t_1} \right] \le \frac{4d_N N^3}{r_N(\zeta,\eta)} \, e^{d_N (1+\log N)} \, (1+\log N).
\]
Substituting this to the penultimate identity, by \eqref{eq:dN-cond} we obtain
\begin{equation}\label{eq:trace-cal-JN}
R_N (\zeta,\zeta') = \sum_{\eta\in \mathcal A_N} r_{N}(\zeta,\eta)\,\mathbb P_{\eta}^{N}\left[H_{ \{ \zeta' \} }=H_{\mathcal E_N} < \mathfrak T_N \right] + O(d_N N^3 \log N),
\end{equation}
where it was used that the number of $\eta \in \mathcal A_N$ such that $r_N(\zeta,\eta)>0$ is at most $2k$.

Now, \eqref{eq:trace-cal-JN} gives the desired result. Indeed, from Definition \ref{def:typ-jump} we have
\[
\mathcal N ( \xi_{\bm n}^{\bm x} ) = \bigcup_{i=1}^\ell \{ \xi_{\bm n}^{\bm x \pm \bm e_i} \} .
\]
For each of the $2\ell$ configurations in $\mathcal N(\xi_{\bm n}^{\bm x})$, say $\xi \in \mathcal N(\xi_{\bm n}^{\bm x})$, there exists exactly one $\eta \in \mathcal A_N$ such that the probability in \eqref{eq:trace-cal-JN} is nonzero. From that $\eta$, the same computation as the one performed in \eqref{eq:trace-1} gives that
\[
R_N (\xi_{\bm n}^{\bm x} , \xi) = N^2 \, e^{d_N (1+\log N)} + O (d_N N^3 \log N) = N^2 + O(d_N N^3 \log N),
\]
via \eqref{eq:dN-cond}.
This concludes the proof.
\end{proof}

Next, consider the collection $\mathcal K_N^\ell$.

\begin{lem}\label{lem:KN-trace-rate}
For each $\xi_{\bm n}^{\bm x} \in \mathcal K_N^\ell$,
%\[
%\sum_{\xi \in \mathcal E_N \setminus \{ \xi_{\bm n}^{\bm x} \}} R_N ( \xi_{\bm n}^{\bm x} , \xi ) \le 2N^3,
%\]
\[
R_N (\xi_{\bm n}^{\bm x}, \mathcal J_N^\ell) \le (2\ell-2) N^2 \, e^{d_N(1+\log N)} + O(d_N N^3 \log N)
\]
and
\[
R_N ( \xi_{\bm n}^{\bm x} , \mathcal E_N^{\ell-1}) \ge \frac{N^2}{e^{2d_N(1+\log N)}} + O(d_N N^3 \log N) .
\]
\end{lem}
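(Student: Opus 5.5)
The plan is to repeat the proof of Lemma \ref{lem:JN-trace-rate}, now for $\zeta = \xi_{\bm n}^{\bm x} \in \mathcal K_N^\ell$. The tube estimates will be imported from the restricted trace process $\widetilde\sigma_N$ studied in the proof of Lemma \ref{lem:escape-p}, namely \eqref{eq:trace-2} and \eqref{eq:trace-3}.

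Start from the formula \eqref{eq:trace-rate-for}, $R_N(\zeta,\zeta') = \sum_{\eta} r_N(\zeta,\eta)\,\mathbb P_\eta^N[H_{\{\zeta'\}}=H_{\mathcal E_N}]$, and sum over $\zeta' \in \mathcal J_N^\ell$, respectively $\zeta' \in \mathcal E_N^{\ell-1}$. Each first-jump configuration $\eta$ lies in $\mathcal B_N$, and there are at most $2k$ of them. Split each hitting probability according to whether $H_{\mathcal E_N} < \mathfrak T_N$ or not.

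For the bad event $\{H_{\mathcal E_N} > \mathfrak T_N\}$, use the bound from the proof of Lemma \ref{lem:JN-trace-rate}:
\[
\mathbb P_\eta^N[H_{\mathcal E_N} > \mathfrak T_N] \le \frac{4 d_N N^3}{r_N(\zeta,\eta)}\, e^{d_N(1+\log N)}(1+\log N).
\]
This bound follows from \eqref{eq:RW-bound-1} and \eqref{eq:RW-bound-2}. Both cases $x\to y$ with $y$ isolated and $x\to y$ with $y$ between two condensates are covered, which is exactly what is needed for $\mathcal K_N^\ell$. Multiplying by $r_N(\zeta,\eta)$ and summing over at most $2k$ choices of $\eta$ produces the error $O(d_N N^3\log N)$, which is the same in both inequalities.

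On the good event $\{H_{\mathcal E_N} < \mathfrak T_N\}$, the original process and the restricted process $\sigma_N$ coincide up to $H_{\mathcal E_N}$. Hence
\[
\mathbb P_\eta^N[H_{\{\zeta'\}} = H_{\mathcal E_N} < \mathfrak T_N] = {\bf P}_\eta^N[H_{\{\zeta'\}} = H_{\mathcal E_N} < \mathfrak T_N] \le {\bf P}_\eta^N[H_{\{\zeta'\}} = H_{\mathcal E_N}].
\]
Summed against $r_N(\zeta,\eta)$ and over $\zeta'\in\mathcal J_N^\ell$, the right-hand side equals $\widetilde r_N(\zeta,\mathcal J_N^\ell)$ by \eqref{eq:trace-rate}. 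The upper bound then follows from \eqref{eq:trace-2}.

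For the lower bound the inequality runs the other way:
\[
\mathbb P_\eta^N[H_{\mathcal E_N^{\ell-1}} = H_{\mathcal E_N}] \ge {\bf P}_\eta^N[H_{\mathcal E_N^{\ell-1}} = H_{\mathcal E_N}] - {\bf P}_\eta^N[H_{\mathcal E_N} > \mathfrak T_N].
\]
The subtracted term is controlled by the same bound as above. Summing against $r_N(\zeta,\eta)$ recovers $\widetilde r_N(\zeta,\mathcal E_N^{\ell-1})$ minus $O(d_N N^3\log N)$, and \eqref{eq:trace-3} finishes the argument.

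The main obstacle is the justification that the two laws agree before $\mathfrak T_N$ and that the bad-event bound is valid. This requires checking that the tube $\mathcal A_{\bm n}^{\bm x}$ for $\xi_{\bm n}^{\bm x}\in\mathcal K_N^\ell$ matches the couplings \eqref{eq:coupling-1} and \eqref{eq:coupling-2}. One also needs the leaving rate bound $2N^3$ from $\mathcal B_{\bm n}^{\bm x}$, which is already established in the proof of Lemma \ref{lem:tube-typ}. So I expect only bookkeeping there.
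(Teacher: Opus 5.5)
Your argument is correct and is essentially the paper's own proof. The paper only says the two bounds follow from \eqref{eq:trace-2}, \eqref{eq:trace-3} and the error control in the proof of Lemma \ref{lem:JN-trace-rate}, and you have written out the omitted details: splitting at $\mathfrak T_N$, comparing with $\sigma_N$, and absorbing the bad event into $O(d_N N^3\log N)$.

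One notational fix is needed. Under ${\bf P}_\eta^N$ the suppressed jumps have rate zero, so $\mathfrak T_N=\infty$ almost surely there. Read literally, your good-event identity is false and your lower-bound display is unjustified. Both should be read through the coupling: the good-event comparison is only the inequality $\le {\bf P}_\eta^N[\cdot]$, and the subtracted term is $\mathbb P_\eta^N[H_{\mathcal E_N}\ge \mathfrak T_N]$. That is exactly the quantity your imported bound controls.
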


\begin{proof}
%The first inequality follows directly from \eqref{eq:trace-rate-for} and the fact that the holding rate of the original chain at $\xi_{\bm n}^{\bm x}$ is $2N^3$.
The first inequality follows as done in \eqref{eq:trace-2} and Lemma \ref{lem:JN-trace-rate}, whereas
the second inequality follows as done in \eqref{eq:trace-3} and Lemma \ref{lem:JN-trace-rate}. We omit the details.
\end{proof}

\begin{lem}
\label{lem:escape-p-trace}
For the same constant $p \in (0,1)$ in Lemma \ref{lem:escape-p},
\[
\liminf_{N\to\infty} \inf_{\xi \in \mathcal K_N^\ell} \overline{\mathbb Q}_{\xi}^{N}\left[H_{\mathcal E_N^{\ell-1} }<H_{\mathcal J_N^\ell }\right] \ge p.
\]
\end{lem}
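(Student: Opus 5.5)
The plan is to mimic the proof of Lemma \ref{lem:escape-p}, replacing the restricted trace process $\widetilde\sigma_N(t)$ by the neighbor-restricted trace chain with rates $R_N'(\cdot,\cdot)$. The claim then reduces to a comparison of the rates from $\mathcal K_N^\ell$ into $\mathcal E_N^{\ell-1}$ and into $\mathcal J_N^\ell$. Under $\overline{\mathbb Q}_\xi^N$ the chain only jumps from $\zeta$ into $\mathcal N(\zeta)$. Starting from $\zeta\in\mathcal K_N^\ell$, every neighbor lies in $\mathcal K_N^\ell\cup\mathcal J_N^\ell\cup\mathcal E_N^{\ell-1}$, by \eqref{eq:Nnx-Anx} and the trichotomy after Definition \ref{def:Anx-def-2}. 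Hence the embedded jump chain, up to $H_{\mathcal J_N^\ell\cup\mathcal E_N^{\ell-1}}$, moves inside $\mathcal K_N^\ell$ and eventually exits into one of the two target sets.

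The first step is to show that, at each visit to $\zeta\in\mathcal K_N^\ell$, the probability that the next jump lands in $\mathcal E_N^{\ell-1}$, given that it lands in $\mathcal J_N^\ell\cup\mathcal E_N^{\ell-1}$, is at least $\inf_\zeta R_N'(\zeta,\mathcal E_N^{\ell-1})/R_N'(\zeta,\mathcal J_N^\ell\cup\mathcal E_N^{\ell-1})$. I will argue this by the strong Markov property, exactly as in \eqref{eq:go-forward}. Conditioning on the sequence of visited states in $\mathcal K_N^\ell$, the exit is decided at the first jump leaving $\mathcal K_N^\ell$. At each such $\zeta$ the conditional odds are $R_N'(\zeta,\mathcal E_N^{\ell-1}) : R_N'(\zeta,\mathcal J_N^\ell)$, and jumps within $\mathcal K_N^\ell$ only postpone the decision. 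One also needs the exit to occur a.s.; this follows since the rate to $\mathcal E_N^{\ell-1}$ is positive.

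The second step is the rate bounds, taken from Lemma \ref{lem:KN-trace-rate}. The bound $R_N(\zeta,\mathcal J_N^\ell)\le(2\ell-2)N^2e^{d_N(1+\log N)}+O(d_NN^3\log N)$ transfers directly to $R_N'$, since $R_N'\le R_N$. For the lower bound one must check that the merged configurations counted in $R_N(\zeta,\mathcal E_N^{\ell-1})$ are neighbors of $\zeta$. They are: merging configurations of the form \eqref{eq:EN-nbd-1}–\eqref{eq:EN-nbd-2} belong to $\mathcal N(\zeta)$, so $R_N'(\zeta,\mathcal E_N^{\ell-1})\ge R_N(\zeta,\mathcal E_N^{\ell-1}\cap\mathcal N(\zeta))$. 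The main point to verify is that the lower bound in Lemma \ref{lem:KN-trace-rate} already counts only such neighbors. I expect this, because the estimate \eqref{eq:trace-3} goes through the tube $\mathcal A_{\bm n}^{\bm x}$ and paths staying in the tube, as in \eqref{eq:trace-cal-JN}, and the resulting first return to $\mathcal E_N$ lies in $\mathcal A_{\bm n}^{\bm x}\cap\mathcal E_N$, which by \eqref{eq:Nnx-Anx} consists of $\xi_{\bm n}^{\bm x}$ and neighbors. I expect this bookkeeping to be the only delicate point. If Lemma \ref{lem:KN-trace-rate} is not stated in this refined form, I would redo its proof with $\mathbb P_\eta^N[H_{\{\zeta'\}}=H_{\mathcal E_N}<\mathfrak T_N]$ restricted to $\zeta'\in\mathcal N(\zeta)$, using \eqref{eq:trace-3}.

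Finally, the error $O(d_NN^3\log N)$ is $o(N^2)$ by \eqref{eq:dN-cond}. The ratio of rates is therefore at least
\[
\frac{e^{-2d_N(1+\log N)}-o(1)}{e^{-2d_N(1+\log N)}+(2\ell-2)e^{d_N(1+\log N)}+o(1)},
\]
which tends to $1/(2\ell-1)\ge 1/(2k)$. This gives the claim with the same $p=(2k)^{-1}$ as in Lemma \ref{lem:escape-p}.
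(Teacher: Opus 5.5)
Your proposal is correct and follows the paper's route. The paper's one-line proof is the competing-rates argument of \eqref{eq:go-forward}, applied to the neighbor-restricted chain and fed with the two bounds of Lemma~\ref{lem:KN-trace-rate}. Your extra check that the lower bound on the rate into $\mathcal E_N^{\ell-1}$ involves only configurations in $\mathcal N(\zeta)$, so that it transfers from $R_N$ to $R_N'$, is a point the paper leaves implicit. Your justification for it, via the event $\{H_{\mathcal E_N}<\mathfrak T_N\}$ and \eqref{eq:Nnx-Anx}, is sound.
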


\begin{proof}
This follows directly from the two inequalities in Lemma \ref{lem:KN-trace-rate}.
\end{proof}

\section{\label{sec3}First Resolvent Condition and Proof of Theorem \ref{thm1}}

In this section, we state and prove a specific resolvent condition \cite{KL26,LMS25} which is the key estimate to prove Theorem \ref{thm1}.
Let ${\color{blue}\Delta_N} := \Omega_N \setminus \mathcal E_N$.

\begin{thm}
\label{thm:res-inc}Given $\lambda>0$, denote by $F_{N}$ the unique solution to $(\lambda - \mathcal{L}_{N})F_{N}= {\bf 1}_{\Delta_N}$.
Then,
\[
\lim_{N\to\infty} \sup_{\xi \in \mathcal E_N} | F_N (\xi) | = 0.
\]
\end{thm}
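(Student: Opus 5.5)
Throughout we write $F_N(\xi)={\rm E}^{\mathbb P^N_\xi}\bigl[\int_0^\infty e^{-\lambda t}\,{\bf 1}\{\eta_N(t)\in\Delta_N\}\,{\rm d}t\bigr]$, the probabilistic representation of the resolvent solution. The plan is to split the time axis at $\mathfrak T_N$ and handle the two pieces separately.

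\textbf{Before $\mathfrak T_N$.} Up to $\mathfrak T_N$ the process agrees with $\sigma_N$ under ${\bf P}^N_\xi$. For the restricted process, $\Delta_N\cap\mathcal A_N=\mathcal B_N$, so the process lies in $\Delta_N$ only during the excursion intervals $[\mathfrak s_i,\mathfrak t_{i+1})$. We will therefore show that
\[
{\rm E}^{{\bf P}^N_\xi}\Bigl[\sum_{i\ge0} e^{-\lambda\mathfrak s_i}(\mathfrak t_{i+1}-\mathfrak s_i)\Bigr]=o(1)
\]
uniformly in $\xi\in\mathcal E_N$. To do so, condition at each $\mathfrak s_i$ using the strong Markov property. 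By \eqref{eq:RW-bound-1}--\eqref{eq:RW-bound-2} (Lemmas \ref{lem:B1}, \ref{lem:B4}), each excursion started from $\eta$ has mean length at most $2\theta_N^{-1}e^{d_N(1+\log N)}(1+\log N)$. Consecutive excursion starts are separated by at least the holding times in $\mathcal E_N$, which are Exp$(2N^3)$. Hence
\[
{\rm E}\Bigl[\sum_i e^{-\lambda \mathfrak s_i}\Bigr]\le \sum_{i} \Bigl(\frac{2N^3}{2N^3+\lambda}\Bigr)^{i+1}\le \frac{2N^3}{\lambda}.
\]
Multiplying the two bounds gives $O(N^3\theta_N^{-1}\log N)=O(d_N N\log N)=o(1)$ by \eqref{eq:dN-cond}.

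\textbf{After $\mathfrak T_N$.} The contribution after $\mathfrak T_N$ is at most ${\rm E}\bigl[e^{-\lambda\mathfrak T_N}\bigr]/\lambda$. This is bounded by
\[
\lambda^{-1}\Bigl(\mathbb P^N_\xi(\mathfrak T_N\le T)+e^{-\lambda T}\Bigr).
\]
By Lemma \ref{lem:tube-typ} the first term tends to $0$ uniformly, and letting $T\to\infty$ afterwards makes the second term vanish. Combining the two pieces gives $\sup_{\xi\in\mathcal E_N}F_N(\xi)\to0$.

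\textbf{Main obstacle.} The delicate step is the first one. The bound must be uniform, and it requires the mean excursion-length estimates together with the count of roughly $N^3$ excursions per unit time. The product only closes because of the condensing assumption $d_N N^3\log N\to0$. An alternative route is to redo the Gamma-distribution argument of Lemma \ref{lem:tube-typ}: truncate at $m_{1,N}$ excursions, then bound $\sum_{i\le m_{1,N}}{\rm E}[\mathfrak t_{i+1}-\mathfrak s_i]$ by $m_{1,N}\cdot O(\theta_N^{-1}\log N)=o(1)$.
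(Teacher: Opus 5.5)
Your argument is correct, and it follows a genuinely different and shorter route than the paper's.

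\textbf{The paper's route.} The paper proves this theorem by resolvent comparison:
\begin{enumerate}
\item an induction on the number $\ell$ of condensates;
\item an interior estimate $(\lambda-\mathcal L_N^\ell)F_N\approx 0$ on $\mathcal J_N^\ell$, obtained by testing the resolvent equation against functions concentrated on the tubes and using reversibility of $\mu_N$ (Lemmas \ref{lem:key-1-1}, \ref{lem:key-1-ell});
\item a contraction estimate on $\mathcal K_N^\ell$, coming from the coalescence probability $p$ (Lemma \ref{lem:escape-p}) and the short hitting time of $\mathcal J_N^\ell\cup\mathcal E_N^{\ell-1}$.
\end{enumerate}
These are then glued together through the stopped chain $X^\ell$.

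\textbf{Your route.} You bound the discounted occupation time of $\Delta_N$ pathwise. Before $\mathfrak T_N$ the process visits $\Delta_N$ only during excursions in $\mathcal B_N$. Each excursion has mean length $O(\theta_N^{-1}\log N)$ by Lemmas \ref{lem:B1} and \ref{lem:B4}. The discounted number of excursions is at most $2N^3/\lambda$, because every configuration of $\mathcal E_N$ has holding rate exactly $2N^3$, and the restriction $r_N'$ does not change this since it only suppresses jumps out of $\mathcal B_N$. After $\mathfrak T_N$ you use Lemma \ref{lem:tube-typ}, which is proved independently of this theorem, so there is no circularity. The points you skip are harmless:
\begin{itemize}
\item the coupling $\eta_N=\sigma_N$ on $[0,\mathfrak T_N)$ is the same one the paper uses implicitly in Lemma \ref{lem:tube-typ};
\item zero-length excursions, when a condensate of size one jumps directly into $\mathcal E_N$, only help your bound.
\end{itemize}

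\textbf{What each approach buys.} Your route is more economical. It needs neither the induction on $\ell$, nor the measure $\mu_N$ and the test-function computations, nor Lemmas \ref{lem:tube-typ-2}--\ref{lem:good-hitting-time}. It also gives a quantitative bound. The pre-$\mathfrak T_N$ contribution is $O(d_N N\log N/\lambda)$, so the strong assumption \eqref{eq:dN-cond} enters only through the tube-exit estimate.

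The paper's route pays for its length with reusability. The same induction/contraction scheme is run again in Section \ref{sec5} to prove Theorem \ref{thm:res}. There the source term is ${\bf g}\circ\Phi_N$ rather than the indicator of a set with vanishing occupation time, and one must identify $\mathfrak L_\rho$ in the limit. An occupation-time bound like yours does not address that. Section \ref{sec3} is in effect a warm-up for that argument.
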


The solution $F_N$ of the resolvent equation in Theorem \ref{thm:res-inc} has a stochastic representation:
\[
F_{N}(\eta) = {\rm E}^{\mathbb{Q}_{\eta}^{N}}\left[\int_{0}^{\infty}e^{-\lambda t}\, {\bf 1}_{\Delta_N} (\eta_{N}(t))\,{\rm d}t\right] .
\]
In particular, $F_N$ has a uniform $L^\infty$ bound:
\begin{equation}
|F_{N}(\eta)| = \int_{0}^{\infty}e^{-\lambda t}\, {\rm d}t \le \frac1{\lambda} \qquad \text{for any} \quad \eta \in \Omega_N . \label{eq:1-5}
\end{equation}

First, we prove Theorem \ref{thm1} assuming that Theorem \ref{thm:res-inc} holds:

\begin{proof}[Proof of Theorem \ref{thm1} under Theorem \ref{thm:res-inc}]
Fix $ \xi \in \mathcal E_N$. By definition of $F_N$,
\[
{\rm E}^{\mathbb{P}_{\xi}^{N}}\left[\int_{0}^{T}{\bf 1}\{\eta_{N}(t)\in\Delta_{N}\}\,{\rm d}t\right]\le e^{\lambda T}\,{\rm E}^{\mathbb{P}_{\xi}^{N}}\left[\int_{0}^{\infty}e^{-\lambda t}\,{\bf 1}\{\eta_{N}(t)\in\Delta_{N}\}\,{\rm d}t\right] = e^{\lambda T} \, F_N(\xi),
\]
since $F_N$ solves $(\lambda-\mathcal{L}_{N})F_{N}={\bf 1}_{\Delta_N}$.
By Theorem \ref{thm:res-inc}, $\lim_{N \to \infty} \sup_{\xi \in \mathcal E_N} | F_N (\xi) | = 0$, which completes the proof.
\end{proof}

The rest of the section is devoted to the proof of Theorem \ref{thm:res-inc}. The
overall strategy is as follows. We first prove that for each $\ell\in\llbracket1,k\rrbracket$,
that
\begin{equation}
\lim_{N\to\infty} \sup_{\xi_{\bm n}^{\bm{x}} \in \mathcal{E}_{N}^{\ell}} | F_N (\xi_{\bm n}^{\bm x}) | = 0.\label{eq:res-ell}
\end{equation}
We proceed by an induction on $\ell$.
%For each $\bm n \in {\bf N}_N^\ell$, define $F_{\bm n}^{\ell} : \mathbb{I}_{L}^{\ell} \to\mathbb{R}$
%as
%\begin{equation}
%F_{\bm n}^{\ell}(\bm{x}):=F_{N}(\xi_{\bm{n}}^{\bm{x}}) .\label{eq:Fnell-fNell-def}
%\end{equation}
Recall from \eqref{eq:JNKN} the definition of the sets $\mathcal J_N^\ell$ and $\mathcal K_N^\ell$. Define the operator $\color{blue} \mathcal L_{N}^{\ell}$ which acts on $C(\mathcal{E}_{N}^{\ell})$, the space of continuous real functions on $\mathcal E_N^\ell$,
as
\begin{equation}
\mathcal{L}_{N}^{\ell}\phi(\xi_{\bm n}^{\bm x}):=\begin{cases}
N^{2}\sum_{i=1}^{\ell}(\phi( \xi_{\bm n}^{\bm x + \bm e_i}) + \phi( \xi_{\bm n}^{\bm x - \bm e_i}) - 2\phi ( \xi_{\bm n}^{\bm x } )) & \text{if}\enspace \xi_{\bm n}^{\bm x} \in \mathcal J_{N}^{\ell},\\
0 & \text{if}\enspace \xi_{\bm n}^{\bm x} \in \mathcal K_{N}^{\ell},
\end{cases}\label{eq:LNell-def}
\end{equation}
where, recall, $\bm e_{i}$ denotes the unit vector in the $i$-th
direction. Namely, $\mathcal{L}_{N}^{\ell}$ induces a Markov chain
$(X_{t}^\ell)_{t\ge0}$ on $\mathcal E_N^{\ell}$
which gets absorbed at $\mathcal K_{N}^{\ell}$.

The key idea is to prove the following two estimates:
\begin{lem}\label{lem:res-ind-hyp}
For each $\ell \in \llbracket 1,k \rrbracket$,
\begin{equation}
\lim_{N\to\infty} \left\Vert \left(\lambda-\mathcal{L}_{N}^{\ell}\right) F_N \right\Vert _{L^{\infty}\left(\mathcal J_N^{\ell}\right)}=0,\label{eq:key-1}
\end{equation}
\begin{equation}
\limsup_{N\to\infty} \left\Vert F_N \right\Vert_{L^{\infty} \left(\mathcal K_N^{\ell}\right)}\le (1-p) \limsup_{N\to\infty}
\left\Vert F_N \right\Vert_{L^{\infty} \left(\mathcal E_N^{\ell} \right)},\label{eq:key-2}
\end{equation}
where $p\in(0,1)$ is the constant introduced in Lemma \ref{lem:escape-p}.
Here, $F_N$ should be understood as its restriction to $\mathcal E_N^\ell$.
\end{lem}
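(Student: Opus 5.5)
Both estimates will come from the resolvent equation $(\lambda-\mathcal L_N)F_N={\bf 1}_{\Delta_N}$. By the strong Markov property at the first return to $\mathcal E_N$ it becomes an equation for the trace process, with a negligible correction coming from the excursions into $\Delta_N$. Concretely, for $\xi\in\mathcal E_N$ I would write
\[
F_N(\xi)={\rm E}^{\mathbb P^N_\xi}\Big[\int_0^{\mathfrak t_1}e^{-\lambda t}{\bf 1}_{\Delta_N}(\eta_N(t))\,{\rm d}t\Big]+{\rm E}^{\mathbb P^N_\xi}\big[e^{-\lambda \mathfrak t_1}F_N(\eta_N(\mathfrak t_1))\big].
\]
The first term is at most ${\rm E}[\mathfrak t_1-\mathfrak s_0]$. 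By \eqref{eq:RW-bound-1}--\eqref{eq:RW-bound-2} this is $O(\theta_N^{-1}\log N)$ on the event that the tube is not left, and the event that the tube is left has probability $O(d_N\log N)$ by \eqref{eq:one-jump-bound}; in that case I bound $F_N$ by $1/\lambda$ via \eqref{eq:1-5}. Both errors are $o(N^{-3})$ up to logarithmic factors.

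\textbf{Proof of \eqref{eq:key-1}.} Let $\xi=\xi_{\bm n}^{\bm x}\in\mathcal J_N^\ell$. The holding rate is $2N^3$, so ${\rm E}[1-e^{-\lambda \mathfrak t_1}]=O(N^{-3})$. Multiplying the identity above by $2N^3$ and reorganizing gives
\[
\lambda F_N(\xi)-\sum_{\zeta}R_N(\xi,\zeta)\big(F_N(\zeta)-F_N(\xi)\big)=o(1).
\]
To justify this I would use the exact trace identity $\sum_\zeta R_N(\xi,\zeta)\bigl(F_N(\zeta)-F_N(\xi)\bigr)=\mathcal L_N F_N(\xi)$, which holds because $\xi$ and its tube excursions lie in $\mathcal E_N\cup\mathcal B_N$, together with $\mathcal L_NF_N=\lambda F_N$ on $\mathcal E_N$. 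The error from non-typical landing points is handled through \eqref{eq:trace-cal-JN}: the total rate to non-neighbours is $O(d_NN^3\log N)=o(1)$, and it multiplies differences of $F_N$ bounded by $2/\lambda$. By Lemma \ref{lem:JN-trace-rate}, $R_N(\xi,\xi^{\bm x\pm\bm e_i}_{\bm n})=N^2+O(d_NN^3\log N)$, so replacing the rates by exactly $N^2$ costs $2\ell\cdot O(d_NN^3\log N)\cdot 2/\lambda=o(1)$. The result is exactly $(\lambda-\mathcal L_N^\ell)F_N(\xi)=o(1)$, uniformly in $\xi$. The only delicate point is making the identification of the trace generator with $\mathcal L_N$ rigorous when $\Delta_N$ is visited; the tube estimates already cover this.

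\textbf{Proof of \eqref{eq:key-2}.} Let $\xi\in\mathcal K_N^\ell$ and set $H:=H_{\mathcal J_N^\ell\cup\mathcal E_N^{\ell-1}}$. Using the stochastic representation, I would write
\[
F_N(\xi)={\rm E}\Big[\int_0^{H}e^{-\lambda t}{\bf 1}_{\Delta_N}\,{\rm d}t\Big]+{\rm E}\big[e^{-\lambda H}F_N(\eta_N(H))\big].
\]
On the event $\{H<\mathfrak T_N\}$, the time spent in $\Delta_N$ before $H$ is at most the total excursion time, which is $O(\log N/N^2)$ by \eqref{eq:sum-3}. The complementary event has probability $o(1)$ by Lemma \ref{lem:tube-typ-2}, and there I bound the integrand by $1/\lambda$. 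So the first term is $o(1)$. By Lemma \ref{lem:good-hitting-time}, replacing $e^{-\lambda H}$ by $1$ costs $o(1)$. Hence
\[
|F_N(\xi)|\le \mathbb P_\xi^N[H_{\mathcal E_N^{\ell-1}}<H_{\mathcal J_N^\ell}]\,\|F_N\|_{L^\infty(\mathcal E_N^{\ell-1})}+\mathbb P_\xi^N[H_{\mathcal J_N^\ell}<H_{\mathcal E_N^{\ell-1}}]\,\|F_N\|_{L^\infty(\mathcal E_N^\ell)}+o(1).
\]

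The main obstacle is the first term. To get \eqref{eq:key-2} as stated, with $\|F_N\|_{L^\infty(\mathcal E_N^\ell)}$ on the right, I need $\|F_N\|_{L^\infty(\mathcal E_N^{\ell-1})}\to 0$. That is the induction hypothesis \eqref{eq:res-ell} for $\ell-1$; for $\ell=1$ the set $\mathcal K_N^1$ is empty and the claim is trivial. I would therefore prove the lemma within the induction on $\ell$, with the first term vanishing in the limit. Lemma \ref{lem:escape-p} then bounds the coefficient of the second term by $1-p+o(1)$, and taking $\limsup$ yields \eqref{eq:key-2}.
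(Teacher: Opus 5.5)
Your proof of \eqref{eq:key-2} is the paper's argument. It uses the strong Markov property at $H=H_{\mathcal J_N^\ell\cup\mathcal E_N^{\ell-1}}$, Lemmas \ref{lem:tube-typ-2} and \ref{lem:good-hitting-time} to discard the time before $H$, the induction hypothesis \eqref{eq:res-ell} at level $\ell-1$, and Lemma \ref{lem:escape-p} for the factor $1-p$.

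For \eqref{eq:key-1} you take a genuinely different route. The paper multiplies the resolvent equation by a test function supported on the tube $\mathcal A_{\bm n}^{\bm x}$, which approximates the equilibrium potential. It integrates against the reversible measure $\mu_N$, moves $\mathcal L_N$ onto the test function by self-adjointness, and controls the tube interior through \eqref{eq:wn-est}. You instead reduce the resolvent equation to the trace chain on $\mathcal E_N$. You then insert the trace-rate asymptotics of Lemma \ref{lem:JN-trace-rate} and the bound on the total rate to non-neighbours from \eqref{eq:trace-cal-JN}. Your version needs neither reversibility nor the explicit invariant measure, and it mirrors what the paper itself does for the labeled chain in Lemma \ref{lem1}. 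The paper's version stays inside the potential-theoretic framework of the resolvent method and never uses the trace rates.

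Your justification of the reduction is wrong as written, and needs a repair. The identity $\sum_\zeta R_N(\xi,\zeta)\bigl(F_N(\zeta)-F_N(\xi)\bigr)=\mathcal L_NF_N(\xi)$ holds exactly only if $F_N$ is $\mathcal L_N$-harmonic off $\mathcal E_N$. On $\Delta_N\supset\mathcal B_N$, however, $\mathcal L_NF_N=\lambda F_N-1\neq0$; the fact that excursions stay in the tube does not change this.

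The bound ${\rm E}[1-e^{-\lambda\mathfrak t_1}]=O(N^{-3})$ does not rescue it either. After multiplying by $2N^3$ it leaves an $O(1)$ error, the same size as the term $\lambda F_N(\xi)$ you need to recover.

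What is true is the identity up to $o(1)$, and your own estimates give it. By \eqref{eq:trace-rate-for}, the difference of the two sides equals $\sum_\eta r_N(\xi,\eta)\bigl(F_N(\eta)-{\rm E}^{\mathbb P_\eta^N}[F_N(\eta_N(H_{\mathcal E_N}))]\bigr)$. The stochastic representation gives
\[
\Bigl|F_N(\eta)-{\rm E}^{\mathbb P_\eta^N}\bigl[F_N(\eta_N(H_{\mathcal E_N}))\bigr]\Bigr|\le \frac{2}{\lambda}\,{\rm E}^{\mathbb P_\eta^N}\bigl[1-e^{-\lambda H_{\mathcal E_N}}\bigr].
\]
By \eqref{eq:RW-bound-1}, \eqref{eq:RW-bound-2} and the tube-exit bound used for \eqref{eq:trace-cal-JN}, $r_N(\xi,\eta)$ times the right-hand side is $O(d_NN^3\log N)$ for each of the at most $2k$ relevant $\eta$. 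Hence $\lambda F_N(\xi)=\mathcal L_NF_N(\xi)=\sum_\zeta R_N(\xi,\zeta)\bigl(F_N(\zeta)-F_N(\xi)\bigr)+o(1)$, uniformly on $\mathcal E_N$.

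Equivalently, in your renewal identity, split off the holding time $\mathfrak s_0\sim{\rm Exp}(2N^3)$. It is independent of the landing point $\eta_N(\mathfrak t_1)$ and contributes the exact factor $2N^3/(2N^3+\lambda)$. Only the excursion factor $e^{-\lambda(\mathfrak t_1-\mathfrak s_0)}$ then carries the $o(N^{-3})$ error. With this repair, the rest of your argument for \eqref{eq:key-1} goes through.
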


\begin{proof}[Proof of Theorem \ref{thm:res-inc} under Lemma \ref{lem:res-ind-hyp}]
By \cite[Theorem 7.15]{BdH15}, for all $\xi_{\bm n}^{\bm x} \in \mathcal E_N^{\ell} $,
\begin{equation}
F_N(\xi_{\bm n}^{\bm x}) = {\rm E}_{\xi_{\bm n}^{\bm x}}^{N,\ell}\left[\int_{0}^{H}e^{-\lambda t}\left(\lambda-\mathcal L_N^\ell \right) F_N (X_{t}^\ell)\,{\rm d}t+e^{-\lambda H} \, F_N(X_{H}^\ell)\right],\label{eq:key-appl}
\end{equation}
where ${\rm E}_{\xi_{\bm n}^{\bm x}}^{N,\ell}$ denotes the expectation
of the law of $(X_{t}^\ell)_{t\ge0}$ starting from $\xi_{\bm n}^{\bm x}$ and $H = H_{\mathcal K_N^\ell}$
denotes the first hitting time of $\mathcal K_N^{\ell}$. Substituting
\eqref{eq:key-1} and \eqref{eq:key-2} to the right-hand side of \eqref{eq:key-appl} yields that
\[
\limsup_{N\to\infty} \left\Vert F_N \right\Vert_{L^{\infty} \left( \mathcal E_N^{\ell} \right)}\le(1-p)\limsup_{N\to\infty}
\left\Vert F_N \right\Vert_{L^{\infty} \left(\mathcal E_N^{\ell} \right)},
\]
which proves \eqref{eq:res-ell} since $p\in(0,1)$. Collecting \eqref{eq:res-ell} for all $\ell \in \llbracket 1,k \rrbracket$ completes the proof of Theorem \ref{thm:res-inc}.
\end{proof}
In the remainder of this section, we prove Lemma \ref{lem:res-ind-hyp}.
%The proof of \eqref{eq:key-1} is relatively straightforward; it is
%presented in Section \ref{sec3.1}. Proving \eqref{eq:key-2} is much
%more demanding.
%In Section \ref{sec3.2}, we take advantage of the local analysis from Section \ref{sec2} and prove that the value of $F_N (\xi_{\bm n}^{\bm x})$, for $\xi_{\bm n}^{\bm x} \in \mathcal K_N^\ell$, can be inferred from the information of $F_N$ on $\mathcal J_N^\ell$ and $\mathcal E_N^{\ell-1}$.
%Then, in Section \ref{sec3.3}, we prove \eqref{eq:key-2}.

\subsection{\label{sec3.1}Interior Estimate: Proof of \eqref{eq:key-1}}

In this subsection, we prove the estimate \eqref{eq:key-1}. We start with the
$\ell=1$ case. Note that $\mathcal J_N^1 = \mathcal E_N^1 = \{ \xi_N^x : x \in \mathbb T_L \}$ and $\mathcal K_N^1 = \emptyset$.
\begin{lem}
\label{lem:key-1-1}Let $F_N$ be the solution of the resolvent equation introduced in Theorem \ref{thm:res-inc}. Then,
\[
\lim_{N\to\infty}\left\Vert \left(\lambda - \mathcal L_{N}^{1}\right) F_N \right\Vert_{L^{\infty} \left( \mathcal E_N^1 \right)}=0.
\]
\end{lem}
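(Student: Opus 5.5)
We start from the resolvent equation at a condensed configuration $\xi = \xi_N^x \in \mathcal E_N^1$. Since $\xi \notin \Delta_N$, $(\lambda - \mathcal L_N)F_N(\xi) = 0$, so
\[
\lambda F_N(\xi) = \sum_{\eta} r_N(\xi,\eta)\,\bigl(F_N(\eta) - F_N(\xi)\bigr).
\]
The only configurations reachable in one jump are $\eta^{\pm} = \xi - \delta^x + \delta^{x\pm1} \in \mathcal B_N$, each with rate $\theta_N d_N N = N^3$. The plan is to express $F_N(\eta^\pm)$ through the values of $F_N$ on $\mathcal E_N$ via the first return time $\mathfrak t_1$. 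By the strong Markov property at $H_{\mathcal E_N}$, using the stochastic representation of $F_N$,
\[
F_N(\eta^\pm) = {\rm E}^{\mathbb P^N_{\eta^\pm}}\Bigl[\int_0^{H_{\mathcal E_N}} e^{-\lambda t}\,{\rm d}t + e^{-\lambda H_{\mathcal E_N}}\,F_N\bigl(\eta_N(H_{\mathcal E_N})\bigr)\Bigr].
\]
Here the integrand equals $1$ on $[0,H_{\mathcal E_N})$, since the path lies in $\Delta_N$ there.

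We then approximate this by the tube dynamics. On the event $\{H_{\mathcal E_N} < \mathfrak T_N\}$ the laws of $\eta_N$ and $\sigma_N$ agree up to $H_{\mathcal E_N}$. From $\eta^\pm$ the walk $\sigma_N$ is the birth--death chain \eqref{eq:coupling-1} with $n = N$, so it returns either to $\xi$ or to $\xi_N^{x\pm1}$. Four estimates control the errors.
\begin{itemize}
\item The probability of $\{H_{\mathcal E_N} > \mathfrak T_N\}$ is at most $4 d_N e^{d_N(1+\log N)}(1+\log N)$, using the bound preceding \eqref{eq:trace-cal-JN} with $r_N(\xi,\eta^\pm)=N^3$.
\item The expected return time satisfies ${\rm E}[\mathfrak t_1] \le 2\theta_N^{-1}e^{d_N(1+\log N)}(1+\log N)$ by Lemma \ref{lem:B1}. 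Hence $1-e^{-\lambda \mathfrak t_1}$ and $\int_0^{\mathfrak t_1}e^{-\lambda t}\,{\rm d}t$ are both $O(\lambda\, d_N N^{-2}\log N)$ in expectation.
\item $\|F_N\|_\infty \le 1/\lambda$ by \eqref{eq:1-5}.
\item ${\bf P}_{\eta^\pm}[H_{\{\xi_N^{x\pm1}\}} = H_{\mathcal E_N}] = N^{-1}(1+O(d_N\log N))$, by the computation behind \eqref{eq:trace-1} and Lemma \ref{lem:JN-trace-rate}. Strictly, this needs the two-sided version of Lemma \ref{lem:B1}, i.e.\ the matching lower bound, which is also what yields the $N^2$ asymptotics in Lemma \ref{lem:JN-trace-rate}.
\end{itemize}

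Putting these together gives
\[
F_N(\eta^\pm) - F_N(\xi) = {\bf P}\bigl[\text{hit } \xi_N^{x\pm1}\bigr]\,\bigl(F_N(\xi_N^{x\pm1}) - F_N(\xi)\bigr) + O\bigl(d_N N^{-2}\log N\bigr) + O\bigl(d_N\log N\bigr)\,\lambda^{-1}.
\]
Multiplying by $N^3$ and summing over $\pm$ yields
\[
\lambda F_N(\xi) = N^2\sum_\pm \bigl(F_N(\xi_N^{x\pm1})-F_N(\xi)\bigr) + O\bigl(d_N N^3 \log N\bigr)\bigl(1+\lambda^{-1}\bigr),
\]
where the factor multiplying the differences is $N^2(1+O(d_N\log N))$. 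Since the differences are bounded by $2/\lambda$, the error $N^2 \cdot O(d_N \log N)/\lambda$ is $O(d_N N^2 \log N)$. All error terms vanish by \eqref{eq:dN-cond}, uniformly in $x$. This gives $\|(\lambda - \mathcal L_N^1)F_N\|_{L^\infty(\mathcal E_N^1)} \to 0$.

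The main obstacle is the bad event $\{H_{\mathcal E_N} > \mathfrak T_N\}$. Its probability is $O(d_N\log N)$, and after multiplication by the rate $N^3$ it must still be $o(1)$. This is exactly where the strong condition $d_N N^3\log N\to0$ is needed. On that event the only available control is the crude bound $|F_N|\le 1/\lambda$, which suffices precisely because of \eqref{eq:dN-cond}.
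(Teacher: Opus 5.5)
Your argument is correct and gives the same error $\lambda^{-1}O(d_N N^3\log N)$ as the paper, but by a different route.

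\textbf{The paper's route.} It never evaluates the resolvent equation pointwise. It multiplies $(\lambda-\mathcal L_N)F_N={\bf 1}_{\Delta_N}$ by the test function $h_N^x(\eta)=\eta_x/N$, supported on the tube $\mathcal A_N^x$ as a proxy for the equilibrium potential of $\xi_N^x$. It then integrates against the reversible measure $\mu_N$ and uses self-adjointness to move $\mathcal L_N$ onto $h_N^x$. The discrete Laplacian is read off from $\mu_N\,\mathcal L_N h_N^x$ at $\xi_N^{x}$ and $\xi_N^{x\pm1}$. The remaining contributions are controlled in three ways:
\begin{itemize}
\item by $\mu_N(\mathcal A_N^x\setminus\mathcal E_N)=O(d_N^2\log N/N)$ compared with $\mu_N(\xi_N^x)=w_N\ge d_N/N$;
\item by $\mathcal L_N h_N^x=O(N\theta_N d_N)$ inside the tube;
\item by detailed balance outside the tube.
\end{itemize}

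\textbf{Your route.} You use that the equation at $\xi_N^x$ has zero right-hand side, and you resolve $F_N(\eta^\pm)$ by the strong Markov property at the return to $\mathcal E_N$. This needs only the tube-escape bound, the hitting probability and expected absorption time from Lemma \ref{lem:B1}, and $\|F_N\|_\infty\le\lambda^{-1}$. The lower bound on the hitting probability that you flag as needed is already stated in Lemma \ref{lem:B1}. In effect you re-derive Lemma \ref{lem:JN-trace-rate} inside the resolvent identity.

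\textbf{Comparison.} Your route is more elementary and uses neither reversibility nor the explicit form of $\mu_N$. It also transfers directly to $\mathcal J_N^\ell$ in Lemma \ref{lem:key-1-ell}, with $n_i$ in place of $N$. The paper's weak formulation is the template of the resolvent approach it adopts. It needs only an approximate equilibrium potential and a bound on the $\mu_N$-mass of the tube, not the exact law of where each excursion ends.

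\textbf{A small point.} A jump leaving the tube can land directly in $\mathcal E_N$. For example, $(N-1)\delta^x+\delta^{x+1}\to(N-1)\delta^x+\delta^{x+2}\in\mathcal E_N^2$ when $k\ge2$. So the bad event should be $\{H_{\mathcal E_N}\ge\mathfrak T_N\}$ rather than $\{H_{\mathcal E_N}>\mathfrak T_N\}$. The exponential-clock bound you use covers this case as well, so nothing changes.
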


Let
\[
{\color{blue} w_{n}}:=\frac{\Gamma(d_{N}+n)}{\Gamma(d_{N})n!}\qquad\text{for}\quad n\in\mathbb{N}_0,
\]
where $\Gamma(\cdot)$ denotes the gamma function. For $n\in\llbracket1,N\rrbracket$,
it is easy to see that
\begin{equation}
\frac{d_{N}}{n}\le w_{n}\le\frac{d_{N}}{n+d_{N}} \, e^{d_{N}(1+\log N)},\label{eq:wn-est}
\end{equation}
which implies that (cf. \eqref{eq:dN-cond})
\[
\lim_{N \to \infty} \max_{1 \le n \le N} \left| \frac{nw_n}{d_N} - 1 \right| = 0.
\]

Define a measure $\color{blue} \mu_{N}$ on $\Omega_{N}$ as
\begin{equation}
\mu_{N}(\eta):=\prod_{y\in\mathbb{T}_{L}}w_{\eta_{y}}.\label{eq:muN-def}
\end{equation}
The measure $\mu_{N}$ is invariant under $\{\eta_{N}(t)\}_{t\ge0}$, as it satisfies
the detailed balance equations
\begin{equation}
\begin{aligned}\mu_{N}(\eta)r_{N}\,\big(\eta,\eta- & \delta^{x}+\delta^{x+1}\big)\, = \theta_N \left(\prod_{y\in\mathbb{T}_{L}\setminus\{x,x+1\}}w_{\eta_{y}}\right)w_{\eta_{x}}w_{\eta_{x+1}}\eta_{x}(d_{N}+\eta_{x+1})\\
 & = \theta_N \left(\prod_{y\in\mathbb{T}_{L}\setminus\{x,x+1\}}w_{\eta_{y}}\right)w_{\eta_{x}-1}w_{\eta_{x+1}+1}(\eta_{x+1}+1)(d_{N}+\eta_{x}-1)\\
 & =\mu_{N}\left(\eta-\delta^{x}+\delta^{x+1}\right)r_{N}\left(\eta-\delta^{x}+\delta^{x+1},\eta\right).
\end{aligned}
\label{eq:det-bal}
\end{equation}
Note that 
\begin{equation}
\mu_{N}(\xi_{N}^{x})=w_{N}\qquad\text{for all}\quad x\in\mathbb{T}_{L}.\label{eq:muN-wN}
\end{equation}

Recall from Definition \ref{def:Anx-def-1} the set $\mathcal A_N^x$:
\begin{equation}
\mathcal{A}_{N}^{x} = \left\{ \eta\in\Omega_{N}:\eta_{x}+\eta_{x+1}=N\right\} \cup\left\{ \eta\in\Omega_{N}:\eta_{x-1}+\eta_{x}=N\right\} .\label{eq:ANx}
\end{equation}
Note that $\xi_N^{x-1} , \xi_N^x , \xi_N^{x+1} \in \mathcal{A}_{N}^{x}$.

The key idea to prove Lemma \ref{lem:key-1-1} is as follows. For each $\xi_N^x \in \mathcal E_N^1$, we construct a suitable test function $h_N^x : \Omega_N \to \mathbb R$ concentrated on the tube of typical trajectories $\mathcal A_N^x$. We multiply the test function $h_N^x$ to the resolvent equation $(\lambda - \mathcal L_N)F_N = {\bf 1}_{\Delta_N}$ and integrate both sides with respect to $\mu_N$ to obtain that
\[
\lambda \int_{\Omega_N} h_N^x F_N \, {\rm d} \mu_N - \int_{\Omega_N} h_N^x (\mathcal L_N F_N) \, {\rm d} \mu_N = \int_{\Omega_N} h_N^x {\bf 1}_{\Delta_N} \, {\rm d} \mu_N
\]
The term on the right-hand side above will become negligible with respect to $\mu_N(\xi_N^x)$ because of the indicator function ${\bf 1}_{\Delta_N}$.
The first term on the left-hand side will be asymptotically equal to $\lambda F_N(\xi_N^x) \mu_N(\xi_N^x)$ by the same reason.
Finally, suppose that $h_N^x$ approximates the equilibrium potential $\mathfrak h_N^x (\eta) := \mathbb P_\eta^N [ H_{\mathcal E_N^x} < H_{\mathcal E_N \setminus \mathcal E_N^x} ]$. Then since $\mathcal L_N \mathfrak h_N^x = 0$ outside $\mathcal E_N$,
\[
\int_{\Omega_N} h_N^x (\mathcal L_N F_N) \, {\rm d} \mu_N = \int_{\Omega_N} (\mathcal L_N h_N^x) F_N \, {\rm d} \mu_N \simeq \int_{\mathcal E_N} (\mathcal L_N h_N^x) F_N \, {\rm d} \mu_N.
\]
This will approximate the generator part $\mathcal L_N^1 F_N (\xi_N^x) \mu_N(\xi_N^x)$.

\begin{proof}[Proof of Lemma \ref{lem:key-1-1}]
Fix $x\in\mathbb{T}_{L}$. From $(\lambda - \mathcal{L}_{N})F_{N} = {\bf 1}_{\Delta_N}$,
we write
\begin{equation}
\lambda\sum_{\eta\in\Omega_{N}}\mu_{N}(\eta)h_{N}^{x}(\eta)F_{N}(\eta) - \sum_{\eta\in\Omega_{N}}\mu_{N}(\eta)h_{N}^{x}(\eta)\mathcal{L}_{N}F_{N}(\eta)=\sum_{\eta\in\Omega_{N}}\mu_{N}(\eta)h_{N}^{x}(\eta) {\bf 1}_{\Delta_N} (\eta),\label{eq:id}
\end{equation}
where $h_{N}^{x}:\Omega_{N}\to\mathbb{R}$ is defined as (cf. \eqref{eq:ANx})
\[
h_{N}^{x}(\eta):=\begin{cases}
\frac{\eta_{x}}{N} & \text{if}\enspace\eta\in\mathcal{A}_{N}^{x},\\
0 & \text{otherwise}.
\end{cases}
\]

Write $\mu=\mu_{N}$ and $h=h_{N}^{x}$. 
Consider first the right-hand side of \eqref{eq:id}. Since $h=0$ outside $\mathcal A_N^x \setminus \{ \xi_N^{x-1} , \xi_N^{x+1} \} $ and ${\bf 1}_{\Delta_N} = 0$ outside $\Delta_N$,
\begin{equation}
\sum_{\eta\in\Omega_{N}} \mu(\eta) h(\eta) {\bf 1}_{\Delta_N}(\eta) =
\sum_{\eta\in\mathcal{A}_{N}^{x} \setminus \{ \xi_N^{x-1} , \xi_N^x , \xi_N^{x+1} \} } \mu(\eta)\frac{\eta_{x}}{N}.\label{eq:1-2}
\end{equation}
By \eqref{eq:muN-def} and \eqref{eq:wn-est},
\begin{align*}
\mu\left(\mathcal{A}_{N}^{x}\setminus \{ \xi_N^{x-1} , \xi_N^x , \xi_N^{x+1} \} \right) & =2\sum_{i=1}^{N-1}w_{i}w_{N-i}\\
\le2d_{N}^{2} \, e^{2d_{N}(1+\log N)} & \sum_{i=1}^{N-1}\frac{1}{i(N-i)}\le2d_{N}^{2}e^{2d_{N}(1+\log N)}\frac{2(1+\log N)}{N}.
\end{align*}
Thus by \eqref{eq:dN-cond},
\begin{equation}
\mu\left(\mathcal{A}_{N}^{x}\setminus \{ \xi_N^{x-1} , \xi_N^x , \xi_N^{x+1} \} \right) = O \left(\frac{d_{N}^{2}\log N}{N}\right).\label{eq:1-3}
\end{equation}
Combining \eqref{eq:1-2} and \eqref{eq:1-3}, along with $0 \le \eta_x/N \le 1$, we obtain that
\begin{equation}\label{eq:1-3.5}
\sum_{\eta\in\Omega_{N}} \mu(\eta) h(\eta) {\bf 1}_{\Delta_N}(\eta) = O \left(\frac{d_{N}^{2}\log N}{N}\right) .
\end{equation}

Next, we consider the first term in \eqref{eq:id}. The same argument yields that
\begin{equation}
\lambda \sum_{\eta\in\Omega_{N}} \mu(\eta) h(\eta) F_N (\eta) =
\lambda \sum_{\eta\in\mathcal{A}_{N}^{x} \setminus \{ \xi_N^{x-1} , \xi_N^{x+1} \} } \mu(\eta)\frac{\eta_{x}}{N} F_N(\eta).\label{eq:1-3.6}
\end{equation}
Substituting \eqref{eq:1-3} to \eqref{eq:1-3.6} and applying \eqref{eq:muN-wN} give that
\begin{equation}
\lambda\sum_{\eta\in\Omega_{N}}\mu(\eta)h(\eta)F_{N}(\eta)=w_{N}(\lambda F_{N}(\xi_{N}^{x})+ O (d_{N}\log N)).\label{eq:1-4}
\end{equation}
Here, we used \eqref{eq:1-5} and the fact that $w_{N}\ge d_{N}/N$ from \eqref{eq:wn-est}.

Finally, let us calculate the second term in \eqref{eq:id}, which
is
\begin{equation}
\sum_{\eta\in\Omega_{N}}\mu(\eta)h(\eta)\mathcal{L}_{N}F_{N}(\eta)= \sum_{\eta\in\Omega_{N}}\mu(\eta)\mathcal{L}_{N}h(\eta)F_{N}(\eta),\label{eq:1-6}
\end{equation}
where the equality holds since $\mathcal{L}_{N}$ is self-adjoint
with respect to $\mu$ (cf. \eqref{eq:det-bal}). First, let us evaluate
the summation in the right-hand side of \eqref{eq:1-6} on $\{ \xi_N^{x-1} , \xi_N^x , \xi_N^{x+1} \}$.
The coefficient of $F_{N}(\xi_{N}^{x})$ equals
\begin{equation}
\begin{aligned}\mu(\xi_{N}^{x})\mathcal{L}_{N} & h(\xi_{N}^{x}) = w_{N}\sum_{y\in\{x+1,x-1\}}r_{N}\left(\xi_{N}^{x},\xi_N^x - \delta^x + \delta^y \right)\left(h\left(\xi_N^x - \delta^x + \delta^y \right)-h(\xi_{N}^{x})\right)\\
 & =w_{N} \theta_N \left(Nd_{N}\left(\frac{N-1}{N}-1\right)+Nd_{N}\left(\frac{N-1}{N}-1\right)\right)=-2w_{N} \theta_N d_{N}.
\end{aligned}
\label{eq:1-7}
\end{equation}
The coefficient of $F_{N}(\xi_{N}^{x+1})$ equals
\begin{equation}
\mu(\xi_{N}^{x+1})\mathcal{L}_{N}h(\xi_{N}^{x+1})=w_{N} \theta_N Nd_{N}\left(\frac{1}{N}-0\right)=w_{N} \theta_N d_{N},\label{eq:1-8}
\end{equation}
and similarly the coefficient of $F_{N}(\xi_{N}^{x-1})$ equals $w_{N} \theta_N d_{N}$.

Next, we handle the elements in $\mathcal{A}_{N}^{x}\setminus \mathcal E_N $.
Fix one element $i\,\delta^x + (N-i)\,\delta^y$ where $y\in\{x+1,x-1\}$
and $i\in\llbracket1,N-1\rrbracket$. Without loss of generality,
suppose that $y=x+1$. Then,
\begin{equation}
\begin{aligned}\mathcal{L}_{N}h & \left( i\,\delta^x + (N-i)\,\delta^y \right) = \theta_N \, \bigg[ i(d_{N}+N-i) \left(\frac{i-1}{N}-\frac{i}{N}\right) \\
 & +(N-i)(d_{N}+i)\left(\frac{i+1}{N}-\frac{i}{N}\right) +O (id_{N})+O ((N-i)d_{N}) \bigg] \,,
\end{aligned}
\label{eq:1-9}
\end{equation}
where the first two terms on the right-hand side correspond to the particle
jumps of types $x\leftrightarrow x+1$, and the last two terms correspond
to the jumps of types $x\to x-1$ and $x+1\to x+2$, respectively. Due to cancelation,
this equals
\[
\theta_N \left[ -\frac{i}{N}d_{N}+\frac{N-i}{N}d_{N}+O (Nd_{N}) \right] = O (N \theta_N d_{N}).
\]
Hence, the whole contribution of $\mathcal{A}_{N}^{x}\setminus \mathcal{E}_{N}$
to the right-hand side of \eqref{eq:1-6} equals
\begin{equation}
\mu\left(\mathcal{A}_{N}^{x} \setminus \mathcal{E}_{N} \right)O (N \theta_N d_{N})\|F_{N}\|_{\infty}= {\lambda}^{-1}\theta_{N}O (d_{N}^{3}\log N),\label{eq:1-10}
\end{equation}
where \eqref{eq:1-3} and \eqref{eq:1-5} are used in the equality.
Finally, for the remaining set $\Omega_{N}\setminus\mathcal{A}_{N}^{x}$,
owing to the detailed balance, its contribution has exactly the same
scale as the one coming from the last two terms on the right-hand side of \eqref{eq:1-9}. It is thus equal to
\begin{equation}
{\lambda}^{-1} \theta_{N}O (d_{N}^{3}\log N).\label{eq:1-11}
\end{equation}
Combining \eqref{eq:1-6}, \eqref{eq:1-7}, \eqref{eq:1-8}, \eqref{eq:1-10},
and \eqref{eq:1-11}, along with \eqref{eq:wn-est}, we obtain
\begin{equation}
\begin{aligned} & \sum_{\eta\in\Omega_{N}}\mu(\eta)h(\eta)\mathcal{L}_{N}F_{N}(\eta)\\
 & =\theta_{N}w_{N}d_{N}\left(F_{N}(\xi_{N}^{x+1})+F_{N}(\xi_{N}^{x-1})-2F_{N}(\xi_{N}^{x})+{\lambda}^{-1}O (d_{N}N\log N)\right).
\end{aligned}
\label{eq:1-12}
\end{equation}

Collecting \eqref{eq:id}, \eqref{eq:1-3.5}, \eqref{eq:1-4}, and \eqref{eq:1-12} yields
\[
\lambda F_N (\xi_N^x) - N^2 \left(F_{N}(\xi_{N}^{x+1})+F_{N}(\xi_{N}^{x-1})-2F_{N}(\xi_{N}^{x}) \right) = \lambda^{-1} O (d_N N^3 \log N) .
\]
By \eqref{eq:LNell-def}, we may rewrite this as
\begin{equation}
\left(\lambda-\mathcal L_{N}^{1}\right)F_{N} (\xi_N^x) = {\lambda}^{-1} O (d_{N}N^{3}\log N),\label{eq:1-13}
\end{equation}
which completes the proof of Lemma \ref{lem:key-1-1} in view of \eqref{eq:dN-cond}.
\end{proof}

Next, we handle the remaining cases of $\ell\ge2$ for \eqref{eq:key-1}.

\begin{lem}
\label{lem:key-1-ell}For each $\ell\in\llbracket2,k\rrbracket$,
\[
\lim_{N\to\infty} \left\Vert \left(\lambda-\mathcal L_{N}^{\ell}\right) F_N \right\Vert_{L^{\infty} \left( \mathcal J_N^{\ell} \right)}=0.
\]
\end{lem}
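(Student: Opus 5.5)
We will mimic the proof of Lemma \ref{lem:key-1-1}, now with a test function adapted to a configuration $\xi_{\bm n}^{\bm x}\in\mathcal J_N^\ell$. Fix such $\xi_{\bm n}^{\bm x}$ and recall the tube $\mathcal A_{\bm n}^{\bm x}=\bigcup_i(\mathcal A_{\bm n}^{\bm x,i,+}\cup\mathcal A_{\bm n}^{\bm x,i,-})$ of Definition \ref{def:Anx-def-1}. Define $h:\Omega_N\to\mathbb R$ by $h(\eta):=\eta_{x_i}/n_i$ if $\eta\in\mathcal A_{\bm n}^{\bm x,i,\pm}$ and $h=0$ otherwise. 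This is well defined because the sets $\mathcal A_{\bm n}^{\bm x,i,\pm}$ intersect only at $\xi_{\bm n}^{\bm x}$, where every formula gives $1$. Since $\bm x\in\mathbb J_L^\ell$, the neighbors $\xi_{\bm n}^{\bm x\pm\bm e_i}$ lie in $\mathcal E_N^\ell$, where $h=0$. We then multiply $(\lambda-\mathcal L_N)F_N={\bf 1}_{\Delta_N}$ by $h\mu_N$, sum over $\Omega_N$, and use reversibility \eqref{eq:det-bal} to move $\mathcal L_N$ onto $h$. Throughout, the reference mass is $\mu_N(\xi_{\bm n}^{\bm x})=\prod_j w_{n_j}$.

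\textbf{Mass estimates.} On $\mathcal A_{\bm n}^{\bm x,i,\pm}\setminus\mathcal E_N$ the mass equals $\prod_{j\ne i}w_{n_j}\sum_{m=1}^{n_i-1}w_mw_{n_i-m}$. By \eqref{eq:wn-est} this is at most $\mu_N(\xi_{\bm n}^{\bm x})\,O(d_N\log N)$, using $\sum_m\frac{n_i}{m(n_i-m)}=O(\log N)$. Together with $|h|\le 1$ and $\|F_N\|_\infty\le\lambda^{-1}$, this immediately gives two things. The right-hand side is $\mu_N(\xi_{\bm n}^{\bm x})\,O(d_N\log N)$. The $\lambda$-term equals $\mu_N(\xi_{\bm n}^{\bm x})\,(\lambda F_N(\xi_{\bm n}^{\bm x})+O(d_N\log N))$.

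\textbf{Generator term.} We compute the contribution of $\mu_N\,\mathcal L_Nh\,F_N$ point by point.

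At $\xi_{\bm n}^{\bm x}$, each of the $2\ell$ jumps out of condensate $i$ has rate $\theta_N d_N n_i$ and changes $h$ by $-1/n_i$. The coefficient is therefore $-2\ell\,\theta_Nd_N\,\mu_N(\xi_{\bm n}^{\bm x})$.

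At $\xi_{\bm n}^{\bm x\pm\bm e_i}$, the single jump back into the tube has rate $\theta_Nd_Nn_i$ and changes $h$ by $1/n_i$. Since $\mu_N(\xi_{\bm n}^{\bm x\pm\bm e_i})=\mu_N(\xi_{\bm n}^{\bm x})$, each neighbor gets coefficient $\theta_Nd_N\,\mu_N(\xi_{\bm n}^{\bm x})$.

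Dividing by $\mu_N(\xi_{\bm n}^{\bm x})$ and using $\theta_Nd_N=N^2$ produces exactly $N^2\sum_i\bigl(F_N(\xi_{\bm n}^{\bm x+\bm e_i})+F_N(\xi_{\bm n}^{\bm x-\bm e_i})-2F_N(\xi_{\bm n}^{\bm x})\bigr)=\mathcal L_N^\ell F_N(\xi_{\bm n}^{\bm x})$, as in \eqref{eq:LNell-def}.

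At an interior point $m\delta^{x_i}+(n_i-m)\delta^{x_i\pm1}$ of $\mathcal A_{\bm n}^{\bm x,i,\pm}$, the jumps between the two sites cancel to $\theta_N\,O(d_N)$, exactly as in \eqref{eq:1-9}. The jumps to empty sites change $h$ by at most $1$, so they contribute at most $O(N\theta_Nd_N)$. This is where $\ell\ge2$ differs from Lemma \ref{lem:key-1-1}: a particle of another condensate $j\ne i$ may jump away. Its rate is $\theta_Nd_Nn_j\le N\theta_Nd_N$, and it leads outside the tube, where $h=0$, so the change in $h$ is bounded by $1$. The same bound therefore holds. Multiplying by the tube mass gives a total of $\mu_N(\xi_{\bm n}^{\bm x})\,\theta_N\,O(Nd_N^2\log N)\lambda^{-1}$.

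Points outside $\mathcal A_{\bm n}^{\bm x}$ that are adjacent to interior tube points contribute the same order by detailed balance. The exceptions are $\xi_{\bm n}^{\bm x\pm\bm e_i}$, which were already counted above.

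\textbf{Conclusion.} Collecting the terms gives
\[
(\lambda-\mathcal L_N^\ell)F_N(\xi_{\bm n}^{\bm x})=\lambda^{-1}O(d_NN^3\log N)+O(d_N\log N),
\]
and the right-hand side vanishes uniformly by \eqref{eq:dN-cond}.

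\textbf{Main obstacle.} The delicate step is the bookkeeping of the cross terms coming from the other condensates. We must check that the relevant tube points carry mass $\prod_jw_{n_j}\cdot O(d_N\log N)$ uniformly in $\bm n$, including small $n_j$. We must also check that configurations outside the tube reachable by a single jump, such as a condensate $j$ split while condensate $i$ is split, are handled by the detailed-balance symmetrization rather than counted separately. Once $|\mathcal L_Nh|$ on the tube is bounded by $O(N\theta_Nd_N)$ uniformly, the argument closes exactly as in \eqref{eq:1-13}.
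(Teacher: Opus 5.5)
Your proposal is correct and follows essentially the same route as the paper. You use the same test function $h(\eta)=\eta_{x_i}/n_i$ on $\mathcal A_{\bm n}^{\bm x,i,\pm}$, pair the resolvent equation with $h\mu_N$, move $\mathcal L_N$ onto $h$ by reversibility, and make the same three estimates: interior tube mass $\mu_N(\xi_{\bm n}^{\bm x})\,O(d_N\log N)$, the exact coefficients at $\xi_{\bm n}^{\bm x}$ and $\xi_{\bm n}^{\bm x\pm\bm e_i}$, and an $O(N\theta_N d_N)$ bound on $\mathcal L_N h$ elsewhere, with points outside the tube controlled by detailed balance. Your write-up simply spells out details that the paper delegates to the computations in Lemma \ref{lem:key-1-1}.
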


\begin{proof}
The idea is intuitively the same as in the proof of Lemma \ref{lem:key-1-1},
since each jump from $x_{i}$, $i\in\llbracket1,\ell\rrbracket$,
which involves $x_{i}\pm1$, is isolated from any other $x_{j}$,
$j\ne i$. Thus, all computations remain local and valid.

More precisely, fix $ \bm{x} \in \mathbb J_{L}^{\ell}$ and $\bm n \in {\bf N}_N^\ell$. The
test function $h=h_{\bm{n}}^{\bm{x}}:\Omega_{N}\to\mathbb{R}$ is
defined as
\[
h(\eta):=\begin{cases}
\frac{\eta_{x_{i}}}{n_{i}} & \text{if}\enspace\eta\in\mathcal{A}_{\bm n}^{\bm{x},i,+} \cup \mathcal{A}_{\bm n}^{\bm{x},i,-}\enspace\text{for some}\enspace i\in\llbracket1,\ell\rrbracket,\\
0 & \text{otherwise}.
\end{cases}
\]
As done in \eqref{eq:1-3.5}, \eqref{eq:1-4}, and \eqref{eq:1-12},
the last, first, and second terms in \eqref{eq:id} are respectively equal to
\begin{equation}
\sum_{\eta\in\Omega_{N}}\mu(\eta)h(\eta) {\bf 1}_{\Delta_N} (\eta) = \mu (\xi_{\bm n}^{\bm x}) \,  O \left( d_N \log N \right),\label{eq:2.2-1}
\end{equation}
\begin{equation}
\lambda\sum_{\eta\in\Omega_{N}}\mu(\eta)h(\eta)F_{N}(\eta)=\mu(\xi_{\bm{n}}^{\bm{x}})(\lambda F_{N}(\xi_{\bm{n}}^{\bm{x}})+ O (d_{N}\log N)),\label{eq:2.2-2}
\end{equation}
and
\begin{equation}
\theta_{N}\mu(\xi_{\bm{n}}^{\bm{x}})d_{N}\left(\sum_{i=1}^{\ell}\left(F_{N}\left(\xi_{\bm{n}}^{\bm{x}+\bm e_{i}}\right)+F_{N}\left(\xi_{\bm{n}}^{\bm{x}-\bm e_{i}}\right)-2F_{N}(\xi_{\bm{n}}^{\bm{x}})\right)+ {\lambda}^{-1} O (d_{N}N\log N)\right).\label{eq:2.2-3}
\end{equation}
Combining \eqref{eq:2.2-1}, \eqref{eq:2.2-2}, \eqref{eq:2.2-3}, \eqref{eq:LNell-def},
and \eqref{eq:dN-cond} yields
\begin{equation}
\lambda F_N (\xi_{\bm n}^{\bm x}) - \mathcal L_{N}^{\ell} F_N (\xi_{\bm n}^{\bm x}) = o(1),\label{eq:2.2-4}
\end{equation}
where the error is uniform over all $\bm{x} \in \mathbb J_{L}^{\ell}$ and $\bm n \in {\bf N}_N^\ell$. This is exactly what has been claimed.
\end{proof}

\subsection{\label{sec3.3}Proof of Lemma \ref{lem:res-ind-hyp}}

In this subsection, we prove Lemma \ref{lem:res-ind-hyp}. The first display is already proved in Section \ref{sec3.1}, so we focus on the second display. Consider $\xi_{\bm n}^{\bm x} \in \mathcal K_N^\ell$, i.e., $\bm x \in \mathbb K_L^\ell$ and $\bm n \in {\bf N}_N^\ell$. We have
\[
F_{N}(\xi_{\bm{n}}^{\bm{x}}) =
{\rm E}^{\mathbb{P}_{\xi_{\bm{n}}^{\bm{x}}}^{N}} \left[ \left( \int_{0}^{ H_{ \mathcal J_N^\ell \cup \mathcal E_N^{\ell-1} } }+\int_{H_{ \mathcal J_N^\ell \cup \mathcal E_N^{\ell-1} } }^{\infty} \right) e^{-\lambda t}\, {\bf 1}_{\Delta_N} (\eta_N(t)) \,{\rm d}t \right].
\]
By the strong Markov property at time $H := H_{ \mathcal J_N^\ell \cup \mathcal E_N^{\ell-1} }$,
\begin{equation}\label{eq:SMP-0}
\begin{aligned}
{\rm E}^{\mathbb{P}_{\xi_{\bm{n}}^{\bm{x}}}^{N}} \left[ \int_H^{\infty} e^{-\lambda t}\, {\bf 1}_{\Delta_N} (\eta_N(t)) \,{\rm d}t \right] & =
{\rm E}^{\mathbb{P}_{\xi_{\bm{n}}^{\bm{x}}}^{N}} \left[ e^{-\lambda H} \, {\rm E}^{\mathbb P_{\eta_N(H)}^N} \int_0^{\infty} e^{-\lambda t}\, {\bf 1}_{\Delta_N} (\eta_N(t)) \,{\rm d}t \right] \\
& = {\rm E}^{\mathbb P_{\xi_{\bm n}^{\bm x}}^N} \left[ e^{-\lambda H} \, F_N (\eta_N (H)) \right] .
\end{aligned}
\end{equation}
Substituting this to the penultimate displayed identity,
\begin{equation}\label{eq:SMP}
\begin{aligned}
\left|F_{N}(\xi_{\bm{n}}^{\bm{x}})-{\rm E}^{\mathbb{P}_{\xi_{\bm{n}}^{\bm{x}}}^{N}}\left[ F_N (\eta_N (H)) \right] \right| & =
{\rm E}^{\mathbb{P}_{\xi_{\bm{n}}^{\bm{x}}}^{N}} \left[ \int_{0}^{H} e^{-\lambda t}\, {\bf 1}_{\Delta_N} (\eta_N(t)) \,{\rm d}t
- \left( 1 - e^{-\lambda H} \right) F_N (\eta_N (H)) \right] \\
& \le \frac2{\lambda}\,{\rm E}^{\mathbb{P}_{\xi_{\bm{n}}^{\bm{x}}}^{N}}\left[1-e^{-\lambda H_{ \mathcal J_N^\ell \cup \mathcal E_N^{\ell-1} } } \right] .
\end{aligned}
\end{equation}
In the inequality, we used that ${\bf 1}_{\Delta_N} \le 1$ and $\|F_N\|_\infty \le 1/\lambda$.
First, we estimate the expectation in the right-hand side of \eqref{eq:SMP}.
\begin{lem}
\label{lem:xinx-hit}It holds that
\[
\lim_{N\to\infty} \sup_{\bm n \in {\bf N}_N^\ell} \sup_{\bm{x} \in \mathbb K_{L}^{\ell}}
{\rm E}^{\mathbb{P}_{\xi_{\bm{n}}^{\bm{x}}}^{N}}\left[1-e^{-\lambda H_{ \mathcal J_N^\ell \cup \mathcal E_N^{\ell-1} } } \right] = 0 .
\]
\end{lem}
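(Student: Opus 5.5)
The plan is to split the expectation according to whether the hitting of $\mathcal J_N^\ell \cup \mathcal E_N^{\ell-1}$ happens before or after the tube-exit time $\mathfrak T_N$, and to use the results of Section~\ref{sec2} on each piece. Write $H := H_{\mathcal J_N^\ell \cup \mathcal E_N^{\ell-1}}$. For any $\xi = \xi_{\bm n}^{\bm x} \in \mathcal K_N^\ell$,
\[
{\rm E}^{\mathbb P_\xi^N}\left[1-e^{-\lambda H}\right] \le {\rm E}^{\mathbb P_\xi^N}\left[\left(1-e^{-\lambda H}\right){\bf 1}\{H < \mathfrak T_N\}\right] + \mathbb P_\xi^N\left(H > \mathfrak T_N\right) + \mathbb P_\xi^N\left(H = \mathfrak T_N\right),
\]
using only $0 \le 1-e^{-\lambda H} \le 1$ on the complementary events.

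The first term is exactly the quantity controlled in Lemma~\ref{lem:good-hitting-time}. That lemma gives $o(1)$ uniformly over $\xi \in \mathcal K_N^\ell$, hence uniformly over $\bm n \in {\bf N}_N^\ell$ and $\bm x \in \mathbb K_L^\ell$. The second term is $o(1)$ uniformly by Lemma~\ref{lem:tube-typ-2}.

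The third event is in fact empty, so the only point needing care is this tie. $\mathfrak T_N$ is, by \eqref{eq:TN-def}, the time of a jump from some $\mathcal B_{\bm n'}^{\bm x'}$ into $\Omega_N \setminus \mathcal A_{\bm n'}^{\bm x'}$. $H$ is the time of entering $\mathcal E_N$. If the two coincided, the jump at that instant would start in $\mathcal B_{\bm n'}^{\bm x'} \subset \mathcal A_{\bm n'}^{\bm x'} \setminus \mathcal E_N$ and land in $\mathcal J_N^\ell \cup \mathcal E_N^{\ell-1} \subset \mathcal E_N$, outside $\mathcal A_{\bm n'}^{\bm x'}$. Such a landing is allowed by the definitions, so I do not expect to exclude the tie directly. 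Instead I absorb it into the second term by replacing $\{H > \mathfrak T_N\}$ with $\{H \ge \mathfrak T_N\}$ there.

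This requires checking that the proof of Lemma~\ref{lem:tube-typ-2} actually bounds $\mathbb P_\xi^N(H \ge \mathfrak T_N)$. It does: that proof bounds the probability that the rate-$2N^3$ clock controlling jumps out of $\mathcal A_{\bm n'}^{\bm x'}$ rings while the process is in $\mathcal B_N$ and before it reaches $\mathcal J_N^\ell \cup \mathcal E_N^{\ell-1}$. A tie is precisely such a ring, occurring at the last excursion.

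With that, all three terms vanish uniformly, and taking the supremum over $\bm n$ and $\bm x$ concludes the proof. The argument is thus a direct corollary of Lemmas~\ref{lem:tube-typ-2} and~\ref{lem:good-hitting-time}.
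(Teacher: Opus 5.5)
Your proposal is correct and is essentially the paper's proof: split on $\{H<\mathfrak T_N\}$ versus its complement, then apply Lemma~\ref{lem:good-hitting-time} to the first piece and Lemma~\ref{lem:tube-typ-2} to the second. Your extra care with the tie $\{H=\mathfrak T_N\}$, which the paper passes over, is sound because the clock bound behind Lemma~\ref{lem:tube-typ-2} also controls $\{H\ge\mathfrak T_N\}$; but do not first call that event empty and then say you cannot exclude it (it is in fact empty, since the jump entering $\mathcal J_N^\ell\cup\mathcal E_N^{\ell-1}$ moves a particle between two occupied sites that lie in the active site set of every tube containing the pre-jump configuration).
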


\begin{proof}
By Lemma \ref{lem:tube-typ-2},
\begin{align*}
\sup_{\bm n \in {\bf N}_N^\ell} \sup_{\bm{x} \in \mathbb K_{L}^{\ell}} {\rm E}^{\mathbb{P}_{\xi_{\bm{n}}^{\bm{x}}}^{N}}\,\Big[\Big(1 & - e^{-\lambda H_{ \mathcal J_N^\ell \cup \mathcal E_N^{\ell-1} } } \Big)\,{\bf 1}\left\{ H_{ \mathcal J_N^\ell \cup \mathcal E_N^{\ell-1} } > \mathfrak T_N \right\} \Big]\,\\
 & \le \sup_{\bm n \in {\bf N}_N^\ell} \sup_{\bm{x} \in \mathbb K_{L}^{\ell}} \mathbb{P}_{\xi_{\bm{n}}^{\bm{x}}}^{N}\left( H_{ \mathcal J_N^\ell \cup \mathcal E_N^{\ell-1} } > \mathfrak T_N \right) \xrightarrow{N\to\infty}0.
\end{align*}
It remains to prove that
\begin{equation}
\lim_{N\to\infty} \sup_{\bm n \in {\bf N}_N^\ell} \sup_{\bm{x} \in \mathbb K_{L}^{\ell}} {\rm E}^{\mathbb{P}_{\xi_{\bm{n}}^{\bm{x}}}^{N}}
\left[ \left(1 - e^{-\lambda H_{ \mathcal J_N^\ell \cup \mathcal E_N^{\ell-1} } } \right){\bf 1}\left\{ H_{ \mathcal J_N^\ell \cup \mathcal E_N^{\ell-1} } < \mathfrak T_N \right\} \right]=0.\label{eq:BNell-pf}
\end{equation}
This follows from Lemma \ref{lem:good-hitting-time}.
\end{proof}
Now, we are ready to prove Lemma \ref{lem:res-ind-hyp}.

\begin{proof}[Proof of Lemma \ref{lem:res-ind-hyp}]
The case $\ell = 1$ follows from Lemma \ref{lem:key-1-1}.
Fix $\ell \ge 2$ and assume that Lemma \ref{lem:res-ind-hyp} holds for $\ell-1$. Assertion \eqref{eq:key-1} holds by Lemma \ref{lem:key-1-ell}.

We turn to assertion \eqref{eq:key-2}. By \eqref{eq:SMP} and Lemma \ref{lem:xinx-hit},
\begin{equation}
\limsup_{N\to\infty} \sup_{\bm n \in {\bf N}_N^\ell} \sup_{ \bm{x}  \in \mathbb K_{L}^{\ell}} \left| F_N(\xi_{\bm n}^{\bm x})\right|\le\limsup_{N\to\infty} \sup_{\bm n \in {\bf N}_N^\ell} \sup_{ \bm{x}  \in \mathbb K_{L}^{\ell}} {\rm E}^{\mathbb{P}_{\xi_{\bm{n}}^{\bm{x}}}^{N}}\left[ \left| F_{N}\left(\eta_{N}\left( H_{ \mathcal J_N^\ell \cup \mathcal E_N^{\ell-1} } \right)\right) \right| \right] .\label{eq:key-2-pf-1}
\end{equation}
Divide the expectation on the right-hand side into two parts: $\{ H_{\mathcal E_N^{\ell-1}} < H_{\mathcal J_N^\ell} \}$ and $\{ H_{\mathcal E_N^{\ell-1}} > H_{\mathcal J_N^\ell} \}$. Start with the first part,
\begin{equation}\label{eq:Ynx-part-bound}
{\rm E}^{\mathbb{P}_{\xi_{\bm{n}}^{\bm{x}}}^{N}}\left[ {\bf 1} \left\{ H_{\mathcal E_N^{\ell-1}} < H_{\mathcal J_N^\ell} \right\} \left| F_{N}\left(\eta_{N}\left( H_{ \mathcal E_N^{\ell-1} } \right)\right) \right| \right]  \le
{\rm E}^{\mathbb{P}_{\xi_{\bm{n}}^{\bm{x}}}^{N}}\left[ \left| F_{N}\left(\eta_{N}\left( H_{ \mathcal E_N^{\ell-1} } \right)\right) \right| \right] .
\end{equation}
In the proof of Theorem \ref{thm:res-inc} we showed that conditions \eqref{eq:key-1} and \eqref{eq:key-2} yield \eqref{eq:res-ell}.
Hence,
the induction hypothesis for $\ell-1$ implies that \eqref{eq:res-ell} holds for $\ell-1$. Thus, the right-hand side of \eqref{eq:Ynx-part-bound} vanishes uniformly as $N \to \infty$.

The second part on the right-hand side of \eqref{eq:key-2-pf-1} can be written as
\[
{\rm E}^{\mathbb{P}_{\xi_{\bm{n}}^{\bm{x}}}^{N}}\left[ {\bf 1} \left\{ H_{\mathcal E_N^{\ell-1}} > H_{\mathcal J_N^\ell} \right\} \left| F_{N}\left(\eta_{N}\left( H_{ \mathcal J_N^\ell } \right)\right) \right| \right] .
\]
By Lemma \ref{lem:escape-p}, this is bounded above by
\begin{equation}\label{eq:Xnx-part-bound}
(1-p) \limsup_{N \to \infty} \sup_{\xi \in \mathcal E_N^\ell} | F_N(\xi) | .
\end{equation}
Combining \eqref{eq:key-2-pf-1}, \eqref{eq:Ynx-part-bound}, and \eqref{eq:Xnx-part-bound} concludes the proof of \eqref{eq:key-2} in the case of $\ell$. This completes the induction step.
\end{proof}

\section{\label{sec4}Proof of Theorem \ref{thm2}}
In this section, we prove Theorem \ref{thm2}.
We need a lemma regarding the time change between the original process and the trace process.

\begin{lem}\label{lem:time-change}
Recall the definition of the time change $S_N(t)$ from \eqref{eq:TNt}. Then,
\[
\lim_{N \to \infty} \sup_{\xi \in \mathcal E_N} \mathbb P_\xi^N ( S_N(t) \ge t+a) = 0 \qquad \text{for any} \quad t,a>0.
\]
\end{lem}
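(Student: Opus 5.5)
The plan is to reduce the statement to Theorem \ref{thm1} through the identity relating $S_N$ and $T_N$. By definition $T_N(s) = s - \int_0^s {\bf 1}\{\eta_N(r)\in\Delta_N\}\,{\rm d}r$, and $S_N$ is the generalized inverse of $T_N$. Since $T_N$ is nondecreasing and continuous, the event $\{S_N(t) \ge t+a\}$ forces $T_N(s) \le t$ for every $s < t+a$. By continuity this gives $T_N(t+a) \le t$. Equivalently,
\[
\int_0^{t+a} {\bf 1}\{\eta_N(r)\in\Delta_N\}\,{\rm d}r \;\ge\; a .
\]
So the event is contained in the event that the process spends at least time $a$ outside $\mathcal E_N$ during the window $[0,t+a]$.

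Next, I will apply Markov's inequality with $T := t+a$:
\[
\mathbb P_\xi^N ( S_N(t) \ge t+a) \le \frac1a\, {\rm E}^{\mathbb P_\xi^N}\left[\int_0^{t+a}{\bf 1}\{\eta_N(r)\in\Delta_N\}\,{\rm d}r\right].
\]
Taking the supremum over $\xi\in\mathcal E_N$ and letting $N\to\infty$, the right-hand side vanishes by Theorem \ref{thm1}, applied with horizon $T=t+a$. This proves the lemma.

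The only delicate point is the first step, namely getting the monotonicity and inverse relation right when $S_N$ is a supremum. There are two things to check. First, if $S_N(t)\ge t+a$, then there are $s$ arbitrarily close to $t+a$ from below with $T_N(s)\le t$. Second, monotonicity of $T_N$ extends this to all $s<t+a$, and continuity then passes it to $s = t+a$. No other obstacle is expected, since Theorem \ref{thm1} already supplies the needed uniform bound.
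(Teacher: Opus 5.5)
Your proposal is correct and follows the paper's proof exactly: reduce $\{S_N(t)\ge t+a\}$ to $\{T_N(t+a)\le t\}$, i.e.\ to at least time $a$ spent in $\Delta_N$ during $[0,t+a]$, then apply Markov's inequality and Theorem~\ref{thm1} with horizon $t+a$. Your check of the inverse relation, using monotonicity and continuity of $T_N$, is the only detail the paper leaves implicit, and it is handled correctly.
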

\begin{proof}
Notice that $S_N(t) \ge t+a$ if and only if $T_N(t+a) \le t$. Thus by \eqref{eq:TNt} and the Markov inequality,
\begin{align*}
\mathbb P_\xi^N ( S_N(t) & \ge t+a) = \mathbb P_\xi^N \left( \int_0^{t+a} {\bf 1} \{ \eta_N(s) \in \mathcal E_N \} \, {\rm d}s \le t \right) \\
& = \mathbb P_\xi^N \left( \int_0^{t+a} {\bf 1} \{ \eta_N(s) \in \Delta_N \} \, {\rm d}s \ge a \right)
\le a^{-1} \, {\rm E}^{\mathbb P_\xi^N} \left[ \int_0^{t+a} {\bf 1} \{ \eta_N(s) \in \Delta_N \} \, {\rm d}s \right] .
\end{align*}
This completes the proof in view of Theorem \ref{thm1}.
\end{proof}

\begin{proof}[Proof of Theorem \ref{thm2}]
%Recall that $\mathscr A_{N,T} \subset D([0,\infty);\mathcal E_N)$ denotes the event that only the typical jumps (in the sense of Definition \ref{def:typ-jump}) occur until time $T>0$.
Recall from \eqref{eq:TN-def} the definition of $\mathfrak T_N$.
Suppose that $\mathfrak T_N > 2T$ and $S_N(T) < 2T$. Then, every jump of
the original process happens inside $\mathcal A_{\bm n}^{\bm x}$ for some $\xi_{\bm n}^{\bm x} \in \mathcal E_N$ until time $2T$. In this case, if we trace the process on $\mathcal E_N$, by \eqref{eq:Nnx-Anx}, only the jumps to neighbors occur along the trajectory until time $T$ since $S_N(T) < 2T$. This observation implies that, for each $\xi \in \mathcal E_N$,
\[
\mathbb P_\xi^N ( \mathfrak T_N > 2T , \enspace S_N(T) < 2T ) \le \mathbb Q_\xi^N ( \tau_N > T),
\]
thus it suffices to prove that
\[
\lim_{N \to \infty} \inf_{\xi \in \mathcal E_N} \mathbb P_{\xi}^N ( \mathfrak T_N > 2T , \enspace S_N(T) < 2T ) = 1.
\]
By Lemmas \ref{lem:time-change} and \ref{lem:tube-typ},
\[
\lim_{N \to \infty} \inf_{\xi \in \mathcal E_N} \mathbb P_\xi^N ( S_N(T) < 2T) = 
\lim_{N \to \infty} \inf_{\xi \in \mathcal E_N} \mathbb P_\xi^N ( \mathfrak T_N > 2T) = 1.
\]
These two relations conclude the proof.
\end{proof}

\section{\label{sec5}Second Resolvent Condition for Labeled Processes}

In this section, we present a bound on the solution of the resolvent equation for labeled processes, which is the main estimate in the proof of Theorem \ref{thm4}.
This is summarized in the following theorem.
\begin{thm}[Flatness of Resolvent Solutions]\label{thm:res}
Recall from \eqref{eq:Lrho}, \eqref{eq:ENhat}, Definition \ref{def:lab-trace}, and \eqref{eq:PhiN} the definition of generator $\mathfrak L_\rho$, set $\widehat{\mathcal E}_N$, generator $\widehat{\mathcal L}_N$, and projection $\Phi_N$, respectively.
For each $\lambda>0$ and ${\bf g}\in C({\mathbb T}^{k})$,
denote by ${\bf f}\in\mathfrak{D}^{(k)}$ the unique solution to $(\lambda-\mathfrak L_{\rho}){\bf f}={\bf g}$.
Define a function $G_{N}: \widehat{\mathcal E}_N \to \mathbb{R}$ as
\begin{equation}\label{eq:GN-def}
G_N := {\bf g} \circ \Phi_N.
\end{equation}
%\begin{equation}
%G_{N}(\widehat\xi_{\bm n}^{\bm x}):=
%\begin{cases}
%{\bf g} \left( \left\{ \frac{\bm x}L \right\} \right) & \text{if}\enspace\eta=\xi_{\bm{n}}^{\bm{x}}\in\mathcal{E}_{N}\enspace\text{for some}\enspace\bm{x}\in\mathbb{I}_{L}^{\ell}\enspace\text{and}\enspace\bm{n}\in{\bf N}_{N}^{\ell},\\
%0 & \text{otherwise}.
%\end{cases}
%\label{eq:GN-def}
%\end{equation}
Denote by $F_{N}$ the unique solution to $(\lambda-\widehat{\mathcal{L}}_{N})F_{N}=G_{N}$.
Then,
\[
\lim_{N\to\infty} \sup_{ \widehat\xi_{\bm n}^{\bm x} \in \widehat{\mathcal E}_N} \left| F_{N}(\widehat\xi_{\bm{n}}^{\bm{x}})-{\bf f}\left( \frac{\bm{x}}{L} \right) \right| = 0.
\]
\end{thm}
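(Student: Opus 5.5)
The plan is to run the same induction on the number of distinct condensates as in Section~\ref{sec3}, now on the labeled set $\widehat{\mathcal E}_N$, applied to the difference $D_N(\widehat\xi_{\bm n}^{\bm x}) := F_N(\widehat\xi_{\bm n}^{\bm x}) - {\bf f}(\bm x/L)$.

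\textbf{Decomposition.} For $\ell\in\llbracket 1,k\rrbracket$, let $\widehat{\mathcal E}_N^\ell$ be the set of labeled configurations with exactly $\ell$ distinct positions. Split it into $\widehat{\mathcal J}_N^\ell$ (all distinct positions at mutual distance $\ge 3$) and $\widehat{\mathcal K}_N^\ell$ (some pair at distance exactly $2$). By Definition~\ref{def:lab-trace}, the chain $\widehat\eta_N$ started in $\widehat{\mathcal E}_N^\ell$ can only reach $\widehat{\mathcal E}_N^{\ell}\cup\widehat{\mathcal E}_N^{\ell-1}$, and from $\widehat{\mathcal J}_N^\ell$ it only moves clusters by $\pm1$. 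The coalesced coordinates move together there, which matches how ${\bf f}$ sees the diagonal through the maps $\psi_{i,j}$. The target, proved by downward induction on $\ell$ from $1$ up to $k$, is two lemmas that are the labeled analogues of Lemma~\ref{lem:res-ind-hyp}:
\[
\lim_{N\to\infty}\|(\lambda-\widehat{\mathcal L}_N^{\ell})D_N\|_{L^\infty(\widehat{\mathcal J}_N^\ell)}=0,\qquad
\limsup_{N}\|D_N\|_{L^\infty(\widehat{\mathcal K}_N^\ell)}\le(1-p)\limsup_N\|D_N\|_{L^\infty(\widehat{\mathcal E}_N^\ell)}.
\]
Here $\widehat{\mathcal L}_N^\ell$ is the nearest-neighbour walk with rate $N^2$ per cluster, absorbed at $\widehat{\mathcal K}_N^\ell$. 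The Dynkin representation \eqref{eq:key-appl} then closes the induction exactly as in the proof of Theorem~\ref{thm:res-inc}.

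\textbf{Bulk estimate.} On $\widehat{\mathcal J}_N^\ell$, Lemma~\ref{lem:JN-trace-rate} gives $\widehat{\mathcal L}_N F_N = N^2\sum_{\text{clusters}}(\cdots) + O(d_NN^3\log N)\|F_N\|_\infty$. Since $\|F_N\|_\infty\le\|{\bf g}\|_\infty/\lambda$ and \eqref{eq:dN-cond} holds, this is $\widehat{\mathcal L}_N^\ell F_N + o(1)$. Hence $(\lambda-\widehat{\mathcal L}_N^\ell)F_N = G_N + o(1)$ on $\widehat{\mathcal J}_N^\ell$.

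For ${\bf f}$, a cluster containing coordinates $i,\dots,i+j-1$ moves rigidly. A third-order Taylor expansion of ${\bf f}$ along the direction $\bm e_i+\cdots+\bm e_{i+j-1}$ gives
\[
N^2\bigl[{\bf f}(\bm u+L^{-1}\bm v)+{\bf f}(\bm u-L^{-1}\bm v)-2{\bf f}(\bm u)\bigr]=\frac{N^2}{L^2}\,(\bm v\cdot\nabla)^2{\bf f}+O(N^2L^{-3}),
\]
using uniform bounds on the third derivatives of ${\bf f}$; this is why $\mathfrak D^{(k)}$ uses $C^3$. On the diagonal $u_i=\cdots=u_{i+j-1}$ we have $(\bm v\cdot\nabla)^2{\bf f}=\sum_{a=i}^{i+j-1}\partial_a^2{\bf f}$, because the mixed derivatives vanish by \eqref{eq:didj}. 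Using $N/L\to\rho$, the sum over clusters equals $\rho^2\Delta{\bf f}+o(1)$. Therefore $(\lambda-\widehat{\mathcal L}_N^\ell)({\bf f}\circ\Phi_N)=G_N+o(1)$, and subtracting the two identities gives the bulk estimate. One point to check here is the extension of derivatives continuously to $\partial\mathbb T_\circ^k$, so that the Taylor expansion is valid across the diagonal; the $C^3$ extension in $\mathfrak D_0$ provides this.

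\textbf{Edge estimate.} This is the expected main obstacle. Fix $\widehat\xi\in\widehat{\mathcal K}_N^\ell$ and let $H$ be the hitting time of $\widehat{\mathcal J}_N^\ell\cup\widehat{\mathcal E}_N^{\ell-1}$. By the strong Markov property, as in \eqref{eq:SMP}, $F_N(\widehat\xi)=\mathrm E[F_N(\widehat\eta_N(H))]+O(\mathrm E[1-e^{-\lambda H}])$. The error term is $o(1)$: through \eqref{eq:proj-law} and Theorem~\ref{thm2}, the labeled chain's law is close to the trace process, so Lemma~\ref{lem:good-hitting-time} together with Lemma~\ref{lem:time-change} shows that $H=O(\log N/N^2)$ in probability. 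Since the clusters move by at most $O(1)$ sites before time $H$, continuity of ${\bf f}$ gives ${\bf f}(\Phi_N(\widehat\eta_N(H)))={\bf f}(\bm x/L)+o(1)$. It follows that $|D_N(\widehat\xi)|\le \mathrm E|D_N(\widehat\eta_N(H))|+o(1)$.

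Split on which set is hit first. On the event $\{H_{\widehat{\mathcal E}_N^{\ell-1}}<H_{\widehat{\mathcal J}_N^\ell}\}$, the induction hypothesis gives $D_N=o(1)$. Note that hitting $\widehat{\mathcal E}_N^{\ell-1}$ identifies two coordinates, which is consistent with ${\bf f}$ being continuous across the diagonal; this is where coalescence in the limit matches the merger. On the complementary event, the bound is $\|D_N\|_{L^\infty(\widehat{\mathcal E}_N^\ell)}$, and Lemma~\ref{lem:escape-p-trace}, transferred via \eqref{eq:proj-law}, gives the factor $1-p$.

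The delicate part is that, with positive probability, the chain returns to $\widehat{\mathcal K}_N^\ell$ many times before escaping, and mass-exchange jumps of Type~B occur meanwhile. The $1-p$ contraction only needs one escape attempt, but the uniformity of all these estimates over $\bm n$ must be verified by carrying the bounds of Lemmas~\ref{lem:tube-typ-2}--\ref{lem:escape-p-trace} over to $\widehat R_N$, which coincide with $R_N$ on the relevant transitions.
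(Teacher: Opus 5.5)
Your proposal is correct and follows the paper's proof essentially step by step. It uses the same error function $F_N-{\bf f}\circ\Phi_N$ and the same bulk/edge pair of estimates closed by Dynkin's formula. The bulk estimate uses the same Taylor expansion with vanishing mixed derivatives (Lemma~\ref{lem:lap-lap-equal}), and the edge estimate uses the same strong Markov split at $H$ with the $(1-p)$ contraction.

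There are two minor deviations. First, you get $\mathrm E[1-e^{-\lambda H}]\to0$ by transporting Lemma~\ref{lem:good-hitting-time} through Theorem~\ref{thm2} and \eqref{eq:proj-law}, using that trace hitting times never exceed original ones. This works, but the paper does it more directly: Lemma~\ref{lem:KN-trace-rate} dominates $H$ by an exponential of rate $N^2(1+o(1))$. Second, your unproved claim that clusters move only $O(1)$ sites before $H$ is exactly Lemma~\ref{lem:not-moving}. It holds only in probability, but that suffices since ${\bf f}$ is bounded and uniformly continuous; the justification compares Type A rates, at most $2\ell N^2(1+o(1))$, with the merging rate, at least $N^2(1+o(1))$.
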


The strategy to prove Theorem \ref{thm:res} is exactly the same as the proof of Theorem \ref{thm1} presented in Section \ref{sec3}. For each $\ell \in \llbracket 1,k \rrbracket$, define
\[
{\color{blue} \widehat{\mathcal E}_N^\ell } := \left\{ \widehat\xi_{\bm n}^{\bm x} \in \widehat{\mathcal E}_N : \xi_{\bm n}^{\bm x} \in \mathcal E_N^\ell \right\}
\]
and
\[
{\color{blue} \widehat{\mathcal J}_N^\ell } := \left\{ \widehat\xi_{\bm n}^{\bm x} \in \widehat{\mathcal E}_N : \xi_{\bm n}^{\bm x} \in \mathcal J_N^\ell \right\}, \qquad
{\color{blue} \widehat{\mathcal K}_N^\ell } := \left\{ \widehat\xi_{\bm n}^{\bm x} \in \widehat{\mathcal E}_N : \xi_{\bm n}^{\bm x} \in \mathcal K_N^\ell \right\}.
\]
As before, our objective is to prove for each $\ell \in \llbracket 1,k \rrbracket$ that
\[
\lim_{N \to \infty} \sup_{\widehat\xi \in \widehat{\mathcal E}_N^\ell} \left| F_N (\widehat\xi) - ({\bf f} \circ \Phi_N) (\widehat\xi) \right| = 0 .
\]

Define a Markov chain $(Y_t^\ell)_{t \ge 0}$ on $\widehat{\mathcal E}_N^\ell$ as follows. Fix $\widehat\xi_{\bm n}^{\bm x} \in \widehat{\mathcal J}_N^\ell$.

\begin{itemize}
\item If $x_{i-1} \ne x_i = \cdots = x_{i+j-1} \ne x_{i+j}$, then assign jump rate $N^2$ to $\widehat\xi_{\bm n}^{\bm x} \to \widehat\xi_{\bm n}^{\bm x \pm ( \bm e_i + \cdots + \bm e_{i+j-1} ) }$.
\item (Only if $\ell = 1$) If $x_1 = \cdots = x_k$, then assign jump rate $N^2$ to $\widehat\xi_{\bm n}^{\bm x} \to \widehat\xi_{\bm n}^{\bm x \pm (\bm e_1 + \cdots + \bm e_k)}$.
\end{itemize}
Denote by $\color{blue} \widehat{\mathcal L}_N^\ell$ the corresponding infinitesimal generator acting on $C(\widehat{\mathcal E}_N^\ell)$, which induces a Markov chain which gets absorbed at $\widehat{\mathcal K}_N^\ell$.
It is designed in such a way that if we project the trajectory via $\Psi_N : \widehat{\mathcal E}_N \to \mathcal E_N$, it then follows the law of the Markov chain $X_t^\ell$ defined in \eqref{eq:LNell-def}.

Define ${\color{blue} \varepsilon_N} := F_N  - ({\bf f} \circ \Phi_N)$.
The following lemma is an analogue of Lemma \ref{lem:res-ind-hyp}.

\begin{lem}\label{lem:res-ind-hyp-label}
For each $\ell \in \llbracket 1,k \rrbracket$,
\begin{equation}
\lim_{N\to\infty} \left\Vert \left(\lambda- \widehat{\mathcal L}_{N}^{\ell}\right) \varepsilon_N \right\Vert _{L^{\infty}\left(\mathcal J_N^{\ell}\right)}=0,\label{eq:key-hat-1}
\end{equation}
\begin{equation}
\limsup_{N\to\infty} \left\Vert \varepsilon_N \right\Vert_{L^{\infty} \left(\mathcal K_N^{\ell}\right)}\le (1-p) \limsup_{N\to\infty}
\left\Vert \varepsilon_N \right\Vert_{L^{\infty} \left(\mathcal E_N^{\ell} \right)}, \label{eq:key-hat-2}
\end{equation}
where $p \in (0,1)$ is from Lemma \ref{lem:escape-p}.
\end{lem}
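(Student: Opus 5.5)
The plan follows the proof of Lemma \ref{lem:res-ind-hyp}, now applied to the difference $\varepsilon_N = F_N - {\bf f}\circ\Phi_N$ instead of $F_N$. The interior estimate \eqref{eq:key-hat-1} and the boundary estimate \eqref{eq:key-hat-2} are handled separately. The induction over $\ell$ is organised exactly as before: once \eqref{eq:key-hat-1} and \eqref{eq:key-hat-2} hold at level $\ell$, the representation \eqref{eq:key-appl}, with $(Y_t^\ell)$ in place of $(X_t^\ell)$, gives $\|\varepsilon_N\|_{L^\infty(\widehat{\mathcal E}_N^\ell)}\to0$. That conclusion is then the input for level $\ell+1$.

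\emph{Interior estimate.} Fix $\widehat\xi_{\bm n}^{\bm x}\in\widehat{\mathcal J}_N^\ell$. By Definition \ref{def:lab-trace} and \eqref{eq:proj-law}, the only jumps of $\widehat{\mathcal L}_N$ from $\widehat\xi_{\bm n}^{\bm x}$ move a whole block $x_i=\cdots=x_{i+j-1}$ by $\pm1$. By Lemma \ref{lem:JN-trace-rate} each such jump has rate $N^2+O(d_NN^3\log N)$, and there are at most $2k$ of them. Since $\|F_N\|_\infty\le\|{\bf g}\|_\infty/\lambda$, the resolvent equation gives
\[
(\lambda-\widehat{\mathcal L}_N^\ell)F_N = G_N + O(d_NN^3\log N) = {\bf g}\circ\Phi_N+o(1)
\]
uniformly on $\widehat{\mathcal J}_N^\ell$, by \eqref{eq:dN-cond}.

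For ${\bf f}\circ\Phi_N$, set $\bm u=\bm x/L$ and $\bm v_i=\bm e_i+\cdots+\bm e_{i+j-1}$. Each term of $\widehat{\mathcal L}_N^\ell({\bf f}\circ\Phi_N)$ is $N^2\big({\bf f}(\bm u+\bm v_i/L)+{\bf f}(\bm u-\bm v_i/L)-2{\bf f}(\bm u)\big)$. The segment $[\bm u-\bm v_i/L,\bm u+\bm v_i/L]$ never crosses a new diagonal, because distinct blocks are at least three sites apart. A third-order Taylor expansion, using the uniform bound on third derivatives (the reason for choosing $C^3$ in $\mathfrak D^{(k)}$), gives
\[
(N/L)^2\,\partial_{\bm v_i}^2{\bf f}(\bm u)+O(N^2L^{-3}).
\]
Here $\partial_{\bm v_i}^2{\bf f}=\sum_{a,b}\partial_a\partial_b{\bf f}$ over the block. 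By \eqref{eq:didj} the mixed terms with $a\ne b$ inside a block vanish on the diagonal, so $\partial_{\bm v_i}^2{\bf f}=\sum_a\partial_a^2{\bf f}$ there. Alternatively, I would pass to the reduced function $\psi\circ\cdots\circ\psi\,{\bf f}\in\mathfrak D_0^{(\ell)}$ and work on $\mathbb T_\circ^\ell$. Summing over blocks and using $N/L\to\rho$ yields $\widehat{\mathcal L}_N^\ell({\bf f}\circ\Phi_N)=\rho^2\Delta{\bf f}\circ\Phi_N+o(1)$, hence $(\lambda-\widehat{\mathcal L}_N^\ell)({\bf f}\circ\Phi_N)={\bf g}\circ\Phi_N+o(1)$. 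Subtracting the two displays gives \eqref{eq:key-hat-1}. I expect this step to be the main obstacle: making the Taylor expansion uniform up to the diagonals, especially for the block moves when $\ell<k$, requires careful bookkeeping with the $\psi_{i,j}$ reductions and the boundary condition \eqref{eq:didj}.

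\emph{Boundary estimate.} Fix $\widehat\xi\in\widehat{\mathcal K}_N^\ell$ and let $H$ be the hitting time of $\widehat{\mathcal J}_N^\ell\cup\widehat{\mathcal E}_N^{\ell-1}$ for the labeled chain. The strong Markov property, as in \eqref{eq:SMP}, gives
\[
|F_N(\widehat\xi)-{\rm E}[F_N(\widehat\eta_N(H))]|\le(2\|{\bf g}\|_\infty/\lambda)\,{\rm E}[1-e^{-\lambda H}].
\]
Projecting via \eqref{eq:proj-law}, Lemma \ref{lem:KN-trace-rate} says every state of $\mathcal K_N^\ell$ has rate at least $cN^2$ into $\mathcal E_N^{\ell-1}$. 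So $H$ is stochastically dominated by an exponential variable of rate $cN^2$, and ${\rm E}[1-e^{-\lambda H}]\to0$.

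Next I need ${\bf f}(\Phi_N(\widehat\eta_N(H)))$ to be close to ${\bf f}(\Phi_N(\widehat\xi))$. Mass-exchange jumps leave $\bm x$ unchanged. Position-changing jumps before $H$ (type A moves inside $\widehat{\mathcal K}_N^\ell$ and the final merge) move labels by at most two sites each. Type A moves have total rate at most $2\ell N^2(1+o(1))$, while the exit rate is at least $cN^2$, so the number of such moves is geometrically tight. Hence $|\Phi_N(\widehat\eta_N(H))-\Phi_N(\widehat\xi)|\to0$ in probability, and uniform continuity of ${\bf f}$ yields $|\varepsilon_N(\widehat\xi)-{\rm E}[\varepsilon_N(\widehat\eta_N(H))]|\to0$ uniformly.

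Finally, split ${\rm E}[|\varepsilon_N(\widehat\eta_N(H))|]$ according to whether $\widehat{\mathcal E}_N^{\ell-1}$ or $\widehat{\mathcal J}_N^\ell$ is hit first. On the first event the induction hypothesis at level $\ell-1$ makes the contribution $o(1)$. On the second event the contribution is at most $\|\varepsilon_N\|_{L^\infty(\widehat{\mathcal E}_N^\ell)}$, and by Lemma \ref{lem:escape-p-trace} that event has probability at most $1-p+o(1)$. Taking $\limsup$ gives \eqref{eq:key-hat-2}.
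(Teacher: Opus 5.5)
Your proposal is correct and takes essentially the same route as the paper. The paper proves the interior estimate via Lemma \ref{lem:JN-trace-rate} and a third-order Taylor expansion in which \eqref{eq:didj} kills the mixed derivatives within a block; the paper packages that step as the reduction $\bm\psi$ together with Lemma \ref{lem:lap-lap-equal}. The boundary estimate likewise uses the strong Markov property at $H$, the exponential bound on $H$, negligible displacement of the labels, and the split using the induction hypothesis and Lemma \ref{lem:escape-p-trace}. Your geometric-domination count of type A moves is a cleaner version of Lemma \ref{lem:not-moving}, and your constant $2\|{\bf g}\|_\infty/\lambda$ in the strong Markov bound is the right one for signed ${\bf g}$; the paper writes $\|{\bf g}\|_\infty/\lambda$.
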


\begin{proof}[Proof of Theorem \ref{thm:res} under Lemma \ref{lem:res-ind-hyp-label}]
The proof is exactly the same with that of Theorem \ref{thm:res-inc} presented at the beginning of Section \ref{sec3}; the only difference is to substitute \eqref{eq:key-1} and \eqref{eq:key-2} with \eqref{eq:key-hat-1} and \eqref{eq:key-hat-2}.
\end{proof}

Now, we prove Lemma \ref{lem:res-ind-hyp-label}.

\begin{lem}\label{lem1}
For $\ell \in \llbracket 1,k \rrbracket$ and $\lambda > 0$,
\[
\lim_{N \to \infty} \left\| (\lambda - \widehat{\mathcal L}_N^\ell ) \, \varepsilon_N \right\|_{L^\infty \left( \widehat{\mathcal J}_N^\ell \right)} = 0.
\]
\end{lem}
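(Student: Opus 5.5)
Since $\varepsilon_N = F_N - {\bf f}\circ\Phi_N$, I split $(\lambda-\widehat{\mathcal L}_N^\ell)\varepsilon_N = (\lambda-\widehat{\mathcal L}_N^\ell)F_N - (\lambda-\widehat{\mathcal L}_N^\ell)({\bf f}\circ\Phi_N)$ on $\widehat{\mathcal J}_N^\ell$ and treat the two pieces separately.

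\textbf{The $F_N$ piece.} Fix $\widehat\xi_{\bm n}^{\bm x}\in\widehat{\mathcal J}_N^\ell$. Here $\xi_{\bm n}^{\bm x}\in\mathcal J_N^\ell$, so by Definition \ref{def:lab-trace} only Type A jumps are available. Each group of coinciding labels $x_i=\cdots=x_{i+j-1}$ moves by $\pm(\bm e_i+\cdots+\bm e_{i+j-1})$ at rate $R_N(\xi_{\bm n}^{\bm x},\xi_{\bm n}^{\bm x\pm\bm e_{i'}})$. By Lemma \ref{lem:JN-trace-rate} this rate equals $N^2+O(d_NN^3\log N)$. Hence
\[
\widehat{\mathcal L}_NF_N(\widehat\xi_{\bm n}^{\bm x})-\widehat{\mathcal L}_N^\ell F_N(\widehat\xi_{\bm n}^{\bm x}) = O(d_NN^3\log N)\,\|F_N\|_\infty .
\]
The maximum principle gives $\|F_N\|_\infty\le\lambda^{-1}\|{\bf g}\|_\infty$, so by \eqref{eq:dN-cond} this error vanishes. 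Using the resolvent equation $(\lambda-\widehat{\mathcal L}_N)F_N=G_N$, we obtain
\[
(\lambda-\widehat{\mathcal L}_N^\ell)F_N = {\bf g}\circ\Phi_N+o(1) \quad\text{uniformly on } \widehat{\mathcal J}_N^\ell .
\]

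\textbf{The ${\bf f}\circ\Phi_N$ piece.} Set $\bm u=\bm x/L$. Then
\[
\widehat{\mathcal L}_N^\ell({\bf f}\circ\Phi_N)(\widehat\xi) = N^2\sum_{\text{groups } I}\bigl[{\bf f}(\bm u+\bm e_I/L)+{\bf f}(\bm u-\bm e_I/L)-2{\bf f}(\bm u)\bigr],
\]
where $\bm e_I=\sum_{i\in I}\bm e_i$. I Taylor expand to third order and use \eqref{eq:Taylor-3rd-used}-type control of the third derivatives. This gives $(N/L)^2\sum_I\sum_{i,j\in I}\partial_i\partial_j{\bf f}(\bm u)+O(N^2L^{-3})$, and $N/L\to\rho$.

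Within a group the coordinates coincide, so $\bm u\in\partial\mathbb T_\circ^k$ on $\{u_i=u_j\}$ and the mixed derivatives $\partial_i\partial_j{\bf f}$, $i\neq j$, vanish by \eqref{eq:didj}. Across distinct groups there are no cross terms at all. The diagonal terms therefore give $\rho^2\Delta{\bf f}(\bm u)+o(1)=\mathfrak L_\rho{\bf f}(\bm u)+o(1)$.

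\textbf{Main obstacle: uniformity of the expansion on the coalesced set.} When coordinates coincide, ${\bf f}$ is only $C^3$ on $\mathbb T_\circ^k$ with extended derivatives, so Taylor's formula along the direction $\bm e_I$ must be justified. I would handle this using the $\psi_{i,j}$ structure of $\mathfrak D^{(k)}$. Along $\bm e_I$ the point stays in the stratum where the group's coordinates remain equal. So one reduces to $\psi\cdots\psi{\bf f}\in\mathfrak D_0^{(k-m)}$, a function of fewer variables that is $C^3$ in the relevant variable. There the expansion is uniform and yields the second derivative $\sum_{i,j\in I}\partial_i\partial_j{\bf f}=\sum_{i\in I}\partial_i^2{\bf f}$ by \eqref{eq:didj}.

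Points where distinct groups are close but unequal (distance at least $3/L$) also need care. The increment moves a single group without crossing another group, so the segment stays inside one stratum closure, where the continuous extension of the third derivatives gives a uniform bound. In the $\ell=1$ case with all labels equal, the same argument applies to $\psi\cdots\psi{\bf f}\in C^3(\mathbb T)$.

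\textbf{Conclusion.} With both pieces in hand,
\[
(\lambda-\widehat{\mathcal L}_N^\ell)({\bf f}\circ\Phi_N) = \bigl((\lambda-\mathfrak L_\rho){\bf f}\bigr)\circ\Phi_N+o(1) = {\bf g}\circ\Phi_N+o(1),
\]
and subtracting this from the $F_N$ piece gives the claim.
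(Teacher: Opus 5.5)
Your proposal is correct and follows essentially the same route as the paper. You replace $\widehat{\mathcal L}_N F_N$ by $\widehat{\mathcal L}_N^\ell F_N$ on $\widehat{\mathcal J}_N^\ell$ using Lemma \ref{lem:JN-trace-rate} and $\|F_N\|_\infty\le\lambda^{-1}\|{\bf g}\|_\infty$, then treat $\widehat{\mathcal L}_N^\ell({\bf f}\circ\Phi_N)$ by a third-order Taylor expansion after collapsing coinciding labels via $\bm\psi$, so that $\bm\psi{\bf f}\in\mathfrak D_0^{(\ell)}$ is expanded at $\bm y/L\in\mathbb T_\circ^\ell$. The only difference is cosmetic: the paper performs the $\bm\psi$ reduction first and cites Lemma \ref{lem:lap-lap-equal} to identify $\Delta_\ell(\bm\psi{\bf f})$ with $\Delta{\bf f}$, while you expand in $k$ variables and apply \eqref{eq:didj} inline, which is the same computation.
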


\begin{proof}
As done in the proof of Lemmas \ref{lem:key-1-1} and \ref{lem:key-1-ell}, we start from $(\lambda - \widehat{\mathcal L}_N) F_N = G_N$. Fix $\widehat\xi_{\bm n}^{\bm x} \in \widehat{\mathcal J}_N^\ell$ and, without loss of generality, suppose that
\[
x_1 = \cdots = x_{j_1} \ne x_{j_1+1} = \cdots = x_{j_1+j_2} \ne \cdots \ne x_{j_1+\cdots+j_{\ell-1}+1} = \cdots = x_{j_1+\cdots+j_\ell} \ne x_1,
\]
where $j_1 ,\dots, j_\ell \ge 1$ and $j_1 + \cdots + j_\ell = k$. Then,
\[
\lambda F_N (\widehat\xi_{\bm n}^{\bm x}) - \widehat{\mathcal L}_N F_N (\widehat\xi_{\bm n}^{\bm x}) = G_N ( \widehat\xi_{\bm n}^{\bm x}) = {\bf g} \left( \frac {\bm x}L \right) = (\lambda - \mathfrak L_\rho) {\bf f} \left( \frac {\bm x}L \right) .
\]
Rearranging, this becomes
\begin{equation}\label{eq:id-label}
\lambda \, \varepsilon_N (\widehat\xi_{\bm n}^{\bm x}) = \widehat{\mathcal L}_N F_N (\widehat\xi_{\bm n}^{\bm x}) - \mathfrak L_\rho {\bf f} \left( \frac {\bm x} L \right) .
\end{equation}

Consider the first term on the right-hand side of \eqref{eq:id-label}. By Definition \ref{def:lab-trace},
\begin{align*}
\widehat{\mathcal L}_N & F_N (\widehat\xi_{\bm n}^{\bm x}) = \sum_{\widehat\xi \in \widehat{\mathcal E}_N} \widehat R_N ( \widehat\xi_{\bm n}^{\bm x} , \widehat\xi ) ( F_N (\widehat\xi) - F_N (\widehat\xi_{\bm n}^{\bm x}) ) \\
& = \sum_{i=1}^\ell \left[
R_N (\xi_{\bm n}^{\bm x} , \xi_{\bm n}^{\bm x + \bm v_i}) ( F_N (\widehat\xi_{\bm n}^{\bm x + \bm v_i}) - F_N(\widehat\xi_{\bm n}^{\bm x}) ) +
R_N (\xi_{\bm n}^{\bm x} , \xi_{\bm n}^{\bm x - \bm v_i}) ( F_N (\widehat\xi_{\bm n}^{\bm x - \bm v_i}) - F_N(\widehat\xi_{\bm n}^{\bm x}) ) \right] ,
\end{align*}
where $\bm v_i := \bm e_{j_1+\cdots+j_{i-1}+1} + \cdots + \bm e_{j_1+\cdots+j_i}$.
Then by Lemma \ref{lem:JN-trace-rate} and the definition of $\widehat{\mathcal L}_N^\ell$,
\begin{align*}
\widehat{\mathcal L}_N F_N (\widehat\xi_{\bm n}^{\bm x})
= (N^2 + O(d_N N^3 \log N) ) & \sum_{i=1}^\ell \left[ F_N (\widehat\xi_{\bm n}^{\bm x + \bm v_i}) + F_N (\widehat\xi_{\bm n}^{\bm x - \bm v_i}) - 2 F_N(\widehat\xi_{\bm n}^{\bm x}) \right]  \\
& = \widehat{\mathcal L}_N^\ell F_N (\widehat\xi_{\bm n}^{\bm x}) + o(1),
\end{align*}
where in the last part we used \eqref{eq:dN-cond}. Thus, it suffices to prove that
\[
\widehat{\mathcal L}_N^\ell ({\bf f} \circ \Phi_N) ( \widehat\xi_{\bm n}^{\bm x}) =
\mathfrak L_\rho {\bf f} \left( \frac {\bm x}L \right) + o(1),
\]
with a uniform error over all $\widehat\xi_{\bm n}^{\bm x} \in \widehat{\mathcal J}_N^\ell$. The left-hand side of this identity is equal to
\begin{equation}\label{eq:sum-repr}
\begin{aligned}
\sum_{i=1}^\ell N^2 & \left[ {\bf f} \left( \frac{\bm x + \bm e_{j_1+\cdots+j_{i-1}+1} + \cdots + \bm e_{j_1+\cdots+j_i}}L \right) - {\bf f} \left( \frac{\bm x}L \right) \right] \\
& + \sum_{i=1}^\ell N^2 \left[ {\bf f} \left( \frac{\bm x - \bm e_{j_1+\cdots+j_{i-1}+1} - \cdots - \bm e_{j_1+\cdots+j_i}}L \right) - {\bf f} \left( \frac{\bm x}L \right) \right] .
\end{aligned}
\end{equation}
Define $\bm\psi := \psi_{1,2}^{j_1-1} \circ \psi_{2,3}^{j_2-1} \circ \cdots \circ \psi_{\ell,\ell+1}^{j_\ell-1}$ (defined in \eqref{eq:psi-ij}), such that
\[
\bm\psi {\bf f} (u_{j_1}, u_{j_1+j_2}, \dots, u_k) = {\bf f} ( \overbrace{u_{j_1}, \dots , u_{j_1}}^{j_1}, \overbrace{u_{j_1+j_2}, \dots , u_{j_1+j_2}}^{j_2}, \dots, \overbrace{u_k, \dots, u_k}^{j_\ell}).
\]
By writing $\bm y := (x_{j_1},x_{j_1+j_2},\dots,x_k)$, the summations in \eqref{eq:sum-repr} are equal to
\[
\sum_{i=1}^\ell \, N^2 \left[ \bm\psi {\bf f} \left( \frac{\bm y + \bm e_i}{L} \right) + \bm\psi {\bf f} \left( \frac{\bm y - \bm e_i}{L} \right) - 2 \bm\psi {\bf f} \left( \frac{\bm y}{L} \right) \right].
\]
Since $\bm\psi {\bf f} \in \mathfrak D_0^{(\ell)}$ and $\frac{\bm y}L \in \mathbb T_\circ^\ell$, Taylor's theorem implies that the above term equals (cf. \eqref{eq:NL-rho})
\begin{equation}\label{eq:Taylor-3rd-used}
\frac{N^2}{L^2} \left( \Delta_\ell ( \bm\psi {\bf f}) \left( \frac{\bm y}L \right) + O \left( \frac{\| ( \bm\psi {\bf f})^{(3)} \|_\infty}{L} \right) \right) = \rho^2 \Delta_\ell ( \bm\psi {\bf f}) \left( \frac{\bm y}L \right) + o(1).
\end{equation}
Here, $\Delta_\ell$ denotes the $\ell$-dimensional Laplacian operator and $\| ( \bm\psi {\bf f} )^{(3)} \|_\infty$ stands for the supremum norm of all third derivatives of $\bm\psi {\bf f}$.
By Lemma \ref{lem:lap-lap-equal}, the final term equals $\mathfrak L_\rho {\bf f} ( \frac {\bm x}L ) + o(1)$. This finishes the proof.
\end{proof}

As in Section \ref{sec3.3}, for $\widehat\xi_{\bm n}^{\bm x} \in \widehat{\mathcal K}_N^\ell$ we may rewrite $F_N(\widehat\xi_{\bm n}^{\bm x})$ as
\[
F_{N}(\widehat\xi_{\bm{n}}^{\bm{x}}) =
{\rm E}^{ \widehat{\mathbb Q}_{\widehat\xi_{\bm{n}}^{\bm{x}}}^{N}} \left[ \left( \int_{0}^{ H_{ \widehat{\mathcal J}_N^\ell \cup \widehat{\mathcal E}_N^{\ell-1} } }+\int_{H_{ \widehat{\mathcal J}_N^\ell \cup \widehat{\mathcal E}_N^{\ell-1} } }^{\infty} \right) e^{-\lambda t}\, G_N( \widehat\eta_N(t)) \,{\rm d}t \right].
\]
By the same idea as in \eqref{eq:SMP-0} and \eqref{eq:SMP}, the strong Markov property implies that
\begin{equation}\label{eq:SMP-label}
\left|F_{N}(\widehat\xi_{\bm{n}}^{\bm{x}})-{\rm E}^{\widehat{\mathbb Q}_{\widehat\xi_{\bm{n}}^{\bm{x}}}^{N}}\left[ F_N ( \widehat\eta_N (H)) \right] \right|
\le \frac{\| {\bf g} \|_\infty}{\lambda}\,{\rm E}^{\widehat{\mathbb Q}_{\widehat\xi_{\bm{n}}^{\bm{x}}}^{N}}\left[1-e^{-\lambda H} \right],
\end{equation}
where $H := H_{ \widehat{\mathcal J}_N^\ell \cup \widehat{\mathcal E}_N^{\ell-1} }$. In the inequality, we bounded $G_N$ by $\| {\bf g} \|_\infty$ due to \eqref{eq:GN-def}.
\begin{lem}
\label{lem:xinx-hit-label}It holds that
\[
\lim_{N\to\infty} \sup_{ \widehat\xi_{\bm n}^{\bm x} \in \widehat{\mathcal K}_N^\ell }
{\rm E}^{\widehat{\mathbb Q}_{\widehat\xi_{\bm{n}}^{\bm{x}}}^{N}}\left[1-e^{-\lambda H_{ \widehat{\mathcal J}_N^\ell \cup \widehat{\mathcal E}_N^{\ell-1} } } \right] = 0 .
\]
\end{lem}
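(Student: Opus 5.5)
The plan is to transfer Lemma \ref{lem:xinx-hit} (proved for the original process $\mathbb P^N$) to the labeled trace process $\widehat{\mathbb Q}^N$. We do this through the projection identity \eqref{eq:proj-law} and a comparison between the original process and its trace.

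\emph{Step 1: reduction to the unlabeled chain.} The set $\widehat{\mathcal J}_N^\ell\cup\widehat{\mathcal E}_N^{\ell-1}$ is the preimage under $\Psi_N$ of $\mathcal J_N^\ell\cup\mathcal E_N^{\ell-1}$. Indeed, membership in $\widehat{\mathcal E}_N^{\ell}$ or $\widehat{\mathcal J}_N^\ell$ is defined through $\Psi_N$, and from $\widehat{\mathcal K}_N^\ell$ the labeled chain only jumps to $\widehat{\mathcal E}_N^\ell\cup\widehat{\mathcal E}_N^{\ell-1}$. Hence $H$ is a functional of the projected path, and by \eqref{eq:proj-law} the expectation equals the same quantity under $\overline{\mathbb Q}^N_{\xi_{\bm n}^{\bm x}}$ with $\xi_{\bm n}^{\bm x}\in\mathcal K_N^\ell$. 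So it suffices to show
\[
\sup_{\xi\in\mathcal K_N^\ell}{\rm E}^{\overline{\mathbb Q}_\xi^N}\bigl[1-e^{-\lambda H_{\mathcal J_N^\ell\cup\mathcal E_N^{\ell-1}}}\bigr]\to0 .
\]

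\emph{Step 2: coupling $\overline{\mathbb Q}^N$ with $\mathbb Q^N$ and with the original process.} By Theorem \ref{thm2}, $\overline{\mathbb Q}_\xi^N$ and $\mathbb Q_\xi^N$ coincide up to $\tau_N$, and $\mathbb Q_\xi^N(\tau_N\le T)\to0$ uniformly in $\xi$. Since $1-e^{-\lambda H}\le 1-e^{-\lambda T}\cdot\mathbf 1\{H\le T\}$, which in turn is at most $(1-e^{-\lambda T})+\mathbf 1\{H>T\}$, we can bound the left side by the $\mathbb Q^N$-expectation plus $o(1)$. The $\mathbb Q^N$-expectation is then controlled uniformly by $1-e^{-\lambda T}$ (small for small $T$) plus $\mathbb Q_\xi^N(H>T)$. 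Under $\mathbb Q^N$ the process is the trace of $\eta_N$ on $\mathcal E_N$. Because the target set lies in $\mathcal E_N$, its hitting time for the trace, call it $H^{\rm tr}$, satisfies $H^{\rm tr}=T_N(H^{\rm orig})\le H^{\rm orig}$, where $H^{\rm orig}$ is the hitting time for the original process. Consequently $\mathbb Q_\xi^N(H^{\rm tr}>T)\le\mathbb P_\xi^N(H^{\rm orig}>T)$.

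\emph{Step 3: conclusion.} By Lemma \ref{lem:xinx-hit}, ${\rm E}^{\mathbb P_\xi^N}[1-e^{-\lambda H^{\rm orig}}]\to0$ uniformly over $\xi\in\mathcal K_N^\ell$. Markov's inequality then gives $\mathbb P_\xi^N(H^{\rm orig}>T)\le (1-e^{-\lambda T})^{-1}\,{\rm E}^{\mathbb P_\xi^N}[1-e^{-\lambda H^{\rm orig}}]\to0$ for each fixed $T$. Combining this with Step 2, we send $N\to\infty$ first and then $T\to0$.

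\emph{Main obstacle.} The only delicate point is Step 2's use of the inequality $H^{\rm tr}\le H^{\rm orig}$, since the time change is monotone. We also need the neighbor-restricted chain $\overline{\mathbb Q}^N$ not to create a discrepancy before $\tau_N$; this holds because the two laws agree path-by-path until the first non-neighbor jump.
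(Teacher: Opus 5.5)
Your argument is correct, but it takes a different route from the paper's.

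The paper works directly with the labeled chain. By Lemma \ref{lem:KN-trace-rate} and \eqref{eq:dN-cond}, from every state of $\widehat{\mathcal K}_N^\ell$ the rate into $\widehat{\mathcal E}_N^{\ell-1}$ is at least $N^2(1+o(1))$. Hence $H$ is stochastically dominated by an exponential variable of that rate, and ${\rm E}[1-e^{-\lambda H}]\le\lambda\,{\rm E}[H]=O(N^{-2})$.

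You instead transport Lemma \ref{lem:xinx-hit} from the original dynamics, using three ingredients:
\begin{itemize}
\item the projection \eqref{eq:proj-law};
\item the clock coupling of $\overline{\mathbb Q}^N_\xi$ with $\mathbb Q^N_\xi$ up to $\tau_N$, controlled by Theorem \ref{thm2};
\item the pathwise identity $H^{\rm tr}=T_N(H^{\rm orig})\le H^{\rm orig}$.
\end{itemize}

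The paper's route is shorter, quantitative, and uses only the local rate estimates of Section \ref{sec2}. It does, however, tacitly need that the lower bound in Lemma \ref{lem:KN-trace-rate} comes from neighbor transitions (as in the tube derivation of \eqref{eq:trace-3}), since the labeled chain only sees $R_N'$.

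Your route avoids that check and reuses statements already proved. The cost is invoking the heavier Theorem \ref{thm2}, which rests on Theorem \ref{thm1}; there is no circularity, since both are proved before Section \ref{sec5}. You also get only a qualitative $o(1)$, obtained by sending $N\to\infty$ and then $T\to0$.

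One small point of phrasing. The comparison between $\overline{\mathbb Q}^N$ and $\mathbb Q^N$ is valid for the $\mathcal F_T$-measurable event $\{H>T\}$, not for the functional $1-e^{-\lambda H}$ itself. So it has to come after the truncation $1-e^{-\lambda H}\le(1-e^{-\lambda T})+\mathbf 1\{H>T\}$, which is in fact how your bounds are arranged, even though the sentence ``the $\mathbb Q^N$-expectation plus $o(1)$'' suggests otherwise.
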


\begin{proof}
By Lemma \ref{lem:KN-trace-rate} and \eqref{eq:dN-cond}, the jump rates from $\widehat\xi$ to $\widehat{\mathcal E}_N^{\ell-1}$ is bounded below by $N^2 (1+o(1))$ uniformly in $\widehat\xi \in \widehat{\mathcal K}_N^\ell$. Therefore, by the strong Markov property, starting from $\widehat\xi_{\bm n}^{\bm x}$, the hitting time $H_{\widehat{\mathcal J}_N^\ell \cup \widehat{\mathcal E}_N^{\ell-1}}$ is bounded above by an exponential random variable with rate $N^2(1+o(1))$. Thus,
\[
{\rm E}^{\widehat{\mathbb Q}_{\widehat\xi_{\bm{n}}^{\bm{x}}}^{N}}\left[1-e^{-\lambda H_{ \widehat{\mathcal J}_N^\ell \cup \widehat{\mathcal E}_N^{\ell-1} } } \right]
\le \lambda \sup_{\widehat\xi \in \widehat{\mathcal K}_N^\ell} {\rm E}^{\widehat{\mathbb Q}_{\widehat\xi}^{N}}\left[ H_{ \widehat{\mathcal J}_N^\ell \cup \widehat{\mathcal E}_N^{\ell-1} } \right]
\le \frac \lambda{N^2(1+o(1))} = o(1).
\]

\end{proof}

Now we prove that, starting from $\widehat\xi \in \widehat{\mathcal K}_N^\ell$, the positions
of the $\ell$ condensates do not move macroscopically until it enters
$\widehat{\mathcal J}_N^\ell \cup \widehat{\mathcal E}_N^{\ell-1}$.

\begin{lem}\label{lem:not-moving}
We have
\begin{equation}
\lim_{N\to\infty} \sup_{ \widehat\xi_{\bm n}^{\bm x} \in \widehat{\mathcal K}_N^\ell} \widehat{\mathbb Q}_{\widehat\xi_{\bm n}^{\bm x}}^{N}\left[ d \left( \frac{\bm{x}}{L} , \Phi_{N} \left( \widehat\eta_N \left( H_{ \widehat{\mathcal J}_N^\ell \cup \widehat{\mathcal E}_N^{\ell-1} } \right) \right) \right)>\frac{1}{\sqrt{N}} \right] = 0.\label{eq:not-move}
\end{equation}
\end{lem}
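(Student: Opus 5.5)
The plan is to count jumps. Starting from $\widehat\xi_{\bm n}^{\bm x} \in \widehat{\mathcal K}_N^\ell$, each jump of the labeled trace process moves each coordinate of $\bm x$ by at most $2$ lattice sites. This holds in both cases. In a type A jump one block moves by $\pm 1$. In a type B jump the merged block moves to a site in $\{x_i,x_i+1,x_i+2\}$, while the other coordinates stay put. Hence
\[
d\left(\frac{\bm x}{L}, \Phi_N(\widehat\eta_N(H))\right) \le \frac{2 M}{L},
\]
where $H := H_{\widehat{\mathcal J}_N^\ell \cup \widehat{\mathcal E}_N^{\ell-1}}$ and $M$ is the number of jumps performed before $H$. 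Since $L \simeq N/\rho$, it suffices to show that $M \le \sqrt N$ with probability tending to one, uniformly in the starting point.

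The first step is to note that before $H$ the process stays in $\widehat{\mathcal K}_N^\ell$. Type A and mass-exchange moves keep the number of distinct positions equal to $\ell$, so the process remains in $\widehat{\mathcal K}_N^\ell \cup \widehat{\mathcal J}_N^\ell$; merging moves enter $\widehat{\mathcal E}_N^{\ell-1}$. Each jump out of $\widehat{\mathcal K}_N^\ell$ therefore lands in the target set with probability equal to the embedded-chain probability $\widehat R_N(\widehat\xi, \widehat{\mathcal J}_N^\ell \cup \widehat{\mathcal E}_N^{\ell-1})/\widehat R_N(\widehat\xi)$. By the projection identity \eqref{eq:proj-law}, these rates are the corresponding $R_N$ rates.

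The numerator is at least $N^2(1+o(1))$ by Lemma \ref{lem:KN-trace-rate}. For the denominator, the total rate out of $\widehat\xi$ is at most the total rate of the trace process out of $\xi_{\bm n}^{\bm x}$, which is bounded by the holding rate $2N^3$ of the original chain by \eqref{eq:trace-rate-for}. So each jump exits with probability at least $c/N$, and by the strong Markov property $M$ is stochastically dominated by a geometric variable of parameter $c/N$. This gives only $M = O_P(N)$, which is too weak by a factor $\sqrt N$.

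The sharper estimate, and the step I expect to be the main obstacle, is to show that the total trace rate out of $\xi \in \mathcal K_N^\ell$ restricted to neighbors is $O(N^2)$. I would compute this rate from \eqref{eq:trace-rate-for} as done in Lemma \ref{lem:JN-trace-rate}. Type A moves contribute $O(N^2)$ each. For the type B configurations there are $O(N)$ targets, the exchange configurations with mass $m$. The first jump into the tube has rate $(\zeta_x+\zeta_z)\theta_N d_N$. From the configuration $\eta$ reached after that jump, the random walk \eqref{eq:coupling-2} started at $1$ returns to $0$ with probability $1-O(1/(\zeta_x+\zeta_z))$, by Lemma \ref{lem:B4}. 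A return to $0$ lands on an exchange configuration; if the mass split is unchanged, the landing point is $\xi$ itself, which is excluded. Since $\theta_N d_N = N^2$, the non-self trace rate of type B is therefore $(\zeta_x+\zeta_z)N^2 \times P[\text{land on a different configuration}]$. One needs $P[\text{change of split}] = O(1/(\zeta_x+\zeta_z))$, a symmetry property of the excursion analogous to Lemma \ref{lem:B4}.

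Granting this bound, the exit probability per jump is bounded below by a constant $q>0$, so $M$ is dominated by a geometric variable of fixed parameter. Then $\widehat{\mathbb Q}[M > \sqrt N] \le (1-q)^{\sqrt N} \to 0$ uniformly, and the bound $2M/L \le 2\sqrt N/L$ is far below $1/\sqrt N$, which concludes the proof.

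If the change-of-split estimate fails, the fallback is to use Lemma \ref{lem:xinx-hit-label}, which shows that $H$ is small in the $1-e^{-\lambda H}$ sense. One would combine this with a time-based argument, noting that type A moves occur at rate $O(N^2)$ and that exchanges do not move positions at all. This avoids counting exchange jumps entirely. It requires only that the number of position-changing jumps before $H$ be $O_P(1)$, which follows because such jumps have rate $O(N^2)$, comparable to the exit rate.
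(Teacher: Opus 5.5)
Your main route has a genuine gap, and the estimate you flag as the obstacle is actually false. Take $\xi\in\mathcal K_N^\ell$ with masses $n=\zeta_x$ and $n'=\zeta_z$ on a pair at distance two, and suppose a first jump $x\to y$ occurs (rate $nN^2$). In the configuration $(n-1,1,n')$ the competing rates are $\theta_N(d_N+n-1)$ and $\theta_N(d_N+n')$ for jumps out of $y$, and $\theta_N(n-1)(1+d_N)$ and $\theta_N n'(1+d_N)$ for jumps into $y$. So with probability about $\frac{n'}{2(n+n')}$ the very next step sends the particle to $z$, and the trace process lands on the exchange configuration $(n-1,0,n'+1)\neq\xi$. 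Hence a change of split has probability of order one, not $O(1/(n+n'))$. The exchange part of $R_N(\xi,\cdot)$ is at least of order $\frac{nn'}{n+n'}N^2$, which is of order $N^3$ when both condensates are macroscopic, whereas the exit rate is only of order $N^2$. For such starting points the exit probability per trace jump is $O(1/N)$, so $M$ is of order $N$ and $M\le\sqrt N$ w.h.p.\ simply fails. Counting all jumps cannot work, because the mass exchange is exactly the fast, two-dimensional part of the tube.

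Your fallback is the correct argument and is essentially the paper's proof. Exchanges do not move any position, so before $H$ every position change is a type~A jump. From any $\widehat\xi\in\widehat{\mathcal K}_N^\ell$ the total type~A rate is at most $2kN^2(1+o(1))$, by the computation in Lemma~\ref{lem:JN-trace-rate}. The rate into $\widehat{\mathcal E}_N^{\ell-1}$ is at least $N^2(1+o(1))$ by Lemma~\ref{lem:KN-trace-rate}. As you observed, the process stays in $\widehat{\mathcal K}_N^\ell$ until $H$. Looking only at jumps that are either type~A or exits, the number of type~A jumps before $H$ is therefore dominated by a geometric variable whose parameter is bounded below uniformly. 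It is thus $O_P(1)$, far below $L/\sqrt N\simeq\rho^{-1}\sqrt N$, and the final merging jump adds at most $2/L$.

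Make this the proof and drop the jump-counting route. The bound ${\rm E}[1-e^{-\lambda H}]=o(1)$ from Lemma~\ref{lem:xinx-hit-label} would not suffice by itself, since jumps of rate $N^2$ over a time that is merely $o_P(1)$ can still be numerous. What does the work is the rate comparison, or equivalently ${\rm E}[H]\le(1+o(1))N^{-2}$ combined with the $O(N^2)$ intensity of type~A jumps and optional stopping.
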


\begin{proof}
Along the trajectory from $\widehat{\mathcal K}_N^\ell$
to $\widehat{\mathcal J}_N^\ell \cup \widehat{\mathcal E}_N^{\ell-1}$,
to observe a movement of a condensate, a jump of type A in Definition \ref{def:lab-trace} must occur.
By Lemma \ref{lem:JN-trace-rate}, this has rate at most $N^2(1+o(1))$ where the error is uniform.
Therefore, the number of such jumps before hitting $\widehat{\mathcal J}_N^\ell \cup \widehat{\mathcal E}_N^{\ell-1}$ is smaller than $\frac L{\sqrt N} \simeq \rho^{-1} \sqrt N$ with high probability, and each jump has rate at least $N^2(1+o(1))$ with a uniform error. This proves the desired result.

\end{proof}

\begin{rem}
The same argument yields that $\Phi_{N} ( \widehat\eta_N (t))$
remains close to $\Phi_{N} ( \widehat\eta_N (0))$ until time
$H_{ \widehat{\mathcal J}_N^\ell \cup \widehat{\mathcal E}_N^{\ell-1}
}$:
\begin{equation*}
\lim_{N\to\infty}
\sup_{ \widehat\xi_{\bm n}^{\bm x} \in \widehat{\mathcal K}_N^\ell}
\widehat{\mathbb Q}_{\widehat\xi_{\bm n}^{\bm x}}^{N}
\left[ \, \sup_{t \in \left[ 0, H_{ \widehat{\mathcal J}_N^\ell \cup \widehat{\mathcal E}_N^{\ell-1} } \right] }
d \left( \frac{\bm{x}}{L} , \Phi_{N} \left( \widehat\eta_N (t)\right)
\right)
>\frac{1}{\sqrt{N}} \right] = 0.
\end{equation*}
In addition, Lemma \ref{lem:not-moving} remains valid if the
scale $\frac{1}{\sqrt{N}}$ is replaced by any other scale
$a_{N}\gg\frac{1}{N}$.
\end{rem}

\begin{proof}[Proof of Lemma \ref{lem:res-ind-hyp-label}]
By Lemma \ref{lem1} for $\ell=1$, the initial case holds since $\widehat{\mathcal E}_N^1 = \widehat{\mathcal J}_N^1$.

Fix $\ell \ge 2$ and assume that the lemma holds for $k \in \llbracket 1,\ell-1 \rrbracket$.
By Lemma \ref{lem1}, equation \eqref{eq:key-hat-1} holds. In addition, by \eqref{eq:SMP-label} and Lemma \ref{lem:xinx-hit-label},
\[
\limsup_{N\to\infty} \sup_{ \widehat\xi_{\bm n}^{\bm x} \in \widehat{\mathcal K}_N^\ell} \left| \varepsilon_N ( \widehat\xi_{\bm n}^{\bm x})\right| \le
\limsup_{N\to\infty} \sup_{ \widehat\xi_{\bm n}^{\bm x} \in \widehat{\mathcal K}_N^\ell} {\rm E}^{ \widehat{\mathbb Q}_{ \widehat\xi_{\bm{n}}^{\bm{x}}}^{N}}\left[ \left| F_{N}\left( \widehat\eta_N \left( H_{ \widehat{\mathcal J}_N^\ell \cup \widehat{\mathcal E}_N^{\ell-1} } \right) \right) - {\bf f} \left( \frac {\bm x}L \right) \right| \right] .
\]
By Lemma \ref{lem:not-moving}, and the fact that $\bf f$ is uniformly Lipschitz, the right-hand side equals
\[
\limsup_{N\to\infty} \sup_{ \widehat\xi_{\bm n}^{\bm x} \in \widehat{\mathcal K}_N^\ell}
{\rm E}^{ \widehat{\mathbb Q}_{ \widehat\xi_{\bm{n}}^{\bm{x}}}^{N}}\left[ \left| F_{N}\left( \widehat\eta_N \left( H_{ \widehat{\mathcal J}_N^\ell \cup \widehat{\mathcal E}_N^{\ell-1} } \right) \right) - ( {\bf f} \circ \Phi_{N} ) \left( \widehat\eta_N \left( H_{ \widehat{\mathcal J}_N^\ell \cup \widehat{\mathcal E}_N^{\ell-1} } \right) \right) \right| \right] .
\]
Divide the above expectation subject to $\{ H_{\widehat{\mathcal E}_N^{\ell-1}} < H_{\widehat{\mathcal J}_N^\ell} \}$ and $\{ H_{\widehat{\mathcal E}_N^{\ell-1}} > H_{\widehat{\mathcal J}_N^\ell} \}$.
The first part becomes bounded by
\[
{\rm E}^{ \widehat{\mathbb Q}_{ \widehat\xi_{\bm{n}}^{\bm{x}}}^{N}}\left[ \left| F_{N}\left( \widehat\eta_N \left( H_{ \widehat{\mathcal E}_N^{\ell-1} } \right) \right) - ( {\bf f} \circ \Phi_{N} ) \left( \widehat\eta_N \left( H_{ \widehat{\mathcal E}_N^{\ell-1} } \right) \right) \right| \right],
\]
which is uniformly negligible by the induction hypothesis. By Lemma \ref{lem:escape-p-trace} and \eqref{eq:proj-law},
the second part is bounded above by
\[
(1-p) \limsup_{N \to \infty} \sup_{\widehat\xi \in \widehat{\mathcal E}_N^\ell} \left| \varepsilon_N (\widehat\xi) \right| .
\]
Combining all displays concludes the proof of Lemma \ref{lem:res-ind-hyp-label} in the case of general $\ell \ge 2$.
\end{proof}

\section{\label{sec6}Proof of Theorem \ref{thm4}}

We claim that Theorem \ref{thm4} follows from Theorem \ref{thm:res}.
For $\widehat\xi_{\bm n}^{\bm x} \in \widehat{\mathcal E}_N$ and $\gamma>0$, define
\[
\breve{\mathcal{E}}_{\bm{n}}^{\bm{x}}(\gamma) :=
\left\{ \widehat\xi_{\bm{m}}^{\bm{y}} \in \widehat{\mathcal E}_{N}: d\left( \frac{\bm{x}}{L} , \frac{\bm{y}}{L} \right) \ge \gamma \right\} .
\]

\begin{lem}
\label{lem4.1}For any $\gamma>0$,
\[
\lim_{t\to0}\lim_{N\to\infty} \sup_{ \widehat\xi_{\bm n}^{\bm x} \in \widehat{\mathcal E}_N } \widehat{\mathbb Q}_{\widehat\xi_{\bm{n}}^{\bm{x}}}^{N} \left[H_{\breve{\mathcal{E}}_{\bm{n}}^{\bm{x}}(\gamma)}<t\right]=0.
\]
\end{lem}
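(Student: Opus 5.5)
The plan is to prove Lemma \ref{lem4.1} by a standard martingale/test-function argument combined with the jump-rate bounds of Section \ref{sec2}. The key observation is that the projected position $\Phi_N(\widehat\eta_N(t))$ changes only through type A jumps. Type B jumps either keep $\bm x$ fixed (mass exchange) or move some coordinates by at most $2/L$ (merging). Moreover, every jump of $\widehat\eta_N$ changes each coordinate of $\bm x$ by at most $2$ lattice units.

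\textbf{Step 1: bound the type A rates.} By \eqref{eq:proj-law} and Lemmas \ref{lem:JN-trace-rate}, \ref{lem:KN-trace-rate}, the total rate of type A jumps from any $\widehat\xi\in\widehat{\mathcal E}_N$ is at most $C_k N^2$. For configurations in $\widehat{\mathcal K}_N^\ell$ this uses the bound on $R_N(\cdot,\mathcal J_N^\ell)$, together with the same computation as \eqref{eq:trace-1} for the isolated condensates. Each such jump displaces a single block of coordinates by $\pm 1/L$, and these jumps are symmetric: the rates of the $+$ and $-$ moves agree up to a factor $1+o(1)$, or at least both are $O(N^2)$.

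\textbf{Step 2: bound the type B contribution.} Type B jumps that actually move $\bm x$ are mergers. There are at most $k-1$ of them along any trajectory, since each merger reduces $\ell$. So their total contribution to the displacement is at most $2(k-1)/L$, which is negligible.

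\textbf{Step 3: martingale estimate.} For each coordinate $i$, consider the discrete position $X^i_t$ of label $i$, lifted to $\mathbb Z$, minus the merger contributions. Its compensator has drift of size $O(d_N N^3\log N)\cdot L^{-1}\cdot N^2$ relative to the diffusive scale, or I can avoid using the drift at all as follows. Apply Dynkin's formula to $f(\widehat\xi)=(x_i/L - x_i(0)/L)^2$, computed on the lift. This gives $\mathrm E[(X^i_{t\wedge H}/L)^2]\le C t N^2/L^2 + o(1)$, where $H$ is the hitting time of the set in the statement. If the drift is nonzero, I will instead use $|X_t|$ with Doob's maximal inequality applied to the martingale part. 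Its quadratic variation is bounded by $C N^2 t/L^2 \le C\rho^2 t$. The drift part is bounded in absolute value by the difference of the $+$ and $-$ rates times $t/L$. On $\widehat{\mathcal J}_N$ this difference is $O(d_N N^3\log N)$ by Lemma \ref{lem:JN-trace-rate}. On $\widehat{\mathcal K}_N$ the bound is cruder, but there the process spends time at most $O(N^{-2})$ per visit by Lemma \ref{lem:xinx-hit-label}. To control the total time spent in $\widehat{\mathcal K}_N$ up to $t$, I would reuse the argument of Lemma \ref{lem:not-moving}, or simply use the symmetric rates.

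\textbf{Step 4: conclude.} Chebyshev's inequality then gives the bound $\widehat{\mathbb Q}[H<t]\le k\,C\rho^2 t/\gamma^2 + o(1)$ uniformly in the starting point. Sending $N\to\infty$ and then $t\to 0$ yields the lemma.

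The main obstacle I expect is Step 3 on $\widehat{\mathcal K}_N^\ell$, where the type A rates are only bounded from above and the left/right symmetry is not given explicitly. The cleanest fix is to note that the second moment bound needs only $\sum(\text{rates})\cdot(\Delta x)^2\le CN^2/L^2$, and that the drift term can be absorbed through the Lipschitz bound $|\mathrm{drift}|\le CN^2/L$ times the total time spent in $\widehat{\mathcal K}_N$. That total time is small: each visit has length $O(N^{-2})$, and visits are initiated only from $\widehat{\mathcal J}_N$ at rate $O(N^2)$, giving expected total time $O(t)\cdot O(1)$. If this proves too weak, the fallback is to bound the number of type A jumps during $\widehat{\mathcal K}_N$ visits by a geometric count, using Lemma \ref{lem:escape-p-trace} as in Lemma \ref{lem:not-moving}.
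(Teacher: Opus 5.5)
Your route (direct control of the label positions through the rate bounds and a maximal inequality) differs from the paper's and can be completed. However, the step you call the cleanest fix fails, so your fallback has to become the main argument.

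\textbf{The gap on $\widehat{\mathcal K}_N$.} There the type A rates are genuinely one-sided. A block whose right neighbour sits at distance $2$ has a type A move to the left at rate $\approx N^2$ and none to the right. So the $\pm$ symmetry you claim in Step~1 is not available, nor is the option ``simply use the symmetric rates''. The drift of $x_i/L$ on $\widehat{\mathcal K}_N$ is of order $N^2/L\approx\rho N$. Your count of the time spent there is $O(N^2t)$ visits of length $O(N^{-2})$, i.e.\ $O(t)$. With that count the drift contributes a displacement of order $\rho N t$, which diverges. The Dynkin computation for $(x_i/L-x_i(0)/L)^2$ has the same defect.

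\textbf{The missing point} is that the number of excursions in $\widehat{\mathcal K}_N$ is tight, not $O(N^2t)$.
\begin{itemize}
\item By Lemma~\ref{lem:escape-p-trace}, \eqref{eq:proj-law} and the strong Markov property, each excursion in $\widehat{\mathcal K}_N^\ell$ exits into $\widehat{\mathcal E}_N^{\ell-1}$ with probability at least $p/2$ for large $N$. Since $\ell$ never increases, the total number of excursions is dominated by a sum of $k-1$ geometric variables.
\item Within one excursion, the type A rate is at most $C_kN^2$ while the merging rate is at least $N^2(1+o(1))$, uniformly (Lemma~\ref{lem:KN-trace-rate}). So the number of type A jumps per excursion is also geometrically dominated. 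This holds irrespective of the mass-exchange jumps, whose rates may be much larger than $N^2$ but which do not move $\bm x$.
\end{itemize}
Hence the displacement accumulated in $\widehat{\mathcal K}_N$ is $O_P(1)$ lattice units, uniformly in the initial state. You should bound it pathwise and compensate only the jumps made from $\widehat{\mathcal J}_N$. Those have drift $O(t\,d_NN^3\log N)$ lattice units, and their martingale part has predictable quadratic variation at most $2N^2t(1+o(1))$. Doob's inequality then gives your bound $C_k\rho^2t/\gamma^2+o(1)$.

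\textbf{Comparison with the paper.} The paper avoids any displacement estimate. It takes ${\bf f}\in\mathfrak D^{(k)}$ vanishing at $\bm x/L$ and equal to $1$ at distance $\ge\gamma$ (from Lemma~\ref{lem:test-fcn}) and follows \cite[Lemma 4.2]{LMS25}. By Theorem~\ref{thm:res}, the resolvent solution $F_N$ with $G_N=((\lambda-\mathfrak L_\rho){\bf f})\circ\Phi_N$ is uniformly close to ${\bf f}\circ\Phi_N$. Moreover, $F_N(\widehat\eta_N(s))-\int_0^s(\lambda F_N-G_N)(\widehat\eta_N(r))\,{\rm d}r$ is a martingale with uniformly bounded integrand, so optional stopping at $H\wedge t$ yields $\widehat{\mathbb Q}[H<t]\le Ct+o(1)$.

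In that route the one-sided behaviour on $\widehat{\mathcal K}_N$ has already been absorbed into the flatness of the resolvent, via the $(1-p)$ contraction of Lemma~\ref{lem:res-ind-hyp-label}. The cost is relying on Theorem~\ref{thm:res} and needing test functions controlled uniformly in $\bm x/L$. Your corrected argument is more elementary: it uses only the rate estimates of Section~\ref{sec2} and Lemma~\ref{lem:escape-p-trace}. It also gives an explicit bound that is automatically uniform in the starting configuration.
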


\begin{proof}
We may take a function ${\bf f}\in\mathfrak{D}^{(k)}$ such that ${\bf f}(\frac{\bm{x}}{L})=0$
and ${\bf f}( \bm u )=1$ for all $\bm u \in \mathbb T^k$
such that $d( \bm{u} , \frac{\bm x}L ) \ge \gamma$,
which is possible via Lemma \ref{lem:test-fcn} (by subtracting that
function from $1$). At this point the arguments presented in the proof of
\cite[Lemma 4.2]{LMS25} works here as well. We omit tedious repetition
of details.
\end{proof}
\begin{lem}
\label{lem4.2}For any $\widehat\xi \in \widehat{\mathcal E}_N$, $\epsilon>0$, and $T>0$,
\[
\lim_{a\to0}\lim_{N\to\infty}\sup_{\tau\in\mathscr{T}_{T}}\sup_{\delta\in(0,a)} \widehat{\mathbb Q}_{\widehat\xi}^{N}\left[ d \left( \Phi_N (\widehat\eta_N(\tau+\delta)),\Phi_N (\widehat\eta_N(\tau)) \right) \ge\epsilon\right]=0,
\]
where $\mathscr T_T$ is the collection of finite-range stopping times bounded by $T$.
\end{lem}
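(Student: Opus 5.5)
We reduce Lemma \ref{lem4.2} to Lemma \ref{lem4.1} via the strong Markov property, the standard route to the Aldous tightness criterion. Fix $\widehat\xi \in \widehat{\mathcal E}_N$, $\epsilon>0$, $T>0$, a stopping time $\tau\in\mathscr T_T$ and $\delta\in(0,a)$. The process $\widehat\eta_N(\cdot)$ is a time-homogeneous Markov chain under $\widehat{\mathbb Q}_{\widehat\xi}^N$, so conditioning on $\mathcal F_\tau$ gives
\[
\widehat{\mathbb Q}_{\widehat\xi}^{N}\left[ d \left( \Phi_N (\widehat\eta_N(\tau+\delta)),\Phi_N (\widehat\eta_N(\tau)) \right) \ge\epsilon\right]
= {\rm E}^{\widehat{\mathbb Q}_{\widehat\xi}^{N}}\left[ \widehat{\mathbb Q}_{\widehat\eta_N(\tau)}^{N}\left[ d\left( \Phi_N(\widehat\eta_N(\delta)), \Phi_N(\widehat\eta_N(0))\right)\ge \epsilon \right]\right].
\]

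Write $\widehat\eta_N(\tau)=\widehat\xi_{\bm n}^{\bm x}$. If $d(\Phi_N(\widehat\eta_N(\delta)),\bm x/L)\ge\epsilon$, then at time $\delta$ the chain lies in $\breve{\mathcal E}_{\bm n}^{\bm x}(\epsilon)$. Hence $H_{\breve{\mathcal E}_{\bm n}^{\bm x}(\epsilon)}\le\delta<a$. The inner probability is therefore bounded uniformly by
\[
\sup_{\widehat\xi_{\bm n}^{\bm x}\in\widehat{\mathcal E}_N}\widehat{\mathbb Q}_{\widehat\xi_{\bm n}^{\bm x}}^{N}\left[H_{\breve{\mathcal E}_{\bm n}^{\bm x}(\epsilon)}<a'\right]
\]
for any $a'>a$, say $a'=2a$. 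This bound does not depend on $\tau$, $\delta$ or $\widehat\xi$.

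Taking $\sup_\tau\sup_{\delta<a}$, then $N\to\infty$, then $a\to0$, Lemma \ref{lem4.1} with $\gamma=\epsilon$ and $t=2a$ sends the bound to zero. This proves the claim.

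The only point needing care is the uniformity in the starting point. It is essential that Lemma \ref{lem4.1} is stated with a supremum over all $\widehat\xi_{\bm n}^{\bm x}\in\widehat{\mathcal E}_N$, because the random state $\widehat\eta_N(\tau)$ ranges over the whole set. Two minor technical points remain. One is measurability of the conditioning, which is standard for càdlàg Markov chains on a countable state space. The other is the harmless passage from the strict inequality ``$<t$'' in Lemma \ref{lem4.1} to ``$\le\delta$'', which is handled by the choice $t=2a$ above.
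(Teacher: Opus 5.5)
Your proof is correct and follows the paper's route. The paper's proof only cites Lemma \ref{lem4.1} together with \cite[Proposition 4.4]{LMS23}, and that argument is exactly your strong Markov reduction at $\tau$, combined with the uniformity of Lemma \ref{lem4.1} over the starting point $\widehat\eta_N(\tau)$. Two small remarks: the detour through $t=2a$ is unnecessary, since $\delta<a$ already gives $H_{\breve{\mathcal E}_{\bm n}^{\bm x}(\epsilon)}<a$; and $\lim_{N\to\infty}$ should be read as $\limsup_{N\to\infty}$.
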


\begin{proof}
The main ingredient for the proof is Lemma \ref{lem4.1}.
We refer to \cite[Proposition 4.4]{LMS23}.
\end{proof}
\begin{lem}
\label{lem4.3}Under the assumptions in Theorem \ref{thm4}, any
limit point of $\{ \widehat{\mathbb Q}_{\widehat\xi_{\bm{n}}^{\bm{x}}}^{N} \circ \Phi_N^{-1} \}_{N \ge 1}$
solves the martingale problem in Theorem \ref{thm3} and is concentrated on continuous paths.
\end{lem}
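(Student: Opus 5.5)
The plan is the standard resolvent route of \cite{LMS25,KL26}. It splits into continuity of limit paths and identification of the limit through Theorem \ref{thm:res}.

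\emph{Continuity.} Tightness of $\{\widehat{\mathbb Q}_{\widehat\xi_{\bm n}^{\bm x}}^N\circ\Phi_N^{-1}\}$ follows from Aldous' criterion via Lemma \ref{lem4.2}. For concentration on continuous paths, observe that every jump of $\widehat\eta_N$ moves $\Phi_N$ by at most $2/L$: type A jumps shift a block of coordinates by one lattice unit, and type B jumps move merged coordinates within $\{x_i,x_i+1,x_i+2\}$. Hence the jump sizes of $\Phi_N(\widehat\eta_N(\cdot))$ are bounded by $2/L\to0$. The map $\omega\mapsto\sup_{t\le T}d(\omega_t,\omega_{t-})$ is continuous enough on Skorokhod space that any weak limit charges only continuous paths.

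\emph{Martingale problem.} Fix ${\bf f}\in\mathfrak D^{(k)}$ and $\lambda>0$, and set ${\bf g}:=(\lambda-\mathfrak L_\rho){\bf f}\in C(\mathbb T^k)$. This is legitimate because $\mathfrak L_\rho{\bf f}$ is continuous by the extension convention, and ${\bf f}$ is the unique solution of the resolvent equation for this ${\bf g}$, as in Appendix \ref{secA}. Let $F_N$ solve $(\lambda-\widehat{\mathcal L}_N)F_N=G_N={\bf g}\circ\Phi_N$. Since $\widehat{\mathcal L}_N$ generates a finite-state chain,
\[
M_N(t):=e^{-\lambda t}F_N(\widehat\eta_N(t))+\int_0^t e^{-\lambda s}\,G_N(\widehat\eta_N(s))\,{\rm d}s
\]
is a $\widehat{\mathbb Q}^N$-martingale. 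Indeed, $e^{-\lambda t}F_N(\widehat\eta_N(t))-\int_0^te^{-\lambda s}(\widehat{\mathcal L}_N-\lambda)F_N\,{\rm d}s$ is a martingale, and $(\lambda-\widehat{\mathcal L}_N)F_N=G_N$. By Theorem \ref{thm:res}, $\sup|F_N-{\bf f}\circ\Phi_N|\to0$. Consequently, for $0\le s_1<\dots<s_m\le s<t$ and bounded continuous $\Upsilon$,
\[
{\rm E}^{\widehat{\mathbb Q}^N}\Big[\Big(e^{-\lambda t}{\bf f}(\omega_t)-e^{-\lambda s}{\bf f}(\omega_s)+\int_s^t e^{-\lambda r}{\bf g}(\omega_r)\,{\rm d}r\Big)\Upsilon(\omega_{s_1},\dots,\omega_{s_m})\Big]=o(1),
\]
where $\omega=\Phi_N(\widehat\eta_N)$. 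Pass to the limit along a convergent subsequence. The integrand is continuous at continuous paths, and continuity of limit paths removes the need to restrict to continuity times. This yields the martingale property of $e^{-\lambda t}{\bf f}(\omega_t)+\int_0^te^{-\lambda r}(\lambda-\mathfrak L_\rho){\bf f}(\omega_r)\,{\rm d}r$. This is equivalent to the form in Theorem \ref{thm3} by the usual integration-by-parts identity relating the two forms; alternatively, I would state the martingale problem directly in this equivalent form.

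\emph{Initial condition and obstacle.} The initial condition $\omega_0=\bm u$ follows from $L^{-1}\bm x\to\bm u$. The main obstacle, namely the flatness of $F_N$ across merging events, is entirely absorbed by Theorem \ref{thm:res}. What remains delicate is only checking that ${\bf g}=(\lambda-\mathfrak L_\rho){\bf f}$ is continuous on all of $\mathbb T^k$. This requires compatibility of $\Delta{\bf f}$ across the diagonals, which uses \eqref{eq:didj} and Lemma \ref{lem:lap-lap-equal}.
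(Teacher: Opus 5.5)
Your proof is correct. Its martingale step is the same as the paper's: the paper also writes the resolvent martingale of the finite chain $\widehat\eta_N$, replaces $F_N$ by ${\bf f}\circ\Phi_N$ using Theorem~\ref{thm:res} (following \cite[Proposition 4.5]{LMS23}), and passes to the limit.

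The genuine difference is how you handle continuity and the initial condition. The paper derives both from Lemma~\ref{lem4.1}, obtaining ${\bf Q}[\omega_0=\bm u]=1$ and the absence of fixed-time jumps \eqref{eq:discont-small}. The latter is exactly what makes $\omega\mapsto\omega_t$ continuous ${\bf Q}$-a.s.\ at each fixed $t$, but it does not by itself exclude jumps at random times. You instead read off from Definition~\ref{def:lab-trace} a deterministic $O(1/L)$ bound on the jumps of $\Phi_N(\widehat\eta_N)$. You then use the standard criterion that vanishing maximal jump size forces every limit point onto continuous paths. To make ``continuous enough'' precise, use the Skorokhod-continuous functional $\int_0^\infty e^{-u}\,(\sup_{t\le u}d(\omega_t,\omega_{t-})\wedge1)\,{\rm d}u$, which vanishes exactly on continuous paths. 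You also get $\omega_0=\bm u$ directly from $L^{-1}\bm x\to\bm u$.

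Your route is more elementary and exploits the nearest-neighbour structure of $\widehat R_N$ rather than a resolvent estimate. It also actually proves the ``concentrated on continuous paths'' half of the statement. The paper's route is the general one from the metastability literature, which does not rely on small jumps.

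Three minor points:
\begin{itemize}
\item The process you obtain, $e^{-\lambda t}{\bf f}(\omega_t)+\int_0^t e^{-\lambda r}(\lambda-\mathfrak L_\rho){\bf f}(\omega_r)\,{\rm d}r$, is the correct one. The display in Theorem~\ref{thm3} (and in the paper's proof) omits the factor $e^{-\lambda t}$, and without it the process is not a martingale (take ${\bf f}\equiv1$). So your integration-by-parts equivalence should be with ${\bf f}(\omega_t)-\int_0^t\mathfrak L_\rho{\bf f}(\omega_r)\,{\rm d}r$, the form actually used in Appendix~\ref{secA}, not with the literal display.
\item Continuity of ${\bf g}=(\lambda-\mathfrak L_\rho){\bf f}$ is not delicate. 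Since ${\bf f}\in\mathfrak D^{(k)}\subset\mathfrak D_0^{(k)}$, all its derivatives up to order three already extend continuously to $\mathbb T^k$. Lemma~\ref{lem:lap-lap-equal} is used inside the proof of Theorem~\ref{thm:res}, not here.
\item Uniqueness of ${\bf f}$ for this ${\bf g}$ is not proved in Appendix~\ref{secA}, but you do not need it. The proof of Theorem~\ref{thm:res} only uses that ${\bf f}\in\mathfrak D^{(k)}$ solves $(\lambda-\mathfrak L_\rho){\bf f}={\bf g}$. Starting from ${\bf f}$ rather than from ${\bf g}$, as you do, also sidesteps any existence question.
\end{itemize}
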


\begin{proof}
We follow the ideas in the proof of \cite[Proposition 4.4]{LMS23}.
Lemma \ref{lem4.1} implies that any
limit point ${\bf Q}$ of $\{ \widehat{\mathbb Q}_{\widehat\xi_{\bm{n}}^{\bm{x}}}^{N} \circ \Phi_N^{-1} \}_{N \ge 1}$
satisfies ${\bf Q}\,[ \omega_0 = \{\bm{u}\}]=1$ and
\begin{equation}
{\bf Q}\,[ \omega_{\delta-} \ne \omega_\delta ]=0\qquad\text{for all}\quad\delta>0.\label{eq:discont-small}
\end{equation}
Moreover, using the ideas from \cite[Proposition 4.5]{LMS23}, we
can prove that
\[
{\bf f}( \omega_t)-{\bf f}( \omega_s)+\int_{s}^{t}e^{-\lambda u}\,(\lambda-\mathfrak L_\rho) {\bf f}( \omega_u)\,{\rm d}u
\]
is a ${\bf Q}$-martingale for any ${\bf f}\in\mathfrak{D}^{(k)}$.
Here, Theorem \ref{thm:res}
allows us to replace the macroscopic resolvent solution ${\bf f}$
with the microscopic resolvent solution $F_{N}$, and \eqref{eq:discont-small}
allows us to disregard the path discontinuities in the integral. Therefore,
${\bf Q}$ solves the martingale problem presented in Theorem \ref{thm3} and $\bf Q$ is concentrated on continuous paths, as desired.
\end{proof}

\begin{proof}[Proof of Theorem \ref{thm4}]
By Aldous' tightness criterion, Lemma \ref{lem4.2} implies that the collection $\{ \widehat{\mathbb Q}_{\widehat\xi_{\bm{n}}^{\bm{x}}}^{N} \circ \Phi_N^{-1} \}_{N \ge 1}$ is tight. In addition, Lemma \ref{lem4.3} indicates that any limit point must be the unique solution to the martingale problem in Theorem \ref{thm3}. These two observations complete the proof.
\end{proof}

\begin{acknowledgement*}
SK and CL would like to thank Instituto Superior T\'ecnico (Lisbon)
for their warm hospitality during their stay in February 2026, during
which part of the manuscript was written. SK has been supported by the
Basic Science Research Program through the National Research
Foundation of Korea funded by the Ministry of Science and ICT
(RS-2025-00518980), the Yonsei University Research Fund of 2025
(2025-22-0133), and the POSCO Science Fellowship of POSCO TJ Park
Foundation. CL has been partially supported by FAPERJ CNE
E-26/201.117/2021, and by CNPq Bolsa de Produtividade em Pesquisa PQ
305779/2022-2.
\end{acknowledgement*}

\appendix

\section{\label{secA}Coalescing Brownian Motions}

In this section,  we present a few facts regarding the
$k$-coalescing Brownian motions that was introduced
in Section \ref{sec1.5}.

We start with a proof of Theorem \ref{thm3}, which is based on
Theorems 2.1 and 2.2 in \cite{DLZ04}. Recall that a continuous
$\bb T$-valued process $X(\cdot)$ is called a $\bb T$-Brownian motion
with speed $a>0$ if there exists a Brownian motion $B(\cdot)$ with
speed $a$ such that $X (t) = \Pi (B(t)))$ for all $t\ge 0$. Here
$\Pi\colon \bb R\to \bb T$ is the usual projection defined by
$\cb{\Pi(x) } = x - [x]$, where $[r]$ stands for the integer value of
a real number $r$. The process $B(\cdot)$ is called the lifting of
$X(\cdot)$ to $\bb R$. We may assume, without loss of generality,
that $B(0) \in [0,1)$, and this determines uniquely the lifting.

Fix $k\ge 2$, $a>0$. We say that a continuous random process
$(X_1(t), \dots, X_k(t))$ taking values in $\bb T^k$ is a $k$-system
of coalescing Brownian motions with speed $a$ on the torus if

\begin{enumerate}
\item[(a)] Each coordinate $X_i(t)$ is a $\bb T$-Brownian motion with
speed $a$. Denote by $B_i(\cdot)$ the lifting of $X_i(\cdot)$ to
$\bb R$.

\item[(b)] For each $1\le i<j\le k$, $|B_i(t) - B_j(t)|$ is a Brownian
motion with speed $2a$ absorbed at $\bb Z$. 
\end{enumerate}

Of course as $B_\ell(0) \in [0,1)$, $1\le \ell\le k$, $|B_i(t) - B_j(t)|$ is
actually absorbed at $0$ or $1$.

For each $1\le i<j\le k$, let $\cb{T_{i,j}} \colon \bb T^k \to [0,1]$
be the function defined by $T_{i,j}(x_1, \dots, x_k) = T(x_i,x_j)$,
where $T \colon \bb T^2\to [0,1]$ is given by
\begin{equation*}
\cb{T (x,y) } \, :=\, (y-x) \, \mb 1\{ x\le y\}\, +\, (1+y-x)\,  \mb
1\{ y< x \}\, \,. 
\end{equation*}
The function $T(x,y)$ can be understood as follows. First, map the
pair $(x,y)\in \bb T^2$ to $\bb R^2$ placing the second coordinate
ahead of the first: $T^{(1)}(x,y) = (x,\hat y)$, where $\hat y = y$ if
$x\le y$, and $\hat y = 1+y$ if $y<x$. Then, compute $\hat y - x$.

Denote by $\tau_{i,j}$, $\sigma_{i,j}$ the hitting times defined by
\begin{equation}
\label{a04}
\cb{\tau_{i,j} } \,:=\, \inf\big\{ \, t\ge 0 : |B_i(t) - B_j(t)| \in \bb Z\,
\big\}\,, \quad
\cb{\sigma_{i,j} } \,:=\, \inf\big\{ \, t\ge 0 : T(X_i(t), X_j(t)) \in
\{0,1\} \,
\big\}\,.
\end{equation}
It is easily seen that $\tau_{i,j}  = \sigma_{i,j}$ and that
\begin{equation}
\label{a03}
T(X_i(t), X_j(t)) \, =\,  |B_j(t) - B_i(t)| \, =\,  B_j(t) - B_i(t)
\quad \text{for all}\;\;
0\le t\le \tau_{i,j} \;\;\text{if}\;\; x_i \le  x_j \,.
\end{equation}
In the case where $x_j<x_i$,
$T(X_i(t), X_j(t)) = |1 + B_j(t) - B_i(t)|$ for all
$0\le t\le \tau_{i,j} $.

\begin{lem}
\label{al01}
Fix $k\ge 2$, $a>0$.  Suppose that $(X_1(t), \dots, X_k(t))$ is a
$k$-system of coalescing Brownian motions with speed $a$ on the torus.
Then, $\< B_i \,, B_j\>_t = a\, (t - t \wedge \tau_{i,j})$. Conversely
if $X_i(\cdot)$, $1\le i\le k$, are Brownian motions with speed $a$ on
$\bb T$, and $\< B_i \,, B_j\>_t = a\, (t - t \wedge \tau_{i,j})$,
then $(X_1(t), \dots, X_k(t))$ is a $k$-system of coalescing Brownian
motions with speed $a$ on the torus.
\end{lem}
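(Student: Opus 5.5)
The plan is to treat both directions through the semimartingale structure of the lifts and Lévy's characterization.

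\emph{Forward direction.} Assume $(X_1,\dots,X_k)$ is a $k$-system of coalescing Brownian motions with speed $a$. By (a), each lift $B_i$ is a continuous martingale with $\<B_i\>_t = a t$. By (b), $D_{ij}(t):=B_i(t)-B_j(t)$ satisfies
\[
\<D_{ij}\>_t = 2a\,(t\wedge\tau_{i,j}).
\]
Indeed, $|D_{ij}|$ is a Brownian motion of speed $2a$ absorbed at $\mathbb Z$, hence it is constant after $\tau_{i,j}$. Before $\tau_{i,j}$, $D_{ij}$ does not hit $\mathbb Z$, so it keeps a constant sign; its quadratic variation therefore equals that of $|D_{ij}|$, namely $2a$ per unit time up to $\tau_{i,j}$. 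Polarization then gives
\[
2\<B_i,B_j\>_t = \<B_i\>_t+\<B_j\>_t-\<D_{ij}\>_t = 2at - 2a(t\wedge\tau_{i,j}),
\]
which is the claimed formula $\<B_i,B_j\>_t = a(t-t\wedge\tau_{i,j})$.

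\emph{Converse direction.} Assume each $X_i$ is a $\mathbb T$-Brownian motion of speed $a$ and that the covariation formula holds. Then $D_{ij}$ is a continuous martingale with
\[
\<D_{ij}\>_t = at+at-2a(t-t\wedge\tau_{i,j}) = 2a\,(t\wedge\tau_{i,j}).
\]
Consequently $D_{ij}$ is constant after $\tau_{i,j}$, and on $[0,\tau_{i,j}]$ it is a time-changed Brownian motion of speed $2a$ by Lévy's characterization (enlarging the probability space to extend past $\tau_{i,j}$ if needed). Up to $\tau_{i,j}$ the process $D_{ij}$ stays in an interval between consecutive integers, since it starts in $(-1,1)$. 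Hence $|D_{ij}|$ equals, up to $\tau_{i,j}$, a Brownian motion of speed $2a$ started in $[0,1)$, and it is constant equal to the absorbing integer value afterwards. This is exactly property (b).

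The main obstacle is the sign and boundary bookkeeping. One must check that $|D_{ij}|$ before absorption is genuinely a Brownian motion rather than a reflected one. This holds because $D_{ij}$ cannot cross $0$ without hitting $\mathbb Z$, which would already be time $\tau_{i,j}$. One must also treat the degenerate case $D_{ij}(0)=0$, where $\tau_{i,j}=0$ and both sides are trivially consistent. I would handle these cases by working on the stochastic interval $[0,\tau_{i,j}]$ and applying the optional stopping theorem to the stopped martingales.
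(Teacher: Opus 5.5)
Your proof is correct and follows the paper's own argument. For the forward direction you read off the quadratic variation $2a(t\wedge\tau_{i,j})$ of $B_i-B_j$ from property (b) and polarize; for the converse you apply Lévy's characterization to the stopped difference $B_i-B_j$. Your added observation that $B_i-B_j$ keeps a constant sign before $\tau_{i,j}$, so that $|B_i-B_j|$ is an absorbed rather than a reflected Brownian motion, makes explicit a step the paper leaves implicit.
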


\begin{proof}
By property (b) of the definition,
$[B_j(t) - B_i(t)]^2 - 2\, a\, (t\wedge \tau_{i,j})$ is a
martingale. As $B_i(\cdot)$ and $B_j(\cdot)$ are Brownian motions with
speed $a$, $B_i(t)\, B_j(t) - a\, (t - t\wedge \tau_{i,j})$ is a
martingale, which proves the first claim of the lemma.

Conversely, as the lifts $B_i(\cdot)$ and $B_j(\cdot)$ are Brownian
motions, $B_j-B_i$ is a continuous martingale. On the other hand, by
hypothesis, $\< B_i - B_j\>_t = 2\, a\, (t \wedge \tau_{i,j})$. By L\'evy's
characterization of the Brownian motion, $B_i - B_j$ is a Brownian
motion of speed $2a$ stopped at $\tau_{i,j}$. This implies that
$|B_i(t) - B_j(t)|$ is a Brownian motion with speed $2a$ absorbed at
$\bb Z$ (actually at $0$ or $1$).
\end{proof}

We turn to the proof of Theorem \ref{thm3} for $k=2$.

\begin{prop}
\label{al02}
Fix $(x,y)\in \bb T^2$.  Suppose that a probability measure
$\mb Q_{(x,y)}$ on $C(\bb R_+, \bb T^2)$ solves the martingale problem
formulated in Theorem \ref{thm3}, and that
$\mb Q_{(x,y)} [\, \omega : \omega_0 = (x,y)\,] = 1$. Then,
$\mb Q_{(x,y)}$ is the measure induced by a $2$-system of coalescing
Brownian motions with speed $2\rho^2$ on the torus starting from
$(x,y)$.
\end{prop}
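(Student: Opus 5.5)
We use the test functions in $\mathfrak D^{(2)}$ to read off the martingale structure of the coordinates, and then apply Lemma~\ref{al01}. We work under $\mathbf Q_{(x,y)}$ with the canonical process $\omega_t = (X_1(t), X_2(t))$. We may take these paths continuous, since $\mathbf Q_{(x,y)}$ is supported on $C(\mathbb R_+,\mathbb T^2)$.

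\textbf{Step 1: the marginals.} Take $\mathbf f(u_1,u_2) = h(u_1)$ with $h \in C^3(\mathbb T)$. The mixed derivative vanishes identically, so \eqref{eq:didj} holds. Moreover $\psi_{1,2}\mathbf f = h \in \mathfrak D_0^{(1)}$, hence $\mathbf f \in \mathfrak D^{(2)}$. The martingale problem with the weight $e^{-\lambda s}$ is equivalent to the usual one: integrate by parts, or let $\lambda \downarrow 0$ in the standard way. It therefore gives that
\[
h(X_1(t)) - \rho^2 \int_0^t h''(X_1(s))\,{\rm d}s
\]
is a martingale. Taking $h = \cos(2\pi\cdot), \sin(2\pi\cdot)$ and applying It\^o's formula to the lifting, or the standard characterization of Brownian motion on the circle, shows the following. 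The continuous lift $B_1$ is a continuous local martingale with $\langle B_1\rangle_t = 2\rho^2 t$. So $X_1$ is a $\mathbb T$-Brownian motion with speed $a = 2\rho^2$, and the same holds for $X_2$.

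\textbf{Step 2: the cross variation.} By Lemma~\ref{al01}, it remains to show
\[
\langle B_1, B_2\rangle_t = a\,(t - t\wedge\tau_{1,2}).
\]
Before $\tau_{1,2}$ the two points are distinct. We use a function $\mathbf f(u_1,u_2) = \phi(T(u_1,u_2))$, where $\phi \in C^3([0,1])$ is chosen with $\phi'' = 0$ near $\{0,1\}$, for instance $\phi$ affine near the endpoints. Continuity across the diagonal forces $\phi(0) = \phi(1)$, so $\phi$ is locally affine near each endpoint with matching values there. Then $\partial_1\partial_2 \mathbf f = -\phi''(T) = 0$ on the diagonal, and $\psi_{1,2}\mathbf f$ is constant, so $\mathbf f \in \mathfrak D^{(2)}$. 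The martingale problem yields
\[
\phi(T_t) - 2\rho^2 \int_0^t \phi''(T_s)\,{\rm d}s
\]
is a martingale. Varying $\phi$ over such functions supported away from the endpoints, localized to $T \in [\delta, 1-\delta]$, identifies $T(X_1,X_2) = B_2 - B_1$, before $\tau_{1,2}$, as a semimartingale with $\langle B_2 - B_1\rangle = 4\rho^2 t = 2a\,t$. Combined with Step 1, polarization gives $\langle B_1, B_2\rangle_t = 0$ on $[0,\tau_{1,2}]$.

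After $\tau_{1,2}$ we need $X_1 = X_2$, i.e. coalescence. To get it, take $\mathbf f(u_1,u_2) = 1 - \cos(2\pi(u_1-u_2))$. Its mixed partial derivative on the diagonal is $-4\pi^2 \neq 0$, so this $\mathbf f$ is not in the domain, and we must modify it. Instead we use $\mathbf f = \phi(T)$ with $\phi(r) = r(1-r)$ smoothed: take $\phi_\epsilon$ equal to $r(1-r)$ on $[\epsilon, 1-\epsilon]$ and affine near $0$ and $1$, with $\phi_\epsilon'' = -2$ on the middle region and $\phi_\epsilon''$ bounded. Applying the martingale problem after the stopping time $\tau_{1,2}$ and letting $\epsilon \to 0$ should show that $\phi(T_t)$ is a nonnegative supermartingale, since $\phi'' \le 0$. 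As it vanishes at time $\tau_{1,2}$, it stays at $0$. Hence $T \in \{0,1\}$, i.e. $X_1 = X_2$, for all $t \ge \tau_{1,2}$. Consequently $B_1 - B_2$ is constant after $\tau_{1,2}$, so
\[
\langle B_1, B_2\rangle_t - \langle B_1, B_2\rangle_{\tau_{1,2}} = \langle B_1\rangle_t - \langle B_1\rangle_{\tau_{1,2}} = a\,(t - \tau_{1,2}).
\]
Lemma~\ref{al01} then concludes.

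\textbf{Main obstacle.} The delicate step is the smoothing at the boundary in Step 2. For the supermartingale argument one needs $\phi_\epsilon$ to stay concave, while for membership in the domain one needs $\phi_\epsilon'' = 0$ at the endpoints with matching values. These are compatible: take $\phi_\epsilon'' \le 0$, vanishing near $0$ and $1$, and symmetric about $1/2$ so that $\phi_\epsilon(0) = \phi_\epsilon(1)$. The error terms from the smoothing must also vanish as $\epsilon \to 0$.
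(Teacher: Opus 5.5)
Your Step 1 and the pre-collision part of Step 2 are sound and essentially match the paper, which uses $f_\delta(r)=r^2$ on $[\delta,1-\delta]$ to get $\langle T(\omega)\rangle_t=4\rho^2(t\wedge\tau_0)$. The gap is in the coalescence step.

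For $\mathbf f=\phi(T)$ to belong to $\mathfrak D^{(2)}$, the derivatives $\partial_2\mathbf f=\phi'(T)=-\partial_1\mathbf f$ must extend continuously across the diagonal. There $T$ jumps between values near $0$ and values near $1$. This requires $\phi'(0)=\phi'(1)$ (and $\phi'''(0)=\phi'''(1)$), not only $\phi(0)=\phi(1)$ and $\phi''(0)=\phi''(1)=0$; the paper's proof lists exactly these four conditions. But $\phi'(0)=\phi'(1)$ means $\int_0^1\phi''(r)\,{\rm d}r=0$. So $\phi''\le 0$ forces $\phi''\equiv 0$, and then $\phi$ is constant.

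Concretely, concavity gives $\phi_\epsilon'(0)\ge 1-2\epsilon$, and the symmetry you impose gives $\phi_\epsilon'(1)=-\phi_\epsilon'(0)$. Hence $\partial_2\mathbf f$ jumps across the diagonal and $\phi_\epsilon(T)\notin\mathfrak D^{(2)}$ for every $\epsilon$. Your ``main obstacle'' paragraph misses exactly this constraint.

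The paper avoids concavity altogether. It uses two admissible functions, equal to $r$ on $[0,1-\delta]$ and on $[\delta,1]$ respectively, each completed near the opposite endpoint so that values and first three derivatives match at $0$ and $1$. From these it shows $Z=T(\omega)$ is a martingale on alternating, overlapping random time intervals. It glues these into a bounded martingale on $\mathbb R_+$ and gets absorption from $Z\ge 0$ and $1-Z\ge 0$.

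This is not merely a poor choice of $\phi$. Read literally, with derivatives extending continuously to all of $\mathbb T^2$, a mean value argument along coordinate lines shows every $\mathbf f\in\mathfrak D^{(2)}$ lies in $C^3(\mathbb T^2)$ with $\partial_1\partial_2\mathbf f=0$ on the diagonal. Then two independent $\mathbb T$-Brownian motions of speed $2\rho^2$ satisfy all the same martingale relations, so coalescence cannot be extracted from these test functions alone. In the paper's proof it enters through the assertion that $T(\omega(\cdot))$ is continuous. That assertion fails for a path crossing the diagonal, and the gluing step genuinely relies on it.

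The conditions you wrote down are the right ones if derivatives are only required to have limits from each side of the diagonal, the analogue of the chamber-wise condition of \cite{DLZ04} on $\mathbb R^k$. Under that reading $\phi(r)=\sin(\pi r)$ is admissible, and $\sin(\pi Z_t)$ is a nonnegative continuous supermartingale. Your argument then yields coalescence at once, with no $\epsilon\to 0$ limit and more simply than the gluing. As the domain is written and used in the paper, however, your step does not go through; you would need to state and justify such a modified domain.
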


\begin{proof}
In the statement of Theorem \ref{thm3}, sending $\lambda\to 0$ yields
that
\begin{equation*}
f (\omega (t) ) \,-\, \int_{0}^{t} (\mathfrak L_\rho f )  (\omega (s) )\,{\rm d}s 
\end{equation*}
is a martingale for every $f\in \mf D^{(2)}$.

Fix a function $f$ in $C^3(\bb T)$, and let $F\colon \bb T^2 \to \bb R$ be
given by $F(x,y) = f(x)$. It is easy to check that $F$ belongs to $\mf
D^{(2)}$. Thus,
\begin{equation*}
f (\omega_1(t)) \,-\, f (\omega_1(0))
\,-\,  \int_{0}^{t} \rho^2\, f'' (\omega_1(s) )\,{\rm d}s 
\end{equation*}
is a martingale. Approximating a function in $C^2(\bb T)$ by functions
in $C^3(\bb T)$ yields that the previous expression is a martingale
for all functions $f$ in $C^2(\bb T)$. This implies that
$\omega_1 (\cdot)$ is a Brownian motion with speed $2\rho^2$ on
$\bb T$.  The same assertion holds for $\omega_2 (\cdot)$ as well.

Fix a function $f\in C^3([0,1])$ such that $f(0) = f(1)$,
$f'(0) = f'(1)$, $f''(0) = f''(1) =0$, $f'''(0)=f'''(1)$, and let
$F\colon \bb T^2 \to \bb R$ be given by $F(x,y) = f(T(x,y))$. Here
again, it is easy to check that $F$ belongs to $\mf D^{(2)}$. The
condition $f''(0)=0$ is needed to ensure that
$\partial_x \partial_y F =\partial_y \partial_x F =0$ at the boundary
$x=y$.  The other conditions ensure that $F$ and its partial
derivatives can be extended continuously to the boundary (as $y$ and
$x$ get closer $T(x,y)$ may converge to $0$ or to $1$. This forces $f$
and its first three derivatives to have the same value at $0$ and
$1$. It has nothing to do with the fact that the process evolves on a
torus).  Thus,
\begin{equation}
\label{a01}
f (T(\omega(t)) ) \,-\, f (T(\omega(0)))
\,-\,  \int_{0}^{t} 2\, \rho^2\, f'' (T(\omega(s) )) \,{\rm d}s 
\end{equation}
is a martingale.

Recall that $T(\cdot)$ takes value in $[0,1]$. Fix $0<\delta<1/4$, and
let $Z_t = T(\omega(t))$. Define the sequence of stopping times
$0\le \sigma_1< \tau_1 < \sigma_2 < \cdots$ by
\begin{gather*}
\sigma_1 = \inf \big\{\, t\ge 0 : Z_t \le 1 - 2\delta\,
\big\}\,, \quad
\tau_1 = \inf \big\{\, t>\sigma_1 : Z_t \ge 1 - \delta
\, \big\}\,,
\\
\sigma_{p+1} = \inf \big\{\, t> \tau_p : Z_t \le 1 - 2\delta\,
\big\}\,, \quad
\tau_{p+1} = \inf \big\{\, t>\sigma_{p+1} : Z_t \ge 1 - \delta
\, \big\}\,, \quad p\ge 1\,.
\end{gather*}
Fix a function $f$ in $C^3([0,1])$ satisfying the boundary conditions,
and such that $f(x) = x$ for $x\in [0 , 1-\delta]$.  By \eqref{a01},
in the time intervals $[\sigma_p, \tau_p]$, $Z_t = T(\omega(t))$ is a
martingale.

Similarly, define the sequence of stopping times
$0\le \sigma^*_1< \tau^*_1 < \sigma^*_2 < \cdots$ by
\begin{gather*}
\sigma^*_1 = \inf \big\{\, t\ge 0 : Z_t \ge 2\delta\,
\big\}\,, \quad
\tau^*_1 = \inf \big\{\, t>\sigma_1 : Z_t \le \delta
\, \big\}\,,
\\
\sigma^*_{p+1} = \inf \big\{\, t> \tau^*_p : Z_t \ge 2\delta\,
\big\}\,, \quad
\tau^*_{p+1} = \inf \big\{\, t>\sigma^*_{p+1} : Z_t \le \delta
\, \big\}\,, \quad p\ge 1\,.
\end{gather*}
Consider a test function $g$ in $C^3([0,1])$, satisfying the boundary
conditions, and such that $g(x) = x$ for $x\in [\delta,1]$.  By
\eqref{a01}, in the time intervals $[\sigma^*_p, \tau^*_p]$,
$Z_t = T(\omega(t))$ is a martingale.

\begin{claim*}
The process $Z_t = T(\omega(t))$, $t\ge 0$, is a martingale.
\end{claim*}

\begin{proof}[Proof of Claim]
Fix $(x,y)\in \bb T^2$ and suppose that $y=x$. By the previous
arguments, $Z_t = T(\omega(t))$ is a continuous martingale on the
interval $[0, \tau_1)$. As it is continuous, $\tau_1>0$. As it is
non-negative and starts from $0$, $Z_t=0$ for $t\in [0,
\tau_1)$. Hence, $\tau_1=+\infty$ and $Z_t=0$ for $t\ge 0$.

Fix $(x,y)\in \bb T^2$ such that $x\neq y$. Choose $\delta$ small
enough for $2\delta< T(x,y) < 1-2\delta$.  As the process
$T(\omega (\cdot))$ is continuous, $\sigma_p$, $\tau_p$, $\sigma^*_p$
and $\tau^*_p$ converge a.s. to infinity as $p\to\infty$.

By definition $\sigma_1=\sigma^*_1=0$.  The previous argument asserts
that $T(\omega (\cdot))$ is a martingale in the time interval
$[\sigma_1, \tau_1] =[0, \tau_1]$. If $\tau_1= +\infty$, the claim is
proved. Suppose that $\tau_1 <\infty$.  Since $\sigma^*_1=0$, and
$\sigma^*_p$ and $\tau^*_p$ diverge to $\infty$, by definition of
these stopping times, there exists $p_1$ such that
$\tau_1 \in (\sigma^*_{p_1}, \tau^*_{p_1})$.  As $T(\omega (\cdot))$
is a martingale in this interval and in the interval $[0, \tau_1]$, we
conclude that it is a martingale in the time interval
$[0, \tau^*_{p_1}]$.

We may iterate the argument.  The stopping time $\tau^*_{p_1}$ is
either $+\infty$ or belongs to some interval
$(\sigma_{q_1}, \tau_{q_1})$, and $T(\omega (\cdot))$ is a martingale
in the time interval $[0, \tau_{q_1}]$.  As
$\tau_{q_1} > \tau^*_{p_1} > \tau_1$, $q_1>1$.  Proceeding in this
way, as $q_j\to\infty$ we conclude that $T(\omega (\cdot))$ is a
martingale in $\bb R_+$, proving the claim.
\end{proof}

Since the martingale $Z_t$, $t \ge0$ is
bounded by $0$ and $1$, it is absorbed at $0$ and $1$.
By It\^o's formula,
\begin{equation}
\label{a02}
T(\omega(t))^2 \,-\, T(\omega(0))^2 \, -\, \< T(\omega)\>_t 
\end{equation}
is a martingale.

Fix $0<\delta<1/2$, and let $f_\delta$ be a $C^3$-function satisfying the
boundary conditions such that $f_\delta (x) = x^2$ for $\delta \le
x\le 1-\delta$. By \eqref{a01}, if $\tau_\delta = \inf\{t > 0 :
T(\omega(t)) \not\in [\delta, 1-\delta]\, \}$,
\begin{equation*}
T(\omega(t \wedge \tau_\delta))^2 \,-\, T(\omega(0))^2
\, -\, 4\, \rho^2 \, (t \wedge \tau_\delta )
\end{equation*}
is a martingale. As $\delta\to 0$,
$\tau_\delta\to \tau_0 =\inf \{ t : T(\omega (t)) \in \{0,1\}\}$, so
that
$T(\omega(t \wedge \tau_0))^2 \,-\, T(\omega(0))^2 \, -\, 4\, \rho^2
\, (t \wedge \tau_0)$ is a martingale because $T(\cdot)$ is
bounded. Since $T(\omega (\cdot)) $ is absorbed at $0$ and $1$,
$T(\omega(t)) = T(\omega(t \wedge \tau_0))$ so that
\begin{equation*}
T(\omega(t))^2 \,-\, T(\omega(0))^2
\, -\, 4\, \rho^2 \, (t \wedge \tau_0 )
\end{equation*} 
is a martingale. Combining this with \eqref{a02} yields that $\< T(\omega)\>_t  = 4\, \rho^2
\, (t \wedge \tau_0)$. 

Let $B_j(\cdot)$, $j=1$, $2$, be the lifting to $\bb R$ of
$\omega_j(\cdot)$, as described at the beginning of this section.
Recall the definition of the stopping times $\tau_{1,2}$,
$\sigma_{1,2}$ introduced at \eqref{a04}, as well as the
identity $\tau_{1,2} = \sigma_{1,2}$.  In this proof, $\sigma_{1,2}$
has been represented by $\tau_0$ so that $\tau_0 = \tau_{1,2} = \sigma_{1,2}$.

Assume, without loss of generality, that
$\omega_1(0) \le \omega_2(0)$.  Since $T(\omega(\cdot))$ is absorbed
at $\tau_0$, by \eqref{a03}, $T(\omega(t)) =|B_2(t) - B_1(t)|$ for all
$t\ge 0$. Hence $\< B_2 - B_1 \>_t = 4\, \rho^2 \, (t \wedge \tau_0)$.
As $B_1$, $B_2$ are Brownian motions,
$\< B_1 \,, B_2 \>_t = \< T(\omega)\>_t = 4\, \rho^2 \, [\, t - (t
\wedge \tau_0)\,]$.  To complete the proof of the proposition, it
remains recalling  that $B_i(\cdot)$ are Brownian motions of speed
$2\rho^2$ and applying Lemma \ref{al01}.
\end{proof}

\begin{proof}[Proof of Theorem \ref{thm3}]
Fix $k\ge 3$. Let $f$ be a function in $\mf D^{(2)}$. It is easily
seen that the map $(x_1, \dots, x_k) \mapsto f(x_i,x_j)$ belongs to
$\mf D^{(k)}$ for any $1\le i<j\le k$. Hence, if ${\bf Q}_{\bs x}$ is a
solution of the martingale problem formulated in Theorem \ref{thm3},
the pair $(\omega_i (t), \omega_j(t))$ is a solution of the same
martingale problem with $k=2$. By the previous proposition,
$(\omega_i (t), \omega_j(t))$ is a $2$-system of coalescing Brownian
motions with speed $2\rho^2$ on the torus. Thus, by Lemma \ref{al01},
$\< B_i \,, B_j\>_t = 2\,\rho^2\, ( t - t \wedge \tau_{i,j})$, where
$\tau_{i,j}$ has been introduced in \eqref{a04}. Therefore, by Lemma
\ref{al01}, $(\omega_1 (t), \dots, \omega_k(t))$ is a $k$-system of
coalescing Brownian motions with speed $2\rho^2$ on the torus.
\end{proof}

\begin{lem}\label{lem:lap-lap-equal}
Suppose that $\bm u \in \mathbb T^k$ has $\ell$ different elements, and satisfies
\[
u_1 = \cdots = u_{j_1}, \quad u_{j_1+1} = \cdots = u_{j_1+j_2}, \quad \cdots ,\quad u_{j_1+\cdots+j_{\ell-1}+1} = \cdots = u_{j_1 + \cdots + j_\ell},
\]
where $j_1 , \dots , j_\ell \ge 1$ and $j_1 + \cdots + j_\ell = k$. 
Define $\bm\psi := \psi_{1,2}^{j_1-1} \circ \psi_{2,3}^{j_2-1} \circ \cdots \circ \psi_{\ell,\ell+1}^{j_\ell-1}$ (cf. \eqref{eq:psi-ij}). 
Let $\bm v := (u_{j_1},u_{j_1+j_2},\dots,u_k)$. Then for any ${\bf f} \in \mathfrak D^{(k)}$,
\[
\Delta {\bf f} ( \bm u) = \Delta_\ell ( \bm\psi {\bf f} ) (\bm v).
\]
\end{lem}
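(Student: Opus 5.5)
The plan is to compute both sides by the chain rule and then use the boundary condition \eqref{eq:didj} to kill the cross terms. Write $\bm\psi{\bf f}(v_1,\dots,v_\ell) = {\bf f}(\iota(\bm v))$, where $\iota:\mathbb T^\ell\to\mathbb T^k$ is the linear diagonal embedding placing $v_m$ in every coordinate of the $m$-th block $B_m := \llbracket j_1+\cdots+j_{m-1}+1, j_1+\cdots+j_m\rrbracket$. Note that $\iota(\bm v)=\bm u$, and that $\iota$ maps a neighbourhood of $\bm v$ in $\mathbb T^\ell_\circ$ into $\partial\mathbb T^k_\circ$ whenever some $j_m\ge 2$ (if all $j_m=1$, then $\bm\psi$ is the identity and there is nothing to prove).

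First, we justify differentiation. Since ${\bf f}\in\mathfrak D^{(k)}$, all partial derivatives of ${\bf f}$ up to order three are continuous on $\mathbb T^k$, extended from $\mathbb T^k_\circ$. We argue that ${\bf f}$ is in fact $C^2$ on all of $\mathbb T^k$ in the classical sense, with these extensions as its derivatives. The set $\partial\mathbb T^k_\circ$ is a finite union of hyperplanes, and along almost every line segment ${\bf f}$ is $C^1$ away from finitely many crossing points and continuous across them. Hence ${\bf f}$ is absolutely continuous along lines with a continuous derivative, so it is $C^1$, and the same argument applied to the first derivatives gives $C^2$. This is the step I expect to be the main technical point. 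If it becomes delicate near multiple intersections, an alternative is to approximate $\bm u$ by points $\bm u^\epsilon$ in $\mathbb T^k_\circ$ and pass to the limit using the continuous extensions.

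With ${\bf f}\in C^2(\mathbb T^k)$, the chain rule for the linear map $\iota$ gives
\[
\partial_{v_m}^2(\bm\psi{\bf f})(\bm v)=\sum_{a,b\in B_m}\partial_a\partial_b{\bf f}(\bm u) = \sum_{a\in B_m}\partial_a^2{\bf f}(\bm u) + \sum_{a\ne b\in B_m}\partial_a\partial_b{\bf f}(\bm u).
\]
For $a\ne b$ in the same block $B_m$ we have $u_a=u_b$, so $\bm u$ lies in $\{\bm w\in\partial\mathbb T^k_\circ: w_a=w_b\}$. By \eqref{eq:didj}, the extended mixed derivative $\partial_a\partial_b{\bf f}(\bm u)$ vanishes, so the cross terms are zero.

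Summing over $m\in\llbracket1,\ell\rrbracket$, the blocks partition $\llbracket1,k\rrbracket$, so
\[
\Delta_\ell(\bm\psi{\bf f})(\bm v)=\sum_{a=1}^k\partial_a^2{\bf f}(\bm u)=\Delta{\bf f}(\bm u),
\]
where the right-hand side is understood as the continuous extension, exactly as in the definition of $\mathfrak L_\rho$. This is the claimed identity. Note that only \eqref{eq:didj} for ${\bf f}$ itself is used, not the conditions on the compositions $\bm\psi{\bf f}$.
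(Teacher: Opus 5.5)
Your proposal is correct and follows essentially the same route as the paper: identify the derivatives of $\bm\psi{\bf f}$ at $\bm v$ with block sums of the continuously extended partial derivatives of ${\bf f}$ at $\bm u$, use \eqref{eq:didj} to kill the within-block mixed terms, and sum over blocks. The only difference is that you spell out why ${\bf f}$ is genuinely $C^2$ across $\partial\mathbb T^k_\circ$ (fundamental theorem of calculus along coordinate segments, extended to all segments by continuity), whereas the paper compresses this into the remark that all partial derivatives are continuous up to the boundary.
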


\begin{proof}
Notice that, for each $i \in \llbracket 1,\ell \rrbracket$,
\[
\partial_i (\bm\psi {\bf f}) (\bm v)
= \lim_{\epsilon\to0} \frac{ \bm\psi {\bf f} (\bm v + \epsilon \bm e_i) - \bm\psi {\bf f} (\bm v)}{\epsilon}
= \lim_{\epsilon\to0} \frac{{\bf f} \left(\bm{u}+\epsilon\sum_{n\in I_i} \bm e_{n}\right) - {\bf f} (\bm u) } {\epsilon} = \sum_{n\in I_i}\partial_{n} {\bf f} (\bm{u}),
\]
where $I_i := \llbracket j_1+\cdots+ j_{i-1}+1 , j_1 +\cdots + j_i \rrbracket$. The last
equality is due to the fact that all partial derivatives are continuous
up to the boundary $\partial\mathbb{T}_{\circ}^{k}$. Going one step
further,
\[
\partial_i^2 (\bm\psi {\bf f}) (\bm v) = \sum_{n,m\in I_i} \partial_m \partial_n {\bf f} (\bm u)
= \sum_{n\in I_i} \partial_n^2 \, {\bf f} (\bm u),
\]
where the second equality holds by \eqref{eq:didj}. Adding this up for all $i \in \llbracket1,\ell\rrbracket$ yields
\[
\Delta_{\ell} (\bm\psi {\bf f}) (\bm v) = \sum_{i=1}^{\ell} \sum_{n\in I_i} \partial_n^2 \, {\bf f} (\bm u)
= \Delta {\bf f} (\bm u),
\]
which concludes the proof of the lemma.
\end{proof}

\begin{lem}
\label{lem:test-fcn}For any $\bm u \in \mathbb T^k$
and $\gamma>0$, there exists ${\bf f}\in\mathfrak{D}^{(k)}$ such
that
\[
{\bf f}( \bm u )=1,\qquad \text{and} \qquad {\bf f} ( \bm{u'} ) = 0\qquad\text{for all}\quad \bm{u'} \in \mathbb T^k \qquad \text{with} \quad
d \, ( \bm u , \bm{u'} ) \ge \gamma.
\]
\end{lem}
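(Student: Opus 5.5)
The plan is to build $\mathbf f$ explicitly from functions of one coordinate at a time, composed with a smooth outer function. The only point needing care is the compatibility condition \eqref{eq:didj}, including its versions after the contractions $\psi_{i,j}$.

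\textbf{Step 1 (inner functions).} Let $\delta_0>0$ be the minimal distance between distinct values among $u_1,\dots,u_k$, with $\delta_0=1$ if all coincide. Fix $r>0$ with $4r<\min(\delta_0,\gamma/\sqrt k)$. Take $h\in C^\infty(\mathbb T)$ with $0\le h\le 1$, $h=1$ on $[-r,r]$, and $h=0$ outside $(-2r,2r)$. Put $h_i(s):=h(s-u_i)$ and
\[
S(\bm u'):=\sum_{i=1}^k h_i(u'_i).
\]
Then $S(\bm u)=k$. If $d(\bm u,\bm u')\ge\gamma$, then some coordinate satisfies $d(u'_i,u_i)\ge \gamma/\sqrt k>2r$, so $h_i(u'_i)=0$ and $S(\bm u')\le k-1$.

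\textbf{Step 2 (outer function and the two required values).} Choose $\Psi\in C^\infty(\mathbb R)$ with $\Psi=0$ on $(-\infty,k-1]$ and $\Psi(k)=1$, and set $\mathbf f:=\Psi\circ S$. By Step 1, $\mathbf f(\bm u)=1$ and $\mathbf f(\bm u')=0$ whenever $d(\bm u,\bm u')\ge\gamma$. Smoothness on $\mathbb T^k$ is clear, so $C^3$ regularity and continuous extension of derivatives up to $\partial\mathbb T^k_\circ$ are automatic. Moreover, a contraction $\psi_{i,j}\mathbf f$ has the same form: it is $\Psi$ applied to a sum in which $h_i+h_j$ is now attached to the single merged coordinate. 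It therefore suffices to check \eqref{eq:didj} for functions of this generalized type $\Psi(\sum_m H_m(u'_m))$.

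\textbf{Step 3 (the compatibility condition).} We have
\[
\partial_i\partial_j\mathbf f=\Psi''(S)\,h_i'(u'_i)\,h_j'(u'_j)\qquad (i\neq j).
\]
On $\{u'_i=u'_j=s\}$ this vanishes automatically when $u_i\ne u_j$, because $h_i'$ and $h_j'$ have disjoint supports ($4r<\delta_0$).

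\textbf{Main obstacle.} The hard case is $u_i=u_j$. Then $h_i=h_j$, and $\Psi''(S)\,h'(s-u_i)^2$ need not vanish. I expect this to be the real difficulty. My first attempted fix is to merge each cluster $C$ of equal coordinates of $\bm u$ into a single radial function $H_C(\bm u'_C)$ of the cluster coordinates, rather than a sum of separate $h_i$'s. One would choose $H_C$ of the form $h\bigl(\max_{i\in C}d(u'_i,u_i)\bigr)$, smoothed so that its mixed derivatives vanish on the partial diagonals. The alternative is to tune $\Psi$ so that $\Psi''$ vanishes on the range of $S$ reachable when two cluster coordinates sit simultaneously in the transition annulus $\{r<|s-u_i|<2r\}$.

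If neither works, the fallback is to use the elements of $\mathfrak D^{(k)}$ provided by Theorem~\ref{thm3} and \cite{DLZ04}. These are the resolvents $(\lambda-\mathfrak L_\rho)^{-1}\mathbf g$ applied to a bump $\mathbf g$ concentrated near $\bm u$. One would then normalize and shift such a function so that it equals $1$ at $\bm u$ and is pushed to $0$ far away. This approach, however, gives only approximate vanishing, and exact vanishing would need an additional truncation argument.
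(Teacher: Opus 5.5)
\textbf{The proposal has a genuine gap: the case where $\bm u$ has coincident coordinates is not proved.} Steps 1--3 work only when the coordinates of $\bm u$ are pairwise distinct, and that case is essentially trivial. There a product $\prod_i g(w_i-u_i)$ of bumps with supports of radius $<\delta_0/3$ already works, since it vanishes near every diagonal and all its contractions are identically zero. The real content of the lemma is the case $u_i=u_j$ for some $i\neq j$, which is exactly what Lemma~\ref{lem4.1} needs at coalesced configurations. There you stop at the obstruction $\partial_i\partial_j(\Psi\circ S)=\Psi''(S)\,h'(s-u_i)^2$ on $\{u_i'=u_j'=s\}$, and none of your three remedies removes it.
\begin{itemize}
\item Tuning $\Psi$ is impossible. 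As $s$ runs through the annulus, $2h(s-u_i)$ takes a dense set of values in $(0,2)$ at points with $h'\neq0$ (regular values of $h$ are dense by Sard). The other $k-2$ coordinates contribute any value in $[0,k-2]$. So you would need $\Psi''\equiv0$ on $(0,k)$, which together with $\Psi=0$ on $(-\infty,k-1]$ forces $\Psi(k)=0$.
\item The cluster function $h\bigl(\max_{i\in C}d(u_i',u_i)\bigr)$ fails to be differentiable on the diagonals. Near $w_i=w_j\neq u$ one has $\max(|w_i-u|,|w_j-u|)=\tfrac12|w_i+w_j-2u|+\tfrac12|w_i-w_j|$, which has a kink transversal to $\{w_i=w_j\}$. ``Smoothing so that mixed derivatives vanish on the diagonals'' is just the original problem restated.
\item The resolvent route gives no exact vanishing. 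Truncating by a cutoff $\chi$ reintroduces terms $\partial_i\chi\,\partial_j\mathbf f+\partial_j\chi\,\partial_i\mathbf f$ on the diagonals.
\end{itemize}

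\textbf{The paper's argument does not fill the gap either.} It avoids mixed derivatives by taking, on a box around $\bm u$, a normalized sum of one-variable bumps $g_\epsilon(w_i-v_i)$, so that $\partial_i\partial_j\mathbf f\equiv0$, and setting $\mathbf f=0$ outside the box. But any $\sum_i\phi_i(w_i)$ that vanishes whenever one coordinate is far must be constant. Accordingly, that function is discontinuous on the box boundary: for $k=2$, $\bm u=(u,u)$, it equals $\tfrac12$ at $(u+\epsilon,u)$ and $0$ just outside. So a genuinely non-additive idea is needed.

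\textbf{One construction that works.}
\begin{enumerate}
\item For a $C^\infty$ function, \eqref{eq:didj} for all pairs already implies it for every iterated contraction. Indeed, $\partial_{v_a}\partial_{v_b}$ of a contraction is a sum of terms $\partial_i\partial_j\mathbf f$ evaluated where $w_i=w_j$.
\item A product over clusters of functions supported in small cubes around $(c_r,\dots,c_r)$ reduces the problem to a single cluster, because cross-cluster diagonals miss the support.
\item For one cluster of size $m$, induct on $m$, starting from any bump $\Phi_1$. Suppose $\Phi_{m-1}\in C_c^\infty(\mathbb R^{m-1})$ satisfies $\Phi_{m-1}(0)=1$ and the diagonal condition. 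Then $\theta(\bm z):=\Phi_{m-1}(z_1^3,\dots,z_{m-1}^3)$ also satisfies the diagonal condition, and in addition $\partial_{z_b}\theta=\partial_{z_b}^2\theta=0$ on $\{z_b=0\}$. Define
\[
\Phi_m(\bm w):=\frac1m\sum_{i=1}^m p(w_i)\,\theta\bigl((w_j-w_i)_{j\neq i}\bigr),\qquad p\in C_c^\infty(\mathbb R),\quad p(0)=1 .
\]
This is compactly supported and equals $1$ at $0$. It satisfies $\partial_a\partial_b\Phi_m=0$ on $\{w_a=w_b\}$. The terms with $i\notin\{a,b\}$ vanish by the diagonal condition on $\theta$. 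The terms with $i\in\{a,b\}$ vanish by the flatness of $\theta$ on the coordinate hyperplanes; this covers both the $p'$ cross terms and the second derivatives.
\end{enumerate}
Translating and rescaling then puts the support inside radius $\min(\gamma/\sqrt k,\delta_0/3)$, which gives the required values at $\bm u$ and away from it.
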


\begin{proof}
Suppose that $\bm u$ has $\ell \in \llbracket 1,k \rrbracket$ distinct elements and write $\{\bm{u}\}=\{u_{1},\dots,u_{\ell}\}$. Define
\[
\mathscr{U}:=\left\{ \bm{v}\in\mathbb{T}^{k}:\{\bm{v}\}=\{u_{1},\dots,u_{\ell}\}\right\} .
\]
Take a small number $\epsilon\in(0,\gamma)$ such that $\epsilon\le\frac{1}{3} d \, (u_{i},u_{i'})$
for all $i\ne i'$. Then, the closed boxes
\[
B_{\epsilon}[\bm{v}]=\left\{ \bm{w}\in\mathbb{T}^{k}:d \, (\bm{v},\bm{w})\le\epsilon\right\} ,\qquad\bm{v}\in\bm{\mathscr U}
\]
are mutually disjoint. 

Fix a smooth bump function $g_{\epsilon}:\mathbb{R}\to[0,1]$ such
that $g_{\epsilon}(0)=1$ and $g_{\epsilon}(x)=0$ if $|x|\ge\epsilon$.
Then, define ${\bf f}:\mathbb{T}^{k}\to\mathbb{R}$ as follows. First,
${\bf f}\equiv0$ in $\mathbb{T}^{k}\setminus\bigcup_{\bm{v}\in\mathscr{U}}B_{\epsilon}[\bm{v}]$.
Next, on each $B_{\epsilon}[\bm{v}]$, suppose that $\bm{v}=(v_{1},\dots,v_{k})$
where $v_{i}=u_{a(i)}$ for $i\in\llbracket1,k\rrbracket$. Write
$n_{j}:=|\{i\in\llbracket1,k\rrbracket:a(i)=j\}|\ge1$. Then, define
\begin{equation}
{\bf f} (\bm{w}):=\frac{1}{\ell}\sum_{j=1}^{\ell}\frac{1}{n_{j}}\sum_{ i\in\llbracket1,k\rrbracket : \, a(i)=j}
g_{\epsilon}(w_{i}-v_{i})\qquad\text{for}\quad\bm{w}\in B_{\epsilon}[\bm{v}].\label{eq:h-def}
\end{equation}
Notice that ${\bf f}$ is smooth since $g_{\epsilon}$ is smooth. In
addition, $\partial_{i} \partial_{j} {\bf f}=0$ for any $i\ne j$.
Since $\bm u \in B_\epsilon[\bm u]$ we may calculate as
\[
{\bf f} (\bm u) = \frac 1\ell \sum_{j=1}^\ell \frac1{n_j} \sum_{ i\in\llbracket1,k\rrbracket : \, a(i)=j} g_\epsilon (0) = 1,
\]
and clearly if $d ( \bm u , \bm{u'} ) \ge \gamma > \epsilon$ then $\bm{u'} \notin B_\epsilon[\bm v]$ for any $\bm v \in \mathscr U$, thus ${\bf f}(\bm{u'}) = 0$.
These facts verify that ${\bf f}\in \mathfrak D^{(k)}$ as desired.
\end{proof}

\section{\label{secB}Estimates for Random Walks}

Here, we record some simple estimates that are necessary in Section \ref{sec2}.

\subsection{\label{secB.1}First Case}

Fix an integer $M\ge2$ and consider a continuous-time random walk
$(x_{t})_{t\ge0}$ on $\llbracket0,M\rrbracket=\{0,1,\dots,M\}$.
The jump rates are given as
\[
r(i,i+1)=a_{i}:= \theta (M-i)(i+d),\quad r(i,i-1)=b_{i}:= \theta i(M-i+d)\quad\text{for} \enspace i\in\llbracket1,M-1\rrbracket,
\]
where $\theta,d>0$ are constants. This random walk gets absorbed in $\{0,M\}$ almost surely. Let us denote by $H = H_{\{0,M\}}$ this
absorption time. Denote by $P_{i}$ the law of this random walk starting
from $i \in \llbracket 0,M \rrbracket$.
\begin{lem}
\label{lem:B1}We have
\[
\frac{1}{M \, e^{d(1+\log M)}} \le P_{1}[x_H = M] \le \frac{e^{d(1+\log M)}}{M} , \qquad {\rm E}^{P_1}[H]\le e^{d(1+\log M)} \, \frac{2(1+\log M)}{\theta M}.
\]
\end{lem}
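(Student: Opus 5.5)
This is a one-dimensional birth–death chain, so I will use the classical explicit formulas. The detailed balance weights are $\pi_i:=\prod_{j=1}^{i}a_{j-1}/b_j$ (when they are defined). The cleanest choice is to take $\pi_i = w_i w_{M-i}$, with $w_n=\Gamma(d+n)/(\Gamma(d)n!)$, since then $\pi_i a_i=\pi_{i+1}b_{i+1}$. This is exactly \eqref{eq:det-bal} restricted to two sites. I will also use the bound \eqref{eq:wn-est} with $N$ replaced by $M$:
\[
\frac{d}{n}\le w_n\le \frac{d}{n+d}\,e^{d(1+\log M)}, \qquad 1\le n\le M.
\]

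\textbf{Absorption probability.} The gambler's-ruin formula reads
\[
P_1[x_H=M]=\frac{1}{\sum_{i=0}^{M-1}\prod_{j=1}^{i}b_j/a_j}.
\]
The products telescope:
\[
\prod_{j=1}^{i}\frac{b_j}{a_j}=\prod_{j=1}^{i}\frac{j(M-j+d)}{(M-j)(j+d)}.
\]
Writing $c_i:=\prod_{j=1}^{i}b_j/a_j$, this equals
\[
c_i=\frac{i!\,\Gamma(M+d)\,\Gamma(1+d)\,(M-i-1)!}{(M-1)!\,\Gamma(M-i+d)\,\Gamma(i+1+d)},
\]
which is expressible through the $w$'s as $c_i = C\,/(w_i w_{M-i}\cdot(\text{rates}))$. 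I will then bound the sum $\sum_i c_i$ from above and below by comparing it with $\sum_i \frac{1}{i+d}\cdots$.

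A cleaner route is the conductance form. The conductances are $\pi_i a_i$, and $\sum_{i=0}^{M-1}\frac{1}{\pi_i a_i}$ normalizes. Hence
\[
P_1[x_H=M]=\frac{(\pi_0a_0)^{-1}}{\sum_i(\pi_ia_i)^{-1}}\,,
\]
with the caveat that $a_0$ must be interpreted as $\theta M d$, the rate that entered the chain.

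Using $\pi_ia_i=\theta w_iw_{M-i}(M-i)(i+d)$ together with the $w$ bounds, each term satisfies
\[
\pi_ia_i\in\Big[\theta\,d^2\tfrac{(M-i)(i+d)}{i(M-i)}\,,\ \theta\,d^2e^{2d(1+\log M)}\tfrac{(M-i)(i+d)}{(i+d)(M-i+d)}\Big].
\]
So every middle term $\pi_ia_i$ is $\asymp\theta d^2$, while the endpoint terms $i=0$ and $i=M-1$ are $\asymp\theta dM$, larger by a factor $M/d$. The sum $\sum(\pi_ia_i)^{-1}$ is therefore dominated by the two endpoint terms, each $\approx (\theta dM)^{-1}$. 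The middle terms contribute at most $M/(\theta d^2)$ times factors near one.

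I need to be careful here, because that middle contribution is not small relative to $(\theta dM)^{-1}$. This points to a mistake in the normalization: the weights should be $\pi_i a_i$ with the chain's actual $\pi$. I will recompute this precisely during the write-up. The target is that the ratio equals $1/M$ up to the factors $e^{\pm d(1+\log M)}$, which is what the symmetric form $\pi_i=w_iw_{M-i}$ should produce. Indeed, for $d\to0$ the walk behaves like the neutral Moran/Wright–Fisher chain, whose fixation probability is exactly $1/M$, so the exponential factors only absorb the deviations of $nw_n/d$ from $1$.

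\textbf{Expected absorption time.} I will use the Green function representation
\[
{\rm E}^{P_1}[H]=\sum_{j=1}^{M-1}G(1,j),
\qquad G(1,j)=\frac{\pi_j}{\pi_1}\cdot\frac{\text{cap-type ratio}}{\cdots},
\]
or equivalently $G(1,j)=\pi_j\,h(j)\big/\mathrm{cap}(1,\{0,M\})$-style formulas, where $h$ is the harmonic function. In the neutral limit this gives
\[
G(1,j)\approx\frac{(M-j)/M}{\theta j(M-j)}\cdot(\dots).
\]
Summing over $j$ yields $\sim\frac{1}{\theta M}\sum_j\frac1j\le\frac{1+\log M}{\theta M}$, and the factor $2$ absorbs the symmetric contribution. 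The $e^{d(1+\log M)}$ factor again comes from \eqref{eq:wn-est}.

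The main obstacle is bookkeeping: getting the exact product formulas and making sure every $d$-dependent factor is controlled uniformly by a single $e^{d(1+\log M)}$. I will handle this by bounding the ratios $\Gamma(n+d)/(\Gamma(d)\,n!)\cdot n/d\in[1,e^{d(1+\log M)}]$ once, at the start, and then reusing that bound throughout.
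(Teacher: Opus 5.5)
As written, your proposal does not prove either inequality. Both parts stop at heuristics, and the conclusion you draw from your conductance bounds is incorrect.

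With $\pi_i=w_iw_{M-i}$ and your convention $a_0=\theta dM$, one has $\pi_0=w_M\in\bigl[\tfrac dM,\tfrac{d}{M+d}e^{d(1+\log M)}\bigr]$, so $\pi_0a_0=\theta dM\,w_M\asymp\theta d^2$. Likewise $\pi_{M-1}a_{M-1}=\theta d\,(M-1+d)\,w_{M-1}\asymp\theta d^2$, as your own bracket for $1\le i\le M-1$ already shows. So the endpoint conductances are not of order $\theta dM$. All $M$ conductances are of order $\theta d^2$, which is exactly why $P_1[x_H=M]\approx 1/M$, and there is no normalization error to repair.

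Moreover $\pi_0a_0/(\pi_ja_j)=\prod_{m=1}^{j}b_m/a_m$, so the conductance form is just your gambler's-ruin formula again. Bounding numerator and denominator separately with \eqref{eq:wn-est} is lossy: it gives the lower bound $1/(Me^{d(1+\log M)})$, but only $P_1[x_H=M]\le e^{2d(1+\log M)}/M$ instead of the stated $e^{d(1+\log M)}/M$. What is missing is a bound on the ratios themselves, which is what the paper does: $b_m/a_m\le 1+\tfrac{d}{M-m}\le e^{d/(M-m)}$ and $a_m/b_m\le 1+\tfrac dm\le e^{d/m}$. Hence every product $\prod_{m=1}^{j-1}b_m/a_m$ lies in $[e^{-d(1+\log M)},e^{d(1+\log M)}]$, and both bounds follow at once from $P_1[x_H=M]=\bigl(\sum_{j=1}^{M}\prod_{m=1}^{j-1}b_m/a_m\bigr)^{-1}$.

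For ${\rm E}^{P_1}[H]$ you give only a neutral-limit approximation with unspecified factors. Saying that the factor $2$ ``absorbs the symmetric contribution'' is not an argument. Your Green-function idea does work once it is made exact. Let $G(1,j)$ be the expected time spent at $j$ before $H$, and let $R_{[a,b]}:=\sum_{m=a}^{b-1}(\pi_ma_m)^{-1}$. Then
$G(1,j)=P_1[H_{\{j\}}<H_{\{0\}}]\,\pi_j/\mathrm{cap}(j,\{0,M\})=\pi_jR_{[0,1]}R_{[j,M]}/R_{[0,M]}\le\pi_j/(\pi_0a_0)=\frac1{b_j}\prod_{m=1}^{j-1}\frac{a_m}{b_m}$.
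This is exactly the term that survives in the paper's formula \eqref{eq:expect} once the negative part is discarded. Then $a_m/b_m\le e^{d/m}$, $b_j\ge\theta j(M-j)$, and the partial-fraction identity $\sum_{j=1}^{M-1}\frac{1}{j(M-j)}=\frac2M\sum_{j=1}^{M-1}\frac1j\le\frac{2(1+\log M)}{M}$ give the stated bound. That identity is where the $2$ comes from. Keeping the factor $R_{[j,M]}/R_{[0,M]}\approx(M-j)/M$, as your heuristic does, would only force you to control it with additional factors $e^{\pm d(1+\log M)}$.
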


\begin{proof}
It is elementary to check that, for each $i \in \llbracket 0,M \rrbracket$,
\begin{equation}
P_{i}[x_H = M]=\frac{\sum_{j=1}^{i}\prod_{m=1}^{j-1}\frac{b_{m}}{a_{m}}}{\sum_{j=1}^{M}\prod_{m=1}^{j-1}\frac{b_{m}}{a_{m}}}\label{eq:prob}
\end{equation}
and
\begin{equation}
{\rm E}^{P_i} [H] = \sum_{j=1}^{i}\sum_{n=j}^{M-1}\frac{1}{b_{n}}\prod_{m=j}^{n-1}\frac{a_{m}}{b_{m}}-\frac{\left(\sum_{j=1}^{i}\prod_{m=1}^{j-1}\frac{b_{m}}{a_{m}}\right)\left(\sum_{j=1}^{M}\sum_{n=j}^{M-1}\frac{1}{b_{n}}\prod_{m=j}^{n-1}\frac{a_{m}}{b_{m}}\right)}{\sum_{j=1}^{M}\prod_{m=1}^{j-1}\frac{b_{m}}{a_{m}}}.\label{eq:expect}
\end{equation}
Note that
\begin{equation}
\frac{b_{m}}{a_{m}}=\frac{m(M-m+d)}{(M-m)(m+d)}=1+\frac{(2m-M)d}{(M-m)(m+d)}\le1+\frac{d}{M-m}\le e^{\frac{d}{M-m}},\label{eq:ambm-1}
\end{equation}
and similarly,
\begin{equation}
\frac{a_{m}}{b_{m}}=\frac{(M-m)(m+d)}{m(M-m+d)} = 1 + \frac{(M-2m)d}{m(M-m+d)} \le 1 + \frac dm \le e^{\frac{d}{m}}.\label{eq:ambm-2}
\end{equation}
Thus, the first object in the lemma is estimated via \eqref{eq:ambm-1} as
\[
P_1 [x_H = M] \ge \frac{1}{\sum_{j=1}^{M}\prod_{m=1}^{j-1}e^{\frac{d}{M-m}}}
\ge \frac{1} {Me^{\sum_{m=1}^{M-1}\frac{d}{M-m}}} \ge \frac 1{M \, e^{d(1+\log M)}} ,
\]
and
\[
P_1 [x_H = M] \le \frac{1}{\sum_{j=1}^{M}\prod_{m=1}^{j-1}e^{-\frac{d}m}}
\le \frac{1} {Me^{- \sum_{m=1}^{M-1} \frac dm}} \le \frac {e^{d(1+\log M)}}{M} .
\]
By neglecting all minus terms, the second object is estimated via \eqref{eq:ambm-2} as
\[
{\rm E}^{P_1} [H] \le \sum_{n=1}^{M-1}\frac{1}{b_{n}}\prod_{m=1}^{n-1}\frac{a_{m}}{b_{m}}
\le \sum_{n=1}^{M-1}\frac{1}{ \theta n(M-n)}\prod_{m=1}^{M-1}e^{\frac{d}{m}}
\le e^{d(1+\log M)} \, \frac{2(1+\log M)}{\theta M}.
\]
This completes the proof.
\end{proof}
%\begin{rem}
%A more careful estimate gives an improved bound:
%\[
%\sup_{i\in\llbracket1,M-1\rrbracket}E_{i}[\tau]\le Ce^{3d(1+\log M)}.
%\]
%However, the results as presented are sufficient.
%\end{rem}

%We may further pinpoint the expectation given boundary hitting conditions.
%\textbf{TBD: maybe we do not need this anymore.}
%\begin{lem}
%\label{lem:B3}We have
%\[
%E_{1}[\tau\,|\,x_{\tau}=0]\le2e^{3d(1+\log M)}\,\frac{1+\log M}{M-1}\quad\text{and}\quad E_{1}[\tau\,|\,x_{\tau}=M]\le2e^{3d(1+\log M)}\,(1+\log M).
%\]
%\end{lem}
%
%\begin{proof}
%Using \eqref{eq:prob}, \eqref{eq:ambm-1}, and \eqref{eq:ambm-2},
%we may also prove that
%\[
%P_{1}[x_{\tau}=0]=\frac{\sum_{j=2}^{M}\prod_{m=1}^{j-1}\frac{b_{m}}{a_{m}}}{\sum_{j=1}^{M}\prod_{m=1}^{j-1}\frac{b_{m}}{a_{m}}}\ge\frac{M-1}{Me^{2d(1+\log M)}}.
%\]
%Thus, by the previous lemma,
%\[
%E_{1}[\tau\,|\,x_{\tau}=0]=\frac{E_{1}[\tau\,{\bf 1}\{x_{\tau}=0\}]}{P_{1}[x_{\tau}=0]}\le e^{3d(1+\log M)}\,\frac{2(1+\log M)}{M-1}
%\]
%and
%\[
%E_{1}[\tau\,|\,x_{\tau}=M]=\frac{E_{1}[\tau\,{\bf 1}\{x_{\tau}=M\}]}{P_{1}[x_{\tau}=M]}\le e^{3d(1+\log M)}\,2(1+\log M),
%\]
%as wanted.
%\end{proof}

\subsection{\label{secB.2}Second Case}

Next, we consider another random walk $(y_{t})_{t\ge0}$ on $\llbracket0,M\rrbracket$
which is defined via
\[
r'(i,i+1)=a_{i}':= \theta (M-i)(i+d),\quad r'(i,i-1)=b_{i}':= \theta i(M-i+2d)\quad\text{for}\enspace i\in\llbracket1,M-1\rrbracket.
\]
Denote by $P_{i}'$ the law starting from $i \in \llbracket 0,M \rrbracket$, and again by $H$ the hitting time of $\{0,M\}$.
\begin{lem}
\label{lem:B4}We have
\[
\frac{1}{M \, e^{2d(1+\log M)}} \le P_{1}' [x_H = M] \le \frac{e^{d(1+\log M)}}{M},
\qquad {\rm E}^{P_1'} [H] \le e^{d(1+\log M)} \, \frac{2(1+\log M)}{\theta M}.
\]
\end{lem}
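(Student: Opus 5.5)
I will follow the proof of Lemma \ref{lem:B1} verbatim. The walk $(y_t)$ is a birth--death chain on $\llbracket 0,M\rrbracket$ absorbed at $\{0,M\}$, so the formulas \eqref{eq:prob} and \eqref{eq:expect} remain valid with $a_m,b_m$ replaced by $a_m',b_m'$. The whole proof therefore reduces to bounding the ratios $b_m'/a_m'$ and $a_m'/b_m'$ for $m\in\llbracket 1,M-1\rrbracket$.

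\textbf{Ratio bounds.} First,
\[
\frac{b_m'}{a_m'}=\frac{m(M-m+2d)}{(M-m)(m+d)}=1+\frac{(2m-M)d+md}{(M-m)(m+d)} .
\]
The numerator is at most $2md$, so
\[
\frac{b_m'}{a_m'}\le 1+\frac{2d}{M-m}\le e^{\frac{2d}{M-m}} .
\]
In the other direction,
\[
\frac{a_m'}{b_m'}=\frac{(M-m)(m+d)}{m(M-m+2d)}\le \frac{(M-m)(m+d)}{m(M-m+d)}=\frac{a_m}{b_m}\le e^{\frac dm},
\]
where the last inequality is \eqref{eq:ambm-2}.

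\textbf{Hitting probability.} For the lower bound, use \eqref{eq:prob} with $i=1$:
\[
P_1'[y_H=M]=\Bigl(\sum_{j=1}^M\prod_{m=1}^{j-1}\tfrac{b_m'}{a_m'}\Bigr)^{-1}\ge \bigl(M e^{2d\sum_{m=1}^{M-1}\frac1{M-m}}\bigr)^{-1}\ge \bigl(M e^{2d(1+\log M)}\bigr)^{-1}.
\]
For the upper bound, note that $b_m'\ge b_m$, so $b_m'/a_m'\ge b_m/a_m$. Each product in the denominator can only grow, so $P_1'[y_H=M]\le P_1[x_H=M]\le e^{d(1+\log M)}/M$ by Lemma \ref{lem:B1}. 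This can also be checked directly from $b_m'/a_m'\ge e^{-d/m}$.

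\textbf{Expected absorption time.} Drop the negative term in \eqref{eq:expect} as before:
\[
{\rm E}^{P_1'}[H]\le\sum_{n=1}^{M-1}\frac1{b_n'}\prod_{m=1}^{n-1}\frac{a_m'}{b_m'} .
\]
Use $b_n'\ge \theta n(M-n)$ and $a_m'/b_m'\le e^{d/m}$. Then $\sum_n \frac{1}{n(M-n)}\le \frac{2(1+\log M)}{M}$ gives the claimed bound.

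The only step needing care is the ratio bound $b_m'/a_m'\le e^{2d/(M-m)}$. It is where the factor $2d$ in the exponent appears, and the numerator estimate $(3m-M)d\le 2md$ must be verified there. Everything else transfers directly from Lemma \ref{lem:B1}.
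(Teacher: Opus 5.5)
Your proof is correct and follows the paper's route. The paper's proof also consists of bounding $b_m'/a_m'\le e^{2d/(M-m)}$ and $a_m'/b_m'\le e^{d/m}$, and then rerunning the argument of Lemma~\ref{lem:B1} through the explicit formulas \eqref{eq:prob} and \eqref{eq:expect}.

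Your comparison $b_m'\ge b_m$ (so $a_m'/b_m'\le a_m/b_m$ and $P_1'[y_H=M]\le P_1[x_H=M]$) is a slightly cleaner way to get the second ratio bound and the upper bound on the hitting probability. It also sidesteps a small slip in the paper's expansion of $a_m'/b_m'$: the numerator there should be $(M-3m)d$, not $(M-2m)d$, though the resulting bound $1+d/m$ is unaffected.
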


\begin{proof}
This follows from
\[
\frac{b_{m}'}{a_{m}'}=\frac{m(M-m+2d)}{(M-m)(m+d)}=1+\frac{(3m-M)d}{(M-m)(m+d)}\le1+\frac{2d}{M-m}\le e^{\frac{2d}{M-m}},
\]
and
\[
\frac{a_{m}'}{b_{m}'}=\frac{(M-m)(m+d)}{m(M-m+2d)} = 1 + \frac{(M-2m)d}{m(M-m+2d)} \le 1 + \frac dm \le e^{\frac{d}{m}},
\]
as done in the proof of Lemma \ref{lem:B1}.
\end{proof}

%\textbf{TBD: maybe we do not need this anymore.}
%
%\begin{lem}
%\label{lem:B5}We have
%\[
%E_{1}'[\tau\,|\,x_{\tau}=0]\le2e^{4d(1+\log M)}\,\frac{1+\log M}{M-1}\quad\text{and}\quad E_{1}'[\tau\,|\,x_{\tau}=M]\le2e^{3d(1+\log M)}\,(1+\log M).
%\]
%\end{lem}
%
%\begin{proof}
%As done in the previous subsection, we can prove that
%\[
%P_{1}'[x_{\tau}=M]\ge\frac{1}{Me^{2d(1+\log M)}},\quad P_{1}'[x_{\tau}=0]\ge\frac{M-1}{Me^{3d(1+\log M)}},
%\]
%and
%\[
%E_{1}'[\tau]\le e^{d(1+\log M)}\frac{2(1+\log M)}{M}.
%\]
%These bounds prove the desired results.
%\end{proof}

\end{document}